\newtheorem{theorem}{Theorem}
\newtheorem{lemma}{Lemma}
\newtheorem{definition}{Definition}
\newtheorem{prop}{Proposition}
\newtheorem{remark}{Remark}
\newtheorem{corollary}{Corollary}
\begin{document}

\title[Allen-Cahn Equation with Periodic Mobility]{Homogenization of the Allen-Cahn Equation with Periodic Mobility}
\author[P.S.\ Morfe]{Peter S.\ Morfe}

\maketitle

\begin{abstract}  We analyze the sharp interface limit for the Allen-Cahn equation with an anisotropic, spatially periodic mobility coefficient and prove that the large-scale behavior of interfaces is determined by mean curvature flow with an effective mobility.  Formally, the result follows from the asymptotics developed by Barles and Souganidis for bistable reaction-diffusion equations with periodic coefficients.  However, we show that the corresponding cell problem is actually ill-posed when the normal direction is rational.  To circumvent this issue, a number of new ideas are needed, both in the construction of mesoscopic sub- and supersolutions controlling the large-scale behavior of interfaces and in the proof that the interfaces obtained in the limit are actually described by the effective equation.  \end{abstract}\

\section{Introduction}  

\subsection{Overview}  In this paper, we study the sharp interface limit of the Allen-Cahn equation with periodic mobility.  Given $u_{0} \in UC(\mathbb{R}^{d}; [-1,1])$, we are interested in the behavior as $\epsilon \to 0^{+}$ of the solutions $(u^{\epsilon})_{\epsilon > 0}$ of the following equations:
	\begin{equation} \label{E: AC mobility}
		\left\{ \begin{array}{r l}
			m(\epsilon^{-1} x, \epsilon Du^{\epsilon}) u^{\epsilon}_{t} - \Delta u^{\epsilon} + \epsilon^{-2} W'(u^{\epsilon}) = 0 & \text{in} \, \, \mathbb{R}^{d} \times (0,\infty), \\
			u^{\epsilon} = u_{0} & \text{on} \, \, \mathbb{R}^{d} \times \{0\}.
		\end{array} \right.
	\end{equation}
Here $W : [-1,1] \to [0,\infty)$ is a double well-potential with wells of equal depth (for concreteness, $W^{-1}(\{0\}) = \{-1,1\}$) and $m : \mathbb{T}^{d} \times \mathbb{R}^{d} \to (0,\infty)$ is a mobility coefficient.

Equation \eqref{E: AC mobility} describes the large-scale behavior of phase transitions when the free energy is determined by the Allen-Cahn functional and energy dissipation is determined by $m$.  The scaling results from blowing up space by $\epsilon^{-1}$ and time, by $\epsilon^{-2}$.  Without going into details yet, we only assume that $W$ and $m$ are bounded and sufficiently smooth.

Classically, in the spatially homogeneous, isotropic setting (i.e.\ when $m \equiv 1$), the large-scale behavior of $(u^{\epsilon})_{\epsilon > 0}$ is described by the so-called \emph{sharp interface limit}.  Informally, this means that, as $\epsilon \to 0^{+}$, $u^{\epsilon}(\cdot,t) \to 1$ in $\Omega_{t}$ and $u^{\epsilon}(\cdot,t) \to -1$ in $\mathbb{R}^{d} \setminus \Omega_{t}$, where the family of open sets $(\Omega_{t})_{t \geq 0}$ is a mean curvature motion with $\Omega_{0} = \{u_{0} > 0\}$.

In the periodic setting considered here, we prove below that a similar result holds, except that the effective interface velocity $V$ has the following form:
	\begin{equation} \label{E: velocity}
		\overline{m}(n_{\partial \Omega_{t}}) V = \kappa_{\partial \Omega_{t}}.
	\end{equation}
Here $n_{\partial \Omega_{t}}$ and $\kappa_{\partial \Omega_{t}}$ are the normal vector and mean curvature of the surface $\partial \Omega_{t}$ and $\overline{m}$ is the effective mobility determined by $m$ and $W$ through averaging effects.

\subsection{Motivation}  Since the '90s, there has been significant interest in whether or not the main features of bistable reaction-diffusion equations in spatially homogeneous media persist in the periodic setting.  Beginning with Barles and Souganidis \cite{barles souganidis}, some attention has been devoted to the sharp interface limit in periodic media, but the question is largely open.  

In \cite{barles souganidis}, the authors consider equations of the following form in the special case when $m \equiv 1$:
	\begin{equation} \label{E: general gradient flow}
		m(\epsilon^{-1} x, \epsilon Du^{\epsilon}) u_{t}^{\epsilon} - \text{div}(a(\epsilon^{-1} x) Du^{\epsilon}) + \epsilon^{-2} W'(u^{\epsilon}) = 0 \quad \text{in} \, \, \mathbb{R}^{d} \times (0,\infty).
	\end{equation}
Notice that this equation is, at least formally, the gradient flow of the spatially heterogeneous free energy
	\begin{equation} \label{E: functional}
		\mathcal{F}^{a}(u; \Omega) = \int_{\Omega} \left( \frac{1}{2} \langle a(y) Du(y), Du(y) \rangle + W(u(y)) \right) \, dy
	\end{equation}
with respect to the $L^{2}$-Riemannian metric given by
	\begin{equation} \label{E: metric}
		\langle v^{\epsilon}, w^{\epsilon} \rangle_{u^{\epsilon}} = \int_{\mathbb{R}^{d}} v^{\epsilon}(x) w^{\epsilon}(x) m(\epsilon^{-1} x, \epsilon Du^{\epsilon}(x)) \, dx.
	\end{equation}
Here we follow Taylor and Cahn \cite{taylor cahn}.

Barles and Souganidis prove that, under some strong assumptions, the large-scale behavior of \eqref{E: general gradient flow} is described by an anisotropic curvature flow in the sharp interface limit (see \cite[Section 6]{barles souganidis}).  

Recently, the author showed that these assumptions need not hold in general (e.g.\ even if $a$ and $W$ are smooth) and there are obstructions of a variational nature that make identifying the right class of coefficients a difficult problem \cite{pulsating einstein}.  Nonetheless, there is ample reason to believe that \cite{barles souganidis} is a step in the right direction and it is instructive to revisit those results.

The key assumptions made in \cite{barles souganidis} are as follows:
	\begin{itemize}
		\item[(i)] There is a family $\{U_{e}\}_{e \in S^{d-1}}$, smoothly parametrized by $e \in S^{d-1}$, of smooth solutions of the pulsating standing wave equation
			\begin{align} \label{E: pulsating standing wave}
				\mathcal{D}_{e}^{*} (a(y) \mathcal{D}_{e}U_{e}) + W'(U_{e}) &= 0 \quad \text{in} \, \, \mathbb{R} \times \mathbb{T}^{d}, \quad (\mathcal{D}_{e} := e \partial_{s} + D_{y}) \\ 
				\lim_{s \to \pm \infty} U_{e}(s,y) &= \pm 1, \quad \partial_{s} U_{e} \geq 0. \nonumber
			\end{align}
%		Notice that it follows that the function $V_{e} = \partial_{s} U_{e}$ is a positive eigenfunction of the linearized equation:
%			\begin{align*}
%				\mathcal{D}_{e}^{*}(a(y) \mathcal{D}_{e}V_{e}) + W''(U_{e}) V_{e} = 0 \quad \text{in} \, \, \mathbb{R} \times \mathbb{T}^{d}.
%			\end{align*} 
		\item[(ii)]  The linearized equation associated with \eqref{E: pulsating standing wave} is solvable.  The idea here is if $F_{e} \in L^{2}(\mathbb{R} \times \mathbb{T}^{d})$ and $e \in S^{d-1}$, then there should be a constant $\overline{F}_{e}$ and a corrector $P_{e}$ solving the degenerate elliptic cell problem:
			\begin{equation} \label{E: linearized equation}
				\mathcal{D}_{e}^{*}(a(y) \mathcal{D}_{e}P_{e}) + W''(U_{e}) P_{e} = F_{e} - \overline{F}_{e} \quad \text{in} \, \, \mathbb{R} \times \mathbb{T}^{d}.
			\end{equation}
		Since $\partial_{s} U_{e}$ is a positive eigenfunction of this operator, one anticipates that $\overline{F}_{e} = \|\partial_{s} U_{e}\|_{L^{2}(\mathbb{R} \times \mathbb{T}^{d})}^{-2} \int_{\mathbb{R} \times \mathbb{T}^{d}} F_{e}(s,y) \partial_{s} U_{e}(s,y) \, dy \, ds$.  
		
		In addition, \cite{barles souganidis} assumes that $e \mapsto \overline{F}_{e}$ and $e \mapsto P_{e}$ are smooth.  This presents problems even in the present work and seems to be intractable in general (see Remark \ref{R: degenerate directions} below), but we will see that the loss of smoothness can be overcome in some situations.
	\end{itemize}

Examples of coefficients $(a,W)$ where assumption (i) fails can be found in \cite{pulsating einstein}.  In fact, (i) turns out to be a constraint on the behavior of the energy $\mathcal{F}^{a}$, and, so far, the only known example where it holds is when $a$ is constant.  This difficulty notwithstanding, the motivation for this work stems from the additional degree of freedom in the mobility $m$.  When $m$ is non-constant, there are two, independent contributors to the effective behavior in the sharp interface limit in \eqref{E: general gradient flow}: the energy landscape of $\mathcal{F}^{a}$ and the oscillation of $m$.  Even if the role of the energy is not well understood, that still leaves the question of the asymptotics of \eqref{E: AC mobility}, where $a$ is constant but $m$ is oscillatory.

We answer that question here, proving that homogenization occurs and the sharp interface limit is described by \eqref{E: velocity}.  At a formal level, this is immediate given the approach of \cite{barles souganidis}.  However, even in this setting, a number of significant problems arise when turning the formal asymptotics into a proof, and these require new ideas.  We hope this paper will shed light on the difficulties that need to be overcome to treat \eqref{E: general gradient flow} in general, although there are still substantial problems that would need to be addressed first.

\subsection{Main result}  Before stating the result, here are the assumptions on $m$: 
	\begin{align}
		m &\in C(\mathbb{T}^{d} \times \mathbb{R}^{d}), \label{A: m continuous} \\
		0 < \theta &:= \min \left\{m(y,v) \, \mid \, (y,v) \in \mathbb{T}^{d} \times \mathbb{R}^{d}\right\}, \label{A: m positive} \\
		\Theta &:= \max \left\{ m(y,v) \, \mid \, (y,v) \in \mathbb{T}^{d} \times \mathbb{R}^{d} \right\} < \infty, \label{A: m bounded} \\
%	\end{align}
%and there is a $\mu \in (1/2,1)$ such that
%	\begin{align}
		H_{1} &= \sup \left\{ \frac{|m(y,v) - m(y',v)|}{\|y - y'\|} \, \mid \, (y,v), (y,v') \in \mathbb{T}^{d} \times \mathbb{R}^{d}, \, \, y \neq y' \right\} < \infty. \label{A: m holder}
	\end{align}
	
Concerning the potential, we assume that $W \in C^{3}([-1,1])$ satisfies
	\begin{align}
		\{W=0\} = \{-1,1\}, \quad &\{W' = 0\} = \{-1,0,1\}, \label{A: zeros} \\
		(-1,0) \subseteq \{W' > 0\}, \quad & (0,1) \subseteq \{W' < 0\}, \label{A: sign of derivative} \\
		W''(1) \wedge W''(-1) > 0, \quad & W''(0) < 0. \label{A: nondegeneracy of W}
	\end{align} 
	
The main result of the paper is
	
\begin{theorem} \label{T: main} Assume that $m$ satisfies \eqref{A: m continuous}, \eqref{A: m positive}, \eqref{A: m bounded}, and \eqref{A: m holder} and $W$ satisfies \eqref{A: zeros}, \eqref{A: sign of derivative}, and \eqref{A: nondegeneracy of W}.  Let $\overline{m} : S^{d-1} \to [\theta,\Theta]$ denote the effective mobility defined in \eqref{E: effective mobility} below.
If $u_{0} \in UC(\mathbb{R}^{d}; [-1,1])$, $(u^{\epsilon})_{\epsilon > 0} \subseteq C(\mathbb{R}^{d} \times [0,\infty); [-1,1])$ are the viscosity solutions of \eqref{E: AC mobility}, and $\bar{u} : \mathbb{R}^{d} \times [0,\infty) \to \mathbb{R}$ is the unique viscosity solution of the effective level set PDE
	\begin{equation} \label{E: effective equation}
		\left\{ \begin{array}{r l}
		\overline{m}(\widehat{D\bar{u}}) \bar{u}_{t} - \text{tr} \left( \left( \text{Id} - \widehat{D\bar{u}} \otimes \widehat{D\bar{u}} \right) D^{2} \bar{u} \right) = 0 & \text{in} \, \, \mathbb{R}^{d} \times (0,\infty), \\
		\bar{u} = u_{0} & \text{on} \, \, \mathbb{R}^{d} \times \{0\},
		\end{array} \right.
	\end{equation}
then $u^{\epsilon} \to \pm 1$ locally uniformly in $\{\pm \bar{u} > 0\}$.
\end{theorem}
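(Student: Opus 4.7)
The plan is to run the perturbed-test-function argument of Evans and Barles--Souganidis for the sharp interface limit, exploiting that since $a\equiv\text{Id}$ here the pulsating standing wave profile reduces to the one-dimensional $q:\mathbb{R}\to(-1,1)$ with $-q''+W'(q)=0$, $q(\pm\infty)=\pm1$, $q'>0$, independent of orientation. Inserting the formal ansatz $u^{\epsilon}(x,t)\approx q(\epsilon^{-1}(\phi(x,t)+\epsilon\psi(x/\epsilon,t)))$ into \eqref{E: AC mobility} produces an $O(\epsilon^{-1})$ cell problem for the corrector $\psi$ whose Fredholm condition against $q'$ picks out
\[
\overline{m}(e) = \frac{\int_{\mathbb{T}^{d}}\int_{\mathbb{R}} m(y,q'(s)e)\,q'(s)^{2}\,ds\,dy}{\int_{\mathbb{R}} q'(s)^{2}\,ds},
\]
which lies in $[\theta,\Theta]$, is continuous in $e\in S^{d-1}$ by dominated convergence and \eqref{A: m continuous}--\eqref{A: m bounded}, and produces precisely the effective equation \eqref{E: effective equation}. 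When $e$ is irrational the cell equation is well-posed, the corrector is smooth and periodic, and a standard computation converts $q(\phi/\epsilon)+\epsilon\psi(x/\epsilon)$ into strict local viscosity sub- and supersolutions of \eqref{E: AC mobility}.

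The main obstacle is that when $e$ is rational the kernel of the linearized operator $-\mathcal{D}_{e}^{*}\mathcal{D}_{e}+W''(q)$ contains modes beyond $\mathbb{R}q'$, so the periodic cell problem for $\psi$ is generically unsolvable. To bypass this I would build mesoscopic sub- and supersolutions on a scale $\epsilon\ll\eta\ll1$: approximate the rational $e$ by a nearby irrational $e'$, use the well-posed corrector $\psi_{e'}$, and localize the ansatz to cubes of side $\eta$ so that over each cube the evolution is governed, up to a controllable error, by $\overline{m}(e')$. Continuity of $\overline{m}$ controls $|\overline{m}(e')-\overline{m}(e)|$, the strict inequality coming from the test function absorbs the mesoscopic residual, and a careful interpolation across the boundaries between cubes preserves the viscosity sub/supersolution property. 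An equivalent route is to construct approximate correctors with a small $L^{\infty}$ defect added to the cell equation and to track that defect explicitly. Either path demands quantitative estimates describing how the corrector $\psi_{e'}$ deteriorates as $e'\to e$, and this is the principal technical difficulty.

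To conclude, define the half-relaxed limits $u^{*}$ and $u_{*}$ of $(u^{\epsilon})_{\epsilon>0}$. The structure \eqref{A: zeros}--\eqref{A: nondegeneracy of W} together with standard bistable barrier arguments built from the one-dimensional profile $q$ forces $u^{*},u_{*}\in\{-1,1\}$ outside an interface, reducing the problem to identifying that interface. Applying the mesoscopic barriers of the previous paragraph at each smooth test function $\phi$ touching $\{u^{*}=1\}$ from outside and $\{u_{*}=-1\}$ from inside, and invoking the comparison principle for \eqref{E: AC mobility}, one shows that $\{u^{*}=1\}$ is a geometric subsolution and $\{u_{*}=-1\}$ is a geometric supersolution of \eqref{E: effective equation}. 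Comparing with the unique level-set solution $\bar{u}$ then yields $\{\bar{u}>0\}\subseteq\{u^{*}=u_{*}=1\}$ and $\{\bar{u}<0\}\subseteq\{u^{*}=u_{*}=-1\}$, which is precisely the locally uniform convergence $u^{\epsilon}\to\pm1$ in $\{\pm\bar{u}>0\}$.
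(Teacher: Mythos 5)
Your setup is correct and agrees with the paper: the ansatz $q(\bar d/\epsilon)+\epsilon P(\cdot)$, the cell problem and its Fredholm condition giving $\overline{m}(e)$ by \eqref{E: effective mobility}, the identification of rational directions as the obstruction, the half-relaxed limits $\Omega_t^{(1)},\Omega_t^{(2)}$ and the final comparison with $\bar u$ via \cite[Proposition 2.1]{barles souganidis} (the paper's Theorem~\ref{T: comparison principle}). Where your plan diverges is precisely the step you flag as ``the principal technical difficulty,'' and this is where it breaks.

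You propose to handle a rational contact direction $e$ by replacing it with a nearby irrational $e'$, using the corrector for $e'$, localizing to cubes of side $\eta$, and invoking continuity of $\overline m$. The problem is that Remark~\ref{R: degenerate directions} shows there is a genuine quantitative obstruction here: for a rational $e$ the cell defect cannot be pushed below (half) the oscillation of $\underline{\tilde m}_e$, and consequently, for irrational $e'\to e$ and fixed defect level $\nu$, the approximate corrector $\tilde P^\nu_{e'}$ and its derivatives must blow up. Since the subsolution computation in Lemma~\ref{L: delicate subsolution property} produces error terms proportional to $\|D P_e\|,\|D^2P_e\|$, this blowup destroys the barrier unless $\epsilon$ is made small relative to the corrector norm --- but the defect $\nu$ does not go to zero along the way, so the residual in the mesoscopic inequality does not vanish and the cube-localized ansatz never becomes a subsolution of \eqref{E: AC mobility}. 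There is no choice of $\epsilon\ll\eta$ that repairs this, and the paper's introduction says exactly this: passing from irrational information to global information ``is too much to ask in general (e.g.\ if the interface is a plane with a rational normal).''

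What the paper actually does instead is structurally different and is the paper's main novelty. It never attempts to construct barriers at rational contact points with curvature. It introduces Definition~\ref{D: irrational directions}: a ``subsolution in irrational directions'' is only required to satisfy the effective inequality at irrational contact points (part (a), which is Proposition~\ref{P: graph localization}, proved with the approximate correctors of Theorem~\ref{T: approximate correctors}); at rational contact points it is only required to satisfy a crude speed bound when the level set is $\delta$-flat (part (b), which is Proposition~\ref{P: rational directions}, proved with \emph{no corrector} at all, only $\theta\le m\le\Theta$); and at degenerate points (c) it uses shrinking-ball barriers (Proposition~\ref{P: zero normal}). The bridge from this weaker notion to a genuine viscosity subsolution is Theorem~\ref{T: irrational directions}, proved by inf-convolution, Jensen's lemma, and the fine (BV/approximate-differentiability) structure of gradients of semi-concave functions (Lemma~\ref{L: semi-convex}, Proposition~\ref{P: fine}): on the set where $D\tilde u_\eta$ is rational one shows $(\text{Id}-\widehat{D\tilde u_\eta}\otimes\widehat{D\tilde u_\eta})D^2\tilde u_\eta=0$ a.e., so part (b) applies with $\delta=0$. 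Your proposal is missing this entire viscosity-theoretic upgrade, which is the key idea that makes the proof go through without the quantitative corrector estimates you would otherwise need.
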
  

To prove Theorem \ref{T: main}, the starting point is the ansatz of \cite{barles souganidis}.  Namely, we consider the following asymptotic expansion for $u^{\epsilon}$:
	\begin{equation} \label{E: main ansatz}
		u^{\epsilon}(x,t) = q \left( \frac{\bar{d}(x,t)}{\epsilon} \right) + \epsilon P_{Dd(x,t)} \left(\frac{\bar{d}(x,t)}{\epsilon}, \frac{x}{\epsilon}\right) + \dots
	\end{equation} 
Here $q$ is the one-dimensional standing wave solution of \eqref{E: AC mobility} and we are led to the following cell problem for the correctors $(P_{e})_{e \in S^{d-1}}$ and effective mobility $\overline{m}$:
	\begin{equation} \label{E: cell problem}
		m(y,\dot{q}(s)e) \dot{q}(s) + \mathcal{D}_{e}^{*} \mathcal{D}_{e} P_{e} + W''(q(s)) P_{e} = \overline{m}(e) \dot{q}(s) \quad \text{in} \, \, \mathbb{R} \times \mathbb{T}^{d}.
	\end{equation}
Here, as before, $\mathcal{D}_{e} = e \partial_{s} + D_{y}$ so this is a degenerate elliptic PDE.  

As we briefly discuss in Remark \ref{R: degenerate directions} below, it turns out that these cell problems are ill-posed in general when $e$ is a rational direction, that is, $e \in \mathbb{R} \mathbb{Z}^{d}$.  This is precisely the obstruction to assumption (ii) above.  In some cases, it is possible to find solutions when $e$ is an irrational direction, that is, $e \notin \mathbb{R} \mathbb{Z}^{d}$, but it is not obvious this is true in general.

Nonetheless, we show below that, as long as $e \notin \mathbb{R} \mathbb{Z}^{d}$, \emph{approximate correctors} do exist.  More precisely, if $m$ and $W$ satisfy the assumptions of Theorem \ref{T: main} and $\nu > 0$, then there is an $\overline{m}(e) > 0$ and a $\tilde{P}^{\nu}_{e} \in C^{2,\mu}(\mathbb{R} \times \mathbb{T}^{d})$ such that
	\begin{align}
		 \left| [m(y,\dot{q}(s)e) - \overline{m}(e)] \dot{q}(s) + \mathcal{D}_{e}^{*} \mathcal{D}_{e} \tilde{P}^{\nu}_{e} + W''(q(s)) \tilde{P}^{\nu}_{e} \right|  \leq \nu \dot{q}(s) \quad \text{in} \, \, \mathbb{R} \times \mathbb{T}^{d}. \label{E: approximate correctors}	
	\end{align}

We will see that this is a good enough replacement of assumption (ii), but it leads to two problems that still need to be addressed.  First, the sub- and supersolutions used in \cite{barles souganidis} to control solutions of \eqref{E: AC mobility} as $\epsilon \to 0^{+}$ were global constructions that required correctors in every direction $e$.  That presents a problem here since there are obstructions to approximate correctors in rational directions.  To circumvent this, we build on the graphical approach of \cite{pulsating einstein}, showing how to construct local sub- and supersolutions controlling the developing interface in a neighborhood of any point where the normal vector is irrational.

Next, we need to show that knowledge of the behavior of the macroscopic interface at points where the normal is irrational is enough to determine its behavior globally.  This is too much to ask in general (e.g.\ if the interface is a plane with a rational normal), but we are saved by the fact that the structure of \eqref{E: AC mobility} already gives us a good amount of information at locations where the interface is more-or-less flat.  Using viscosity theoretic arguments, we prove that an interface that is only known to satisfy \eqref{E: velocity} in irrational directions and that is well-behaved at points where it is flat actually moves by \eqref{E: velocity} in the viscosity sense.  Put another way, a solution $\bar{u}$ of \eqref{E: effective equation} ``in irrational directions" is actually a viscosity solution in the usual sense.

\subsection{Related literature}  Allen and Cahn \cite{allen cahn} made the connection between mean curvature flow and the equation bearing their name.  This attracted quite a lot of attention in the mathematical community.  The first rigorous proofs appeared in the work of De Mottoni and Schatzman \cite{de mottoni schatzman}, Bronsard and Kohn \cite{bronsard kohn}, and Chen \cite{chen} in the setting of smooth flows.  The development of a viscosity theory for level set PDEs led to proofs that mean curvature motion describes the asymptotics globally in time.  This was the work of Evans, Soner, and Souganidis \cite{ESS}, Barles, Soner, and Souganidis \cite{barles soner souganidis}, and Barles and Souganidis \cite{barles souganidis}.  Shortly thereafter, Ilmanen \cite{ilmanen} showed how to describe the global asymptotics using Brakke flow and geometric measure theory.

In addition to the Allen-Cahn equation, considerable energy was invested in the motion of interfaces in the stochastic Ising model with K\`{a}c interactions.  See the work of De Masi, Orlandi, Presutti, and Triolo \cite{crazy de masi presutti paper}, which described the macroscopic evolution up to the development of singularities, and Katsoulakis and Souganidis \cite{katsoulakis souganidis isotropic}, \cite{katsoulakis souganidis anisotropic} for the global result (also see \cite{katsoulakis souganidis glauber kawasaki}).  In that context, the mescoscopic scale is described by a non-local reaction diffusion equation with a mobility term in the same spirit as \eqref{E: AC mobility}.  This partly inspired the present work.

Another source of inspiration is the discussion of phase separation phenomena by Taylor and Cahn \cite{taylor cahn}.  There the authors propose equations like \eqref{E: AC mobility}, except the mobility depends only on the direction of the gradient.  As acknowledged already in \cite{taylor cahn}, it is not clear that such equations are well-posed.  At any rate, from a modeling perspective, given that the Allen-Cahn functional is very sensitive to the norm of the gradient, it seems natural to consider mobility coefficients that depend on the amplitude of the gradient in addition to the direction as in \eqref{E: AC mobility}.

Beyond \cite{barles souganidis}, homogenization results for geometric motions in the parabolic scaling regime have been hard to come by.  Results have been obtained when the initial interface is a graph evolving in a laminar medium, starting with Barles, Cesaroni, and Novaga \cite{barles cesaroni novaga} for a class of forced mean curvature flows and, more recently, \cite{pulsating einstein} for phase field equations like \eqref{E: general gradient flow}.  For plane-like initial data, Cesaroni, Novaga, and Valdinoci \cite{cesaroni novaga valdinoci} prove homogenization of solutions of mean curvature flow with periodic forcing, obtaining effective front speeds that are discontinuous with respect to the normal direction (see also the paper by Chen and Lou \cite{chen lou}).  In the companion paper \cite{level set PDE}, the author proves homogenization for a broad class of geometric motions, roughly corresponding to those for which planes are stationary solutions, in dimension two, and homogenization of sub- and supersolutions in higher dimensions when the equation is quasi-linear.  In the higher dimensional case, the effective coefficients are generically discontinuous at every rational direction.

The fundamental difficulty of the present work and \cite{level set PDE} stems from the fact that the notion of corrector is not tractable in rational directions.  This is morally equivalent to the hurdles faced in the homogenization of oscillating boundary value problems.  (A non-exhaustive list of references is: the papers by Barles, Da Lio, and Souganidis \cite{barles da lio souganidis}, Barles and Mironescu \cite{barles mironescu}, G\'{e}rard-Varet and Masmoudi \cite{homogenization and boundary layers}, Choi and Kim \cite{choi kim}, Feldman \cite{feldman}, and Feldman and Kim \cite{feldman kim}.)  In that context as well as the present one, the fact that there is no well-defined ergodic constant in the cell problem in a given rational direction is intimately linked to the non-unique ergodicity of the associated group of translations on the torus.  In addition to complicating the analysis considerably, this can ultimately lead to discontinuities in the effective coefficients.

Finally, the use of approximate correctors in this work was very much inspired by what is by now a standard tool in the homogenization arsenal, see the papers by Ishii \cite{ishii almost periodic}, Lions and Souganidis \cite{lions souganidis front}, and Caffarelli, Souganidis, and Wang \cite{caffarelli souganidis wang elliptic}, for example.

\subsection{Outline of the paper}  Section \ref{S: strategy} details the strategy of the proof and states Theorem \ref{T: irrational directions}, which is needed to overcome the difficulty encountered in rational directions.  Section \ref{S: macroscopic part} treats the proof of Theorem \ref{T: irrational directions} and Section \ref{S: approximate correctors}, the existence of approximate correctors.  Section \ref{S: irrational directions} proves that the large-scale behavior of interfaces has the right behavior when the normal direction is irrational.  Section \ref{S: rational directions} treats the case when the normal direction is rational and the curvature is small.  In Section \ref{S: zero normal}, the case when ``the normal vanishes" is analyzed.  

There are three appendices.  Appendix \ref{A: initialization} contains a lemma used in the proof of homogenization; Appendix \ref{A: comparison principle} proves that \eqref{E: AC mobility} is well posed; and Appendix \ref{A: correctors} includes regularity results needed in the construction of correctors as well as a few standard facts about the one-dimensional Allen-Cahn equation.

\subsection{Notation}  In $\mathbb{R}^{d}$, $\langle \cdot, \cdot \rangle: \mathbb{R}^{d} \times \mathbb{R}^{d} \to \mathbb{R}$ is the Euclidean inner product and $\|\cdot\| : \mathbb{R}^{d} \to [0,\infty)$, the associated norm.  Given $p \in \mathbb{R}^{d} \setminus \{0\}$, we write $\hat{p} = \|p\|^{-1} p$.  

$\mathcal{L}^{d}$ is the Lebesgue measure in $\mathbb{R}^{d}$.  $\mathcal{H}^{d-1}$ is the $(d-1)$-dimensional Hasudorff measure, normalized to coincide with surface area.

Given $a, b \in \mathbb{R}$, we write $a \vee b = \max\{a,b\}$ and $a \wedge b = \min \{a,b\}$.

For a set $A$, the characteristic function $\chi_{A}$ is defined by $\chi_{A}(x) = 1$ if $x \in A$ and $\chi_{A}(x) = 0$, otherwise.  Given a finite measure $\mu$, $\fint_{A} (\cdot) \, \mu(dx) = \mu(A)^{-1} \int_{A} (\cdot) \, \mu(dx)$.

\section{Strategy of Proof} \label{S: strategy}

\subsection{Solutions of \eqref{E: effective equation} in Irrational Directions}  As explained in the introduction, the approximate correctors used in the asymptotic analysis are only available in irrational directions.  This leads to the question of whether or not an interface known to move by \eqref{E: velocity} at points where its normal vector is irrational actually has to move that way globally.  The case of a rational plane translating at an arbitrary velocity shows this is untenable.  However, the form of \eqref{E: AC mobility} already rules this out.  By building appropriate sub- and supersolutions, it is possible to prove that the interface obtained in the limit is a solution of \eqref{E: effective equation} in the following sense:

\begin{definition} \label{D: irrational directions} Given an open set $U \subseteq \mathbb{R}^{d} \times (0,\infty)$, we say that a locally bounded, upper semi-continuous function $\tilde{u} : \mathbb{R}^{d} \times (0,\infty) \to \mathbb{R}$ is a \emph{subsolution of \eqref{E: effective equation} in irrational directions in $U$} if there is a $K > 0$ such that, given a smooth function $\varphi : \mathbb{R}^{d} \times (0,\infty) \to \mathbb{R}$ such that $\tilde{u} - \varphi$ has a strict local maximum at $(x_{0},t_{0}) \in U$, the following conditions are met:
	\begin{itemize}
		\item[(a)] If $D\varphi(x_{0},t_{0}) \in \mathbb{R}^{d} \setminus \mathbb{R} \mathbb{Z}^{d}$, then 
			\begin{equation*}
				\overline{m}(\widehat{D\varphi}(x_{0},t_{0})) \varphi_{t}(x_{0},t_{0}) - \text{tr} \left( \left( \text{Id} - \widehat{D\varphi}(x_{0},t_{0}) \otimes \widehat{D\varphi}(x_{0},t_{0}) \right) D^{2} \varphi(x_{0},t_{0}) \right) \leq 0.
			\end{equation*}
		\item[(b)] If $D\varphi(x_{0},t_{0}) \in \mathbb{R} \mathbb{Z}^{d} \setminus \{0\}$ and 
			\begin{equation*}
				\left\| \left(\text{Id} - \widehat{D\varphi}(x_{0},t_{0}) \otimes \widehat{D\varphi}(x_{0},t_{0})\right) D^{2} \varphi(x_{0},t_{0}) \right\| \leq \delta \|D\varphi(x_{0},t_{0})\|,
			\end{equation*}
		then 
			\begin{equation*}
				\varphi_{t}(x_{0},t_{0}) \leq K \delta \|D\varphi(x_{0},t_{0})\|.
			\end{equation*}
		\item[(c)] If $\|D\varphi(x_{0},t_{0})\| = \|D^{2}\varphi(x_{0},t_{0})\| = 0$, then
			\begin{equation*}
				\varphi_{t}(x_{0},t_{0}) \leq 0.
			\end{equation*}
	\end{itemize}
	
Similarly, a locally bounded, lower semi-continuous function $\tilde{v} : \mathbb{R}^{d} \times (0,\infty) \to \mathbb{R}$ is a \emph{supersolution of \eqref{E: effective equation} in irrational directions in $U$} if $-\tilde{v}$ is a subsolution in irrational directions.  A locally bounded, continuous function in $\mathbb{R}^{d} \times (0,\infty)$ is a \emph{solution of \eqref{E: effective equation} in irrational directions in $U$} if it is both a subsolution and a supersolution.  \end{definition}

Using approximate correctors, we show that the interface obtained from \eqref{E: AC mobility} in the limit $\epsilon \to 0^{+}$ satisfies (a).  (b) and (c) then follow directly from the structure of \eqref{E: AC mobility} without correctors.  It only remains to determine whether or not Definition \ref{D: irrational directions} is enough to identify the viscosity solution of \eqref{E: effective equation}.  That is the content of the next theorem:

\begin{theorem} \label{T: irrational directions} Given an open set $U \subseteq \mathbb{R}^{d} \times (0,\infty)$, if $\tilde{u} : \mathbb{R}^{d} \times (0,\infty) \to \mathbb{R}$ is a subsolution (resp.\ supersolution) of \eqref{E: effective mobility} in irrational directions in $U$, then it is a viscosity subsolution (resp.\ supersolution) of \eqref{E: effective mobility} in $U$ in the usual sense.  \end{theorem}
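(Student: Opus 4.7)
The plan is a perturbation-and-limit argument. By symmetry it suffices to treat the subsolution case. Let $\varphi \in C^\infty$ be a test function such that $\tilde u - \varphi$ attains a strict local maximum at $(x_0, t_0) \in U$, and set $p_0 = D\varphi(x_0, t_0)$ and $H_0 = D^2\varphi(x_0, t_0)$. There are three cases to consider: if $p_0 \notin \mathbb{R}\mathbb{Z}^d$, condition (a) yields the desired viscosity inequality immediately; if $p_0 = 0 = H_0$, condition (c) is exactly the standard singular-case test for the level set PDE. The nontrivial case is $p_0 \in \mathbb{R}\mathbb{Z}^d$ with $(p_0, H_0) \neq (0, 0)$.

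For this case, I would introduce
\[
\varphi_{\eta, \rho}(x, t) = \varphi(x, t) + \langle \eta, x - x_0 \rangle + \rho \bigl( \|x - x_0\|^2 + (t - t_0)^2 \bigr),
\]
for $\eta \in \mathbb{R}^d$ and $\rho > 0$ small. The strictness of the maximum of $\tilde u - \varphi$ at $(x_0, t_0)$, reinforced by the $\rho$-quadratic, ensures that $\tilde u - \varphi_{\eta, \rho}$ attains a local maximum at a unique point $(x_{\eta, \rho}, t_{\eta, \rho}) \to (x_0, t_0)$ as $(\eta, \rho) \to 0$, with gradient
\[
q_{\eta, \rho} := D\varphi(x_{\eta, \rho}, t_{\eta, \rho}) + \eta + 2\rho (x_{\eta, \rho} - x_0) \longrightarrow p_0.
\]
The quadratic regularization makes the maximum non-degenerate, so by the implicit function theorem the map $\eta \mapsto q_{\eta, \rho}$ is smooth. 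Since $\mathbb{R}\mathbb{Z}^d$ is $\mathcal{L}^d$-negligible, a Sard/Fubini argument produces sequences $\eta_n, \rho_n \to 0$ along which $q_n := q_{\eta_n, \rho_n}$ is irrational. Condition (a) applied at $(x_n, t_n) := (x_{\eta_n, \rho_n}, t_{\eta_n, \rho_n})$ gives
\[
\overline{m}(\hat q_n) \partial_t \varphi_{\eta_n, \rho_n}(x_n, t_n) - \text{tr}\bigl((\text{Id} - \hat q_n \otimes \hat q_n) D^2 \varphi_{\eta_n, \rho_n}(x_n, t_n)\bigr) \leq 0,
\]
and passing to the limit, using continuity of $\overline{m}$ on $S^{d-1}$ (established elsewhere in the paper via the approximate correctors of Section \ref{S: approximate correctors}), recovers the standard viscosity subsolution inequality at $(x_0, t_0)$. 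The residual singular subcase $p_0 = 0$, $H_0 \neq 0$ is handled in the same spirit by perturbing with small irrational $\eta$ and taking a limit over approach directions, recovering the standard singular convention for level set PDEs.

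The technical heart is the rational nonzero case. I expect the main obstacle to be the Sard-type step: while the idea of picking $\eta$ so that $q_{\eta, \rho}$ is irrational is standard in principle, it requires smooth dependence of the maximizer on $(\eta, \rho)$, which is only automatic after the $\rho$-regularization and requires care when $H_0$ itself is degenerate. Condition (b) is the safety net for the limit step in its most delicate form: when the tangential Hessian $A := (\text{Id} - \hat p_0 \otimes \hat p_0) H_0$ vanishes, applying (b) with $\delta \to 0^+$ independently forces $\varphi_t(x_0, t_0) \leq 0 = \text{tr}(A)/\overline{m}(\hat p_0)$, matching the viscosity inequality without any limit argument at all. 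In this way conditions (a), (b), and (c) together form a complete set of sufficient tests for the standard viscosity inequality, with (b) and (c) precisely covering the degenerate regimes in which the perturbation machinery extracts no useful information from (a).
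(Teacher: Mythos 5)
Your overall plan — split into three cases, then in the rational nonzero case perturb the test function to reach irrational gradients — is in the right spirit, and the treatment of the $p_0\notin\mathbb{R}\mathbb{Z}^d$ case via (a) and the $p_0=H_0=0$ case via (c) is fine. But there are two genuine gaps in the technical heart of the argument.

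First, the implicit-function-theorem/Sard step is unfounded as written. The maximizer of $\tilde{u}-\varphi_{\eta,\rho}$ need not be unique, and certainly need not depend smoothly (or even measurably in any useful way) on $\eta$, because $\tilde{u}$ is only upper semi-continuous; adding $\rho(\|x-x_0\|^2+(t-t_0)^2)$ to $\varphi$ does nothing to regularize $\tilde{u}$. The paper circumvents this by first passing to the inf-convolution $\tilde{u}_\eta$ (semi-concave, in the supersolution case they actually prove) and then applying Jensen's Lemma; Jensen's Lemma is precisely the measure-theoretic replacement for "smooth dependence of the perturbed maximizer" that your plan needs, and it requires the semi-convexity/semi-concavity produced by sup/inf-convolution — not just a quadratic perturbation of the test function.

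Second, and more seriously, the Sard/Fubini step can simply fail to produce any irrational contact gradient, even after regularization. If $\tilde{u}_\eta$ is, say, locally affine with rational gradient $\xi$, then for every linear perturbation $\langle\eta,x\rangle$ of $\varphi$ the new touching point moves so that the gradient of the perturbed test function there is still exactly $\xi$; no choice of $\eta$ makes it irrational. Your "safety net" via condition (b) does not cover this: you invoke (b) only when the tangential Hessian of $\varphi$ vanishes at the original point $(x_0,t_0)$, which is a separate and too-narrow hypothesis. The paper handles the "all contact gradients stay rational" alternative with a genuinely different mechanism: because $D\tilde{u}_\eta$ lies in $BV_{\mathrm{loc}}$, on the positive-measure set where $\widehat{D\tilde{u}_\eta}$ equals a fixed rational unit vector, the absolutely continuous part of its derivative is purely normal, i.e.\ $\bigl(\text{Id}-\widehat{D\tilde{u}_\eta}\otimes\widehat{D\tilde{u}_\eta}\bigr)D^2\tilde{u}_\eta=0$ a.e.\ there (Lemma \ref{L: semi-convex} and Proposition \ref{P: fine}); condition (b) of Definition \ref{D: irrational directions} is then applied at such a point, and the jet comparison $D^2\tilde{u}_\eta\geq D^2\varphi$ (resp.\ $\leq$) converts this to the viscosity inequality for $\varphi$. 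This $BV$-level-set derivative lemma is the key missing ingredient in your proposal; without it, the rational nonzero case with nonvanishing tangential Hessian of $\varphi$ remains unproved.
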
  

In view of the theorem, to prove Theorem \ref{T: main} it suffices to show that the interface obtained from \eqref{E: AC mobility} is a solution of \eqref{E: effective equation} in irrational directions.  The rest of this section lays out the main steps of the argument.

\subsection{Steps of the Proof}  Henceforth $u_{0} \in UC(\mathbb{R}^{d}; [-1,1])$ is fixed and $(u^{\epsilon})_{\epsilon > 0}$ are the solutions of \eqref{E: AC mobility} in $\mathbb{R}^{d} \times (0,\infty)$.  The existence and uniqueness of these solutions is sketched in Appendix \ref{A: comparison principle}.

The macroscopic phases that develop in the sharp interface limit are described by the following open sets, parametrized by $t \geq 0$:
	\begin{align*}
		\Omega_{t}^{(1)} &= \left\{ x \in \mathbb{R}^{d} \, \mid \, \liminf \nolimits_{*} u^{\epsilon}(x,t) = 1 \right\}, \\
		\Omega_{t}^{(2)} &= \left\{ x \in \mathbb{R}^{d} \, \mid \, \limsup \nolimits^{*} u^{\epsilon}(x,t) = -1 \right\}.
	\end{align*}
Recall that the half-relaxed limits in the definition are given by
	\begin{align*}
		\liminf \nolimits_{*} u^{\epsilon}(x,t) &= \lim_{\delta \to 0^{+}} \inf \left\{ u^{\epsilon}(y,s) \, \mid \, \epsilon + \|x - y\| + |t - s| \leq \delta, \, \, s > 0 \right\}, \\
		\limsup \nolimits^{*} u^{\epsilon}(x,t) &= \lim_{\delta \to 0^{+}} \sup \left\{ u^{\epsilon}(y,s) \, \mid \, \epsilon + \|x - y\| + |t - s| \leq \delta, \, \, s > 0 \right\}.
	\end{align*}
	
We proceed by proving that $(\Omega_{t}^{(1)})_{t \geq 0}$ and $(\Omega_{t}^{(2)})_{t \geq 0}$ define super- and subflows of \eqref{E: effective mobility}, respectively, in the sense of \cite{barles souganidis}.  It will not be necessary to know what that means in this paper.  Instead, we define $\chi^{*}, \chi_{*} : \mathbb{R}^{d} \times [0,\infty) \to \mathbb{R}$ by 
	\begin{align*}
		\chi_{*}(x,t) = \left\{ \begin{array}{r l}
							1, & \text{if} \, \, x \in \Omega_{t}^{(1)}, \\
							-1, & \text{otherwise},
						\end{array} \right. 
		\quad \chi^{*}(x,t) = \left\{ \begin{array}{r l}
								1, & \text{if} \, \, x \in \overline{\Omega_{t}^{(2)}}, \\
								-1, & \text{otherwise},
					\end{array} \right.
	\end{align*}
and prove these are respectively discontinuous super- and subsolutions of \eqref{E: effective equation}.
The main thrust of the paper is the proof of the following result:
		
	\begin{prop} \label{P: main proposition} $\chi^{*}$ (resp.\ $\chi_{*}$) is a subsolution (resp.\ supersolution) of \eqref{E: effective mobility} in irrational directions in $\mathbb{R}^{d} \times (0,\infty)$.  Furthermore, $\chi^{*}(\cdot,0) = -1$ in $\{u_{0} < 0\}$ and $\chi_{*}(\cdot,0) = 1$ in $\{u_{0} > 0\}$. \end{prop}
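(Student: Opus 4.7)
The plan is to prove the subsolution property for $\chi^{*}$; the supersolution property for $\chi_{*}$ is symmetric. In each of the three clauses of Definition \ref{D: irrational directions}, the strategy is the same in spirit: given a smooth test function $\varphi$ so that $\chi^{*} - \varphi$ has a strict local maximum at $(x_{0}, t_{0})$, I assume the relevant inequality fails and build a mesoscopic supersolution $w^{\epsilon}$ of \eqref{E: AC mobility} on a small parabolic cylinder $Q$ around $(x_{0}, t_{0})$ which dominates $u^{\epsilon}$ on the parabolic boundary of $Q$ but whose value at $(x_{0}, t_{0})$ is close to $-1$. Applying the comparison principle from Appendix \ref{A: comparison principle} and then passing to the half-relaxed limit would force $\limsup^{*} u^{\epsilon}(x_{0}, t_{0}) = -1$, contradicting $\chi^{*}(x_{0}, t_{0}) = 1$. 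The initial condition reduces to the same kind of barrier argument at $t = 0$, using the one-sided construction from Appendix \ref{A: initialization}.

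The heart of the matter is case (a), where $e := \widehat{D\varphi}(x_{0}, t_{0}) \notin \mathbb{R}\mathbb{Z}^{d}$ and the approximate corrector $\tilde{P}^{\nu}_{e}$ from \eqref{E: approximate correctors} is available. Following the ansatz \eqref{E: main ansatz}, I would replace $\varphi(x_{0},t_{0}) - \varphi$ near $(x_{0}, t_{0})$ by a graphical phase $\bar{d}$ chosen with $|D\bar{d}| \equiv 1$ on a mesoscopic slab aligned with $e$, as in the graphical approach of \cite{pulsating einstein}, and set
\[
w^{\epsilon}(x,t) = q\!\left( \frac{\bar{d}(x,t) + \lambda \epsilon}{\epsilon} \right) + \epsilon \, \tilde{P}^{\nu}_{e}\!\left( \frac{\bar{d}(x,t)}{\epsilon}, \frac{x}{\epsilon} \right) + O(\epsilon^{2})
\]
with a small downward push $\lambda > 0$. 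The $O(\epsilon^{-1})$ residual vanishes because $q$ solves the one-dimensional standing wave equation; the $O(1)$ residual collapses, via the cell problem \eqref{E: cell problem} and the defect estimate \eqref{E: approximate correctors}, to the sign of $\overline{m}(e)\varphi_{t}(x_{0}, t_{0}) - \mathrm{tr}((\mathrm{Id} - e \otimes e) D^{2}\varphi(x_{0}, t_{0}))$, up to errors controlled by $\nu$, by the difference $|D\bar{d}| - 1$ off the contact point, and by the graphical slab width. Choosing $\nu$, the cylinder $Q$, and $\lambda$ small in the right order yields a strict supersolution of \eqref{E: AC mobility} inside $Q$; boundary compatibility with $u^{\epsilon}$ on $\partial_{p} Q$ follows from the exponential decay of $q - (\pm 1)$ and of $\tilde{P}^{\nu}_{e}$ together with the strictness of the maximum.

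Cases (b) and (c) do not require correctors, and this is the point: in these regimes the naked structure of \eqref{E: AC mobility} already gives what we need. In case (b), where $D\varphi(x_{0}, t_{0})$ is a nonzero rational direction and the curvature term is bounded by $\delta \|D\varphi(x_{0},t_{0})\|$, I would take a purely one-dimensional barrier $w^{\epsilon}(x,t) = q(\epsilon^{-1}(\varphi(x_{0}, t_{0}) - \varphi(x,t) + \lambda \epsilon))$ and show that, after plugging into \eqref{E: AC mobility}, the non-oscillatory residual is dominated by $\varphi_{t}(x_{0}, t_{0}) \dot{q}$ up to a term of size $C \delta \|D\varphi(x_{0}, t_{0})\| \dot{q}$, with $C$ depending only on $\Theta$ and $W$; this yields the constant $K$ in clause (b). Case (c) is handled by the construction in Section \ref{S: zero normal}, which uses ODE-type barriers to show that flat regions cannot spontaneously lift. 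The initial condition is then a straightforward application of the one-sided barriers from Appendix \ref{A: initialization}: near any $x$ with $u_{0}(x) > 0$, these trap $u^{\epsilon}$ above a level tending to $1$ on a short time interval depending on $x$, forcing $\chi_{*}(x, 0) = 1$, and the argument for $\chi^{*}$ is symmetric.

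The step I expect to be the main obstacle is the localized construction in case (a). Unlike the global ansatz of \cite{barles souganidis}, the barrier $w^{\epsilon}$ must be built using a single irrational direction $e$, because approximate correctors in directions rationally close to $e$ are not under control and may in fact be ill-defined. Consequently the geometry of the mesoscopic slab, the scale separation between $\nu$, the slab width, $\lambda$, and $\epsilon$, and the boundary matching against $u^{\epsilon}$ have to be arranged so that the contribution of the error terms on the lateral boundary does not swamp the interior gain $\lambda \dot{q}$; balancing these scales is what makes the construction delicate and is the reason a fully local, graphical version of the barrier approach is necessary here.
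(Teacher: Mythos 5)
Your high-level plan matches the paper's: the proposition is proved by establishing the three clauses of Definition~\ref{D: irrational directions} separately (irrational gradient via mesoscopic barriers with approximate correctors, rational-and-nearly-flat via barriers without correctors, zero gradient via shrinking-ball barriers), then handling the initial datum, with $\chi^{*}$ obtained from $\chi_{*}$ by the symmetry $u\mapsto -u$, $W\mapsto W(-\cdot)$. That is exactly the structure of Propositions~\ref{P: graph localization}, \ref{P: rational directions}, \ref{P: zero normal}, and \ref{P: initial data}. However, there are two concrete gaps.

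First, the boundary matching on $\partial_{p} Q$ is not obtained the way you describe. You claim compatibility ``follows from the exponential decay of $q - (\pm 1)$ and of $\tilde{P}^{\nu}_{e}$ together with the strictness of the maximum.'' But the strict maximum of $\chi^{*}-\varphi$ only constrains the \emph{half-relaxed limits} of $u^{\epsilon}$, not $u^{\epsilon}$ at a fixed $\epsilon$: on the bottom of the cylinder, $u^{\epsilon}(\cdot,t_{0}-\nu_{2})$ is an essentially arbitrary function with values in $[-1,1]$, and your barrier $w^{\epsilon}$ is near $\pm 1$ away from a thin layer. You cannot order them without an intermediate step. The paper's Proposition~\ref{P: initialization} fills this precisely: a short forward time-shift of order $\tau\epsilon^{2}\log(\epsilon^{-1})$, via the one-dimensional comparison built on $\tilde{\chi}^{\epsilon}$ and $\bar{f}$ from Appendix~\ref{A: initialization}, forces $u^{\epsilon}(\cdot,t+\tau\epsilon^{2}\log\epsilon^{-1})$ to be within $\beta\epsilon$ of $\pm 1$ outside a mesoscopic layer, and only then can the comparison principle be applied to $w^{\epsilon}$ against the time-shifted $u^{\epsilon}$. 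Without this ``initialization'' step the argument does not close; you use the same tool from Appendix~\ref{A: initialization} only for the $t=0$ claim, but it is needed at every contact point.

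Second, the barrier you write in case (b),
\[
w^{\epsilon}(x,t) = q\!\left( \frac{\varphi(x_{0},t_{0}) - \varphi(x,t) + \lambda\epsilon}{\epsilon} \right),
\]
does not satisfy $|D(\varphi(x_{0},t_{0})-\varphi)|\equiv 1$, so the $O(\epsilon^{-2})$ residual $\epsilon^{-2}\ddot{q}\,(1 - \|D\varphi\|^{2})$ does not cancel and in fact dominates everything else. You must use a signed distance function $\tilde{d}_{c}$ to the level set (exactly as you propose for case (a) and as the paper does in Section~\ref{S: rational directions}), where the constant $K=10\theta^{-1}$ in clause (b) comes out of the estimate $m\,\tilde{d}_{c,t} - \Delta\tilde{d}_{c} \le \theta\,\tilde{d}_{c,t} - \Delta\tilde{d}_{c}$ combined with $|\Delta\tilde{d}_{c}|\lesssim\delta$ on a small cylinder. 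A minor additional slip: the contradiction you derive should be that $\chi^{*}=1$ \emph{in a full neighborhood} of $(x_{0},t_{0})$, which then forces $D\varphi(x_{0},t_{0})=0$ against the standing hypothesis in cases (a) and (b); ``$\limsup^{*} u^{\epsilon}(x_{0},t_{0})=-1$'' by itself is \emph{consistent} with $\chi^{*}(x_{0},t_{0})=1$, not contradictory.
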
  
	
	\begin{proof}  Where $\chi_{*}$ is concerned, the first statement follows from Propositions \ref{P: graph localization}, \ref{P: rational directions}, and \ref{P: zero normal}, and the second, from Proposition \ref{P: initial data} below.  The statements concerning $\chi^{*}$ then follow by replacing $u^{\epsilon}$ by $-u^{\epsilon}$, $W$ by $u \mapsto W(-u)$, $\chi^{*}$ by $-\chi^{*}$, etc.\ since this has the effect of transforming supersolutions into subsolutions.  \end{proof} 
	
In view of Theorem \ref{T: irrational directions}, we can remove the ``irrational directions" qualifier and instead treat $\chi^{*}$ and $\chi_{*}$ as viscosity sub- and supersolutions.  It only remains to prove that, even though these functions are discontinuous, $\chi^{*}$ and $\chi_{*}$ can still be compared to $\bar{u}$.  This part is classical.

	\begin{theorem} \label{T: comparison principle}  If $\bar{u}$ is the solution of \eqref{E: effective mobility}, then $\chi^{*} \leq - \chi_{\{\bar{u} < 0\}}$ and $\chi_{*} \geq \chi_{\{\bar{u} > 0\}}$ in $\mathbb{R}^{d} \times [0,\infty)$.  \end{theorem}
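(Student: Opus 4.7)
The plan is to reduce the result to the classical comparison principle for the effective level-set PDE~\eqref{E: effective equation}, exploiting its geometric character. By Proposition~\ref{P: main proposition} combined with Theorem~\ref{T: irrational directions}, the functions $\chi^*$ and $\chi_*$ are bona fide discontinuous viscosity sub- and supersolutions of~\eqref{E: effective equation} in the usual sense, both valued in $\{-1,+1\}$, with the recorded initial behavior $\chi_*(\cdot,0)=1$ on $\{u_0>0\}$ and $\chi^*(\cdot,0)=-1$ on $\{u_0<0\}$. Since $\bar u$ is continuous, it only remains to compare each of $\chi_*$ and $\chi^*$ with $\bar u$ at positive times.

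To handle the fact that $\chi_*$ and $\chi^*$ are discontinuous, I would use the standard truncation trick. Fix $\delta>0$ and let $\Phi_\delta\colon\mathbb{R}\to[-1,1]$ be continuous and nondecreasing with $\Phi_\delta\equiv-1$ on $(-\infty,0]$ and $\Phi_\delta\equiv+1$ on $[\delta,\infty)$. Because~\eqref{E: effective equation} is geometric, i.e.\ invariant under composition with continuous nondecreasing functions, the composition $\Phi_\delta\circ\bar u$ is itself a continuous viscosity solution of~\eqref{E: effective equation} sharing the level sets of $\bar u$. A pointwise check gives $\Phi_\delta(u_0)\leq\chi_*(\cdot,0)$ on $\mathbb{R}^d$: on $\{u_0>0\}$ the left side is $\leq 1=\chi_*$, while on $\{u_0\leq 0\}$ the left side equals $-1$ and $\chi_*\geq -1$ trivially. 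Applying the standard USC--LSC comparison principle for~\eqref{E: effective equation} to the continuous subsolution $\Phi_\delta\circ\bar u$ and the LSC supersolution $\chi_*$, I conclude $\Phi_\delta\circ\bar u\leq\chi_*$ on $\mathbb{R}^d\times[0,\infty)$, and sending $\delta\to 0^+$ forces $\chi_*=1$ wherever $\bar u>0$, which is the desired inequality. The dual inequality $\chi^*=-1$ on $\{\bar u<0\}$ follows from the symmetric argument obtained by replacing $(u^\epsilon,W,u_0)$ by $(-u^\epsilon,W(-\,\cdot\,),-u_0)$, which converts the statement about $\chi_*$ into one about $-\chi^*$.

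The main---and really only---ingredient is the comparison principle for~\eqref{E: effective equation} itself: the effective mobility $\overline m$ in~\eqref{E: effective equation} need not be continuous at rational directions in general, so the Chen--Giga--Goto/Evans--Spruck machinery does not apply verbatim. However, a comparison principle for this class of geometric level-set PDEs is established in the companion paper~\cite{level set PDE} and can be invoked here as a black box, which is presumably why the text preceding the statement flags this part of the argument as \emph{classical}. The remaining step---passing from the initial-trace inequality $\Phi_\delta(u_0)\leq\chi_*(\cdot,0)$ to the global inequality---is routine and can, if desired, be carried out by a slight time shift, applying the comparison principle to $\chi_*$ and $\Phi_\delta(\bar u(\cdot,\cdot+\sigma))$ on $\mathbb{R}^d\times(\sigma,\infty)$ before letting $\sigma\downarrow 0^+$.
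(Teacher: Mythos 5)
Your proof takes essentially the same route as the paper: identify $\chi^{*}$ and $\chi_{*}$ as discontinuous viscosity sub-/supersolutions of \eqref{E: effective equation} via Proposition~\ref{P: main proposition} together with Theorem~\ref{T: irrational directions}, and then compare against the continuous solution $\bar{u}$. The paper handles this last step as a black box by noting that $\bar u$ is uniformly continuous and invoking \cite[Proposition 2.1]{barles souganidis}, whereas you unpack that black box with the standard truncation trick $\Phi_{\delta}\circ\bar u$; these are the same argument at different levels of detail.

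One correction is in order. Your worry that the effective mobility $\overline m$ ``need not be continuous at rational directions'' and that the Chen--Giga--Goto/Evans--Spruck machinery therefore fails is a misreading of this paper's setup. From the explicit formula \eqref{E: effective mobility}, together with the assumptions \eqref{A: m continuous}--\eqref{A: m bounded} and the exponential decay of $\dot q$, the map $e\mapsto\overline m(e)$ is continuous on $S^{d-1}$ (by dominated convergence) and takes values in $[\theta,\Theta]$. The genuine difficulties at rational directions in this paper concern well-posedness of the cell problem \eqref{E: cell problem} and the existence of correctors $P_{e}$, not the continuity of $\overline m$ itself. Consequently the comparison principle for \eqref{E: effective equation} is entirely classical here: there is no need to reach for the companion paper \cite{level set PDE}, and the reference you want is \cite[Proposition 2.1]{barles souganidis}, exactly as the paper uses.

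Finally, a small technical remark on your closing time-shift suggestion: comparing $\Phi_{\delta}(\bar u(\cdot,\cdot+\sigma))$ with $\chi_{*}$ starting from $t=0$ would require $\Phi_{\delta}(\bar u(\cdot,\sigma))\le\chi_{*}(\cdot,0)$, which is not what Proposition~\ref{P: main proposition} gives you. The cleaner route is the one the paper indicates: observe that $\bar u$ is uniformly continuous on $\mathbb{R}^{d}\times[0,\infty)$ (by approximation and translation invariance), so the initial trace matching is handled directly by the relaxed initial-comparison condition in \cite[Proposition 2.1]{barles souganidis} (or Theorem~\ref{T: comparison} in the appendix), with no shift needed.
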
  
	
		\begin{proof}  The existence and uniqueness of $\bar{u}$ is standard (cf.\ \cite{barles soner souganidis}).  Since $u_{0}$ is uniformly continuous and the equation is translationally invariant, a well-known (e.g.\ approximation) argument shows that $\bar{u}$ is uniformly continuous in both variables.  Thus, \cite[Proposition 2.1]{barles souganidis} applies, giving the desired conclusion.  \end{proof}  
	
Now notice that Theorem \ref{T: comparison principle} implies Theorem \ref{T: main} by definition of $\chi^{*}$ and $\chi_{*}$.  Therefore, it only remains to prove Proposition \ref{P: main proposition} and Theorem \ref{T: irrational directions}.

\section{Proof of Theorem \ref{T: irrational directions}}  \label{S: macroscopic part}

\subsection{Sketch of proof}  In this section, we prove Theorem \ref{T: irrational directions}.  Here it is important to recall a very convenient (equivalent) notion of solution of \eqref{E: effective equation} that goes back to Barles and Georgelin \cite{barles georgelin}.  

	\begin{definition}  \label{D: barles georgelin} Given an open set $U \subseteq \mathbb{R}^{d} \times (0,\infty)$, a locally bounded, upper semi-continuous function $\tilde{u} : \mathbb{R}^{d} \times (0,\infty) \to \mathbb{R}$ is a \emph{viscosity subsolution of \eqref{E: effective equation} in $U$} provided it has the following property: given a smooth function $\varphi : \mathbb{R}^{d} \times (0,\infty) \to \mathbb{R}$ is a smooth function and $(x_{0},t_{0}) \in U$ such that strict local maximum of $\tilde{u} - \varphi$ has a strict local maxium at $(x_{0},t_{0})$, 
		\begin{itemize}
			\item[(a)] If $D\varphi(x_{0},t_{0}) \neq 0$, then
				\begin{equation*}
					\overline{m}(\widehat{D\varphi}(x_{0},t_{0})) \varphi_{t}(x_{0},t_{0}) - \text{tr} \left( \left( \text{Id} - \widehat{D\varphi}(x_{0},t_{0}) \otimes \widehat{D\varphi}(x_{0},t_{0}) \right) D^{2} \varphi(x_{0},t_{0}) \right) \leq 0.
				\end{equation*}
			\item[(b)] If $\|D\varphi(x_{0},t_{0})\| = \|D^{2} \varphi(x_{0},t_{0})\| = 0$, then
				\begin{equation*}
					\varphi_{t}(x_{0},t_{0}) \leq 0.
				\end{equation*}
		\end{itemize}
	
Similarly, a locally bounded, lower semi-continuous function $\tilde{v} : \mathbb{R}^{d} \times (0,\infty) \to \mathbb{R}$ is a \emph{viscosity supersolution of \eqref{E: effective equation} in $U$} if $-\tilde{v}$ is a viscosity subsolution.  A locally bounded, continuous function in $\mathbb{R}^{d} \times (0,\infty)$ is a \emph{viscosity solution of \eqref{E: effective equation} in $U$} if it is both a subsolution and a supersolution.
\end{definition}  

With Definition \ref{D: barles georgelin} in mind, formally, the reason Theorem \ref{T: main} is true is if the solution in question were smooth, then it would be clear.  Indeed, we only need to check points where $Du(x_{0},t_{0}) \neq 0$ since otherwise Definition \ref{D: barles georgelin}, (b) and Definition \ref{D: irrational directions}, (iii) are in agreement.  If $Du(x_{0},t_{0}) \in \mathbb{R} \mathbb{Z}^{d}$, then either $Du(x,t) \notin \mathbb{R} \mathbb{Z}^{d}$ for some $(x,t)$ arbitrarily close to $(x_{0},t_{0})$, in which case the necessary differential inequality follows by continuity, or $Du(x,t) = Du(x_{0},t_{0})$ in a neighborhood of $(x_{0},t_{0})$.  In the latter case, differentiation shows
	\begin{equation*}
		\left(\text{Id} - \widehat{Du}(x_{0},t_{0}) \otimes \widehat{Du}(x_{0},t_{0})\right) D^{2} u(x_{0},t_{0}) = 0
	\end{equation*}
and now we are in the purview of Definition \ref{D: irrational directions}, (ii).

The sub- and supersolutions we work with in the proof are discontinuous, being characteristic functions of open sets, and, thus, far from smooth.  To circumvent this, we show that the sketch above is correct when $u$ is semi-convex or semi-concave and then use sup- and inf-convolutions to pass to the general case.  

\subsection{Proof of Theorem \ref{T: main}}  In order to make the previous sketch rigorous in the case of a semi-convex/semi-concave function, we will need to use facts about the derivatives of such functions.  

First, we recall that a semi-convex/semi-concave function has a derivative in $BV_{\text{loc}}$:

	\begin{lemma} \label{L: semi-convex} If $\Omega \subseteq \mathbb{R}^{d}$ is a bounded open set and $u : \Omega \to \mathbb{R}$ is semi-convex or semi-concave, then $Du \in BV_{\text{loc}}(\Omega;\mathbb{R}^{d})$ and the absolutely continuous part of the derivative of $Du$ coincides with $D^{2}u$ $\mathcal{L}^{d}$-a.e.\ in $\Omega$.
	\end{lemma}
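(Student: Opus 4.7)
\medskip

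\noindent\textbf{Proof proposal.} The plan is to reduce the statement to the case of convex functions, where both conclusions are classical and go back to Aleksandrov.

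Since $u$ is semi-concave if and only if $-u$ is semi-convex, and both conclusions are invariant under the map $u \mapsto -u$, I may assume $u$ is semi-convex with some constant $C \geq 0$. Setting $v(x) := u(x) + \frac{C}{2}\|x\|^{2}$, the function $v$ is convex on $\Omega$, and since subtracting the smooth vector field $x \mapsto Cx$ neither affects $BV_{\text{loc}}$ regularity of $Du$ nor alters the absolutely continuous part of its distributional derivative, it suffices to establish both conclusions for $v$ in place of $u$.

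Next, since $v$ is convex on a bounded open set, it is locally Lipschitz, so $Dv$ is a well-defined $L^{\infty}_{\text{loc}}(\Omega; \mathbb{R}^{d})$ vector field. I would then invoke the classical fact that the distributional Hessian $D^{2}v$ of a convex function on an open set is a symmetric, non-negative definite, matrix-valued Radon measure; in particular, $Dv \in BV_{\text{loc}}(\Omega; \mathbb{R}^{d})$. The idea behind this is that the one-dimensional restrictions of $Dv$ along coordinate directions are monotone (by convexity), hence of bounded variation, and a Fubini-type argument delivers the multidimensional statement.

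Finally, to identify the absolutely continuous part of this measure with the pointwise Hessian, I would cite Aleksandrov's theorem: the convex function $v$ is twice differentiable $\mathcal{L}^{d}$-a.e.\ in the pointwise sense, and at any point where the pointwise Hessian $D^{2}v(x)$ exists it coincides with the Radon-Nikodym derivative of the distributional $D^{2}v$ with respect to $\mathcal{L}^{d}$. Undoing the reduction, $D^{2}u = D^{2}v - C\,\text{Id}$ a.e., which is the desired identity. The main obstacle is Aleksandrov's theorem itself; however, this is a well-known classical result available in standard references on convex analysis and geometric measure theory, so I would simply quote it rather than reproduce the proof.
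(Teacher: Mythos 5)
Your proof is correct. The paper actually states Lemma \ref{L: semi-convex} without giving a proof, treating it as a standard fact (the surrounding material cites Ambrosio--Fusco--Pallara \cite{ambrosio fusco pallara} for related $BV$ facts); your reduction to the convex case followed by Aleksandrov's theorem is precisely the standard route and supplies the argument the paper leaves implicit. One small point worth making explicit: the version of Aleksandrov's theorem you need is the ``strong'' form that not only gives a.e.\ twice differentiability but also identifies the pointwise (Aleksandrov) Hessian with the density of the absolutely continuous part of the distributional Hessian measure --- this is, e.g., \cite[Theorem 7.10]{ambrosio fusco pallara} or Evans--Gariepy, and is exactly what makes your final identification $D^{2}u = D^{2}v - C\,\mathrm{Id}$ a.e.\ close the loop.
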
  
		
Next, we show that the differentiation step in the sketch can be made rigorous even in the semi-convex/semi-concave case.  In the lemma below, we have in mind that $V$ is the derivative of a semi-convex/semi-concave function.

	\begin{prop} \label{P: fine}  Suppose $\Omega \subseteq \mathbb{R}^{d}$ is a bounded open set and $V \in BV_{\text{loc}}(\Omega; \mathbb{R}^{m})$ for some $m \in \mathbb{N}$.  Let $D^{\text{ac}}V \in L^{1}_{\text{loc}}(\Omega; \mathbb{R}^{d \times m})$ denote the Radon-Nikodym derivative of the Radon measure $DV$ with respect to $\mathcal{L}^{d}$.  Given any $v \in \mathbb{R}^{d}$ and $e \in S^{d-1}$, 
		\begin{equation*}
			D^{\text{ac}}V = 0 \quad \mathcal{L}^{d}\text{-a.e.\ in} \, \, \{V = v\}
		\end{equation*}    
	and 
		\begin{equation*}
			\left(\text{Id} - \widehat{V} \otimes \widehat{V} \right) D^{\text{ac}}V = 0 \quad \mathcal{L}^{d}\text{-a.e.\ in} \, \, \{\widehat{V} = e\}
		\end{equation*}    
	\end{prop}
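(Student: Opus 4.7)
The strategy is to reduce both statements to the Lipschitz case via a Lusin-type approximation for $BV$ functions, and then invoke the classical Stampacchia lemma on level sets together with a projection trick for the second assertion.

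\emph{Step 1 (Lusin approximation).} For every $\lambda > 0$, I would invoke the standard Lusin-type approximation for $BV$ maps (see e.g.\ Ambrosio-Fusco-Pallara): there exist a Lipschitz map $V_{\lambda} : \Omega \to \mathbb{R}^{m}$ and a Borel set $K_{\lambda} \subseteq \Omega$ such that $V = V_{\lambda}$ pointwise on $K_{\lambda}$, the pointwise derivatives agree in the sense that $D^{\text{ac}} V = D V_{\lambda}$ holds $\mathcal{L}^{d}$-a.e.\ on $K_{\lambda}$, and $\mathcal{L}^{d}(\Omega \setminus \bigcup_{\lambda > 0} K_{\lambda}) = 0$. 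Because $K_{\lambda} \cap \{V = v\} = K_{\lambda} \cap \{V_{\lambda} = v\}$ and similarly for the level sets of $\widehat{V}$, both assertions reduce to the corresponding statements for the Lipschitz map $V_{\lambda}$.

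\emph{Step 2 (First assertion, Lipschitz case).} For a Lipschitz map $f: \Omega \to \mathbb{R}^{m}$ and $v \in \mathbb{R}^{m}$, the identity $Df = 0$ $\mathcal{L}^{d}$-a.e.\ on $\{f = v\}$ is the classical Stampacchia lemma applied to each component: $D f^{(i)} = 0$ a.e.\ on $\{f^{(i)} = v_{i}\}$, and intersecting over $i$ gives the claim. Specializing to $f = V_{\lambda}$ and combining with Step 1 gives the first part of the proposition.

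\emph{Step 3 (Second assertion via projection).} Let $P = \text{Id} - e \otimes e$ denote the orthogonal projection onto $e^{\perp}$. Then $P V_{\lambda}$ is Lipschitz, and, ignoring the $\mathcal{L}^{d}$-null set $\{V_{\lambda} = 0\}$ on which the first part already gives $D^{\text{ac}} V = 0$, one has $\{\widehat{V}_{\lambda} = e\} \subseteq \{V_{\lambda} \in \mathbb{R} e\} = \{P V_{\lambda} = 0\}$. Applying Step 2 to $P V_{\lambda}$ with $v = 0$ yields $D(P V_{\lambda}) = P \circ D V_{\lambda} = 0$ $\mathcal{L}^{d}$-a.e.\ on $K_{\lambda} \cap \{\widehat{V} = e\}$. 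Since $\widehat{V} \otimes \widehat{V} = e \otimes e$ on that set, this is precisely the identity $(\text{Id} - \widehat{V} \otimes \widehat{V}) D^{\text{ac}} V = 0$.

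\emph{Main obstacle.} The one non-routine ingredient is the Lusin-type approximation of Step 1, and in particular the identification $D^{\text{ac}} V = D V_{\lambda}$ on $K_{\lambda}$; the rest of the argument is a pair of standard manipulations. An alternative that avoids quoting Step 1 would be to work directly with the approximate differential $\nabla V(x_{0}) = D^{\text{ac}} V(x_{0})$ at a Lebesgue density-$1$ point $x_{0}$ of $\{V = v\}$ and argue that, because $V - v$ vanishes on a set of density $1$ at $x_{0}$, approximate differentiability forces the linear map $\xi \mapsto \nabla V(x_{0}) \xi$ to have vanishing approximate limit at $0$, which is possible only if $\nabla V(x_{0}) = 0$; the second assertion then follows by composing with $P$ as in Step 3.
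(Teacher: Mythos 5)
Your proposal is correct, but it follows a genuinely different route from the paper's. The paper works directly with approximate differentiability of $BV$ functions: it invokes the fact (AFP, Thm.\ 3.83) that $V$ is a.e.\ approximately differentiable with approximate differential $D^{\text{ac}}V$, derives the first assertion from the density-point argument (AFP, Prop.\ 3.73), and for the second assertion applies the chain rule for approximate differentials (AFP, Prop.\ 3.71) with the \emph{nonlinear} map $p \mapsto \hat{p}$, computing its differential to be $\|V\|^{-1}(\text{Id} - \hat{V} \otimes \hat{V})$ and then reusing the level-set argument on $f(V)$. You instead reduce to the Lipschitz case via the Lusin-type approximation, dispose of the first assertion with the classical Stampacchia lemma applied componentwise, and then handle the second by composing with the \emph{constant linear} projection $P = \text{Id} - e \otimes e$, observing $\{\widehat{V} = e\} \subseteq \{PV = 0\}$ and applying the first part to $PV$. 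Your linear-projection trick is, if anything, cleaner than the paper's nonlinear chain rule: one does not have to worry about differentiating $p \mapsto \hat{p}$ or about the approximate differentiability of $f(V)$, only about the commutation $D^{\text{ac}}(PV) = P\,D^{\text{ac}}V$, which is immediate for a constant linear $P$ (indeed, once the first part is known for all $BV_{\mathrm{loc}}$ maps, you could skip the Lusin approximation entirely in Step 3 and just apply it to $PV$). The paper's route avoids quoting the Lusin approximation and is self-contained given the AFP references. One small slip: $\{V_\lambda = 0\}$ is not in general $\mathcal{L}^d$-null, but this is harmless — you never need it to be, since $\{\widehat{V} = e\}$ is by definition disjoint from $\{V = 0\}$ and hence from $K_\lambda \cap \{V_\lambda = 0\}$, so the inclusion $K_\lambda \cap \{\widehat{V} = e\} \subseteq K_\lambda \cap \{PV_\lambda = 0\}$ is all that is used.
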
  
	
		\begin{proof}  Let $\mathcal{D}_{V}$ denote the set of approximate differentiability points of $V$, that is, $x \in \mathcal{D}_{V}$ if and only if there is a linear map $A_{x} : \mathbb{R}^{d} \to \mathbb{R}^{m}$ such that
			\begin{equation*}
				\lim_{r \to 0^{+}} r^{-d} \int_{B(x,r)} \frac{\|V(y) - V(x) - A_{x}(y -x)\|}{r} \, dy = 0.
			\end{equation*} 
		Since $V \in BV_{\text{loc}}(\Omega; \mathbb{R}^{m})$, it follows that $\mathcal{L}^{d}(\Omega \setminus \mathcal{D}_{V}) = 0$ and $A_{x} = D^{\text{ac}}V(x)$ for a.e.\ $x \in \Omega$ (cf.\ \cite[Theorem 3.83]{ambrosio fusco pallara}).  
		
		A straightforward computation shows $D^{\text{ac}}V = 0$ a.e.\ in $\{V = v\}$ (see also \cite[Proposition 3.73]{ambrosio fusco pallara}).  
		
		Define $f : \mathbb{R}^{d} \to \mathbb{R}^{d}$ by $f(p) = \widehat{p}$ if $p \neq 0$ and $f(0) = 0$.  It is not hard to see that each $x \in \mathcal{D}_{V}$ with $A_{x} \neq 0$ is an approximate differentiability point of $f(V)$.  Furthermore, a straightforward computation shows its approximate derivative is given by $\|V\|^{-1} \left(\text{Id} - \widehat{V} \otimes \widehat{V}\right) D^{\text{ac}}V$ a.e.\  (Both statements can be found in \cite[Proposition 3.71]{ambrosio fusco pallara}).  As in the case of $\{V = v\}$, it is not hard to see that the approximate derivative of $f(V)$ has to vanish a.e.\ in $\{f(V) = e\} = \{\widehat{V} = e\}$.       \end{proof}  
		
Now we are prepared for the

	\begin{proof}[Proof of Theorem \ref{T: irrational directions}]  Let $V$ be a bounded, open subset of $U$ such that $\overline{V}$ is compactly contained in $U$.  For convenience, pick a $T > 0$ such that $V \subseteq \mathbb{R}^{d} \times (0,T)$.
	
	For each $\eta > 0$, define the inf-convolution $\tilde{u}_{\eta}$ of $\tilde{u}$ by
		\begin{equation*}
			\tilde{u}_{\eta}(x,t) = \inf \left\{ \tilde{u}(y,s) + \frac{\|y - x\|^{2}}{2 \eta} + \frac{(t - s)^{2}}{2 \eta} \, \mid \, (y,s) \in U \cap (\mathbb{R}^{d} \times (0,T)) \right\}.
		\end{equation*}
	A classical argument shows that $\tilde{u}_{\eta}$ is a semi-concave function in $U \cap (\mathbb{R}^{d} \times (0,T))$.
	
	Another well-known argument shows that there is an $\eta_{0} > 0$ such that if $0 < \eta < \eta_{0}$, then, for each $(x,t) \in V$, there is a $(y,s) \in U \cap (\mathbb{R}^{d} \times (0,T))$ such that 
		\begin{equation*}
			\tilde{u}_{\eta}(x,t) = \tilde{u}(y,s) + \frac{\|y - x\|^{2}}{2 \eta} + \frac{(t - s)^{2}}{2 \eta}.
		\end{equation*}
	From this, we can argue as in \cite{user's guide} to show that $\tilde{u}_{\eta}$ is a subsolution of \eqref{E: effective mobility} in irrational directions in $V$.  In fact, we claim that, for each $\eta \in (0,\eta_{0})$, $\tilde{u}_{\eta}$ satisfies
		\begin{equation} \label{E: keystone}
			\overline{m}(\widehat{D\tilde{u}^{\eta}}) \tilde{u}_{\eta,t} - \text{tr}_{*} \left( \left(\text{Id} - \widehat{D\tilde{u}^{\eta}} \otimes \widehat{D\tilde{u}^{\eta}} \right) D^{2}\tilde{u}^{\eta} \right) \geq 0 \quad \text{in} \, \, V.
		\end{equation}
	
	Henceforth, fix $\eta \in (0,\eta_{0})$ and let us proceed to the proof of \eqref{E: keystone}.  Assume that $\varphi$ is smooth and $\tilde{u}_{\eta} - \varphi$ has a strict local minimum at $(x_{0},t_{0}) \in V$.  If $\|D\varphi(x_{0},t_{0})\| = \|D^{2}\varphi(x_{0},t_{0})\| = 0$ or $\widehat{D\varphi}(x_{0}) \notin \mathbb{R} \mathbb{Z}^{d}$, then there is nothing to show since $\tilde{u}_{\eta}$ is a supersolution in irrational directions in $V$.  Thus, it remains to consider the case when $D\varphi(x_{0}) \neq 0$ and $\widehat{D\varphi}(x_{0}) \in \mathbb{R} \mathbb{Z}^{d}$.  
	
	In what follows (subtracting a constant from $\varphi$ if necessary), let $s > 0$ be such that $B((x_{0},t_{0}),s) \subseteq V$, $\tilde{u}_{\eta}(x,t) > \varphi(x)$ for $x \in B((x_{0},t_{0}),s) \setminus \{(x_{0},t_{0})\}$, and $\tilde{u}_{\eta}(x_{0},t_{0}) = \varphi(x_{0},t_{0})$.
	
	Since $\tilde{u}_{\eta}$ is semi-concave and $\varphi$ is smooth, it follows that $\tilde{u}_{\eta} - \varphi$ is semi-concave in $B((x_{0},t_{0}),s)$.  Thus, for each $\delta > 0$ small enough, we can apply Jensen's Lemma (cf.\ \cite{user's guide}), thereby obtaining a set $K_{\delta} \subseteq B((x_{0},t_{0}),s)$ such that $\mathcal{L}^{d + 1}(K_{\delta}) > 0$ and, for each $(x,t) \in K_{\delta}$,
	\begin{itemize}
		\item[(i)] $\tilde{u}_{\eta}$ is twice punctually differentiable at $(x,t)$.
		\item[(ii)] There is an $(a_{(x,t)},p_{(x,t)}) \in B(0,\delta)$ such that the function $(y,r) \mapsto \tilde{u}_{\eta}(y,r) - \varphi(y,r) - \langle p_{(x,t)}, y \rangle - a_{(x,t)} r$ has a local minimum in $B((x_{0},t_{0}),s)$.
	\end{itemize}
	We claim that we can find an $(x_{1},t_{1}) \in K_{\delta}$ such that
		\begin{align} 
			&-\delta\Theta \leq \overline{m} \left(\frac{D\varphi(x_{1},t_{1}) + p_{(x_{1},t_{1})}}{\|D\varphi(x_{1},t_{1}) + p_{(x_{1},t_{1})}\|}\right) \varphi_{t}(x_{1},t_{1}) \label{E: important goal} \\
				&\quad - \text{tr} \left( \left( \text{Id} - \frac{[D\varphi(x_{1},t_{1}) + p_{(x_{1},t_{1})}] \otimes [D\varphi(x_{1},t_{1}) + p_{(x_{1},t_{1})}]}{\|D\varphi(x_{1},t_{1}) + p_{(x_{1},t_{1})}\|^{2}} \right) D^{2} \varphi(x_{1},t_{1}) \right) \nonumber.
		\end{align}
	Making $\delta$ and $s$ smaller if necessary, we can assume that 
		\begin{equation*}
			D\tilde{u}_{\eta}(x_{1},t_{1}) = D\varphi(x_{1},t_{1}) + p_{(x_{1},t_{1})} \neq 0 \quad  \text{for all}  \quad (x_{1},t_{1}) \in K_{\delta}.
		\end{equation*} 

We will prove \eqref{E: important goal} by studying the structure of the spatial derivatives of $\tilde{u}_{\eta}$ in $K_{\delta}$.  We only need to consider the following two cases:
	\begin{itemize}
		\item[(a)] For a.e.\ $(x,t) \in K_{\delta}$, $D\tilde{u}_{\eta}(x,t) \in \mathbb{R} \mathbb{Z}^{d} \setminus \{0\}$.
		\item[(b)] There is a measurable $A_{\delta} \subseteq K_{\delta}$ such that $D\tilde{u}_{\eta}(x,t) \in \mathbb{R}^{d} \setminus \mathbb{R} \mathbb{Z}^{d}$ for a.e.\ $(x,t) \in A_{\delta}$ and $\mathcal{L}^{d + 1}(A_{\delta}) > 0$.
		\end{itemize}

The easier case is (b).  If (b) holds, then we can fix an $(x_{1},t_{1}) \in A_{\delta}$ and invoke the supersolution property of $\tilde{u}_{\eta}$ at $(x_{1},t_{1})$ to find
	\begin{equation*}
		0 \leq \overline{m}(\widehat{D\tilde{u}_{\eta}}(x_{1},t_{1})) \tilde{u}_{\eta,t}(x_{1},t_{1}) - \text{tr} \left( \left(\text{Id} - \widehat{D\tilde{u}_{\eta}}(x_{1},t_{1}) \otimes \widehat{D\tilde{u}_{\eta}}(x_{1},t_{1}) \right) D^{2} \tilde{u}_{\eta}(x_{1},t_{1}) \right).
	\end{equation*}
Now we recall that, by (ii), the following relations hold:
	\begin{align*}
		D\tilde{u}_{\eta}(x_{1},t_{1}) &= D\varphi(x_{1},t_{1}) + p_{(x_{1},t_{1})}, \quad D^{2}\tilde{u}_{\eta}(x_{1},t_{1}) \geq D^{2}\varphi(x_{1},t_{1}), \\
		\tilde{u}_{\eta,t}(x_{1},t_{1}) &= \varphi_{t}(x_{1},t_{1}) + a_{(x_{1},t_{1})}.
	\end{align*}
By ellipticity and \eqref{A: m positive}, this gives \eqref{E: important goal}.  
	
Next, we turn to case (a).  Given $t \in (0,T)$, let $U_{t} = \{x \in \mathbb{R}^{d} \, \mid \, (x,t) \in U\}$.  Recall from Proposition \ref{P: fine} that the map $D\tilde{u}_{\eta}(\cdot,t) \in BV_{\text{loc}}(U_{t}; \mathbb{R}^{d})$ for each fixed $t$ and $D^{\text{ac}}(D\tilde{u}_{\eta}(\cdot,t)) = D^{2}\tilde{u}_{\eta}(\cdot,t)$ a.e.  Let us define $\{B_{e}\}_{e \in S^{d-1} \cap \mathbb{R} \mathbb{Z}^{d}}$ by  
	\begin{align*}
		B_{e} &= \left\{(x,t) \in K_{\delta} \, \mid \, \frac{D\tilde{u}_{\eta}(x,t)}{\|D\tilde{u}_{\eta}(x,t)\|} = e \right\}.
	\end{align*}
Since we assumed (a) holds, it follows that $\sum_{e} \mathcal{L}^{d + 1}(B_{e}) > 0$.

An immediate application of Lemma \ref{L: semi-convex}, Proposition \ref{P: fine}, and Fubini's Theorem shows that 
	\begin{align*}
		\left(\text{Id} - \widehat{D\tilde{u}_{\eta}} \otimes \widehat{D\tilde{u}_{\eta}}\right) D^{2} \tilde{u}_{\eta} &= 0 \quad \text{a.e.\ in} \, \, \bigcup_{e \in S^{d-1} \cap \mathbb{R} \mathbb{Z}^{d}} B_{e}.
	\end{align*}  
Thus, we can fix a point $(x_{1},t_{1}) \in \bigcup_{e} B_{e}$ such that 
	\begin{equation*}
			\left(\text{Id} - \widehat{D\tilde{u}_{\eta}}(x_{1},t_{1}) \otimes \widehat{D\tilde{u}_{\eta}}(x_{1},t_{1})\right) D^{2}\tilde{u}_{\eta}(x_{1},t_{1}) = 0.
	\end{equation*}  
Since $\bigcup_{e \in S^{d-1} \cap \mathbb{R} \mathbb{Z}^{d}} B_{e} \subseteq K_{\delta}$ and $\tilde{u}_{\eta}$ is a supersolution of \eqref{E: effective equation} in irrational directions in $V$, we have
	\begin{align*}
		0 &\leq \overline{m} \left(\widehat{D\tilde{u}_{\eta}}(x_{1},t_{1}) \right) \tilde{u}_{\eta,t}(x_{1},t_{1})  \\
			&\leq \overline{m} \left( \frac{D\varphi(x_{1},t_{1}) + p_{(x_{1},t_{1})}}{\|D\varphi(x_{1},t_{1}) + p_{(x_{1},t_{1})}\|} \right) \varphi_{t}(x_{1},t_{1}) + \delta \Theta \\
				&\quad - \text{tr} \left( \left(\text{Id} - \frac{[D\varphi(x_{1},t_{1}) + p_{(x_{1},t_{1})}] \otimes [D\varphi(x_{1},t_{1}) + p_{(x_{1},t_{1})}]}{\|D\varphi(x_{1},t_{1}) + p_{(x_{1},t_{1})}\|^{2}} \right) D^{2}\varphi(x_{1},t_{1}) \right).
	\end{align*}
This is exactly \eqref{E: important goal}.  
		
We conclude that, no matter which of cases (a) or (b) occur, there is an $(x_{1},t_{1}) \in K_{\delta}$ such that \eqref{E: important goal} holds.  Next, by recalling that $(x_{0},t_{0})$ is a strict local minimum of $\tilde{u}_{\eta} -\varphi$ in $B((x_{0},t_{0}),s)$ and $K_{\delta} \subseteq B((x_{0},t_{0}),s)$, a straightforward argument shows that $(x_{1},t_{1}) \to (x_{0},t_{0})$ as $\delta \to 0^{+}$.  Therefore, in the limit $\delta \to 0^{+}$, \eqref{E: important goal} becomes
	\begin{equation*}
		0 \leq \overline{m}(\widehat{D\varphi}(x_{0},t_{0})) \varphi_{t}(x_{0},t_{0}) - \text{tr} \left( \left( \text{Id} - \widehat{D\varphi}(x_{0},t_{0}) \otimes \widehat{D\varphi}(x_{0},t_{0}) \right) D^{2} \varphi(x_{0},t_{0}) \right).
	\end{equation*}
	
Since $\varphi$ was arbitrary, we proved that \eqref{E: keystone} holds as long as $\eta \in (0,\eta_{0})$.  At the same time, we know that $\tilde{u} = \liminf_{*} \tilde{u}_{\eta}$ in $V$ as $\eta \to 0^{+}$.  Thus, the stability properties of viscosity solutions imply that $\tilde{u}$ is also a supersolution, that is,
	\begin{equation*}
		\overline{m}(\widehat{D\tilde{u}}) \tilde{u}_{t} - \text{tr}_{*} \left( \left(\text{Id} - \widehat{D\tilde{u}} \otimes \widehat{D\tilde{u}} \right) D^{2} \tilde{u} \right) \geq 0 \quad \text{in} \, \, V.
	\end{equation*}
Since $V$ was arbitrary, we conclude that $\tilde{u}$ is a viscosity supersolution of \eqref{E: effective mobility} in $U$.\end{proof}

\section{Approximate Correctors} \label{S: approximate correctors}

The purpose of this section is to prove the existence of approximate correctors, that is, solutions of \eqref{E: approximate correctors}.  When $e \in S^{d-1} \setminus \mathbb{R} \mathbb{Z}^{d}$, this is possible because the diffusion in the $\langle e \rangle^{\perp}$ directions explores the entire torus.  We will show below that the same strategy does not work in rational directions precisely because this is no longer the case.  In Remark \ref{R: cell problem existence}, we show that when $m$ is sufficiently regular, \eqref{E: cell problem} has solutions in certain Diophantine directions; Remark \ref{R: degenerate directions} shows that there is an obstruction when $e \in \mathbb{R} \mathbb{Z}^{d}$.    

We will see below that the effective mobility $\overline{m}(e)$ is given by 
	\begin{equation} \label{E: effective mobility}
		\overline{m}(e) = c_{W}^{-1} \int_{\mathbb{R} \times \mathbb{T}^{d}} m(y,\dot{q}(s)e) \dot{q}(s)^{2} \, dy \, ds, \quad c_{W} = \int_{-\infty}^{\infty} \dot{q}(s)^{2} \, ds.
	\end{equation}

The main result of this section concerning existence of approximate correctors is stated next:

	\begin{theorem} \label{T: approximate correctors}  Fix $e \in S^{d-1} \setminus \mathbb{R} \mathbb{T}^{d}$.  If $m$ and $W$ satisfy the assumptions of Theorem \ref{T: main} and $\nu > 0$, then there is a $\tilde{P}^{\nu}_{e} \in C^{2,\mu}(\mathbb{R} \times \mathbb{T}^{d})$ such that \eqref{E: approximate correctors} holds. 
	\end{theorem}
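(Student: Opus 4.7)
The strategy is the classical Lions--Papanicolaou--Varadhan discount approximation, adapted to the degenerate operator $\mathcal{L}_e := \mathcal{D}_e^*\mathcal{D}_e + W''(q(s))$ and to the $\dot{q}(s)$ weight that appears on both sides of \eqref{E: cell problem}. I first perform a ground-state transform: differentiating $\ddot{q} = W'(q)$ gives $\mathcal{L}_e\dot{q} = -\dddot{q} + W''(q)\dot{q} = 0$, and $\dot{q}>0$ decays exponentially at $\pm\infty$ by \eqref{A: nondegeneracy of W}. A direct computation then yields the factorization
\begin{equation*}
\mathcal{L}_e(\dot{q}\,\Phi) \;=\; \dot{q}^{-1}\,\mathcal{D}_e^*\!\left(\dot{q}^{\,2}\,\mathcal{D}_e\Phi\right),
\end{equation*}
so under the substitution $P = \dot{q}\,\Phi$ the cell problem \eqref{E: cell problem} becomes the divergence-form equation $\mathcal{D}_e^*(\dot{q}^{\,2}\mathcal{D}_e\Phi)=\dot{q}^{\,2}\tilde{F}_e$ with uniformly bounded right-hand side $\tilde{F}_e(s,y) := \overline{m}(e) - m(y,\dot{q}(s)e)$; the defining formula \eqref{E: effective mobility} is exactly the compatibility condition $\int_{\mathbb{R}\times\mathbb{T}^d}\dot{q}^{\,2}\tilde{F}_e\,ds\,dy=0$.

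For each $\lambda>0$ I solve the discounted version
\begin{equation*}
\lambda\,\Phi_\lambda \;+\; \dot{q}^{-2}\mathcal{D}_e^*\!\left(\dot{q}^{\,2}\mathcal{D}_e\Phi_\lambda\right) \;=\; \tilde{F}_e \quad \text{in } \mathbb{R}\times\mathbb{T}^d,
\end{equation*}
whose left-hand side is the sum of a strictly positive multiplier and a nonnegative self-adjoint operator on $L^2(\dot{q}^{\,2})$ and is therefore coercive. Existence of a $C^{2,\mu}$ solution follows from a further vanishing-viscosity regularization: add $-\eta\Delta_{s,y}$, apply Lax--Milgram in the $\dot{q}^{\,2}$-weighted $H^1$ space (the exponential decay of $\dot{q}$ supplies the needed integrability), and send $\eta\to 0^+$ via the interior Schauder-type estimates compiled in Appendix \ref{A: correctors}. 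Comparison of $P_\lambda := \dot{q}\Phi_\lambda$ with $\lambda^{-1}\|m-\overline{m}(e)\|_\infty\cdot\dot{q}$ via the maximum principle for $\lambda+\mathcal{L}_e$ (using $\mathcal{L}_e\dot{q}=0$) gives the crude bound $\|\Phi_\lambda\|_\infty\le\lambda^{-1}\|m-\overline{m}(e)\|_\infty$, and undoing the transform yields the key residual identity
\begin{equation*}
[m(y,\dot{q}(s)e)-\overline{m}(e)]\,\dot{q}(s) \;+\; \mathcal{L}_e P_\lambda \;=\; -\lambda\,\dot{q}(s)\,\Phi_\lambda(s,y),
\end{equation*}
so $P_\lambda$ already verifies \eqref{E: approximate correctors} with $\nu$ replaced by $\lambda\|\Phi_\lambda\|_\infty$.

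The heart of the argument is therefore to upgrade this to a uniform bound $\|\Phi_\lambda\|_\infty\le C$ independent of $\lambda\in(0,1]$: once this is achieved, $\lambda\|\Phi_\lambda\|_\infty\le C\lambda\to 0$, and the choice $\lambda=\lambda(\nu)<\nu/C$ together with $\tilde{P}^\nu_e := \dot{q}\,\Phi_\lambda$ closes the proof. I argue by contradiction: if $M_k := \|\Phi_{\lambda_k}\|_\infty \to \infty$ along some sequence $\lambda_k\to 0^+$, then $\Psi_k := \Phi_{\lambda_k}/M_k$ satisfies $\|\Psi_k\|_\infty = 1$, has vanishing $\dot{q}^{\,2}$-weighted mean (from testing the discounted equation against $1$ and invoking the compatibility condition), and solves
\begin{equation*}
\lambda_k\Psi_k \;+\; \dot{q}^{-2}\mathcal{D}_e^*\!\left(\dot{q}^{\,2}\mathcal{D}_e\Psi_k\right) \;=\; M_k^{-1}\,\tilde{F}_e \;\to\; 0.
\end{equation*}
The interior regularity of Appendix \ref{A: correctors}, together with the fact that the exponential decay of $\dot{q}$ at $\pm\infty$ prevents $|\Psi_k|$ from concentrating its extremum at $|s|=\infty$, extracts a locally uniform subsequential limit $\Psi_\infty$ with $\|\Psi_\infty\|_\infty=1$, vanishing weighted mean, and satisfying $\mathcal{D}_e^*(\dot{q}^{\,2}\mathcal{D}_e\Psi_\infty)=0$. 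Integrating by parts (legitimate thanks to exponential decay of $\dot{q}^{\,2}\Psi_\infty^2$) forces $\mathcal{D}_e\Psi_\infty=0$, and now the hypothesis $e\notin\mathbb{R}\mathbb{Z}^d$ is decisive: every bounded continuous solution of this transport equation on $\mathbb{R}\times\mathbb{T}^d$ must take the form $\Psi_\infty(s,y) = f(e\cdot y - s)$, and the $\mathbb{Z}^d$-periodicity in $y$ forces $f$ to be periodic with periods $\{e\cdot k:k\in\mathbb{Z}^d\}$---a dense subgroup of $\mathbb{R}$ precisely when $e\notin\mathbb{R}\mathbb{Z}^d$. Hence $f$, and therefore $\Psi_\infty$, is constant, and the vanishing weighted mean then identifies that constant as $0$, contradicting $\|\Psi_\infty\|_\infty=1$.

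The principal obstacle is this rigidity step. In rational directions, bounded solutions of $\mathcal{D}_e\Psi=0$ form an infinite-dimensional family of ``plane waves'', which is the root of the ill-posedness flagged in Remark \ref{R: degenerate directions}; consequently the above contradiction argument has no analogue for $e\in\mathbb{R}\mathbb{Z}^d$, and approximate correctors cannot in general be constructed in that case. The remaining ingredients---vanishing-viscosity existence, maximum-principle comparison, and interior regularity adapted to the degenerate operator $\mathcal{L}_e$---are standard in spirit but are carried out carefully in Appendix \ref{A: correctors} because of the degeneracy and the non-compactness of the domain $\mathbb{R}\times\mathbb{T}^d$.
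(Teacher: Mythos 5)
Your ground-state transform $P = \dot{q}\,\Phi$, the discounted approximation, and the residual identity $[m-\overline{m}(e)]\dot{q} + \mathcal{L}_e P_\lambda = -\lambda\dot{q}\Phi_\lambda$ are all correct, and they are in the same spirit as the paper's passage to the renormalized function $V_2^\delta = \dot{q}^{-1}P_2^\delta$. The critical gap is the claimed \emph{uniform} bound $\sup_{\lambda\in(0,1]}\|\Phi_\lambda\|_\infty \le C$. That bound would, after a Schauder/diagonal extraction, produce a bounded exact solution of the cell problem \eqref{E: cell problem} for \emph{every} irrational $e$, which is more than the statement asserts and more than is true: Remark \ref{R: cell problem existence} shows that exact correctors require a quantitative (Diophantine) rate on $e$, precisely because the small denominators $\|k - \langle k,e\rangle e\|$, $k\in\mathbb{Z}^d\setminus\{0\}$, can be arbitrarily small for Liouville-type irrational directions, making $\int_0^\infty \|Q_\zeta^{\tilde s}(\cdot,t)\|_{L^\infty}\,dt$ divergent. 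The paper's Proposition \ref{P: convergence of fluctuations} is deliberately weaker: it proves only $\delta\,\|\Phi_\delta\|_\infty\to 0$, which follows from the qualitative averaging Lemma \ref{L: averaging} by dominated convergence against $\delta e^{-\delta t}\,dt$, with no integrability in $t$ required.

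The contradiction argument also fails at the compactness step. You normalize $\Psi_k = \Phi_{\lambda_k}/M_k$ and assert that a locally uniform subsequential limit $\Psi_\infty$ retains $\|\Psi_\infty\|_\infty = 1$. On the non-compact strip $\mathbb{R}\times\mathbb{T}^d$ this is not automatic: the near-maximum points $(s_k,y_k)$ of $|\Psi_k|$ can escape to $|s_k|\to\infty$, and then the unshifted local limit is $\Psi_\infty\equiv 0$ with no contradiction. Shifting by $s_k$ does capture a nonzero limit, but then the zeroth-order potential $W''(q(s+s_k))\to W''(\pm 1) > 0$ and the drift becomes constant; the limiting equation admits a constant solution that is \emph{not} ruled out by the vanishing weighted mean, because $\dot{q}^2$ decays exponentially and the mean constraint $\int \dot{q}^2\Psi_k = 0$ carries no information at $|s|\to\infty$. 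Your appeal to the exponential decay of $\dot{q}$ thus runs in the wrong direction: that decay is exactly what makes the weighted-mean constraint blind to the region where the supremum escapes. The $\mathcal{D}_e$-rigidity lemma you invoke (bounded $\mathcal{D}_e$-invariants on $\mathbb{R}\times\mathbb{T}^d$ are constant when $e\notin\mathbb{R}\mathbb{Z}^d$) is correct and uses irrationality in essentially the same way as Lemma \ref{L: averaging}, but it cannot be leveraged into a uniform $L^\infty$ bound without the kind of quantitative Diophantine control that Remark \ref{R: cell problem existence} shows is necessary. To repair the argument you would need to prove only $\lambda\|\Phi_\lambda\|_\infty\to 0$; the paper accomplishes this directly via the semigroup representation \eqref{E: linear rep} and the Fourier computation in Lemma \ref{L: averaging}, sidestepping any a priori compactness in $\lambda$.
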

	
To prove the theorem, we start by regularizing $m$: given $\mu \in (0,1)$, fix $\tilde{m} \in C^{\mu}(\mathbb{R} \times \mathbb{T}^{d})$ such that
	\begin{align}
		\sup \left\{ |m(y,\dot{q}(s)e) - \tilde{m}(s,y)| \, \mid \, (s,y) \in \mathbb{R} \times \mathbb{T}^{d} \right\} \leq \frac{1}{3} \nu, \label{E: close to m} \\
		\|\tilde{m}\|_{C^{\mu}(\mathbb{R} \times \mathbb{T}^{d})} + \|D_{y} \tilde{m}\|_{C^{\mu}(\mathbb{R} \times \mathbb{T}^{d})} + \|D^{2}_{y} \tilde{m}\|_{C^{\mu}(\mathbb{R} \times \mathbb{T}^{d})} < \infty. \label{E: smooth}
	\end{align}
We decompose $\tilde{m}$ in the following way:
	\begin{equation}
		\tilde{m}(s,y) = \int_{\mathbb{T}^{d}} \tilde{m}(s,y') \, dy' + \left(\tilde{m}(s,y) - \int_{\mathbb{T}^{d}} \tilde{m}(s,y') \, dy' \right) =: \tilde{m}_{1}(s) + \tilde{m}_{2}(s,y).
	\end{equation}
Correspondingly, we define a corrector $\overline{P}_{e}$ and penalized correctors $(P^{\delta}_{2})_{\delta > 0}$ solving the following PDE:
	\begin{align}
		\tilde{m}_{1}(s) \dot{q}(s) - \ddot{\overline{P}}_{e} + W''(q(s)) \overline{P}_{e} &= \overline{\tilde{m}}(e) \dot{q}(s) \quad \text{in} \, \, \mathbb{R}, \\
		\tilde{m}_{2}(s,y) \dot{q}(s) + \delta P_{2}^{\delta} + \mathcal{D}_{e}^{*} \mathcal{D}_{e} P_{2}^{\delta} + W''(q(s)) P_{2}^{\delta} &= 0 \quad \text{in} \, \, \mathbb{R} \times \mathbb{T}^{d}, \label{E: fluctuation cell problem} \\
		\overline{\tilde{m}}(e) = c_{W}^{-1} \int_{\mathbb{R} \times \mathbb{T}^{d}} \tilde{m}(s,y) \dot{q}(s)^{2} \, dy \, ds.&
	\end{align}
Here, as above, $\mathcal{D}_{e} = e \partial_{s} + D_{y}$.  

The existence and regularity of $\overline{P}_{e}$ and $(P^{\delta}_{e})_{\delta > 0}$ is discussed in Appendix \ref{A: correctors}.  Theorem \ref{T: approximate correctors} is proved as soon as we establish the following:

	\begin{prop} \label{P: convergence of fluctuations}  $\|\dot{q}^{-1} (\delta P^{\delta}_{2})\|_{L^{\infty}(\mathbb{R} \times \mathbb{T}^{d})} \to 0$ as $\delta \to 0^{+}$.  In particular, given $\nu > 0$, if $\delta > 0$ is small enough, then $\tilde{P}^{\nu}_{e} = \overline{P}_{e} + P^{\delta}_{e}$ satisfies \eqref{E: approximate correctors}.  \end{prop}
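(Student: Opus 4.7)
The plan is to factor out the weight $\dot{q}$ from $P_2^\delta$, which converts the problem into a non-degenerate one at zeroth order, and then to show that $v^\delta := \delta P_2^\delta / \dot{q}$ converges to zero uniformly via maximum principle and ergodicity. Setting $R^\delta := P_2^\delta / \dot{q}$ and using the ground-state identity $(\mathcal{D}_e^*\mathcal{D}_e + W''(q))\dot{q} = 0$, a direct computation converts \eqref{E: fluctuation cell problem} into
\begin{equation*}
\delta R^\delta + \mathcal{D}_e^*\mathcal{D}_e R^\delta - 2\frac{\ddot{q}(s)}{\dot{q}(s)}\, e \cdot \mathcal{D}_e R^\delta = -\tilde{m}_2(s, y),
\end{equation*}
in which the drift $\ddot{q}/\dot{q}$ is bounded on $\mathbb{R}$, converging to $\mp \sqrt{W''(\pm 1)}$ as $s\to\pm\infty$. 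Since the zeroth-order coefficient is now $\delta > 0$, the classical maximum principle applied at an extremum of $R^\delta$ (where $\mathcal{D}_e R^\delta = 0$ and $\mathcal{D}_e^*\mathcal{D}_e R^\delta$ has the correct sign) immediately gives the uniform bound $\|v^\delta\|_\infty \le \|\tilde{m}_2\|_\infty$.

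Next, observe that the reduced operator $L' := \mathcal{D}_e^*\mathcal{D}_e - 2(\ddot{q}/\dot{q})(e\cdot \mathcal{D}_e)$ is formally self-adjoint and non-negative on the weighted Hilbert space $L^2(\dot{q}^2 \, ds\, dy)$: the drift term is designed precisely to cancel the contribution of $\mathcal{D}_e(\dot{q}^2) = 2\dot{q}\ddot{q}\, e$ produced by integration by parts, yielding the quadratic form $\int \dot{q}^2\, \phi L'\phi \, dy\, ds = \int \dot{q}^2 |\mathcal{D}_e \phi|^2 \, dy\, ds$. Pairing the $R^\delta$-equation with $\dot{q}^2$ and invoking the zero $y$-mean of $\tilde{m}_2$ then yields the orthogonality identity
\begin{equation*}
\int_{\mathbb{R} \times \mathbb{T}^d} v^\delta(s,y)\, \dot{q}(s)^2 \, dy\, ds = 0 \qquad \text{for every } \delta > 0.
\end{equation*}
Combining the $C^{2,\mu}$-regularity of $R^\delta$ from Appendix~\ref{A: correctors} with interior Schauder estimates for $\mathcal{D}_e^*\mathcal{D}_e$ produces equi-continuity of $\{v^\delta\}$ on compact sets, while a barrier argument on the $P_2^\delta$-equation — exploiting the positivity of $W''(q(s))$ for $|s|$ large together with the exponential decay of $\dot{q}$ at infinity — shows that $v^\delta(s,\cdot) \to 0$ as $|s| \to \infty$ uniformly in $\delta \in (0,1]$. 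From a subsequence $\delta_k \to 0$ one then extracts a uniform limit $v^{\delta_k} \to v^\infty$ on $\mathbb{R} \times \mathbb{T}^d$. Since $L' v^\delta = -\delta(v^\delta + \tilde{m}_2) \to 0$, the limit satisfies $L' v^\infty = 0$, and the energy identity above forces $\mathcal{D}_e v^\infty \equiv 0$; because $e \notin \mathbb{R} \mathbb{Z}^d$, the orbits of the characteristic flow $(s, y) \mapsto (s - t, y + t e)$ are dense in $\mathbb{R} \times \mathbb{T}^d$, so $v^\infty$ must be constant, and passing the orthogonality identity to the limit identifies that constant as $0$. The ``in particular'' assertion is then a routine computation: substituting $\tilde{P}^\nu_e = \overline{P}_e + P_2^\delta$ into the left-hand side of \eqref{E: approximate correctors} and using the PDEs satisfied by $\overline{P}_e$ and $P_2^\delta$ reduces the residual to $[m(y, \dot{q} e) - \tilde{m}(s,y)] \dot{q} + [\overline{\tilde{m}}(e) - \overline{m}(e)] \dot{q} - \delta P_2^\delta$, which is bounded in absolute value by $\nu \dot{q}$ thanks to \eqref{E: close to m}, the analogous averaged bound $|\overline{\tilde{m}}(e) - \overline{m}(e)| \le \nu/3$, and the main claim once $\delta$ is small enough.

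The step I expect to be most delicate is establishing the uniform equi-continuity and decay at infinity of $\{v^\delta\}$ needed to make the compactness argument rigorous. The operator $\mathcal{D}_e^*\mathcal{D}_e$ is degenerate along the characteristic direction $(1, -e)$, so standard Schauder estimates do not deliver uniform-in-$\delta$ interior regularity without some care, and the quantitative modulus could a priori depend on the Diophantine properties of $e$. A robust fallback is to bypass regularity of the full operator by working mode-by-mode in the $y$-Fourier decomposition: because $e \notin \mathbb{R} \mathbb{Z}^d$, every nonzero $k \in \mathbb{Z}^d$ satisfies $k_\perp := k - (e \cdot k) e \ne 0$, and after the gauge change $P^\delta_k(s) = e^{-2\pi i (e \cdot k) s} Q^\delta_k(s)$ each Fourier mode obeys the coercive one-dimensional Schr\"odinger equation $(-\partial_{ss} + W''(q) + 4\pi^2 |k_\perp|^2 + \delta) Q^\delta_k = -\hat{m}(s,k) e^{2\pi i (e \cdot k) s} \dot{q}$, for which the ground-state comparison yields a pointwise bound $|P^\delta_k|/\dot{q} \le C \|\hat{m}(\cdot,k)\|_\infty / (\delta + 4\pi^2 |k_\perp|^2)$. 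Summing this envelope against the Fourier decay provided by the $C^{2,\mu}$-regularity of $\tilde{m}$ in $y$, and splitting according to whether $|k_\perp|^2 \gg \delta$ or $|k_\perp|^2 \lesssim \delta$, should close the uniform estimate by dominated convergence.
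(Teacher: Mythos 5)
Your primary argument (compactness plus ergodicity) has a genuine gap at the step you yourself flagged: the uniform-in-$\delta$ decay of $v^\delta(s,\cdot)$ as $|s|\to\infty$. The barrier you sketch does not deliver it. The issue is a resonance: $\tilde m_2\,\dot q$ decays at exactly the rate $e^{-\sqrt{W''(\pm 1)}|s|}$, which is the same rate as the decaying homogeneous solution of $-\partial_{ss}+W''(q)$, so one cannot expect $P_2^\delta$ to decay faster than $\dot q$ pointwise (and hence $P_2^\delta/\dot q$ need not decay at all). Indeed, the bound from Corollary~\ref{C: cell problem solution} is only $|V_2^\delta|\le C(1+\delta^{-1})$, which after multiplication by $\delta$ gives boundedness, not decay. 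Without decay at infinity, the Arzel\`a--Ascoli step only yields $v^{\delta}\to 0$ \emph{locally} uniformly on $\mathbb{R}\times\mathbb{T}^d$; it does not control near-extrema escaping to $|s|=\infty$. The natural attempt to close this by a blow-up (translating by $s_k\to\infty$ and taking a limit solving a constant-coefficient equation) runs into the problem that your orthogonality identity $\int v^\delta\,\dot q^2 = 0$ and the energy bound $\int \dot q^2 |\mathcal D_e v^\delta|^2 = O(\delta)$ both carry a weight $\dot q(s+s_k)^2$ that concentrates away from $s\approx 0$ after translation, so they say nothing about the blow-up limit. Thus the conclusion $\|v^\delta\|_{L^\infty(\mathbb{R}\times\mathbb{T}^d)}\to 0$ is not reached. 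Your worry about uniform Schauder bounds, on the other hand, has a clean fix that you seem not to have noticed: following the paper, set $v_\zeta^\delta(x)=V_2^\delta(\langle x,e\rangle-\zeta,x)$; in $\mathbb{R}^d$ the operator $\mathcal D_e^*\mathcal D_e$ becomes $-\Delta$, which is uniformly elliptic, so the degeneracy is entirely an artifact of the $(s,y)$-coordinates.

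Your fallback Fourier argument, in contrast, is essentially correct and is also close in spirit to the paper's own proof. The modewise computation (the gauge change absorbing the $\langle e,k\rangle$ drift and producing the shifted potential $W''(q)+4\pi^2|k_\perp|^2+\delta$, and the ground-state/Doob transform giving $\|P_k/\dot q\|_\infty\le \|\hat m(\cdot,k)\|_\infty/(\delta+4\pi^2|k_\perp|^2)$) is sound, and multiplying by $\delta$ gives modewise decay since $k_\perp\neq 0$. The one thing to be careful about is summability: $\tilde m\in C^{2,\mu}_y$ gives $\|\hat m(\cdot,k)\|_\infty\lesssim |k|^{-2-\mu}$, which is not summable for $d\ge 3$, so one needs an approximation step (e.g.\ replacing partial Fourier sums with Fej\'er means, or truncating the forcing and using the contraction $\|v^\delta\|_\infty\le \|\tilde m_2\|_\infty$ to control the error). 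The paper does exactly this: Lemma~\ref{L: averaging} is first proved for $u$ with summable Fourier series and then extended to compact subsets of $C(\mathbb{T}^d)$ by approximation. Your fallback is therefore morally the same estimate as the paper's, packaged modewise via an ODE comparison rather than via the heat-kernel-on-$\langle e\rangle^\perp$ Feynman--Kac representation; the paper's representation is slightly slicker because the bound $\|\delta\tilde v^\delta_\zeta\|_\infty\le \int_0^\infty\delta e^{-\delta t}\eta(t^{-1})\,dt$ is simultaneously global in $x$, uniform in $\zeta$, and requires no summability discussion at the level of the estimate itself. I would suggest promoting your Fourier argument to the main proof and dropping the compactness route. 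Your identification of the residual in the ``in particular'' part of the statement as $[m(y,\dot q e)-\tilde m]\dot q + [\overline{\tilde m}(e)-\overline m(e)]\dot q - \delta P_2^\delta$, each piece bounded by $\tfrac{\nu}{3}\dot q$, is correct.
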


\subsection{Convergence of $\delta P^{\delta}_{2}$}  Since $(P^{\delta}_{2})_{\delta > 0} \subseteq C^{2,\alpha}(\mathbb{R} \times \mathbb{T}^{d})$, straightforward manipulations show that the functions $(v^{\delta}_{\zeta})_{\zeta \in \mathbb{R}}$ obtained by the rule
	\begin{equation*}
		v^{\delta}_{\zeta}(x) = V^{\delta}_{2}(\langle x,e \rangle - \zeta, x), \quad V_{2}^{\delta}(s,y) = \dot{q}(s)^{-1} P_{2}^{\delta}(s,y)
	\end{equation*}
are solutions of the following family of PDE:
	\begin{equation*}
		\tilde{m}_{2}(\langle x,e \rangle - \zeta,x) + \delta v^{\delta}_{\zeta} - \Delta v^{\delta}_{\zeta} - \frac{2 \ddot{q}(\langle x,e \rangle - \zeta)}{\dot{q}(\langle x,e \rangle - \zeta)} \langle e, Dv^{\delta}_{\zeta} \rangle = 0 \quad \text{in} \, \, \mathbb{R}^{d}.
	\end{equation*}
Thus, the asymptotic behavior of $(\delta P^{\delta}_{2})_{\delta > 0}$ is captured by that of $(\delta v^{\delta}_{\zeta})_{\delta > 0}$.  

Notice that if we define $(\tilde{v}^{\delta}_{\zeta})_{\zeta \in \mathbb{R}}$ by 
	\begin{equation} \label{E: recentered solutions}
		\tilde{v}^{\delta}_{\zeta}(x) = v^{\delta}_{\zeta}(x + \zeta e),
	\end{equation}
then the functions $(\tilde{v}^{\delta}_{\zeta})_{\zeta \in \mathbb{R}}$ satisfy the ``centered" family of PDE:
	\begin{equation} \label{E: recentered equations}
		\tilde{m}_{2}(\langle x,e \rangle, x + \zeta e) + \delta \tilde{v}^{\delta}_{\zeta} - \Delta \tilde{v}^{\delta}_{\zeta} - \frac{2 \ddot{q}(\langle x,e \rangle)}{\dot{q}(\langle x,e \rangle)} \langle e, D\tilde{v}_{\zeta}^{\delta} \rangle = 0 \quad \text{in} \, \, \mathbb{R}^{d}.
	\end{equation}
Therefore, letting $\tilde{p} : \mathbb{R} \times \mathbb{R} \times (0,\infty) \to (0,\infty)$ and $g : \langle e \rangle^{\perp} \times \langle e \rangle^{\perp} \times (0,\infty) \to (0,\infty)$ denote the fundamental solutions of the operators $\partial_{t} - \partial_{ss} + W''(q(s))$ and $\partial_{t} - \Delta_{\langle e \rangle^{\perp}}$, respectively, we find
	\begin{align}
		\tilde{v}^{\delta}_{\zeta}(x) &= -\int_{0}^{\infty} e^{-\delta t} U_{\zeta}(x,t) \, dt, \label{E: linear rep} \\
		U_{\zeta}(x,t) &:= \int_{-\infty}^{\infty} Q^{\tilde{s}}_{\zeta}(x + (\tilde{s} - \langle x,e \rangle)e,t) \tilde{p}(\langle x,e \rangle, \tilde{s},t) d\tilde{s}, \nonumber \\
		Q^{\tilde{s}}_{\zeta}(x,t) &:= \int_{\langle e \rangle^{\perp}} \tilde{m}_{2}(\tilde{s},x + y + \zeta e) g(0,y,t) \, \mathcal{H}^{d-1}(dy). \nonumber
	\end{align}
(Above $\Delta_{\langle e \rangle}^{\perp}$ is the Laplacian in the $\langle e \rangle^{\perp}$ directions, that is, $\Delta_{\langle e \rangle^{\perp}} = \text{tr}( (\text{Id} - e \otimes e)D^{2})$.  Thus, $g(0,y,t) = (4 \pi t)^{-\frac{(d - 1)}{2}} \exp \left(- (4t)^{-1} \|y\|^{2}\right)$.)

We will prove that $\delta \tilde{v}^{\delta}_{\zeta} \to 0$ uniformly using the averaging induced by the diffusion in $\langle e \rangle^{\perp}$.  Toward that end, the following observation will play a decisive role.

		\begin{lemma} \label{L: averaging}  If $\mathcal{F}$ is a compact subset of $C(\mathbb{T}^{d})$ in the uniform norm topology, then there is a modulus $\eta : [0,\infty) \to [0,\infty)$ with $\lim_{\delta \to 0^{+}} \eta(\delta) = 0$ such that
			\begin{equation*}
				\sup \left\{ \left| \int_{\langle e \rangle^{\perp}} u(x + y) g(0,y,t) \, dt - \int_{\mathbb{T}^{d}} u(y) \, dy \right| \, \mid \, x \in \mathbb{T}^{d}, \, \, u \in \mathcal{F} \right\} \leq \eta (t^{-1}).
			\end{equation*}
		\end{lemma}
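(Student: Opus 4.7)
The plan is to prove the lemma in three steps, using Fourier analysis on $\mathbb{T}^d$ together with a compactness argument. The underlying mechanism is that, because $e \notin \mathbb{R}\mathbb{Z}^d$, the subspace $\langle e \rangle^\perp$ projects to a dense subgroup of $\mathbb{T}^d$, so Gaussian averages over $\langle e \rangle^\perp$ eventually see the entire torus.

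\textbf{Step 1 (Fourier computation on trigonometric polynomials).} For the single mode $u(y) = e^{2\pi i \langle k, y \rangle}$ with $k \in \mathbb{Z}^d$, I would decompose $k = \langle k, e \rangle e + k_\perp$ and use the Gaussian Fourier transform on the hyperplane $\langle e \rangle^\perp$ to get
\begin{equation*}
\int_{\langle e \rangle^\perp} u(x+y)\, g(0,y,t)\, \mathcal{H}^{d-1}(dy) = e^{2\pi i \langle k, x\rangle} \exp\!\left(-4\pi^2 t \|k_\perp\|^2\right).
\end{equation*}
For $k = 0$ this equals $\int_{\mathbb{T}^d} u$. For $k \neq 0$, the assumption $e \notin \mathbb{R}\mathbb{Z}^d$ forces $k_\perp \neq 0$ (else $e$ would be parallel to a nonzero lattice vector), so $\|k_\perp\|^2 > 0$ and the right-hand side tends to $0$ as $t \to \infty$. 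By linearity this extends to any finite trigonometric polynomial $p$ with rate controlled by $\min\{\|k_\perp\|^2 : \hat{p}(k) \neq 0, \, k \neq 0\}$.

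\textbf{Step 2 (Extension to $C(\mathbb{T}^d)$).} I would introduce the operator $T_t u(x) := \int_{\langle e\rangle^\perp} u(x+y) g(0,y,t)\, \mathcal{H}^{d-1}(dy)$, which is an $L^\infty$-contraction since $g(0,\cdot,t)$ is a probability density on $\langle e\rangle^\perp$. Combined with Stone--Weierstrass density of trigonometric polynomials in $C(\mathbb{T}^d)$, a standard three-$\varepsilon$ argument upgrades Step 1 to the assertion that $\|T_t u - \int_{\mathbb{T}^d} u\|_{L^\infty(\mathbb{T}^d)} \to 0$ as $t \to \infty$ for every $u \in C(\mathbb{T}^d)$.

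\textbf{Step 3 (Uniformity from compactness).} Given $\varepsilon > 0$, compactness of $\mathcal{F}$ provides a finite $(\varepsilon/3)$-net $\{u_1,\dots,u_N\} \subseteq \mathcal{F}$ in the uniform norm. By Step 2 there is $T(\varepsilon)$ with $\max_i \|T_t u_i - \int_{\mathbb{T}^d} u_i\|_\infty \leq \varepsilon/3$ whenever $t \geq T(\varepsilon)$. Applying $\|T_t\|_{L^\infty \to L^\infty} \leq 1$ and the triangle inequality yields the same bound with $\varepsilon$ in place of $\varepsilon/3$ uniformly over $u \in \mathcal{F}$ and $x \in \mathbb{T}^d$. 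Setting $\eta(\delta) := \inf\{\varepsilon : T(\varepsilon) \leq \delta^{-1}\}$ then produces the desired modulus.

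\textbf{Main obstacle.} The only subtlety is that Step 1 is not quantitative in a useful way on all of $C(\mathbb{T}^d)$: if $e$ is close to badly Diophantine, $\|k_\perp\|$ can be made arbitrarily small by choosing $k$ along near-resonances, so no modulus works simultaneously for all continuous $u$. The compactness of $\mathcal{F}$ is exactly what is needed to circumvent this and is why the lemma is stated for compact families rather than individual functions.
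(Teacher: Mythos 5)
Your proposal is correct and follows essentially the same approach as the paper: the same Fourier-mode computation exploiting $k - \langle k,e\rangle e \neq 0$ for $k \in \mathbb{Z}^d \setminus\{0\}$, density of nice functions plus $L^\infty$-contractivity of the averaging operator, and compactness of $\mathcal{F}$ to upgrade to uniformity. Your Step 3 just spells out more explicitly (finite $\varepsilon$-net plus contractivity) the compactness argument that the paper attributes tersely to Arzelà--Ascoli.
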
  
		
			\begin{proof}  We only need to prove uniform convergence at a given $u \in C(\mathbb{T}^{d})$.  The uniformity in $\mathcal{F}$ then follows by the Arzel\`{a}-Ascoli Theorem.
			
			To start with, assume that $u \in C(\mathbb{T}^{d})$ is such that its Fourier series is summable, that is, $\sum_{k \in \mathbb{Z}^{d}} |\hat{u}(k)| < \infty$.  If we define $Q : \mathbb{T}^{d} \times (0,\infty) \to \mathbb{R}$ by 
				\begin{equation*}
					Q(x,t) = \int_{\langle e \rangle^{\perp}} u(x + y) g(0,y,t) \, \mathcal{H}^{d-1}(dy),
				\end{equation*}
			then an elementary computation shows that
				\begin{equation*}
					\hat{Q}(k,t) = \hat{u}(k) e^{- 4 \pi^{2} \|k - \langle k,e \rangle e\|^{2} t}.
				\end{equation*}
			Thus, the fact that $k \neq \langle k,e \rangle e$ for all $k \in \mathbb{Z}^{d}$ readily implies
				\begin{equation*}
					\lim_{t \to \infty} \sum_{k \in \mathbb{Z}^{d} \setminus \{0\}} |\hat{Q}(k,t)| = 0.
				\end{equation*}
			In particular, 
				\begin{equation*}
					\lim_{t \to \infty} \sup \left\{ \left|\int_{\langle e \rangle^{\perp}} u(x + y) g(0,y,t) \, \mathcal{H}^{d-1}(dy) - \int_{\mathbb{T}^{d}} u(y) \, dy \right| \, \mid \, x \in \mathbb{T}^{d} \right\} = 0.
				\end{equation*}  
				
			Finally, the general case follows by approximation.
			\end{proof}  
		
With Lemma \ref{L: averaging} in hand, Proposition \ref{P: convergence of fluctuations} follows readily:
	
	\begin{proof}[Proof of Proposition \ref{P: convergence of fluctuations}]  Recall that $V^{\delta}_{2} = \dot{q}^{-1} P^{\delta}_{2}$.  Hence our previous computations yield
		\begin{align*}
			\| \dot{q}^{-1}(\delta P^{\delta}_{2})\|_{L^{\infty}(\mathbb{R} \times \mathbb{T}^{d})} &= \sup \left\{ |\delta \tilde{v}_{\zeta}^{\delta}(x)| \, \mid \, x \in \mathbb{R}^{d}, \, \, \zeta \in \mathbb{R} \right\} \\
				&\leq \int_{0}^{\infty} \delta e^{-\delta t} \|Q^{\tilde{s}}_{\zeta}(\cdot,t)\|_{L^{\infty}(\mathbb{T}^{d})} \, dt.
		\end{align*}
	
	Observe that $\{\tilde{m}_{2}(\tilde{s},\cdot + \zeta e) \, \mid \, (\tilde{s},\zeta) \in \mathbb{R}^{2}\}$ is relatively compact in $C(\mathbb{T}^{d})$ by the choice of $\tilde{m}_{2}$.	Thus, by Lemma \ref{L: averaging}, there is a modulus $\eta : [0,\infty) \to [0,\infty)$ with $\lim_{\delta \to 0^{+}} \eta(\delta) = 0$ such that
		\begin{equation*}
			\sup \left\{ \|Q_{\zeta}^{\tilde{s}}(\cdot,t)\|_{L^{\infty}(\mathbb{T}^{d})} \, \mid \, (\tilde{s}, \zeta) \in \mathbb{R}^{2} \right\} \leq \eta(t^{-1}).
		\end{equation*}
	Putting it all together, we conclude by observing that
		\begin{equation*}
			\limsup_{\delta \to 0^{+}} \| \dot{q}^{-1}(\delta P^{\delta}_{2})\|_{L^{\infty}(\mathbb{R} \times \mathbb{T}^{d})} \leq \lim_{\delta \to 0^{+}} \int_{0}^{\infty} \delta e^{-\delta t} \eta(t^{-1}) \, dt = 0.
		\end{equation*}	 \end{proof}  
		
Now that Theorem \ref{T: approximate correctors} is proved, a few remarks are in order:

\begin{remark}  The diffusion in $\langle e \rangle^{\perp}$ directions is needed in Proposition \ref{P: convergence of fluctuations}.  More precisely, the same is no longer true if the forcing only depended on $s$.    

Here is an example.  Set $W(u) = (1 - u^{2})^{2}$ and consider the penalized correctors solving the following:
	\begin{equation*}
		q(s) \dot{q}(s) +\delta P^{\delta} - \ddot{P}^{\delta} + W''(q(s)) P^{\delta} = 0 \quad \text{in} \, \, \mathbb{R}.
	\end{equation*}
In this case, $q$ is an even function so $q \dot{q}$ is orthogonal to $\dot{q}$.  Thus, one can show that $\delta P^{\delta} \to 0$ uniformly as $\delta \to 0^{+}$.  However, this is no longer true when we renormalize by $\dot{q}$.

In this case, $v^{\delta} = \dot{q}^{-1} P^{\delta}$ solves the PDE:
	\begin{equation*}
		q(s) + \delta v^{\delta} - \ddot{v}^{\delta} + 2 \tanh(s) \dot{v}^{\delta} = 0 \quad \text{in} \, \, \mathbb{R}.
	\end{equation*}
For a given $\delta > 0$, as $\bar{s} \to \infty$, we find that $v^{\delta}(\cdot + \bar{s})$ converges to the bounded solution $v_{+}^{\delta}$ of
	\begin{equation*}
		1 + \delta v^{\delta}_{+} - \ddot{v}^{\delta}_{+} + 2 \dot{v}^{\delta}_{+} = 0 \quad \text{in} \, \, \mathbb{R}.
	\end{equation*} 
Since the coefficients are constant, this gives $-\delta v^{\delta}_{+} \equiv 1$.  In particular,
	\begin{equation*} 
		\left\| \dot{q}^{-1} (\delta P^{\delta}) \right\|_{L^{\infty}(\mathbb{R})} = \sup \left\{ \delta |v^{\delta}(s)| \, \mid \, s \in \mathbb{R} \right\} \geq 1 \quad \text{for all} \, \, \delta > 0.
	\end{equation*}

It is worth noting that, in the previous example, any bounded solution $P$ of $q(s) \dot{q}(s) - \ddot{P} + W''(q(s)) P = 0$ in $\mathbb{R}$ necessarily grows much faster than $\dot{q}$ as $s \to \pm \infty$.  That is, in this case, the function $\dot{q}^{-1} P$ is an unbounded solution of the associated PDE.
\end{remark}

Not only was the diffusion in orthogonal directions necessary in the proof of Theorem \ref{T: approximate correctors}, but irrationality was also:

\begin{remark} \label{R: degenerate directions}  If $e \in \mathbb{R} \mathbb{Z}^{d}$, then $(\dot{q}^{-1} \delta P^{\delta}_{2})_{\delta > 0}$ converges uniformly to a non-constant function in general.  To see this, notice that, if $e \in \mathbb{R} \mathbb{Z}^{d}$, then, in Lemma \ref{L: averaging}, the conclusion changes to the following one:
	\begin{equation*}
		\lim_{t \to \infty} \int_{\langle e \rangle^{\perp}} u(x + y,t) g(0,y,t) \, \mathcal{H}^{d-1}(dy) = \fint_{\mathbb{T}^{d-1}_{e}(\langle x,e \rangle)} u(\xi) \, \mathcal{H}^{d-1}(d \xi).
	\end{equation*}
Here the sub-tori $(\mathbb{T}^{d-1}_{e}(r))_{r \in [0,r_{e})}$ are defined by
	\begin{align*}
		\mathbb{T}^{d-1}_{e}(r) &= \left\{y \in \mathbb{T}^{d} \, \mid \, \langle y,e \rangle = r + \langle k,e\rangle \, \, \text{for some} \, \, k \in \mathbb{Z}^{d} \right\}, \\
		r_{e} &= \min \left\{ \langle k,e \rangle \, \mid \, k \in \mathbb{Z}^{d} \right\} \cap (0,\infty).
	\end{align*}
Geometrically, this becomes transparent, for example, when $e$ is a coordinate vector.

As a consequence of the previous observation, we see that if $(P^{\delta})_{\delta > 0}$ are the bounded solutions of $\tilde{m}(s,y) + \delta P^{\delta} + \mathcal{D}_{e}^{*} \mathcal{D}_{e} P^{\delta} + W''(q(s)) P^{\delta} = 0$ in $\mathbb{R} \times \mathbb{T}^{d}$ and $e \in \mathbb{R} \mathbb{Z}^{d}$, then 
	\begin{equation*}
		\lim_{\delta \to 0^{+}} \dot{q}(s)^{-1} (\delta P^{\delta}(s,y)) = \underline{\tilde{m}}_{e}(\langle y,e \rangle - s) \quad \text{uniformly in} \, \, \mathbb{R} \times \mathbb{T}^{d},
	\end{equation*}
where $\underline{\tilde{m}}_{e} : [0,r_{e}) \to \mathbb{R}$ is given by
	\begin{equation*}
		\underline{\tilde{m}}_{e}(\zeta) = c_{W}^{-1} \int_{-\infty}^{\infty} \fint_{\mathbb{T}^{d-1}_{e}(\zeta)} \tilde{m}(s,\xi) \dot{q}(s)^{2} \, \mathcal{H}^{d-1}(d \xi) \, ds.
	\end{equation*} 
While $\underline{\tilde{m}}_{e}$ certainly extends to a periodic function in $\mathbb{R}$, it need not be constant.

Similarly, \eqref{E: cell problem} cannot have a (e.g.\ weak) solution unless $\underline{\tilde{m}}_{e}$ is constant, and one can show that \eqref{E: approximate correctors} cannot hold unless $2\nu$ is larger than the oscillation of $\underline{\tilde{m}}_{e}$.
    \end{remark}  
    
Finally, we describe a situation where \eqref{E: cell problem} does have solutions in certain directions:

\begin{remark} \label{R: cell problem existence}  If we impose enough regularity assumptions on $m$ and arithmetic conditions on $e$, and if $\{Q^{\tilde{s}}_{\zeta}\}$ are defined as above, then it is possible to show the following estimate
	\begin{equation*}
		\sup \left\{ \int_{0}^{\infty} \|Q^{\tilde{s}}_{\zeta}(\cdot,t)\|_{L^{\infty}(\mathbb{T}^{d})} \, dt \, \mid \, (\tilde{s}, \zeta) \in \mathbb{R} \right\} < \infty.
	\end{equation*}
This can be made precise by following \cite[Proposition 22]{level set PDE}.  With this estimate, we use \eqref{E: linear rep} to see that $\|v_{\zeta}^{\delta}\|_{L^{\infty}(\mathbb{R}^{d})}$ is bounded independently of $(\delta,\zeta)$.  Therefore, we can send $\delta \to 0^{+}$ to obtain a solution of \eqref{E: cell problem}.  \end{remark}

\section{Irrational Contact Points}  \label{S: irrational directions}

The remainder of the paper is devoted to the proof of Proposition \ref{P: main proposition}.  This section establishes that the phase indicator function $\chi_{*}$ satisfies condition (a) in the definition of a supersolution in irrational directions (see Definition \ref{D: irrational directions}).   
 
Put another way, the goal of this section is to prove the following:

	\begin{prop} \label{P: graph localization} If $\varphi$ is a smooth function in $\mathbb{R}^{d} \times (0,\infty)$; $(x_{0},t_{0}) \in \mathbb{R}^{d} \times (0,\infty)$ is a point where $\chi_{*} - \varphi$ has a strict local minimum; and $D\varphi(x_{0},t_{0}) \in \mathbb{R}^{d} \setminus \mathbb{R} \mathbb{Z}^{d}$, then
		\begin{equation} \label{E: supersolution inequality}
			\overline{m}(\widehat{D\varphi}(x_{0},t_{0})) \varphi_{t}(x_{0},t_{0}) - \text{tr} \left( \left( \text{Id} - \widehat{D\varphi}(x_{0},t_{0}) \otimes \widehat{D\varphi}(x_{0},t_{0}) \right) D^{2} \varphi(x_{0},t_{0}) \right) \geq 0.
		\end{equation}
	\end{prop}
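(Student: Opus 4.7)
The plan is to argue by contradiction via a localized Barles--Souganidis ansatz, in which the irrationality of $e := \widehat{D\varphi}(x_0,t_0)$ is used only to invoke Theorem \ref{T: approximate correctors}. Suppose that
\[
\overline{m}(e)\,\varphi_t(x_0,t_0) - \text{tr}\bigl((\text{Id} - e \otimes e)\, D^2\varphi(x_0,t_0)\bigr) \leq -3\delta
\]
for some $\delta > 0$. Since $\chi_*$ takes only the values $\pm 1$, the strict local minimum condition forces $\chi_*(x_0,t_0) = -1$ together with the inclusion $\{\varphi(\cdot,t) > \varphi(x_0,t_0)\} \subseteq \Omega^{(1)}_t$ in a neighborhood of $(x_0,t_0)$. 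After rotating coordinates so that $e$ is the last basis vector, I write $x = (x',x_e) \in \mathbb{R}^{d-1} \times \mathbb{R}$ and use the implicit function theorem to represent the level set $\{\varphi(\cdot,t) = \varphi(x_0,t_0)\}$ near $(x_0,t_0)$ as a graph $x_e = h(x',t)$ with $h(x_0',t_0) = x_{0,e}$ and $\nabla_{x'} h(x_0',t_0) = 0$. A direct computation then translates the failed supersolution inequality into the graph inequality
\[
\overline{m}(e)\,\partial_t h(x_0',t_0) - \Delta_{x'} h(x_0',t_0) \geq 3\delta'
\]
for some $\delta' > 0$.

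Following the graphical approach of \cite{pulsating einstein}, I would next construct a perturbed graph that strictly solves the graph flow in the subsolution direction and generates, via an approximate corrector ansatz, a local subsolution of \eqref{E: AC mobility}. Concretely, take parameters $\beta,\rho,\alpha,r,R,\tau > 0$ with $\rho r^2 \geq 2\beta$ and $\rho\alpha\tau \geq 2\beta$, and $\rho$ small enough that
\[
\tilde{h}(x',t) := h(x',t) - \beta + \rho\|x'-x_0'\|^2 + \rho\alpha(t_0 - t)
\]
satisfies $\overline{m}(e)\,\partial_t\tilde{h} - \Delta_{x'}\tilde{h} \geq \delta'$ throughout $B_r(x_0') \times [t_0-\tau,t_0]$. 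Then $\tilde{h}(x_0',t_0) = x_{0,e} - \beta < x_{0,e}$, while $\tilde{h} \geq h + \beta$ on both the lateral face $\partial B_r(x_0') \times [t_0-\tau,t_0]$ and the bottom slice $B_r(x_0') \times \{t_0-\tau\}$. Because $e \in S^{d-1} \setminus \mathbb{R}\mathbb{Z}^d$, Theorem \ref{T: approximate correctors} furnishes, for any $\nu > 0$, an approximate corrector $\tilde{P}^\nu_e$ satisfying \eqref{E: approximate correctors}, and I form the mesoscopic ansatz
\[
w^\epsilon(x,t) = q\!\left(\frac{x_e - \tilde{h}(x',t)}{\epsilon}\right) + \epsilon\,\tilde{P}^\nu_e\!\left(\frac{x_e - \tilde{h}(x',t)}{\epsilon},\,\frac{x}{\epsilon}\right).
\]
Substituting into the operator of \eqref{E: AC mobility}, the standing wave equation cancels the $O(\epsilon^{-2})$ terms and \eqref{E: approximate correctors} averages the $O(\epsilon^{-1})$ contributions, leaving a principal remainder proportional to $\dot{q}\bigl[\overline{m}(e)\,\partial_t\tilde{h} - \Delta_{x'}\tilde{h}\bigr]/\epsilon$, plus errors of order $\nu\dot{q}/\epsilon$, $\|\nabla_{x'}\tilde{h}\|^2\dot{q}/\epsilon$, and $1$. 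Choosing $\nu$ and $\rho r$ small enough that all error terms are strictly dominated by $\delta'\dot{q}/\epsilon$, $w^\epsilon$ is a classical subsolution of \eqref{E: AC mobility} throughout $Q := B_r(x_0') \times (x_{0,e} - R, x_{0,e} + R) \times (t_0 - \tau, t_0]$ for all small $\epsilon$.

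To conclude, I would verify the comparison $w^\epsilon \leq u^\epsilon$ on the parabolic boundary $\partial_p Q$, after possibly subtracting an $O(\epsilon)$ constant from $w^\epsilon$ (which preserves the strict subsolution property by the gap $\delta'$). On the portion of $\partial_p Q$ where $x_e > \tilde{h}(x',t)$, the separation $\tilde{h} \geq h + \beta$ ensures $(x,t) \in \Omega^{(1)}_t$, so $u^\epsilon \to 1$ uniformly on this compact set (using the definition of $\Omega^{(1)}$ and the initialization lemma of Appendix \ref{A: initialization}), and dominates $w^\epsilon \leq 1 + O(\epsilon)$. On the complementary region, choosing $R$ large compared to the other scales makes $(x_e - \tilde{h})/\epsilon$ so negative that $w^\epsilon \leq -1 + e^{-cR/\epsilon}$, well below $u^\epsilon \geq -1$. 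The comparison principle of Appendix \ref{A: comparison principle} then yields $w^\epsilon \leq u^\epsilon$ on $Q$; in particular $u^\epsilon(x_0,t_0) \geq w^\epsilon(x_0,t_0) = q(\beta/\epsilon) + O(\epsilon) \to 1$ uniformly in a small neighborhood of $(x_0,t_0)$, forcing $\chi_*(x_0,t_0) = 1$ and contradicting $\chi_*(x_0,t_0) = -1$. The main obstacle is the coordinated balancing of errors: the $\nu\dot{q}$ residual from the approximate corrector, the $\|\nabla_{x'}\tilde{h}\|^2\dot{q}$ residual from the mismatch between $e$ and the true graph normal, and the $O(\epsilon)$ boundary defect must all be strictly dominated by the gap $\delta'$, which forces a careful hierarchy of parameters. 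The localization to $Q$ is essential: it permits using the approximate corrector only in the single irrational direction $e$, sidestepping the obstructions in rational directions documented in Remark \ref{R: degenerate directions}.
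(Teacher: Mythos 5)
Your overall strategy matches the paper's: argue by contradiction, represent the level set of $\varphi$ as a graph via the implicit function theorem, perturb the graph so it strictly solves the graph-flow inequality, build a mesoscopic subsolution of \eqref{E: AC mobility} from the standing wave $q$ and an approximate corrector $\tilde{P}^\nu_e$, and push it under $u^\epsilon$ by comparison after an initialization step.  However, there is a concrete gap in the choice of phase variable.  You take the phase to be the raw graph coordinate $\xi := x_e - \tilde{h}(x',t)$, so the spatial gradient of $\xi$ is $e - \nabla_{x'}\tilde{h}$, which has norm $\sqrt{1 + \|\nabla_{x'}\tilde{h}\|^2} \neq 1$.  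Computing $\Delta w^\epsilon$ then produces a term $\epsilon^{-2}\,\ddot{q}(\xi/\epsilon)\,(1 + \|\nabla_{x'}\tilde{h}\|^2)$, while the standing wave equation only cancels $\epsilon^{-2}\ddot{q}(\xi/\epsilon)$ against $\epsilon^{-2}W'(q(\xi/\epsilon))$.  You are left with a residual
\begin{equation*}
-\epsilon^{-2}\,\ddot{q}\!\left(\tfrac{\xi}{\epsilon}\right)\,\|\nabla_{x'}\tilde{h}\|^2,
\end{equation*}
which is $O(\epsilon^{-2})$, not $O(\epsilon^{-1}\dot{q})$ as your error bookkeeping asserts.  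For a fixed macroscopic box $Q$ (with $r$, $\rho$ fixed independently of $\epsilon$), $\|\nabla_{x'}\tilde{h}\|$ is a fixed small number, so this term blows up relative to the gap $\delta'\dot{q}/\epsilon$ as $\epsilon\to 0^+$, and the subsolution property fails.  The paper avoids exactly this by replacing the graph coordinate with the signed distance $\tilde{d}_c$ to the perturbed graph, whose spatial gradient is a genuine unit vector everywhere, so that the $\epsilon^{-2}$ terms cancel identically and the graph geometry only enters through $\Delta\tilde{d}_c$ and the mismatch $\|D\tilde{d}_c - e\|\leq\alpha_0$ at order $\epsilon^{-1}$, where it can be dominated.

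A second, smaller issue is the boundary comparison.  You want $w^\epsilon \leq u^\epsilon$ on the part of $\partial_p Q$ where $\tilde{h}\geq h+\beta$, and you invoke ``$u^\epsilon\to 1$ uniformly'' on that compact set from $(x,t)\in\Omega^{(1)}_t$.  But $\Omega^{(1)}_t$ only records $\liminf_*u^\epsilon = 1$, which does not by itself give a uniform rate; you need, as the paper does in Proposition~\ref{P: initialization}, to pay a short time $\tau\epsilon^2\log(\epsilon^{-1})$ and run the ODE subsolution $\tilde{\chi}^\epsilon$ of Appendix~\ref{A: initialization} to manufacture the quantitative lower bound $1-\beta\epsilon$ on that boundary set (and to do it uniformly over $t$ in the time slab).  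You gesture at this via the initialization lemma, but the statement as written glosses over the time shift and the uniform-in-$t$ construction, both of which are needed for the comparison on the whole parabolic boundary.

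Lastly, a remark on the contradiction: the paper concludes that $\chi_*=1$ in a full neighborhood of $(x_0,t_0)$, and the contradiction is with $D\varphi(x_0,t_0)\neq 0$ (a locally constant $\chi_*$ forces $D\varphi(x_0,t_0)=0$ at a strict local minimum of $\chi_*-\varphi$), not simply with the sign of $\chi_*(x_0,t_0)$; your closing sentence should be adjusted accordingly.
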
  
	
The proof of Proposition \ref{P: graph localization} proceeds by contradiction and is divided into three steps.  The first step involves the construction of a suitable local subsolution of \eqref{E: effective equation}.  The second step, the so-called initialization step, shows that the solutions $(u^{\epsilon})_{\epsilon >0}$ develop a relatively sharp interface around $\{\varphi \approx 0\}$ after a short macroscopic time.  As in \cite{barles souganidis}, this initial step allows us to convert the macroscopic subsolution of the first step into a subsolution of \eqref{E: AC mobility}.  This conversion is precisely the third step.  If $\varphi$ does not satisfy \eqref{E: supersolution inequality}, these subsolutions slip underneath the solutions $(u^{\epsilon})_{\epsilon > 0}$ and force $(x_{0},t_{0})$ to be an interior point of the evolution $t \mapsto \Omega_{t}^{(1)}$, a contradiction.

\subsection{Macroscopic Subsolution}  \label{S: macroscopic subsolution} Here we recall some useful observations that follow from the assumption that $D\varphi(x_{0},t_{0}) \neq 0$.  It will be useful to introduce some notation.  

	Throughout the section, we let $e = \widehat{D\varphi}(x_{0},t_{0})$.  Let $\{e_{1},\dots,e_{d-1}\}$ be an orthonormal basis for $\mathbb{R}^{d-1}$ and $\mathcal{O}_{e} : \mathbb{R}^{d-1} \to \mathbb{R}^{d}$ be a linear isometry with $\mathcal{O}_{e}(\mathbb{R}^{d-1}) = \langle e \rangle^{\perp}$.  Given $R > 0$, we define the open cube $Q(0,R) \subseteq \mathbb{R}^{d-1}$ by 
		\begin{equation*}
			Q(0,R) = \{x' \in \mathbb{R}^{d-1} \, \mid \, \max\{ |\langle x', e_{1} \rangle|, \dots, |\langle x', e_{d-1} \rangle| \} < R/2 \}
		\end{equation*}
	For an $\tilde{x}' \in \mathbb{R}^{d-1}$, we set $Q(\tilde{x}',R) = \tilde{x}' + Q(0,R)$.  
	
	Using the coordinates determined by $\mathcal{O}_{e}$, we define open rectangular prisms $Q^{e}(0,R,\rho)$ of base length $R > 0$ and height ratio $\rho$ by 
		\begin{equation*}
			Q^{e}(0,R,\rho) = \left\{ \mathcal{O}_{e}(x') + s e \, \mid \, x' \in Q(0,R), \, \, s \in (-\rho R/2,\rho R/2) \right\}
		\end{equation*}
	Given $x \in \mathbb{R}^{d}$, we define $Q^{e}(x,R,\rho) = x + Q^{e}(0,R,\rho)$.
	
	Finally, an arbitrary point $x \in \mathbb{R}^{d}$ will frequently be written as $x = (x_{e},x')$ with the understanding that $x_{e} \in \mathbb{R}$ and $x' \in \mathbb{R}^{d-1}$ are such that $x = \mathcal{O}_{e}(x') + x_{e} e$.  In particular, we will write $x_{0} = (x_{0,e},x_{0}')$.
	
	We will use the fact that $\{\varphi = 0\}$ is locally a one-parameter family of graphs near $(x_{0},t_{0})$.  Specifically, we have

	\begin{prop} \label{P: graph thing}  There are constants $\rho, \nu, S, V> 0$ and a smooth function $g : Q(x_{0}',S) \times (t_{0} - \nu, t_{0} + \nu) \to \mathbb{R}$ such that
		\begin{itemize}
			\item[(i)] $\varphi(x,t) > 0$ (resp.\ $\varphi(x,t) \leq 0$) for some $(x,t) \in Q^{e}(x_{0},S,\rho) \times (t_{0} - \nu,t_{0} + \nu)$ if and only if $x_{e} > g(x',t)$ (resp.\ $x_{e} \leq g(x',t)$).
			\item[(ii)] $|g(x_{1}',t) - g(x_{2}',t)| \leq \frac{1}{2} \rho (d - 1)^{-\frac{1}{2}} \|x_{1}' - x_{2}'\|$ no matter the choice of $x_{1}',x_{2}' \in Q(x_{0}',S)$ or $t \in (t_{0} - \nu, t_{0} + \nu)$.
			\item[(iii)] $|g(x',t) - g(x',s)| \leq V |t - s|$ for all $x' \in Q(x_{0}',S)$ and $t, s \in (t_{0} - \nu, t_{0} + \nu)$.
		\end{itemize}
	Further, we can assume that $\rho < 1$ and $(x_{0},t_{0})$ is a strict local minimum of $\chi_{*} - \varphi$ in $Q^{e}(x_{0},S,\rho) \times (t_{0} - \nu,t_{0} + \nu)$.  
	\end{prop}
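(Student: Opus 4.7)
The plan is to apply the implicit function theorem to $\varphi$ with respect to the $x_e$ variable and then chase constants. First, by replacing $\varphi$ with $\varphi - \varphi(x_0,t_0)$ (which shifts $\chi_* - \varphi$ by a constant and hence preserves the strict local minimum at $(x_0,t_0)$ and all derivatives of $\varphi$), I may assume $\varphi(x_0,t_0) = 0$. In the coordinates $(x_e, x') \in \mathbb{R} \times \mathbb{R}^{d-1}$ determined by $e = \widehat{D\varphi}(x_0,t_0)$ and $\mathcal{O}_e$, the hypothesis that $D\varphi(x_0,t_0)$ is parallel to $e$ gives
\[
\partial_{x_e}\varphi(x_0,t_0) = \|D\varphi(x_0,t_0)\| > 0, \qquad D_{x'}\varphi(x_0,t_0) = 0.
\]

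Next, by the implicit function theorem applied to $F(x_e,x',t) := \varphi(\mathcal{O}_e(x') + x_e e, t)$ at $(x_{0,e}, x_0', t_0)$, there exist open neighborhoods $\mathcal{U} \subseteq \mathbb{R}^{d-1}$ of $x_0'$ and $\mathcal{I} \subseteq (0,\infty)$ of $t_0$, and a smooth function $g : \mathcal{U} \times \mathcal{I} \to \mathbb{R}$, with $g(x_0',t_0) = x_{0,e}$ and $\varphi(\mathcal{O}_e(x') + g(x',t)e, t) = 0$ throughout $\mathcal{U} \times \mathcal{I}$. Implicit differentiation yields
\[
D_{x'} g = -(\partial_{x_e}\varphi)^{-1} D_{x'}\varphi, \qquad \partial_t g = -(\partial_{x_e}\varphi)^{-1} \varphi_t.
\]
Since $D_{x'}\varphi$ vanishes at $(x_0,t_0)$ and $\partial_{x_e}\varphi$ is bounded below there, continuity lets me shrink $\mathcal{U} \times \mathcal{I}$ so that, for any prescribed $\rho \in (0,1)$, the bound $\|D_{x'} g\| \le \tfrac{1}{2}\rho(d-1)^{-1/2}$ holds, and simultaneously $|\partial_t g| \le V$ for some $V > 0$. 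These two estimates immediately give (ii) and (iii).

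Finally, I fix such a $\rho < 1$ and choose $S, \nu > 0$ so small that the closed prism $\overline{Q^e(x_0,S,\rho)} \times [t_0 - \nu, t_0 + \nu]$ is contained in a neighborhood where $\partial_{x_e}\varphi > 0$ holds, where $g$ and the Lipschitz estimates above are defined, and where $(x_0,t_0)$ remains a strict local minimum of $\chi_* - \varphi$. Because the Euclidean diameter of $Q(x_0',S)$ is $\sqrt{d-1}\, S/2$, the spatial Lipschitz estimate combined with $V\nu$ taken small forces $|g(x',t) - x_{0,e}| < \rho S/2$ throughout $Q(x_0',S) \times (t_0 - \nu, t_0 + \nu)$, so the graph of $g$ stays inside the prism. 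On this prism, monotonicity of $\varphi$ in $x_e$ together with $\varphi(\mathcal{O}_e(x') + g(x',t)e, t) = 0$ yields (i). The only mildly delicate point is simultaneously calibrating $\rho, S, \nu$ to make the graph remain inside the prism while keeping all the Lipschitz and strict-minimum conditions, but this is just a matter of ordering the shrinking steps as above.
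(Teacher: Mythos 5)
Your proof is correct and takes essentially the same route as the paper, which simply notes that the construction of $g$ is a classical application of the implicit function theorem and that $D\varphi(x_0,t_0) = \|D\varphi(x_0,t_0)\|e$ forces $Dg(x_0',t_0) = 0$, hence $\rho < 1$ is achievable; you have fleshed out the shrinking and calibration of the constants that the paper leaves to the reader. One small slip in wording: you call $\sqrt{d-1}\,S/2$ the \emph{diameter} of $Q(x_0',S)$, but that quantity is the maximum distance from the center $x_0'$ to a point of the cube (the circumradius); the diameter is $\sqrt{d-1}\,S$. Since your Lipschitz bound is actually applied to $\|x'-x_0'\|$ rather than to arbitrary pairs, the estimate $|g(x',t)-g(x_0',t)| \le \rho S/4$ that you use is correct, so this is a terminological rather than mathematical issue.
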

	
		\begin{proof}  The construction of $g$ is a classical application of the implicit function theorem.  The fact that $D\varphi(x_{0},t_{0}) = \|D\varphi(x_{0},t_{0})\| e$ implies that $Dg(x_{0},t_{0}) = 0$, and hence the existence of $\rho < 1$.  \end{proof}  
		
	By assumption, there is an $\tilde{\alpha} > 0$ such that
		\begin{equation*}
			\overline{m}(\widehat{D\varphi}(x_{0},t_{0})) \varphi_{t}(x_{0},t_{0}) - \text{tr} \left( \left( \text{Id} - \widehat{D\varphi}(x_{0},t_{0}) \otimes \widehat{D\varphi}(x_{0},t_{0}) \right) D^{2} \varphi(x_{0},t_{0}) \right) \leq -10 \tilde{\alpha} \|D\varphi(x_{0},t_{0})\|.
		\end{equation*}
	In particular, since $g$ is smooth, this implies there is an $0 < S_{1} < S$ and a $0 < \nu_{1} < \nu$ such that
		\begin{equation} \label{E: definition of S1}
			\overline{m}\left( \frac{e - Dg}{\sqrt{1 + \|Dg\|^{2}}} \right) g_{t} - \text{tr} \left( \left( \text{Id} - \frac{Dg \otimes Dg}{1 + \|Dg\|^{2}} \right) D^{2} g \right) \geq 9 \tilde{\alpha} \quad \text{in} \, \, Q(x_{0}',S_{1}) \times (t_{0} - \nu_{1},t_{0} + \nu_{1}).
		\end{equation}
		
	Next, given a free variable $c > 0$ to be determined, define $\tilde{g} : Q(x_{0}',S) \times (t_{0} - \nu, t_{0} + \nu) \to \mathbb{R}$ by 
		\begin{equation*}
			\tilde{g}(x',t) = g(x',t) + \frac{c \|x' - x_{0}'\|^{2}}{2}.
		\end{equation*}
	Notice that there is a $c_{1} > 0$ such that if $c \in (0,c_{1})$, then
		\begin{equation} \label{E: definition of c1}
			\overline{m}\left( \frac{e - D\tilde{g}}{\sqrt{1 + \|D\tilde{g}\|^{2}}} \right) \tilde{g}_{t} - \text{tr} \left( \left( \text{Id} - \frac{D\tilde{g} \otimes D\tilde{g}}{1 + \|D\tilde{g}\|^{2}} \right) D^{2} \tilde{g} \right) \geq 8 \tilde{\alpha} \quad \text{in} \, \, Q(x_{0}',S_{1}) \times (t_{0} - \nu_{1}, t_{0} + \nu_{1}).
		\end{equation}
		
	Finally, let $d : Q^{e}(x_{0},S_{1},\rho) \times (t_{0} - \nu_{1}, t_{0} + \nu_{1}) \to \mathbb{R}$ and $d_{c} : Q^{e}(x_{0},S_{1},\rho) \times (t_{0} - \nu_{1}, t_{0} + \nu_{1}) \to \mathbb{R}$ be the signed distance functions to $\{x_{e} = g(x',t)\}$ and $\{x_{e} = \tilde{g}(x',t)\}$, respectively.  Specifically, we define $d$ by
		\begin{equation*}
			d(x,t) = \left\{ \begin{array}{r l}
							\text{dist}(x, \{\tilde{x} \in Q^{e}(x_{0},S_{1},\rho) \, \mid \, \tilde{x}_{e} = g(\tilde{x}',t)\}), & \text{if} \, \, x_{e} > g(x',t), \\
							-\text{dist}(x, \{\tilde{x} \in Q^{e}(x_{0},S_{1},\rho) \, \mid \, \tilde{x}_{e} = g(\tilde{x}',t)\}), & \text{otherwise},
						\end{array} \right.
		\end{equation*}
	and we define $d_{c}$ similarly, but with $g$ replaced by $\tilde{g}$.
	
	Let $\alpha_{0} > 0$ be another free variable.  Arguing as in\ \cite[Appendix C]{pulsating einstein}, we deduce the following facts about $d_{c}$:
	
		\begin{lemma} \label{L: definition of S2}  Making $\nu_{1}$ smaller if necessary, there is a constant $\gamma > 0$ depending on $c_{1}$, $\varphi$, and $S_{1}$ but not on $c$ such that 
			\begin{align*}
				M &:= \sup \left\{ \|\partial_{t}^{j} D^{i}d_{c}(x,t)\| + \|\partial_{t}^{j} D^{i}d(x,t)\| \, \mid \, x \in Q^{e}(x_{0},S_{1}/2,\rho), \right. \\
					&\qquad \qquad \left. t \in (t_{0} - \nu_{1}, t_{0} + \nu_{1}), \, \, |d_{c}(x,t)| < \gamma, \, \, i + j \leq 4 \right\} < \infty.
			\end{align*}
		Furthermore, we can choose $0 < S_{1}' < S_{1}$ and $0 < \nu_{1}' < \nu_{1}$ in such a way that (making $\gamma$ smaller, if necessary)
			\begin{equation*}
				\overline{m}(Dd_{c}) d_{c,t} - \Delta d_{c} \leq -7 \tilde{\alpha}, \, \, |\overline{m}(Dd_{c}) - \overline{m}(e)| \leq \alpha_{0} \quad \text{in} \, \, Q^{e}(x_{0},S_{1}',\rho) \times (t_{0} - \nu_{1}',t_{0} - \nu_{1}') \cap \{|d| < \gamma\}.
			\end{equation*}
		\end{lemma}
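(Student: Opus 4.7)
The first statement is standard tubular neighborhood theory. By Proposition~\ref{P: graph thing}, both $g$ and $\tilde{g} = g + \tfrac{c}{2}\|x'-x_0'\|^2$ are smooth on $Q(x_0',S) \times (t_0-\nu,t_0+\nu)$. After shrinking $\nu_1$ so that $[t_0-\nu_1,t_0+\nu_1]$ is compactly contained in $(t_0-\nu,t_0+\nu)$, the $C^5$ norms of $g$ and $\tilde{g}$ over $Q(x_0',S_1/2) \times (t_0-\nu_1,t_0+\nu_1)$ are bounded in terms of $\varphi$, $S_1$, and $c_1$ alone, since the quadratic bump contributes at most a fixed polynomial in $c_1 S_1^2$. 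An inverse function theorem argument then produces $\gamma > 0$, depending on these norms but not on $c$, such that inside $Q^e(x_0,S_1/2,\rho)$ each hypersurface $\{x_e = g(x',t)\}$ and $\{x_e = \tilde{g}(x',t)\}$ admits a tubular neighborhood of half-width $\gamma$ on which the corresponding signed distance function is $C^4$ with derivatives up to fourth order controlled by the $C^5$ norms of $g$ and $\tilde{g}$. This gives $M < \infty$.

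For the second statement, I use the standard geometric identities for signed distance functions to a moving graph: on $\{d_c = 0\}$,
\begin{equation*}
\sqrt{1+\|D\tilde{g}\|^2}\left[\overline{m}(Dd_c) d_{c,t} - \Delta d_c\right] = -\left[\overline{m}\!\left(\frac{e - \mathcal{O}_e D\tilde{g}}{\sqrt{1+\|D\tilde{g}\|^2}}\right) \tilde{g}_t - \text{tr}\!\left(\left(\text{Id} - \frac{D\tilde{g}\otimes D\tilde{g}}{1+\|D\tilde{g}\|^2}\right) D^2 \tilde{g}\right)\right].
\end{equation*}
By \eqref{E: definition of c1} the right-hand side is $\leq -8\tilde{\alpha}$, so $\overline{m}(Dd_c) d_{c,t} - \Delta d_c \leq -8\tilde{\alpha}\bigl(1+\|D\tilde{g}\|^2\bigr)^{-1/2}$ on $\{d_c = 0\}$. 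Since $D\tilde{g}(x_0',t_0) = Dg(x_0',t_0) = 0$, continuity of $D\tilde{g}$ allows me to shrink $S_1'$ and $\nu_1'$ so that $\sqrt{1+\|D\tilde{g}\|^2}$ is arbitrarily close to $1$ throughout $Q(x_0',S_1') \times (t_0-\nu_1',t_0+\nu_1')$, improving the estimate to $\overline{m}(Dd_c) d_{c,t} - \Delta d_c \leq -7.5\tilde{\alpha}$ on the corresponding piece of $\{d_c = 0\}$.

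Both sides of the inequality are continuous in $(x,t)$ throughout $\{|d_c| < \gamma\}$ by the first statement, and $\overline{m}$ is continuous on $S^{d-1}$ by dominated convergence applied to \eqref{E: effective mobility}. A final shrink of $\gamma$ propagates the surface bound to the full tube $\{|d| < \gamma\}$, yielding $\overline{m}(Dd_c) d_{c,t} - \Delta d_c \leq -7\tilde{\alpha}$. The estimate $|\overline{m}(Dd_c) - \overline{m}(e)| \leq \alpha_0$ follows from the same continuity of $\overline{m}$ together with the fact that $Dd_c(x_0,t_0) = e$. The only mild obstacle is arranging the constants $\gamma$ and $M$ to be independent of $c \in (0,c_1)$; this works because the $C^5$ norms of $\tilde{g}$ are uniformly bounded in $c \in (0,c_1)$, after which the rest of the argument proceeds as above.
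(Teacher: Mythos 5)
The paper does not prove Lemma~\ref{L: definition of S2} in the body; it simply invokes ``arguing as in \cite[Appendix C]{pulsating einstein}.'' Your reconstruction is correct and supplies the expected argument. The quadratic bump contributes at most a fixed polynomial in $c_1$, $S_1$ to the $C^5$ norm of $\tilde g$, so tubular-neighborhood regularity (inverse function theorem applied to the normal exponential map) gives a $c$-uniform $\gamma$ and the bound $M<\infty$. On $\{d_c = 0\}$, the exact identity converting $\overline{m}(Dd_c)\, d_{c,t} - \Delta d_c$ into $-(1+\|D\tilde g\|^2)^{-1/2}$ times the graph operator reduces the first inequality to \eqref{E: definition of c1}; the extra factor is then absorbed because $D\tilde g(x_0',t_0)=0$, after shrinking $S_1'$, $\nu_1'$. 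Continuity of $\overline m$ (dominated convergence in \eqref{E: effective mobility}, using \eqref{A: m continuous}--\eqref{A: m bounded}) together with the $C^4$ control of $d_c$ from the first part propagates both bounds off the zero level set and also yields $|\overline{m}(Dd_c) - \overline{m}(e)|\le\alpha_0$. I checked the sign conventions in your geometric identity, namely $d_{c,t} = -\tilde g_t/\sqrt{1+\|D\tilde g\|^2}$ on $\{d_c=0\}$ and the formula for $\Delta d_c$ there in terms of the scaled graph mean-curvature operator, and they are consistent; the proof is sound.
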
    

Finally, we let $\eta, \beta > 0$ be free variables to be determined and pick $0 < S_{2} < S_{1}'$ and $0 < \nu_{2} < \nu_{1}'$  and define $\tilde{d}_{c} : Q^{e}(x_{0},S_{2},\rho) \times (t_{0} - \nu_{2}, t_{0} + \nu_{2}) \to \mathbb{R}$ by 
	\begin{equation*}
		\tilde{d}_{c}(x,t) = d_{c}(x + \eta(t - (t_{0} - \nu_{2})) e, t)
	\end{equation*}
Notice that $\tilde{d}_{c}(\cdot,t)$ is the signed distance to the surface $\{x_{e} = \tilde{g}(x',t) - \eta(t - (t_{0} - \nu_{2}))\}$, and it is well-defined by making $S_{2}$ and $\nu_{2}$ smaller if necessary.

$\tilde{d}_{c}$ is the subsolution of \eqref{E: effective mobility} that will be used in the sequel.    Its key properties are summarized next.  In the statement, we use the notation $\partial_{p}$ for the parabolic boundary.  Specifically, for a space-time domain $Q \times (a,b)$ that means
	\begin{equation*}
		\partial_{p} [Q \times (a,b)] = \partial Q \times (a,b] \cup \overline{Q} \times \{a\}.
	\end{equation*} 

	\begin{prop} \label{P: key geometric proposition}  If $\eta \in (0,\Theta^{-1} \tilde{\alpha})$, $\beta \in (0,\frac{1}{2}\eta \nu_{2} \wedge 1 \wedge \frac{1}{2}S_{2})$, $c \in (0,c_{1})$, $\nu_{2} \in (0,\nu_{1}')$, and $S_{2} \in (0,S_{1}'/2)$ satisfy the inequalities
		\begin{equation} \label{E: key inequalities for good geometric behavior}
				(3 \eta + 2V) \nu_{2} < \frac{\rho S_{2}}{4} + \frac{c S^{2}_{2}}{8}
		\end{equation}
	then 
		\begin{align*}
			\overline{m}(D\tilde{d}_{c}) \tilde{d}_{c,t} - \Delta \tilde{d}_{c} \leq - 6 \tilde{\alpha}, \, \, |\overline{m}(D\tilde{d}_{c}) - \overline{m}(e)| \leq \alpha_{0} \quad \text{in} \, \, \{|\tilde{d}_{c}| < \gamma\} \\
			\chi_{\{\tilde{d}_{c} \geq \beta\}} \leq \chi_{\{d \geq \beta\}} \quad \text{on} \, \, \partial_{p} [Q^{e}(x_{0},S_{2},\rho) \times (t_{0} - \nu_{2},t_{0} + \nu_{2})]
		\end{align*}
	\end{prop}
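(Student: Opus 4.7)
The plan is to prove the differential inequality and the boundary ordering separately, choosing the parameters so that the single scalar condition on $\nu_2, S_2$ simultaneously absorbs the various constraints arising in the boundary analysis. Since $\tilde{d}_c(x,t) = d_c(y,t)$ where $y := x + \eta(t-(t_0-\nu_2))e$, the chain rule yields $D\tilde{d}_c(x,t) = Dd_c(y,t)$, $\Delta \tilde{d}_c(x,t) = \Delta d_c(y,t)$, and $\tilde{d}_{c,t}(x,t) = d_{c,t}(y,t) + \eta\langle e, Dd_c(y,t)\rangle$.

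For the differential inequality, I would first check that $S_2 \in (0, S_1'/2)$ and $\nu_2 \in (0, \nu_1')$ are small enough that the translation $(x,t) \mapsto (y,t)$ sends the prism $Q^e(x_0,S_2,\rho) \times (t_0 - \nu_2, t_0+\nu_2)$ into $Q^e(x_0, S_1', \rho) \times (t_0-\nu_1', t_0+\nu_1')$; this follows once $\eta\nu_2$ is sufficiently small, which the scalar condition absorbs. Since $|\tilde{d}_c(x,t)| < \gamma$ is equivalent to $|d_c(y,t)| < \gamma$, Lemma \ref{L: definition of S2} at $(y,t)$ gives $\overline{m}(Dd_c(y,t)) d_{c,t}(y,t) - \Delta d_c(y,t) \leq -7\tilde{\alpha}$ and $|\overline{m}(Dd_c(y,t)) - \overline{m}(e)| \leq \alpha_0$. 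Using $|\langle e, Dd_c\rangle| \leq \|Dd_c\| = 1$ and $\overline{m} \leq \Theta$, one obtains
\[
    \overline{m}(D\tilde{d}_c) \tilde{d}_{c,t} - \Delta \tilde{d}_c \leq -7\tilde{\alpha} + \eta \Theta \leq -6\tilde{\alpha}
\]
from $\eta < \Theta^{-1}\tilde{\alpha}$. The bound $|\overline{m}(D\tilde{d}_c) - \overline{m}(e)| \leq \alpha_0$ transfers from $(y,t)$ to $(x,t)$ directly.

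For the boundary ordering, I would verify on each face of $\partial_p [Q^e(x_0,S_2,\rho) \times (t_0-\nu_2, t_0+\nu_2)]$ that $\tilde{d}_c(x,t) \geq \beta$ forces $d(x,t) \geq \beta$. On the initial slice $t = t_0-\nu_2$ the translation vanishes, so $\tilde{d}_c(\cdot, t_0-\nu_2) = d_c(\cdot, t_0-\nu_2) \leq d(\cdot, t_0-\nu_2)$ (since $\tilde{g} \geq g$ pointwise), and the inclusion is immediate. On the side face $x' \in \partial Q(x_0', S_2)$ one exploits $\|x'-x_0'\| \geq S_2/2$: the surface $\{x_e = \tilde{g}(x',t) - \eta(t-(t_0-\nu_2))\}$ lies above $\{x_e = g(x',t)\}$ by $\frac{c\|x'-x_0'\|^2}{2} - \eta(t-(t_0-\nu_2)) \geq \frac{cS_2^2}{8} - 2\eta\nu_2$, and the scalar condition (together with the top-face margin below) renders this non-negative, so $\tilde{d}_c \leq d$ there. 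On the top face $x_e = x_{0,e} + \rho S_2/2$, the Lipschitz bound on $g$ in space (constant $\leq \frac{1}{2}\rho(d-1)^{-1/2}$) and in time ($|g_t| \leq V$) yield $x_e - g(x',t) \geq \frac{\rho S_2}{4} - V\nu_2$; coupled with the graphical lower bound $d \geq (x_e - g)/\sqrt{1 + \|Dg\|^2}$ and the cushion $\beta < \frac{1}{2}\eta\nu_2$, the scalar condition forces $d \geq \beta$. The bottom face is either vacuous (by the same height comparison $\tilde{d}_c < 0 < \beta$) or handled symmetrically.

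The main obstacle is simultaneously balancing the margins on the three types of boundary faces rather than any single step. The side-face comparison needs $cS_2^2$ to dominate $\eta\nu_2$, the top-face comparison needs $\rho S_2$ to dominate $(V+\eta)\nu_2$, and the hypothesis $(3\eta + 2V)\nu_2 < \rho S_2/4 + cS_2^2/8$ is the scalar relation that absorbs both at once after exploiting the $\beta$-slack. A secondary technical point is to keep the translated point $y$ inside the domain of Lemma \ref{L: definition of S2}; this is again ensured by taking $S_2, \nu_2$ small, and is built into the same scalar condition.
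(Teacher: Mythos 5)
Your proposal follows the same overall route as the paper's proof: use the chain rule $\tilde{d}_{c,t}(x,t) = d_{c,t}(y,t) + \eta \langle e, Dd_c(y,t)\rangle$ together with Lemma \ref{L: definition of S2} and $\eta\Theta < \tilde{\alpha}$ to get the differential inequality, then check the boundary ordering face by face, with the initial slice and bottom face handled by sign considerations. Those parts are right and match the paper. The top-face argument via the graphical lower bound $d \geq (x_e - g)/\sqrt{1 + \|Dg\|^2}$ is a legitimate alternative to the paper's case analysis on the location of the nearest zero $y$ of $d(\cdot,t)$.

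However, the side-face step contains a genuine gap. You assert that because the vertical gap $\tilde{g}(x',t) - \eta(t-(t_0-\nu_2)) - g(x',t) = \tfrac{c\|x'-x_0'\|^2}{2} - \eta(t-(t_0-\nu_2))$ is nonnegative at $x' \in \partial Q(x_0', S_2)$, one concludes $\tilde{d}_c \leq d$ there. That inference requires the two zero-level surfaces to be \emph{globally} (or at least locally in a large enough neighborhood) ordered, but they are not: near $x_0'$ the paraboloid correction $\tfrac{c\|x'-x_0'\|^2}{2}$ vanishes, so the translated surface $\{x_e = \tilde{g}(x',t) - \eta(t-(t_0-\nu_2))\}$ drops strictly \emph{below} $\{x_e = g(x',t)\}$ once $t > t_0-\nu_2$. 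Since $d$ is a distance to the whole (restricted) surface, the nearest zero $y$ to a boundary point $x$ need not have $y'$ near $x'$, and a pointwise ordering at $x'$ alone controls nothing. The paper circumvents this by proving the equivalent statement $\|y-x\| \geq \beta$ for every $y$ with $d(y,t)=0$, splitting into the cases $\|y'-x'\| \geq \beta$ (trivial) and $\|y'-x'\| < \beta$ (where the Lipschitz bound on $g$ transfers the vertical gap and the inequality $2\beta < \eta\nu_2$ supplies the slack), together with a separate check that $d(x,t) > 0$. You should incorporate this two-case comparison — or replace the claim ``$\tilde{d}_c \leq d$'' by the weaker, actually needed implication ``$\tilde{d}_c(x,t) \geq \beta \Rightarrow d(x,t) \geq \beta$'' and prove it directly — before the side-face argument is complete.
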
  
	
		\begin{proof}  The inequality $2 \eta \nu_{2} < \frac{\rho S_{2}}{2} <  \frac{\rho (S_{1}' - S_{2})}{2}$ implies that $x + \eta (t - (t_{0} - \nu_{2}))e \in Q^{e}(x_{0},S_{1}',\rho)$ whenever $x \in Q^{e}(x_{0},S_{2},\rho)$.  Thus, the first statement follows from Lemma \ref{L: definition of S2} and the inequality $\overline{m} \leq \Theta$.  
		
		Concerning the second statement, we know that $\tilde{d}_{c}(\cdot,t_{0} - \nu_{2}) = d_{c}(\cdot,t_{0} - \nu_{2})$.  Therefore, the ordering between $g$ and $\tilde{g}$ implies $\tilde{d}_{c} \leq d$ on the surface $\{t = t_{0} - \nu_{2}\}$.
		
		Next, we check the remaining inequalities, namely,
			\begin{equation*}
				\chi_{\{\tilde{d}_{c} \geq 2 \beta\}} \leq \chi_{\{d \geq 2 \beta\}} \quad \text{on} \, \, \partial Q^{e}(x_{0},S_{2},\rho) \times [t_{0} - \nu_{2},t_{0} + \nu_{2}]
			\end{equation*}
		We will start by examining points $x = (x_{e},x')$ with $x' \in \partial Q(x_{0},S_{2})$.
		
		Assume that $\tilde{d}_{c}(x,t) \geq \beta$ and $x' \in \partial Q(x_{0},S_{2})$.  To show that $d(x,t) \geq \beta$, we proceed point by point.  We claim that if $d(y,t) = 0$, then $\|y - x\| \geq \beta$.  If $\|y' - x'\| \geq \beta$, we are done.  Therefore, assume that $\|y' - x'\| \leq \beta$.  
		
		By the definition of $\tilde{d}_{c}$ and $d_{c}$, we have
			\begin{equation} \label{E: useful for tricky part}
				g(x',t) + \frac{c S_{2}^{2}}{8} - 2 \eta \nu_{2} \leq g(x',t) + \frac{c \|x' - x_{0}'\|^{2}}{2} - \eta(t - (t_{0} - \nu_{2})) < x_{e}.
			\end{equation}
%		In particular,
%			\begin{equation} 
%				 < x_{e}
%			\end{equation}
		At the same time, $|g(y',t) - g(x',t)| \leq \frac{1}{2} \rho \beta$.  Therefore, since $y_{e} = g(y',t)$, we find
			\begin{equation*}
				y_{e} \leq g(x',t) + \frac{\rho \beta}{2} < x_{e} - \frac{c S_{2}^{2}}{8} + 2 \eta \nu_{2} + \frac{\beta}{2}.
			\end{equation*}
		Recalling that $2\beta < \eta \nu_{2}$ and appealing to \eqref{E: key inequalities for good geometric behavior}, we conclude
			\begin{equation*}
				\|y - x\| \geq |y_{e} - x_{e}| = x_{e} - y_{e} \geq \frac{c S_{2}^{2}}{8} - \left (2 \eta \nu_{2} + \frac{\beta}{2} \right) \geq \beta.
			\end{equation*}
		Hence $|d(x,t)| \geq \beta$.  Similarly, \eqref{E: useful for tricky part} shows that $g(x',t) < x_{e}$ so $d(x,t) > 0$.  Thus, $d(x,t) \geq \beta$ as claimed. 
		
		Finally, we consider points $(x,t)$ with $x$ on the top or bottom of the box $Q^{e}(x_{0},S_{2},\rho)$, that is, points for which $|x_{e} - x_{0,e}| = \rho S_{2}/2$.  To start with, observe that if $x_{e} = x_{0,e} - \rho S_{2}/2$, then $\tilde{d}_{c}(x,t) < 0 < \beta$.  Indeed, it suffices to show that $x_{e} + \eta (t - (t_{0} - \nu_{2})) < \tilde{g}(x',t)$, which is true since, by \eqref{E: key inequalities for good geometric behavior},
			\begin{align*}
				x_{e} + \eta(t - (t_{0} - \nu_{2})) &\leq g(x_{0}',t_{0}) - \frac{\rho S_{2}}{2} + 2 \eta \nu_{2} \\
%					&\leq g(x',t) + 2(V + \eta) \nu_{2} - \frac{\rho S_{2}}{4} \\
%					&\leq \tilde{g}(x',t) + 2 (V + \eta) \nu_{2} - \frac{\rho S_{2}}{4} \\
					&\leq \tilde{g}(x',t) + 2 (V + \eta) \nu_{2} - \frac{\rho S_{2}}{4} < \tilde{g}(x',t).
			\end{align*}
			
	It remains to consider the case when $x_{e} = x_{0,e} + \rho S_{2}/2$.  We claim that $d(x,t) \geq \beta$ in this case.  To see this, we first show that if $d(y,t) = 0$ and $y \in Q^{e}(x_{0},S_{1},\rho)$, then $\|y - x\| \geq \beta$.  Indeed, in case $y \in Q^{e}(x_{0},S_{2} + 2\beta, \rho)$, we apply \eqref{E: key inequalities for good geometric behavior} to find
			\begin{align*}
				\|y - x\| &\geq |y_{e} - x_{e}| \\
					&= \left|(x_{e} - x_{0,e}) + (g(x_{0},t_{0}) - g(y',t)) \right| \\
					&\geq \frac{\rho S_{2}}{2} - |g(x_{0},t_{0}) - g(y',t)| \\
					&\geq \frac{\rho S_{2}}{2} - \frac{\rho (S_{2} + 2\beta)}{4} - 2 V \nu_{2} \geq \beta
%					&\geq \frac{\rho S_{2}}{4} - \frac{\beta}{2} \geq \beta.
			\end{align*}
%			\begin{align*}
%				\|x - y\| \geq |x_{e} - y_{e}| &= \left|x_{e} - g(y',t) - \frac{c \|y' - x_{0}'\|^{2}}{2} + \eta (t - (t_{0} - \nu_{2})\right| \\
%					&\geq \rho S_{2} - |g(x',0) - g(y',t)| - \frac{c S_{2}^{2}}{2} - 2 \eta \nu_{2} \\
%					&\geq \frac{\rho S_{2}}{2} - \frac{c S_{2}^{2}}{2} - (2 \eta + V) \nu_{2} \geq \beta
%			\end{align*}
		
	On the other hand, if $y \in Q^{e}(x_{0},S_{1},\rho) \setminus Q^{e}(x_{0},S_{2} + 2\beta,\rho)$, then we can consider two cases: (i) $y' \in Q(x_{0}',S_{1}) \setminus Q(x_{0}',S_{2} + 2 \beta)$ or (ii) $\rho (S_{2} + 2 \beta)/2 \leq |y_{e}| < \rho S_{1}/2$.  In case (i), $\|x - y\| \geq \|x' - y'\| \geq \beta$ follows immediately.  On the other hand, in case (ii), we can assume that both $|y_{e} - x_{0,e}| \geq \rho(S_{2} + 2 \beta)/2$ and $y' \in Q(x_{0}',S_{2} + 2 \beta)$, but then this contradicts Proposition \ref{P: graph thing} and $2V \nu_{2} < \rho S_{2}/2$.  So only case (i) is possible and then $|d(x,t)| \geq \beta$ follows.  
	
	Similarly, we find that $d(x,t) > 0$ so $d(x,t) \geq \beta$.\end{proof}  
		
In the remainder of this section, we will adjust the constants if necessary so that the hypotheses of Proposition \ref{P: key geometric proposition} hold.  In addition, we impose the following constraint on $\eta$:
	\begin{equation*}
		\eta \nu_{2} < \gamma.
	\end{equation*}
The justification for this restriction comes in the remark that follows.  Henceforth, $\eta$, $c$, $\nu_{2}$, and $S_{2}$ are fixed.  We reserve the right to make $\beta > 0$ smaller later.  Also note that $\alpha_{0}$ remains undetermined at this stage and so far no restrictions have been imposed on it.

\begin{remark} \label{R: trivial observations}  Notice that the boundary inequality in Proposition \ref{P: key geometric proposition} has the following (trivial) consequence: for each $\epsilon > 0$,
	\begin{equation*}
		(1 - \beta \epsilon) \chi_{\{\tilde{d}_{c} \geq \beta\}} - \chi_{\{\tilde{d}_{c} < \beta\}} \leq (1 - \beta \epsilon) \chi_{\{d \geq \beta\}} - \chi_{\{d < \beta\}} \quad \text{on} \, \, \partial_{p} [Q(x_{0},S_{2},\rho) \times (t_{0} - \nu_{2}, t_{0} + \nu_{2})].
	\end{equation*}
	
Further, notice that since $Dd_{c}(x_{0},t_{0}) = e$ and $\eta \nu_{2} < \gamma$, it follows that 
	\begin{equation*}
		\tilde{d}_{c}(x_{0},t_{0}) = d_{c}(x_{0} + \eta \nu_{2} e, t_{0}) = \eta \nu_{2} > 2\beta.
	\end{equation*}  
In particular, $\{\tilde{d}_{c} > 2\beta\}$ contains a neighborhood of $(x_{0},t_{0})$.
\end{remark}

\subsection{Initialization}  In this section, we prove an initialization result that shows that the solutions $(u^{\epsilon})_{\epsilon > 0}$ develop a sharp transition along the interface $\{\varphi = 0\}$ when $\epsilon > 0$ is sufficiently small.  Here we follow \cite{barles souganidis} using the result of Appendix \ref{A: initialization}.

	\begin{prop} \label{P: initialization}  Given $\delta \in (0,\frac{1}{2})$, there is a $\tau = \tau(\delta,\beta,\varphi) > 0$ and an $\epsilon_{0} = \epsilon_{0}(\beta,\varphi,u_{0}) > 0$ such that if $\epsilon \in (0,\epsilon_{0})$ and $t \in [t_{0} - \nu_{2},t_{0} + \nu_{2}]$, then
		\begin{equation*}
			u^{\epsilon}(\cdot,t + \tau \epsilon^{2} \log(\epsilon^{-1})) \geq (1 - \beta \epsilon) \chi_{\{d(\cdot,t) \geq \beta\}} - \chi_{\{d(\cdot,t) \leq \beta\}} \quad \text{in} \, \, \overline{Q^{e}(x_{0},S_{2},\rho)}.
		\end{equation*}
	\end{prop}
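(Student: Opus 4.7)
The plan is to combine the strict minimum condition at $(x_0, t_0)$ with the ODE-type comparison result of Appendix~\ref{A: initialization}, in three steps.

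First, I would pin down the geometry by identifying the positive phase. Because $\chi_* \in \{-1, 1\}$ is lower semi-continuous, the case $\chi_*(x_0, t_0) = 1$ would force $\chi_* \equiv 1$ in a neighborhood of $(x_0, t_0)$, making $(x_0, t_0)$ a local maximum of $\varphi$ and thus contradicting $D\varphi(x_0, t_0) \neq 0$; so $\chi_*(x_0, t_0) = -1$. After subtracting a constant from $\varphi$ to arrange $\varphi(x_0, t_0) = 0$, the strict minimum condition becomes $\chi_*(x, t) > -1 + \varphi(x, t)$ for $(x, t) \neq (x_0, t_0)$ in a neighborhood; since $\chi_*$ takes only the values $\pm 1$, this forces $\chi_* \equiv 1$ on $\{\varphi > 0\}$ (intersected with the neighborhood). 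By Proposition~\ref{P: graph thing}, $\{\varphi > 0\}$ coincides with $\{d > 0\}$ inside $Q^e(x_0, S_2, \rho) \times (t_0 - \nu_2, t_0 + \nu_2)$, and in particular $\chi_* \equiv 1$ on the compact set $K := \{(x, t) : d(x, t) \geq \beta/2\} \cap \overline{Q^e(x_0, S_2, \rho)} \times [t_0 - \nu_2, t_0 + \nu_2]$, which is a thickened version of the target.

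Next, I would bootstrap the pointwise information on $K$ into uniform convergence. By the very definition of $\chi_* = 1$, each $(x, t) \in K$ satisfies $\liminf_* u^\epsilon(x, t) = 1$. Unwinding the definition of the half-relaxed liminf at each point and extracting a finite subcover of $K$ yields, for any prescribed $\mu \in (0, 1)$, a threshold $\epsilon_0 = \epsilon_0(\mu, \beta, \varphi, u_0)$ such that $u^\epsilon \geq 1 - \mu$ on $K$ whenever $\epsilon < \epsilon_0$. (The dependence on $u_0$ enters because the rate of convergence of $u^\epsilon$ to $\chi_*$ depends on the initial data.) Once $\mu$ is chosen small in terms of $\delta$, this is the hypothesis needed to invoke the appendix.

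Finally, I would plug this positive-phase bound into the initialization lemma of Appendix~\ref{A: initialization}. Once $u^\epsilon$ is uniformly above $1 - \mu$ in a region, the reaction term $-\epsilon^{-2} W'(u)$ — which behaves like $\epsilon^{-2}(W''(1)/m)(1 - u)$ near $u = 1$ — acts as an exponential attractor to $+1$ on the time scale $\epsilon^2$. Selecting $\tau = \tau(\delta, \beta, \varphi)$ large enough (in terms of $\delta$, $\beta$, and the lower bound on $m$ and $W''(1)$), the lemma produces $u^\epsilon(\cdot, t + \tau \epsilon^2 \log \epsilon^{-1}) \geq 1 - \beta\epsilon$ on the slightly shrunk set $\{d(\cdot, t) \geq \beta\}$, while the bound $u^\epsilon \geq -1$ on the complement is trivial.

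The main obstacle is not conceptual but technical: I must ensure that the $O(\epsilon \sqrt{\tau \log \epsilon^{-1}})$ spatial smearing from diffusion during the initialization window is safely absorbed into the $\beta/2$ gap separating the initial set $\{d \geq \beta/2\}$ from the target $\{d \geq \beta\}$, and that all estimates are uniform in $t$ across the compact interval $[t_0 - \nu_2, t_0 + \nu_2]$. Uniformity is afforded by the compactness of $K$ together with the choice of $S_2$ and $\nu_2$ in Section~\ref{S: macroscopic subsolution}, which placed $Q^e(x_0, S_2, \rho) \times (t_0 - \nu_2, t_0 + \nu_2)$ safely inside the neighborhood of $(x_0, t_0)$ on which the strict-minimum condition holds.
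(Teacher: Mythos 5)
Your plan matches the paper's proof: it uses the strict-minimum condition to deduce $\chi_* = 1$ on a neighborhood of $\{d \geq \beta\}$ in the box, compactness plus the definition of $\liminf_*$ to upgrade this to $u^\epsilon > 1 - \delta$ uniformly for small $\epsilon$, then mollifies the characteristic function of the $\{d \geq \beta/2\}$-thickening and feeds the result into the universal ODE subsolution $\tilde{\chi}^\epsilon$ of Appendix~\ref{A: initialization}, Lemma~\ref{L: universal ODE}, comparing against $u^\epsilon$. One minor slip: the rate of exponential relaxation to $+1$ (and hence $\tau$) is governed by the \emph{upper} bound $\Theta$ on $m$, not the lower bound --- see \eqref{E: forcing}, where $\bar{f}=\Theta^{-1}W'$ on $[0,1]$ --- because the subsolution must lie beneath the slowest admissible dynamics.
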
 

		\begin{proof}  First of all, since $\{d \geq \beta\} \subseteq \{\chi_{*} = 1\}$, there is an $\epsilon_{0} > 0$ such that
			\begin{equation} \label{E: using containments}
				\{d \geq \beta\} \cap (Q^{e}(x_{0},S,\rho) \times [t_{0} - \nu,t_{0} - \nu]) \subseteq \{u^{\epsilon} > 1 - \delta\} \quad \text{if} \, \, \epsilon \in (0,\epsilon_{0}).
			\end{equation}
		For the rest of the proof, fix $t \in [t_{0} - \nu_{2},t_{0} + \nu_{2}]$.  
		
		Let $\underline{\psi} : \mathbb{R}^{d} \to \mathbb{R}$ be the function given by 
			\begin{equation*}
				\underline{\psi}(x) = \left\{ \begin{array}{r l} 
										1 - \delta, & x \in Q^{e}(x_{0},(S_{1} + S_{2})/2,\rho) \cap \{d(\cdot,t) \geq \beta/2\} \\
										-1, & \text{otherwise}
							\end{array} \right.
			\end{equation*}
		
		Fix a smooth, symmetric non-negative function $\rho$ such that $\rho = 0$ in $\mathbb{R}^{d} \setminus B(0,1)$ and $\int_{\mathbb{R}^{d}} \rho(x) \, dx = 1$.  Let $(\rho_{\nu})_{\nu > 0}$ be the mollifying family in $\mathbb{R}^{d}$ given by $\rho_{\nu}(x) = \nu^{-d} \rho(\nu^{-1} x)$.  In view of Lemma \ref{L: definition of S2}, there is a $\overline{\nu} > 0$ small and independent of $t$ with the following property:
			\begin{align*}
				x \in \{d \geq \beta\} \cap Q^{e}(x_{0},S_{2},\rho) \, \, \implies \, \, B(x,\overline{\nu}) \subseteq \{d(\cdot,t) \geq \beta/2\} \cap Q^{e}(x_{0},(S_{1} + S_{2})/2,\rho), \\
				x \in \{d \leq 0\} \cap Q^{e}(x_{0},S_{2},\rho) \, \, \implies \, \,  B(x,\overline{\nu}) \subseteq \{d(\cdot,t) \leq \beta/2\} \cap Q^{e}(x_{0},(S_{1} + S_{2})/2,\rho).
			\end{align*}
		Define $\psi = \rho_{\overline{\nu}} * \underline{\psi}$.  Recall that 	
			\begin{align} 
				\|D\psi\|_{L^{\infty}(\mathbb{R}^{d})} &\leq C_{\rho} \overline{\nu}^{-1}, \label{E: trivial gradient bound} \\
				\|D^{2}\psi\|_{L^{\infty}(\mathbb{R}^{d})} &\leq C_{\rho} \overline{\nu}^{-2}. \label{E: trivial hessian bound}
			\end{align}
		
		Notice that, by the choice of $\overline{\nu}$, if $x \in Q^{e}(x_{0},S_{2},\rho)$ and $d(x,t) \geq \beta$, then
			\begin{equation*}
				\psi(x) = \int_{B(x,\overline{\nu})} \psi(y) \rho_{\overline{\nu}}(x-y) \, dy = 1 - \delta.
			\end{equation*}  
		Similarly, if $x \in Q^{e}(x_{0},S_{2},\rho)$ and $d(x,t) \leq 0$, then $\psi(x) = -1$.  In summary,
			\begin{equation} \label{E: bump construction}
				\{d \geq \beta\} \cap Q^{e}(x_{0},S_{2},\rho) \subseteq \{\psi = 1 - \delta\}, \quad
				\{d \leq 0\} \cap Q^{e}(x_{0},S_{2},\rho) \subseteq \{\psi = -1\}
			\end{equation} 
			
		Let $\tilde{\chi}^{\epsilon}$ be the function from Appendix \ref{A: initialization}, Lemma \ref{L: universal ODE}; let $K > 0$ be a free variable; and define a family $(\underline{u}^{\epsilon})_{\epsilon > 0}$ in $\mathbb{R}^{d} \times [t,\infty)$ by 
			\begin{equation*}
				\underline{u}^{\epsilon}(x,t') = \tilde{\chi}^{\epsilon}( \psi(x) - \epsilon^{-1} K(t' - t), \epsilon^{-1} (t' - t)).
			\end{equation*}
		The construction of Appendix \ref{A: initialization} gives that $\tilde{\chi}^{\epsilon}_{s} \leq - \bar{f}(\chi^{\epsilon})$, where $\bar{f}$ is defined in \eqref{E: forcing}.  Thus, by arguing as in \cite[Lemma 4.1]{barles souganidis} and invoking \eqref{A: m positive} and \eqref{A: m bounded}, we find that $\underline{u}^{\epsilon}$ is a subsolution of \eqref{E: AC mobility} if $K$ is large enough.  (Notice that $K$ depends only on $\overline{\nu}$ through \eqref{E: trivial gradient bound} and \eqref{E: trivial hessian bound} and, thus, is independent of $t$).    Further, $\underline{u}^{\epsilon}(\cdot,0) = \psi \leq u^{\epsilon}(\cdot,t)$ by \eqref{E: using containments}.  Therefore, 
			\begin{equation*}
				\underline{u}^{\epsilon}(x,t') \leq u^{\epsilon}(x,t' + t) \quad \text{if} \, \, (x,t') \in \mathbb{R}^{d} \times [0,\infty).
			\end{equation*}
		Now the conclusion follows from the properties of $\chi$ arguing exactly as in \cite{barles souganidis}.  Note that $\tau$ depends only on $K$ and so is independent of $t$. 
		\end{proof}  
		
\subsection{Mesoscopic Subsolutions}  Finally, we use the macroscopic subsolution of Section \ref{S: macroscopic subsolution}, namely $\tilde{d}_{c}$, to build mesoscopic subsolutions of \eqref{E: AC mobility} that converge to $1$ in the sets $\{\tilde{d}_{c} > 2\beta\}$.  Appealing to Remark \ref{R: trivial observations}, we will then conclude that $u^{\epsilon} \to 1$ uniformly in a neighborhood of $(x_{0},t_{0})$, a patent contradiction.  

	Recall that $e = \widehat{D\varphi}(x_{0},t_{0})$.  Let $\alpha_{1}$ be one last free variable, for convenience.  Invoking Theorem \ref{T: approximate correctors}, we fix an approximate corrector $P_{e} = \overline{P}_{e} + P^{\delta}_{2} \in C^{2,\mu}(\mathbb{R} \times \mathbb{T}^{d})$ such that \eqref{E: approximate correctors} holds with $\nu = \alpha_{1}$.  

	Define a family $(v^{\epsilon})_{\epsilon > 0}$ in $\{(x,t) \in Q^{e}(x_{0},S_{2},\rho) \times [t_{0} - \nu_{2},t_{0} + \nu_{2}] \, \mid \, |\tilde{d}_{c}(x,t)| < \gamma\}$ by 
		\begin{equation*}
			v^{\epsilon}(x,t) = q \left( \frac{\tilde{d}_{c}(x,t) - 2 \beta}{\epsilon} \right) + \epsilon \left( \tilde{d}_{c,t}(x,t) P_{e} \left( \frac{\tilde{d}_{c}(x,t) - 2 \beta}{\epsilon}, \frac{x}{\epsilon} \right) - 2 \beta \right).
		\end{equation*}
	We show below that, provided $\alpha_{0}$, $\alpha_{1}$, and $\beta$ are chosen appropriately, $v^{\epsilon}$ is a subsolution of \eqref{E: AC mobility} as soon as $\epsilon > 0$ is small enough.  
	
	In order to invoke the comparison principle, we extend $(v^{\epsilon})_{\epsilon > 0}$ to $Q^{e}(x_{0},S_{2},\rho) \times (t_{0} - \nu_{2},t_{0} + \nu_{2})$.  The construction again follows \cite{barles souganidis}.  First, we define $(\overline{v}^{\epsilon})_{\epsilon > 0}$ by
		\begin{equation*}
			\overline{v}^{\epsilon}(x,t) = \left\{ \begin{array}{r l}
										\max \left\{v^{\epsilon}(x,t), -1\right\}, & \tilde{d}_{c}(x,t) \geq - (\gamma + 2\beta)/2 \\
										-1, & \tilde{d}_{c}(x,t) < - (\gamma + 2\beta)/2
									\end{array} \right.
		\end{equation*}
	Finally, we fix a smooth, non-decreasing function $f : \mathbb{R} \to [0,1]$ such that
		\begin{equation*}
			f(\xi) = 1 \quad \text{if} \quad \xi \geq \frac{7 \gamma}{8} + \frac{\beta}{4}, \quad f(\xi) = 0 \quad \text{if} \, \, \quad \xi \leq \frac{3 \gamma}{4} + \frac{\beta}{2}
		\end{equation*} 
	and define $(w^{\epsilon})_{\epsilon > 0}$ by
		\begin{equation*}
			w^{\epsilon}(x,t) = (1 - f(\tilde{d}_{c}(x,t))) \overline{v}^{\epsilon}(x,t) + f(\tilde{d}_{c}(x,t)) (1 - \beta \epsilon)
		\end{equation*}
		
	Here is the main result we will need to proceed:
	
		\begin{prop} \label{P: mesoscopic subsolution}  There is an $\epsilon_{1} > 0$ and a choice of the parameters $\alpha_{0}$, $\alpha_{1}$, and $\beta$ such that $w^{\epsilon}$ satisfies
			\begin{equation*}
				\left\{ \begin{array}{r l}
						m(\epsilon^{-1} x, \widehat{Dw^{\epsilon}}) w^{\epsilon}_{t} - \Delta w^{\epsilon} + \epsilon^{-2} W'(w^{\epsilon}) \leq 0 & \text{in} \, \, Q^{e}(x_{0},S_{2},\rho) \times (t_{0} - \nu_{2},t_{0}] \\
						w^{\epsilon} \leq (1 - \beta \epsilon) \chi_{\{\tilde{d}_{c} \geq \beta\}} - \chi_{\{\tilde{d}_{c} < \beta\}} & \text{on} \, \, \partial_{p} [Q^{e}(x_{0},S_{2},\rho) \times (t_{0} - \nu_{2},t_{0}]]
				\end{array} \right.
			\end{equation*}
		Furthermore, if $(x,t) \in Q^{e}(x_{0},S_{2},\rho) \times (t_{0} - \nu_{2},t_{0}]$ satisfies $\tilde{d}_{c}(x,t) > 2 \beta$, then 
			\begin{equation*}
				\liminf \nolimits_{*} w^{\epsilon}(x,t) = 1.
			\end{equation*}
		\end{prop}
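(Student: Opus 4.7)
The plan is to verify the subsolution inequality by an asymptotic expansion of the Allen-Cahn operator applied to $v^\epsilon$ in the region $\{|\tilde d_c|<\gamma\}$, use the approximate cell problem from Theorem \ref{T: approximate correctors} together with the strict inequality $\overline m(D\tilde d_c)\tilde d_{c,t} - \Delta\tilde d_c \leq -6\tilde\alpha$ from Proposition \ref{P: key geometric proposition} to make the leading-order term strictly negative, and then glue with the constants $-1$ and $1-\beta\epsilon$ to obtain a global mesoscopic subsolution $w^\epsilon$. The boundary ordering and the liminf statement will then follow from the asymptotics of the standing wave $q$ together with the geometric boundary inequality in Proposition \ref{P: key geometric proposition}.

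The heart of the argument is a direct expansion. Setting $s = \epsilon^{-1}(\tilde d_c - 2\beta)$ and $y = x/\epsilon$, and using $|D\tilde d_c|\equiv 1$, the chain rule gives $-\Delta v^\epsilon = -\epsilon^{-2}\ddot q + \epsilon^{-1}\tilde d_{c,t}\,\mathcal{D}_{D\tilde d_c}^*\mathcal{D}_{D\tilde d_c}P_e - \epsilon^{-1}\dot q\,\Delta\tilde d_c + O(1)$, while $\epsilon^{-2}W'(v^\epsilon) = \epsilon^{-2}W'(q) + \epsilon^{-1}W''(q)(\tilde d_{c,t}P_e - 2\beta) + O(1)$ and $m(\epsilon^{-1}x,\epsilon Dv^\epsilon)v^\epsilon_t = \epsilon^{-1}m(y,\dot q D\tilde d_c)\dot q\,\tilde d_{c,t} + O(1)$. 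The $\epsilon^{-2}$ contributions cancel by the standing wave equation. For the $\epsilon^{-1}$ term, continuity of $m$, $\overline m$, and $P_e$ in the direction variable, combined with the approximate corrector identity \eqref{E: approximate correctors} for the fixed direction $e$, lets one replace $D\tilde d_c$ by $e$ at the cost of a modulus $\omega(\alpha_0)\dot q$; the result is
\[
\epsilon^{-1}\dot q\bigl[\overline m(D\tilde d_c)\tilde d_{c,t} - \Delta\tilde d_c\bigr] - 2\beta\epsilon^{-1}W''(q) + \epsilon^{-1}R,
\]
where $|R|\leq (\alpha_1 + \omega(\alpha_0))\dot q$. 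The bracket is $\leq -6\tilde\alpha$ by Proposition \ref{P: key geometric proposition}. A one-variable compactness argument then shows that, for $\beta$ small relative to $\tilde\alpha$, the quantity $-\tilde\alpha\dot q(s) - 2\beta W''(q(s))$ is uniformly bounded above by a negative constant $-c^\star$: indeed $W''(\pm 1)>0$ controls the tails where $\dot q$ decays, while $\dot q$ is bounded below on compact $s$-intervals. Choosing $\alpha_0, \alpha_1$ so small that $\omega(\alpha_0)+\alpha_1\leq \tilde\alpha$, the $\epsilon^{-1}$ contribution is $\leq -c^\star\epsilon^{-1}$, which absorbs the $O(1)$ remainder once $\epsilon$ is small.

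The two gluings are standard. The lower gluing $\overline v^\epsilon := v^\epsilon \vee (-1)$ preserves the subsolution property because $-1$ is a stationary solution of \eqref{E: AC mobility} and because the exponential decay $q(s)\to -1$ as $s\to-\infty$ together with the offset $-2\epsilon\beta$ forces $v^\epsilon < -1$ in a neighborhood of $\{\tilde d_c = -(\gamma+2\beta)/2\}$. For the upper gluing, one notes that in the transition band $\{3\gamma/4+\beta/2 < \tilde d_c < 7\gamma/8+\beta/4\}$ the variable $s$ exceeds $C\log(1/\epsilon)$, so $q(s) = 1 - O(\epsilon^N)$ and $\overline v^\epsilon - (1-\beta\epsilon) = O(\epsilon)$; a direct substitution shows that the strictly negative $\epsilon^{-1}$-contribution from $W'(1-\beta\epsilon)\sim -\beta\epsilon\,W''(1)$ dominates the perturbations introduced by $f(\tilde d_c)$ and by the $O(\epsilon)$ mismatch, so $w^\epsilon$ is a subsolution there as well; outside the band $w^\epsilon$ coincides with $\overline v^\epsilon$ or with the constant $1-\beta\epsilon$. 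The boundary inequality then follows directly from the definitions and from Proposition \ref{P: key geometric proposition}: where $\tilde d_c \geq \beta$ the upper bound $w^\epsilon \leq 1-\beta\epsilon$ is built in, and where $\tilde d_c < \beta$ the offset guarantees $\overline v^\epsilon = -1$ for $\epsilon$ small. Finally, at any $(x,t)$ with $\tilde d_c(x,t) > 2\beta$ one has $s\to +\infty$ as $\epsilon\to 0^+$, so $q(s)\to 1$ and the corrector term is $O(\epsilon)$, which gives $\liminf_* w^\epsilon = 1$.

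The main obstacle is the simultaneous calibration of $\alpha_0$, $\alpha_1$, and $\beta$: each must be taken small (to control the mismatch between $D\tilde d_c$ and $e$, the corrector error, and the $W''(q)$ perturbation), but $\beta$ also appears in the geometric inequality \eqref{E: key inequalities for good geometric behavior} and in the boundary comparison, while the uniform bound $-\tilde\alpha\dot q - 2\beta W''(q) \leq -c^\star$ fixes $\beta$ in terms of the profile of $q$ before $\alpha_0, \alpha_1$ can be selected. The order of choices therefore has to be arranged carefully so that every error above is dominated by the main $-6\tilde\alpha\epsilon^{-1}\dot q$ term at the same time.
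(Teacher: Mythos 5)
Your plan follows the same route as the paper's own proof, which is organized across Lemmas \ref{L: delicate subsolution property}, \ref{L: first stage subsolution proof}, and \ref{L: first initialization}: the $\epsilon^{-2}$ terms cancel via the standing-wave equation, the $\epsilon^{-1}$ term is controlled using the approximate cell problem \eqref{E: approximate correctors} together with the differential inequality $\overline m(D\tilde d_c)\,\tilde d_{c,t}-\Delta\tilde d_c\leq -6\tilde\alpha$ from Proposition \ref{P: key geometric proposition}, uniform negativity comes from the one-variable bound involving $\dot q$ and $W''(q)$ (the paper's \eqref{E: BS trick}), and the two gluings, the boundary inequality, and the $\liminf_*$ statement follow from the exponential decay of $q$ and $P_e$ exactly as you sketch.

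There is, however, a gap in the calibration step. You assert that the one-variable bound ``fixes $\beta$ in terms of the profile of $q$ before $\alpha_0,\alpha_1$ can be selected,'' but that order would be circular: the constraint \eqref{E: key inequalities for good geometric behavior} bounds $\beta$ by $\eta\nu_2$ and $S_2$, and by Lemma \ref{L: definition of S2} those quantities shrink as $\alpha_0$ decreases, so a $\beta$ fixed in advance may be incompatible with the $\alpha_0$ you subsequently need. The paper takes the opposite order $\alpha_1\to\alpha_0\to\beta$ and explicitly flags the point that rescues it: the admissible $\alpha_0$ depends on $\alpha_1$ (through the $C^{2,\alpha}$ size of the corrector $V_2^\delta$) but introduces no new restriction on $\beta$, so $\beta$ can be chosen last, small enough to satisfy both \eqref{E: BS trick} and the geometric constraints simultaneously. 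A related omission: replacing $\mathcal{D}^*_{D\tilde d_c}\mathcal{D}_{D\tilde d_c}P_e$ by $\mathcal{D}^*_e\mathcal{D}_e P_e$ is not just ``continuity of $P_e$ in the direction variable''; it uses the factorization $P_2^\delta=V_2^\delta\,\dot q$, so that for any unit vector $e'$ the quantity $\mathcal{D}^*_{e'}\mathcal{D}_{e'}P_2^\delta+W''(q)P_2^\delta$ is a multiple of $\dot q$, which makes the discrepancy between $e'=D\tilde d_c$ and $e'=e$ of size $O(\alpha_0)\dot q$ with constants depending on $\alpha_1$ --- exactly the dependence that forces $\alpha_1$ to be chosen before $\alpha_0$.
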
  
		
	Before proving Proposition \ref{P: mesoscopic subsolution}, let us use it to prove Proposition \ref{P: graph localization}.  
	
	\begin{proof}[Proof of Proposition \ref{P: graph localization}] By Propositions \ref{P: initialization} and \ref{P: mesoscopic subsolution} and Remark \ref{R: trivial observations}, for each $\epsilon \in (0,\epsilon_{0} \wedge \epsilon_{1})$, we have
		\begin{equation*}
			w^{\epsilon} \leq u^{\epsilon}(\cdot, \cdot + \tau \epsilon^{2} \log(\epsilon^{-1})) \quad \text{on} \, \, \partial_{p} [Q^{e}(x_{0},S_{2},\rho) \times (t_{0} - \nu_{2},t_{0})].
		\end{equation*}
	According to Remark \ref{R: trivial observations} and Proposition \ref{P: mesoscopic subsolution}, there is a small $r > 0$ such that
		\begin{equation*}
			1 = \liminf \nolimits_{*} w^{\epsilon}(x,t) \quad \text{if} \, \, \|x - x_{0}\| + |t - t_{0}| < r.
		\end{equation*}
	Therefore, for such points $(x,t)$,
		\begin{equation*}
			\liminf \nolimits_{*} u^{\epsilon}(x,t) \geq \liminf \nolimits_{*} w^{\epsilon}(x,t) = 1.
		\end{equation*}
	Hence $\chi_{*} = 1$ in a neighborhood of $(x_{0},t_{0})$.  This contradicts the assumption that $D\varphi(x_{0},t_{0}) \neq 0$.
	 \end{proof}
	 
	 Now we proceed with the proof of Proposition \ref{P: mesoscopic subsolution}.  The proof will be presented through a series of lemmas.  The first deals with $v^{\epsilon}$ near the interface.
	 
	 \begin{lemma} \label{L: delicate subsolution property}  There is a choice of $\beta$, $\alpha_{1}$, and $\alpha_{0}$ such that: (i) $\beta$ is small enough to satisfy the constraints of the previous section and (ii) there is a constant $\nu(\beta,\tilde{\alpha})$ such that, for all $\epsilon > 0$ small enough, we have
	 	\begin{equation*}
			m(\epsilon^{-1} x, \widehat{Dv^{\epsilon}}) v^{\epsilon}_{t} - \Delta v^{\epsilon} + \epsilon^{-2} W'(v^{\epsilon}) \leq - \frac{\nu(\beta,\tilde{\alpha})}{3\epsilon} \quad \text{in} \, \, \{|d| < \gamma\}.
		\end{equation*}
	\end{lemma}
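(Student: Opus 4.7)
The plan is to substitute the ansatz $v^\epsilon$ into the Allen--Cahn operator, expand all derivatives in powers of $\epsilon$, and show that the leading negative contribution at order $\epsilon^{-1}$ dominates every other term once the free parameters $\alpha_0, \alpha_1, \beta$ are tuned small enough. This is the standard Barles--Souganidis verification, but with the exact corrector replaced by the approximate corrector $P_e$ provided by Theorem \ref{T: approximate correctors} with $\nu = \alpha_1$. First, I would apply the chain rule to $v^\epsilon$ with $s_\epsilon := \epsilon^{-1}(\tilde{d}_c - 2\beta)$, expand $W'(v^\epsilon) = W'(q(s_\epsilon)) + \epsilon W''(q(s_\epsilon))[\tilde{d}_{c,t} P_e - 2\beta] + O(\epsilon^2)$, and use $|D\tilde{d}_c|^2 = 1$ together with the standing-wave identity $\ddot{q}(s) = W'(q(s))$. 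This makes the $\epsilon^{-2}$ terms cancel identically, leaving the $\epsilon^{-1}$ contribution
\[
\epsilon^{-1}\Big\{ \tilde{d}_{c,t}\big[m(x/\epsilon, \dot{q}(s_\epsilon) D\tilde{d}_c)\dot{q}(s_\epsilon) + \mathcal{D}_{D\tilde{d}_c}^{*} \mathcal{D}_{D\tilde{d}_c} P_e + W''(q(s_\epsilon)) P_e\big] - \dot{q}(s_\epsilon)\,\Delta \tilde{d}_c - 2\beta W''(q(s_\epsilon)) \Big\},
\]
after recognizing that the $\epsilon^{-2}$ second derivatives of $P_e(s_\epsilon, x/\epsilon)$ produce exactly $-\mathcal{D}_{D\tilde{d}_c}^{*}\mathcal{D}_{D\tilde{d}_c} P_e$ at leading order.

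Second, I would invoke the approximate corrector bound \eqref{E: approximate correctors} (applied in the fixed direction $e = \widehat{D\varphi}(x_0,t_0)$) together with Proposition \ref{P: key geometric proposition}. Add and subtract $\overline{m}(e)\dot{q}(s_\epsilon)\tilde{d}_{c,t}$ and $\overline{m}(D\tilde{d}_c)\dot{q}(s_\epsilon)\tilde{d}_{c,t}$ inside the braces. Then the bracketed expression becomes $\overline{m}(e)\dot{q}(s_\epsilon)$ modulo three small errors: an $\alpha_1 \dot{q}(s_\epsilon)$ term from the approximate corrector, an $\omega(\|D\tilde{d}_c - e\|)\dot{q}(s_\epsilon)$ term from evaluating $m$ and the degenerate operator at direction $D\tilde{d}_c$ instead of $e$ (using uniform continuity of $m$ in its second argument and the $C^{2,\mu}$ bounds on $P_e$ from Appendix \ref{A: correctors}), and an $\alpha_0 \dot{q}(s_\epsilon)$ term from $|\overline{m}(D\tilde{d}_c) - \overline{m}(e)|$. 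The remaining geometric quantity $\overline{m}(D\tilde{d}_c)\tilde{d}_{c,t} - \Delta \tilde{d}_c$ is bounded above by $-6\tilde{\alpha}$ thanks to Proposition \ref{P: key geometric proposition}, so the $\epsilon^{-1}$ contribution is at most
\[
\epsilon^{-1}\Big\{ -6\tilde{\alpha}\,\dot{q}(s_\epsilon) + C\,|\tilde{d}_{c,t}|\big(\alpha_0 + \alpha_1 + \omega(\|D\tilde{d}_c - e\|)\big)\dot{q}(s_\epsilon) - 2\beta W''(q(s_\epsilon)) \Big\}.
\]

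Third, I would choose $\alpha_0$, $\alpha_1$, and the neighborhood on which $\|D\tilde{d}_c - e\|$ is small (controlled through $S_2$ and $\nu_2$) so that the second bracketed term is at most $\tilde{\alpha}\,\dot{q}(s_\epsilon)$, reducing the quantity to be controlled to $-5\tilde{\alpha}\,\dot{q}(s_\epsilon) - 2\beta W''(q(s_\epsilon))$. The main obstacle is that $W''(q(0)) = W''(0) < 0$, so the $\beta$-term is not uniformly sign-definite and cannot by itself beat the corrector's weight. However, one recovers a uniform negative bound by splitting $s \in \mathbb{R}$ into two regimes: on $\{|s| \leq R\}$, the monotonicity of $q$ gives $\dot{q}(s) \geq c_0 > 0$, and so $-5\tilde\alpha c_0$ absorbs $2\beta \sup |W''|$ provided $\beta$ is taken small relative to $\tilde{\alpha}$; on $\{|s| > R\}$, the asymptotics $q(s) \to \pm 1$ combined with $W''(\pm 1) > 0$ give $W''(q(s)) \geq \tfrac{1}{2} \min\{W''(1), W''(-1)\}$ for $R$ large, so $-2\beta W''(q)$ is uniformly negative and $-5\tilde\alpha\dot{q}(s)$ is harmless since $\dot{q}(s) \to 0$. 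This yields a constant $\nu(\beta,\tilde{\alpha}) > 0$ with $-5\tilde\alpha\dot{q}(s_\epsilon) - 2\beta W''(q(s_\epsilon)) \leq -\nu(\beta,\tilde\alpha)$ uniformly. The $O(1)$ remainders coming from Taylor expansion of $W'$, from approximating $m(\cdot, \epsilon Dv^\epsilon)$ by $m(\cdot, \dot{q}(s_\epsilon) D\tilde{d}_c)$, and from lower-order contributions to $\Delta v^\epsilon$ are all uniformly bounded using the $C^{2,\mu}$ regularity of $P_e$ and continuity of $m$, hence are absorbed into $-\nu(\beta,\tilde\alpha)/(2\epsilon)$ once $\epsilon$ is sufficiently small, giving the claimed estimate with $\nu(\beta,\tilde\alpha)/3$.
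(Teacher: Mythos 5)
Your plan follows essentially the same route as the paper: expand the ansatz, cancel the $O(\epsilon^{-2})$ term using $|D\tilde d_c|=1$ and $\ddot q = W'(q)$, isolate the $O(\epsilon^{-1})$ contribution, insert the approximate-corrector bound with $\nu=\alpha_1$, apply Proposition \ref{P: key geometric proposition} to the geometric quantity, tune the parameters, and close with the Barles--Souganidis trick to produce a uniform $\nu(\beta,\tilde\alpha)>0$ (which the paper cites as \cite[Lemma 4.3]{barles souganidis}, and which your two-regime splitting correctly re-derives).

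There is, however, one genuine gap in the justification. You claim the mismatch error from replacing $\mathcal{D}^*_e\mathcal{D}_e P_e$ by $\mathcal{D}^*_{D\tilde d_c}\mathcal{D}_{D\tilde d_c}P_e$ is of the form $\omega(\|D\tilde d_c - e\|)\,\dot q(s_\epsilon)$, and attribute this to ``the $C^{2,\mu}$ bounds on $P_e$ from Appendix \ref{A: correctors}.'' That is not enough: the mismatch is essentially $-2\langle D\tilde d_c - e,\, D_y\partial_s P^\delta_2\rangle$, and a $C^{2,\mu}$ bound on $P^\delta_2$ itself only makes this \emph{bounded}, not proportional to $\dot q(s_\epsilon)$. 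The $\dot q$-proportionality is essential — without it, for $|s_\epsilon|$ large the mismatch term dominates $-6\tilde\alpha\dot q(s_\epsilon)$, the coefficient of $\dot q$ is no longer uniformly negative, and the Barles--Souganidis splitting (which requires the remainder to be exactly $-2\beta W''(q(s))$) does not apply. What produces the $\dot q$-proportionality is the Doob-transform structure $P^\delta_2 = V^\delta_2\,\dot q$ with $\|V^\delta_2\|_{C^{2,\alpha}} \leq C(1+\delta^{-1})$ (Theorem \ref{T: construction of correctors}) combined with the boundedness of $\ddot q/\dot q$ (Proposition \ref{P: good drift}); the paper uses precisely the identity $\mathcal{D}^*_{e'}\mathcal{D}_{e'}P^\delta_2 + W''(q)P^\delta_2 = \dot q\bigl(\mathcal{D}^*_{e'}\mathcal{D}_{e'}V^\delta_2 - 2(\ddot q/\dot q)\langle e',\mathcal{D}_{e'}V^\delta_2\rangle\bigr)$ for this. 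Relatedly, because those bounds scale like $\delta^{-1}$ and $\delta$ is pinned down by the required accuracy $\alpha_1$, the parameters cannot be chosen in arbitrary order: $\alpha_1$ must be fixed first (determining the implicit constants in your ``$\omega(\|D\tilde d_c - e\|)\dot q$'' term), then $\alpha_0$, and only then $\beta$. Your proposal does not address this ordering, which the paper flags explicitly as the delicate part of the argument.
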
  
	
The selection of $\beta$, $\alpha_{1}$, and $\alpha_{0}$ below is a little delicate.  The reason is, at some stage, the fact that $D\tilde{d}_{c}$ is not constant introduces errors.  Let $\omega_{e}$ be a modulus of continuity for $m$ and $\overline{m}$ at $e$, that is,
	\begin{equation*}
		\omega_{e}(\chi) = \sup \left\{ |m(y,v) - m(y,e)| + |\overline{m}(v) - \overline{m}(e)| \, \mid \, \|v - e\| \leq \chi, \, \, y \in \mathbb{T}^{d} \right\}.
	\end{equation*}
The errors $D\tilde{d}_{c}$ are proportional to $\omega_{e}(\alpha_{0})$ with proportionality constants depending on the choice of $\alpha_{1}$ through the magnitudes of the derivatives of $P_{e}$.  Therefore, to control these errors, we need to choose $\alpha_{1}$ before $\alpha_{0}$.  Given that $\beta$ depends on $\alpha_{0}$ through Proposition \ref{P: key geometric proposition}, it has to be chosen last.  

		\begin{proof}  In what follows, to declutter the notation, it will be convenient to define $s = s(x,t) = \tilde{d}_{c}(x,t) - 2 \beta$ and $p_{\epsilon}$ by 
			\begin{equation*}
				p_{\epsilon}(x,t) = \epsilon \mathcal{D}_{Dd(x,t)} P_{e}(\epsilon^{-1}s(x,t),\epsilon^{-1}x) + \epsilon^{2} D\tilde{d}_{c,t}(x,t) P_{e}(\epsilon^{-1}s(x,t),\epsilon^{-1}x).
			\end{equation*}
		By definition of $v^{\epsilon}$, the regularity of $P_{e}^{\delta}$, and the definition of $M$ in Lemma \ref{L: definition of S2}, we have
			\begin{align*}
				m(\epsilon^{-1} x, \epsilon Dv^{\epsilon}) v^{\epsilon}_{t} &= \epsilon^{-1} \dot{q}(\epsilon^{-1} s) m(\epsilon^{-1} x, \dot{q}(\epsilon^{-1} s)D\tilde{d}_{c}(x,t) + p_{\epsilon}) \tilde{d}_{c,t}(x,t) + O(1) \\
					&\leq \epsilon^{-1} \dot{q}(\epsilon^{-1} s) \left( m(\epsilon^{-1} x, \dot{q}(\epsilon^{-1} s)e) \tilde{d}_{c,t}(x,t) + M\omega_{e}(\alpha_{0} + O(\epsilon))\right) + O(1).
			\end{align*}
		Note, in addition, that, no matter the choice of $e' \in S^{d-1}$, the function $P^{\delta}_{2} = V^{\delta}_{2} \dot{q}$ defined in \eqref{E: fluctuation cell problem} satisfies
			\begin{equation*}
				\mathcal{D}^{*}_{e'} \mathcal{D}_{e'} P^{\delta}_{2} + W''(q(s)) P^{\delta}_{2} = \dot{q}(s)  \left( \mathcal{D}^{*}_{e'} \mathcal{D}_{e'} V^{\delta}_{2} - \frac{2 \ddot{q}(s)}{\dot{q}(s)} \langle e', \mathcal{D}_{e'} V_{2}^{\delta} \rangle \right) \quad \text{in} \, \, \mathbb{R} \times \mathbb{T}^{d}.
			\end{equation*}
		Combining this with the estimate $\|D\tilde{d}_{c} - e\| \leq \alpha_{0}$ from Proposition \ref{P: key geometric proposition}, we find
			\begin{align*}
				- \Delta v^{\epsilon} &= - \epsilon^{-2} \ddot{q}(\epsilon^{-1}s) - \epsilon^{-1} \dot{q}(\epsilon^{-1}s) \Delta \tilde{d}_{c}(x,t) + \epsilon^{-1} \mathcal{D}^{*}_{D\tilde{d}_{c}(x,t)} \mathcal{D}_{D\tilde{d}_{c}(x,t)} P_{e}(\epsilon^{-1}s,\epsilon^{-1}x) \tilde{d}_{c,t}(x,t) + O(1) \\
					&\leq - \epsilon^{-2} \ddot{q}(\epsilon^{-1}s) + \epsilon^{-1} \dot{q}(\epsilon^{-1}s) \left( [\overline{m}(e) - m(\epsilon^{-1}x,\dot{q}(\epsilon^{-1}s)e)] \tilde{d}_{c,t}(x,t) - \Delta \tilde{d}_{c}(x,t) + M\alpha_{1} + O(\alpha_{0})\right)  \\
					&\qquad -\epsilon^{-1} W''(q(\epsilon^{-1}s)) P_{e}+ O(1).
			\end{align*}
		Therefore, writing $\epsilon^{-2} W'(v^{\epsilon}) = \epsilon^{-2} W'(q) + \epsilon^{-1} W''(q) P^{\delta} - 2 \beta \epsilon^{-1} W''(q) + O(1)$ yields
			\begin{align*}
				m(\epsilon^{-1} x, \epsilon Dv^{\epsilon}) v^{\epsilon}_{t} - \Delta v^{\epsilon} + \epsilon^{-2} W'(v^{\epsilon}) &= \epsilon^{-1} \Big \{ \dot{q}(\epsilon^{-1}s,\epsilon^{-1}x) \Big ( \overline{m}(D\tilde{d}_{c}(x,t)) \tilde{d}_{c,t}(x,t) - \Delta \tilde{d}_{c}(x,t) \\
				&\qquad + 2 M\omega_{e}(\alpha_{0} + O(\epsilon)) + O(\alpha_{0}) + M\alpha_{1} \Big ) \\
				&\qquad -2 \beta W''(q(\epsilon^{-1}s)) \Big \} + O(1).
			\end{align*}
		Thus, appealing once more to Proposition \ref{P: key geometric proposition}, 
			\begin{align*}
				&m(\epsilon^{-1} x, \epsilon Dv^{\epsilon}) v^{\epsilon}_{t} - \Delta v^{\epsilon} + \epsilon^{-2} W'(v^{\epsilon}) \\
				&\qquad \leq \epsilon^{-1} \left\{ - (6 \tilde{\alpha} + 2M\omega_{e}(\alpha_{0} + O(\epsilon)) + O(\alpha_{0}) + M \alpha_{1}) \dot{q}(\epsilon^{-1}s) - 2 \beta \epsilon W''(q(\epsilon^{-1}s)) \right\} + O(1).
			\end{align*}
		
		Finally, we choose $\alpha_{1}$, $\alpha_{0}$, and $\beta$, in that order.  To start with, choose $\alpha_{1}$ so that $M \alpha_{1} < \tilde{\alpha}$.  Next, choose $\alpha_{0}$ so that, in the expression above, as soon as $\epsilon$ is small enough, we have
			\begin{equation*}
				2 M\omega_{e}(\alpha_{0} + O(\epsilon)) + O(\alpha_{0}) \leq 2 M\omega_{e}(2\alpha_{0}) + O(\alpha_{0}) \leq \tilde{\alpha}.
			\end{equation*} 
		Note that this choice depends on $\alpha_{1}$ through the magnitude of the derivatives of $V^{\delta}_{2}$, which contribute to the $O(\alpha_{0})$ term.  However, it does not depend on any of the parameters introduced in Section \ref{S: macroscopic subsolution} (and, in particular, introduces no new restrictions on $\beta$) so there is no risk of circular reasoning.  
		
		By \cite[Lemma 4.3]{barles souganidis}, there is a $\overline{\beta}(\tilde{\alpha}) > 0$ such that if $\beta \in (0,\overline{\beta}(\tilde{\alpha}))$, then
			\begin{equation} \label{E: BS trick}
				\nu(\beta,\tilde{\alpha}) := \sup \left\{ 3 \tilde{\alpha} \dot{q}(s) + 2 \beta W''(q(s)) \, \mid \, s \in \mathbb{R} \right\} > 0.
			\end{equation}
		At last, fix such a $\beta$ consistently with the restrictions of Section \ref{S: macroscopic subsolution}.  Note that, with this choice of $(\alpha_{0},\alpha_{1},\beta)$, for small enough $\epsilon > 0$, we find
			\begin{equation*}
				m(\epsilon^{-1} x, \epsilon Dv^{\epsilon}) v^{\epsilon}_{t} - \Delta v^{\epsilon} + \epsilon^{-2} W'(v^{\epsilon}) \leq - \frac{\nu(\beta,\tilde{\alpha})}{2 \epsilon} + O(1) \quad \text{in} \, \, \{|\tilde{d}_{c}| < \gamma\}.
			\end{equation*}
	\end{proof}  
	
	Henceforth, we assume that $\beta$, $\alpha_{0}$, and $\alpha_{1}$ have been chosen so that Lemma \ref{L: delicate subsolution property} holds.  These three parameters will remain fixed throughout the rest of this section.
		
	Next, we show that the functions $(\overline{v}^{\epsilon})_{\epsilon > 0}$ are subsolutions away from $\{\tilde{d}_{c} > 0\}$.  
	
	\begin{lemma} \label{L: first stage subsolution proof} If $\epsilon > 0$ is small enough, then $\overline{v}^{\epsilon}$ is a subsolution of \eqref{E: AC mobility} in $\{\tilde{d}_{c} < \gamma\}$.  \end{lemma}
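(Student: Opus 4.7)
The plan is to exhibit $\overline{v}^{\epsilon}$ as a maximum of two subsolutions of \eqref{E: AC mobility} and invoke the standard stability fact that the pointwise maximum of two viscosity subsolutions of a degenerate parabolic PDE is again a viscosity subsolution. On one hand, Lemma \ref{L: delicate subsolution property} gives that $v^{\epsilon}$ is a classical (hence viscosity) subsolution in $\{|\tilde{d}_c|<\gamma\}$ for all sufficiently small $\epsilon$. On the other hand, the constant $-1$ is a stationary solution of \eqref{E: AC mobility} since $W'(-1)=0$, and in particular a subsolution everywhere.

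The only nontrivial issue is the seam across $\Sigma := \{\tilde{d}_c = -(\gamma+2\beta)/2\}$, where the definition switches between $\max\{v^{\epsilon},-1\}$ on the inside and the constant $-1$ on the outside. To make the gluing automatic, I would show that for $\epsilon$ small enough, $v^{\epsilon}\le -1$ on a full two-sided open neighborhood of $\Sigma$ inside $\{|\tilde{d}_c|<\gamma\}$. On such a neighborhood, $\xi_\epsilon:=\epsilon^{-1}(\tilde{d}_c-2\beta)$ is uniformly comparable to $-\epsilon^{-1}(\gamma/2+3\beta)$ and therefore tends to $-\infty$ at rate $\epsilon^{-1}$. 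Exploiting the exponential decay $q(\xi)+1,\ \dot{q}(\xi)=O(e^{\alpha\xi})$ as $\xi\to-\infty$ (Appendix \ref{A: correctors}) together with the boundedness of $P_e$ and $\tilde{d}_{c,t}$, one obtains
\begin{equation*}
v^{\epsilon}(x,t)=-1-2\beta\epsilon+O\!\left(e^{-\alpha/(2\epsilon)}\right),
\end{equation*}
which is strictly below $-1$ for $\epsilon$ small. Consequently $\overline{v}^{\epsilon}\equiv -1$ on an open neighborhood $N$ of $\Sigma$, so $\overline{v}^{\epsilon}$ is continuous across $\Sigma$ and smoothly matched to its outside value.

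Given the neighborhood $N$, the subsolution property on $\{\tilde{d}_c<\gamma\}$ follows from an open covering argument: on $\{-(\gamma+2\beta)/2<\tilde{d}_c<\gamma\}$, $\overline{v}^{\epsilon}=\max\{v^{\epsilon},-1\}$ is a subsolution by the standard max-of-subsolutions lemma; on $\{\tilde{d}_c<-(\gamma+2\beta)/2\}$, $\overline{v}^{\epsilon}\equiv -1$ is a solution; and on the overlap $N$, both descriptions coincide with $-1$, so the viscosity subsolution property at any point of $\Sigma$ reduces to the trivial verification for the constant $-1$. The main (and really only) obstacle is the quantitative inequality $v^{\epsilon}\le -1$ near $\Sigma$; once that is in place, purely as a consequence of the exponential tails of $q$ and $\dot{q}$, the rest of the argument is a routine patching exercise.
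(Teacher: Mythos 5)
Your proposal is correct and takes essentially the same route as the paper: both decompose $\{\tilde d_c<\gamma\}$ into a region where $\overline{v}^\epsilon=\max\{v^\epsilon,-1\}$ is a maximum of two subsolutions, then use the exponential decay of $q+1$ and $P_e$ to show $v^\epsilon\leq -1$ on an open band straddling the seam $\{\tilde d_c=-(\gamma+2\beta)/2\}$, so $\overline{v}^\epsilon\equiv -1$ there and the two pieces patch trivially. The paper phrases the band as $\{\tilde d_c<-(3\beta/2+\gamma/4)\}$ rather than a neighborhood of $\Sigma$, but the content is identical.
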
  
	
		\begin{proof}  It is clear that $\overline{v}^{\epsilon}$ is a subsolution in $\{- \frac{(\gamma + 2 \beta)}{2} < \tilde{d}_{c} < \gamma \}$, being the maximum of two subsolutions there.  At the same time, we claim that $\overline{v}^{\epsilon}$ is a subsolution in $\{\tilde{d}_{c} < -(\frac{3 \beta}{2} + \frac{\gamma}{4})\}$ as soon as $\epsilon > 0$ is small enough.  Indeed, if $\tilde{d}_{c}(x,t) < -(\frac{3 \beta}{2} + \frac{\gamma}{4})$, then the exponential estimates of Propositions \ref{P: standing wave} and \ref{P: 1D corrector} and Corollary \ref{C: cell problem solution} imply that
			\begin{equation*}
				v^{\epsilon}(x,t) \leq q \left( - \frac{\gamma}{4 \epsilon} \right) + \epsilon \left( M \left| P_{e} \left( \frac{\tilde{d}_{c}(x,t) - 2 \beta}{\epsilon} \right)\right| - 2 \beta \right) \leq -1 + C \exp \left(-\frac{\gamma}{4 C \epsilon}\right) - 2 \beta \epsilon.
			\end{equation*}
		Hence $\overline{v}^{\epsilon} = -1$ in $\{\tilde{d}_{c} < -(\frac{3 \beta}{2} + \frac{\gamma}{4})\}$ as soon as $\epsilon$ is sufficiently small.  In particular, that makes $\overline{v}^{\epsilon}$ is a subsolution in $\{\tilde{d}_{c} < \gamma\}$.    \end{proof}  
		
	Finally, we verify that $(w^{\epsilon})_{\epsilon > 0}$ remains a subsolution inside $\{\tilde{d}_{c} > 0\}$ and has the right boundary behavior.
		
	\begin{lemma} \label{L: first initialization}  If $\epsilon > 0$ is small enough, then $w^{\epsilon}$ is a subsolution of \eqref{E: AC mobility} in $Q^{e}(x_{0},S_{2},\rho) \times (t_{0} - \nu_{2},t_{0} + \nu_{2})$ and 
		\begin{equation*}
			w^{\epsilon} \leq (1 - \beta \epsilon) \chi_{\{\tilde{d}_{c} \geq \beta\}} - \chi_{\{\tilde{d}_{c} < \beta\}} \quad \text{on} \, \, \partial_{p} [Q^{e}(x_{0},S_{2},\rho) \times (t_{0} - \nu_{2},t_{0}]].
		\end{equation*}
	\end{lemma}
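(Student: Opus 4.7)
The plan is to verify both claims by splitting the slab according to the value of $\tilde{d}_c$ into the three regimes determined by the cutoff $f$: the ``low'' region $R_- = \{\tilde{d}_c \leq 3\gamma/4 + \beta/2\}$, where $f(\tilde{d}_c) \equiv 0$ and hence $w^\epsilon = \overline{v}^\epsilon$; the ``high'' region $R_+ = \{\tilde{d}_c \geq 7\gamma/8 + \beta/4\}$, where $f(\tilde{d}_c) \equiv 1$ and hence $w^\epsilon \equiv 1 - \beta\epsilon$; and the transition strip $R_0$ in between, where the mixture is genuinely nontrivial.

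For the differential inequality, the subsolution property in $R_-$ is immediate from Lemma \ref{L: first stage subsolution proof}, since $3\gamma/4 + \beta/2 < \gamma$ once $\beta$ is small enough. In $R_+$, the constant $w^\epsilon \equiv 1 - \beta\epsilon$ is a classical subsolution because $W'(1) = 0$ and $W''(1) > 0$ (from \eqref{A: zeros} and \eqref{A: nondegeneracy of W}) give $\epsilon^{-2} W'(1 - \beta\epsilon) = -\beta W''(1)/\epsilon + O(1) < 0$. In the transition strip $R_0$, the lower bound $\tilde{d}_c - 2\beta \geq 3\gamma/4 - 3\beta/2 > 0$ places us in the exponentially flat tail regime, so by Propositions \ref{P: standing wave} and \ref{P: 1D corrector} and Corollary \ref{C: cell problem solution}, the functions $q(\epsilon^{-1}(\tilde{d}_c - 2\beta))$, $\overline{P}_e$, $P_2^\delta$ and all their relevant derivatives are within $Ce^{-c/\epsilon}$ of their $+\infty$ limits. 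In particular $\overline{v}^\epsilon = v^\epsilon$ and $w^\epsilon$ is smooth in $R_0$. I would then expand the equation on $w^\epsilon = (1 - f(\tilde{d}_c))v^\epsilon + f(\tilde{d}_c)(1 - \beta\epsilon)$ directly, absorbing the $(1-f)$-weighted bulk via Lemma \ref{L: delicate subsolution property} and controlling the cross-terms involving $f'(\tilde{d}_c)$, $f''(\tilde{d}_c)$, and $Dv^\epsilon$ by the exponential decay. Taylor-expanding $W'$ around $1$ gives
\begin{equation*}
W'(w^\epsilon) - (1-f)\, W'(v^\epsilon) = -f\, \beta\epsilon\, W''(1) + O(\epsilon^2),
\end{equation*}
so that
\begin{equation*}
m(\epsilon^{-1}x, \widehat{Dw^\epsilon})\, w^\epsilon_t - \Delta w^\epsilon + \epsilon^{-2} W'(w^\epsilon) \leq -\frac{(1-f)\nu(\beta,\tilde{\alpha})}{3\epsilon} - \frac{f\beta W''(1)}{\epsilon} + O(1),
\end{equation*}
which is strictly negative for $\epsilon$ small.

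For the boundary inequality on $\partial_p [Q^e(x_0, S_2, \rho) \times (t_0 - \nu_2, t_0]]$, the argument splits on whether $\tilde{d}_c \geq \beta$ or $\tilde{d}_c < \beta$. On the latter set, $f(\tilde{d}_c) = 0$ (since $\beta < 3\gamma/4 + \beta/2$), so $w^\epsilon = \overline{v}^\epsilon$; moreover $\tilde{d}_c - 2\beta < -\beta$ forces $q(\epsilon^{-1}(\tilde{d}_c - 2\beta))$ to lie within $Ce^{-c\beta/\epsilon}$ of $-1$, which dominates the $O(\epsilon)$ corrector contribution, so $v^\epsilon \leq -1$ and therefore $\overline{v}^\epsilon = -1$. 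On $\{\tilde{d}_c \geq \beta\}$, convexity of the mixture gives $w^\epsilon \leq \max(\overline{v}^\epsilon, 1 - \beta\epsilon)$, so it suffices to establish $\overline{v}^\epsilon \leq 1 - \beta\epsilon$; using $q \leq 1$ yields $v^\epsilon - (1 - \beta\epsilon) \leq \epsilon(\tilde{d}_{c,t} P_e - \beta)$, and this is nonpositive provided $\beta$ has been chosen large enough relative to $\sup |\tilde{d}_{c,t} P_e|$, an additional lower constraint on $\beta$ that is consistent with the earlier (upper) constraints once the derivatives of $P_e$ are fixed by the choice of $\alpha_1$.

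The principal obstacle I anticipate is the transition strip $R_0$: one must carefully interleave the exponential-in-$\epsilon$ smallness of $q - 1$ and of the corrector remainders with the polynomial $O(\epsilon)$ deficit $1 - \beta\epsilon - v^\epsilon$ entering through the derivatives of the cutoff, arranging that after all cancellations the only surviving $O(\epsilon^{-1})$ contribution is the favorable one generated by the nondegeneracy $W''(1) > 0$. Positivity of $W''(1)$ is essential, as it is precisely what makes $1 - \beta\epsilon$ a strict subsolution and lets the stitched function $w^\epsilon$ inherit this property throughout the transition.
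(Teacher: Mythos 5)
Your decomposition into $R_-$, $R_0$, $R_+$ and the expansion of the equation in the transition strip is essentially the paper's argument for the PDE inequality, and your handling of $\{\tilde{d}_c < \beta\}$ in the boundary estimate (where $f \equiv 0$ and the exponential decay of $q+1$ makes $v^\epsilon \leq -1$) is also correct. However, the argument you give for the boundary estimate on $\{\tilde{d}_c \geq \beta\}$ has a genuine gap. After bounding $v^\epsilon - (1-\beta\epsilon) \leq \epsilon(\tilde{d}_{c,t}P_e - \beta)$ via $q \leq 1$, you need $\beta > \sup|\tilde{d}_{c,t}P_e|$, a \emph{lower} bound on $\beta$. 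But $\beta$ is constrained from above by $\overline{\beta}(\tilde{\alpha})$, by $\tfrac{1}{2}\eta\nu_2$, and by the geometric requirements of Proposition~\ref{P: key geometric proposition}, while $\sup|\tilde{d}_{c,t}P_e| \leq M\|P_e\|_\infty$ with $\|P_e\|_\infty$ scaling like $\delta^{-1}$ for the penalization parameter $\delta$, which was pushed \emph{small} in Theorem~\ref{T: approximate correctors} to achieve accuracy $\alpha_1$. Once $\alpha_1$ is fixed, $\|P_e\|_\infty$ is a fixed but potentially very large constant, so the constraint $\beta > M\|P_e\|_\infty$ is incompatible with the upper bounds $\beta < 1$, etc. The ordering of parameters in Section~\ref{S: irrational directions} is precisely $\alpha_1$, then $\alpha_0$, then $\beta$, and shrinking $\beta$ only; no lower bound on $\beta$ in terms of the corrector norm is available.

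The fix is to not discard $1 - q(s)$ via $q \leq 1$. Split $\{\tilde{d}_c \geq \beta\}$ at the threshold $\tilde{d}_c = 2\beta + \kappa\epsilon$, where $\kappa$ is a fixed parameter chosen so that $\dot{q}(s) < \beta\Theta^{-1}$ for $s \geq \kappa$. For $\tilde{d}_c \leq 2\beta + \kappa\epsilon$, i.e.\ $s \leq \kappa$, one has $q(s) \leq q(\kappa) < 1$ with $1 - q(\kappa)$ a fixed positive quantity, so $v^\epsilon \leq q(\kappa) + O(\epsilon) - 2\beta\epsilon \leq 1 - \beta\epsilon$ for $\epsilon$ small. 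For $\tilde{d}_c > 2\beta + \kappa\epsilon$, i.e.\ $s > \kappa$, the exponential decay of $P_e(s,\cdot)$ (Propositions~\ref{P: standing wave}, \ref{P: 1D corrector} and Corollary~\ref{C: cell problem solution}) matches that of $1-q(s)$, both being comparable to $\dot{q}(s)$; hence $\epsilon|\tilde{d}_{c,t}P_e(s,\cdot)| = O(\epsilon)\cdot(1 - q(s))$ is absorbed by the gap $1 - q(s)$ once $\epsilon$ is small, again giving $v^\epsilon \leq 1 - \beta\epsilon$. This is what lets the argument close without a lower bound on $\beta$.
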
  
	
		\begin{proof}  Arguing as in Lemma \ref{L: first stage subsolution proof}, we see that $\overline{v}^{\epsilon} = v^{\epsilon}$ in $\{\tilde{d}_{c} > \frac{\gamma + 2 \beta}{2}\}$ as soon as $\epsilon > 0$ is sufficiently small.  In fact, we can assume that $1 - v^{\epsilon} \leq 2 \beta \epsilon$ in $\{\tilde{d}_{c} > \frac{\gamma + 2 \beta}{2}\}$.  
		
		Plugging $w^{\epsilon}$ into the equation in $\{ \frac{\gamma + 2 \beta}{2} < \tilde{d}_{c} < \gamma\}$, we find
	\begin{align*}
		m \left(\epsilon^{-1}x, \widehat{Dw^{\epsilon}} \right) w^{\epsilon}_{t} - \Delta w^{\epsilon} + \epsilon^{-2} W'(w^{\epsilon}) &= m \left(\epsilon^{-1} x, \widehat{Dw^{\epsilon}} \right) w_{t}^{\epsilon} - (1 - f(\tilde{d}(x,t))) \Delta v^{\epsilon} \\
		&\quad + 2 f'(\tilde{d}(x,t)) \langle D\tilde{d}, Dv^{\epsilon} \rangle + \epsilon^{-2} W'(w^{\epsilon})  \\
		&\quad + \left( f'(\tilde{d}(x,t)) \Delta \tilde{d} + 2f''(\tilde{d}(x,t)) \right) \\
		&\quad \times (v^{\epsilon}(x,t) - (1 - \beta \epsilon)) \\
		&= (I) + (II) + (III) + (IV) + (V) + (VI)
	\end{align*}
where, in view of the exponential estimates in Propositions \ref{P: standing wave} and \ref{P: 1D corrector} and Corollary \ref{C: cell problem solution}, the error terms can be estimated as follows:
	\begin{align*}
		(I) &= (1 - f(\tilde{d}(x,t))) \{m(\epsilon^{-1} x, \widehat{Dv^{\epsilon}}) v^{\epsilon}_{t} - \Delta v^{\epsilon} + \epsilon^{-2} W'(v^{\epsilon})\} \leq - \frac{1}{3}(1 - f(\tilde{d}(x,t))) \nu(\beta,\tilde{\alpha}) \epsilon^{-1}, \\
		(II) &= f(\tilde{d}(x,t)) \epsilon^{-2} W'(1 - \beta \epsilon) \leq - f(\tilde{d}(x,t)) \beta W''(1) \epsilon^{-1} + O(1) \\
		(III) &= \epsilon^{-2} W'((1 - f) v^{\epsilon} + f (1 - \beta \epsilon)) - (1 - f) \epsilon^{-2} W'(v^{\epsilon}) - f \epsilon^{-2} W'(1 - \beta \epsilon) \\
			&= \epsilon^{-2} O(|v^{\epsilon} - 1|^{2} + \epsilon^{2}) = O(1) \\
		(IV) &= (1 - f(\tilde{d}(x,t))) (m(\epsilon^{-1} x, \widehat{Dw^{\epsilon}}) - m(\epsilon^{-1} x, \widehat{Dv^{\epsilon}})) v^{\epsilon}_{t} \\
			&\leq C \left[ \epsilon^{-1} \dot{q} \left(\frac{\tilde{d}(x,t) - 2 \beta}{\epsilon}\right) + \left| \partial_{s} P_{e} \left(\frac{\tilde{d}(x,t) - 2 \beta}{\epsilon} \right) \right| + \epsilon \right] \leq C \exp \left(- (C \epsilon)^{-1}\left(\frac{\gamma - 2 \beta}{2}\right) \right) \\
		(V) &= (1 - f(\tilde{d}(x,t))) m(\epsilon^{-1} x, \widehat{Dw^{\epsilon}}) (w^{\epsilon}_{t} - v^{\epsilon}_{t}) \\
			&\leq C \exp \left(- (C \epsilon)^{-1}\left(\frac{\gamma - 2 \beta}{2}\right) \right) + C f'(\tilde{d}(x,t)) \tilde{d}_{t} |v^{\epsilon}(x,t) - (1 - \beta \epsilon)| \\
			&\leq C \left[ \exp \left(- (C \epsilon)^{-1}\left(\frac{\gamma - 2 \beta}{2} \right) \right) + \epsilon \right] \\
		(VI) &= 2 f'(\tilde{d}(x,t)) \langle D\tilde{d}, Dv^{\epsilon} \rangle + \left( f'(\tilde{d}(x,t)) \Delta \tilde{d} + 2f''(\tilde{d}(x,t)) \right) (v^{\epsilon}(x,t) - (1 - \beta \epsilon)) \\
			&\leq C \left( \left(\epsilon^{-1} + 1\right) \exp \left( - (C\epsilon)^{-1} \left(\frac{\gamma - 2 \beta}{2} \right) \right) \right) + C\epsilon
	\end{align*}
In particular, we find, in the limit $\epsilon \to 0^{+}$,
	\begin{equation*}
		m \left(\epsilon^{-1}x, \widehat{Dw^{\epsilon}} \right) w^{\epsilon}_{t} - \Delta w^{\epsilon} + \epsilon^{-2} W'(w^{\epsilon}) \leq - \min\left\{\frac{1}{3}\nu(\beta,\tilde{\alpha}), \beta W''(1)\right\} \epsilon^{-1} + O(1).
	\end{equation*}
Thus, $w^{\epsilon}$ is a subsolution in the domain $\{\frac{\gamma + 2 \beta}{2} < \tilde{d}_{c} < \gamma \}$ as soon as $\epsilon$ is small enough.  At the same time, $w^{\epsilon} = \overline{v}_{\epsilon}$ in $\{\tilde{d}_{c} < \frac{3\gamma}{4} + \frac{\beta}{2}\}$ and $w^{\epsilon} = 1 - \beta \epsilon$ in $\{\frac{7 \gamma}{8} + \frac{\beta}{4} < \tilde{d}_{c}\}$  so $w^{\epsilon}$ is actually a subsolution in $Q^{e}(x_{0},S_{2},\rho) \times (t_{0} - \nu_{2},t_{0} + \nu_{2}]$. 

Finally, we check the boundary condition.  We claim that, for all $\epsilon > 0$ small enough,
	\begin{equation} \label{E: crazy inequality}
		v^{\epsilon} \leq (1 - \beta \epsilon)\chi_{\{\tilde{d}_{c} \geq \beta\}} - \chi_{\{\tilde{d}_{c} < \beta\}} \quad \text{in} \, \, Q^{e}(x_{0},S_{2},\rho) \times (t_{0} - \nu_{2},t_{0} + \nu_{2}).	
	\end{equation}
To see this, first, choose $\kappa > 0$ such that 
	\begin{equation*}
		\max \left\{ \dot{q}(s) \, \mid \, s \geq \kappa \right\} < \beta \Theta^{-1}.
	\end{equation*}
Now notice that if $\tilde{d}_{c}(x,t) \leq 2 \beta + \kappa \epsilon$, then	
	\begin{equation*}
		v^{\epsilon}(x,t) \leq q(\kappa) + \epsilon \Theta \|\dot{q}\|_{L^{\infty}(\mathbb{R})} - 2 \beta \epsilon
	\end{equation*}
while $\tilde{d}_{c}(x,t) > 2 \beta + \kappa \epsilon$ implies, by the choice of $\kappa$,
	\begin{equation*}
		v^{\epsilon}(x,t) \leq 1 - \beta \epsilon.
	\end{equation*}
Thus, there is an $\bar{\epsilon} > 0$ such that, for each $\epsilon \in (0,\bar{\epsilon})$,
	\begin{equation*}
		v^{\epsilon} \leq 1 - \beta \epsilon \quad \text{in} \, \, Q^{e}(x_{0},S_{2},\rho) \times (t_{0} - \nu_{2},t_{0} + \nu_{2}).
	\end{equation*}
	
Finally, if $\tilde{d}_{c}(x,t) < \beta$, then, making $\bar{\epsilon} > 0$ smaller if necessary, we find, for each $\epsilon \in (0,\bar{\epsilon})$,
	\begin{equation*}
		v^{\epsilon}(x,t) \leq -1 + C \exp \left( - \frac{\beta}{C \epsilon} \right) - 2 \beta \epsilon \leq -1.
	\end{equation*}
This completes the proof of \eqref{E: crazy inequality}.  Since $f(\xi) = 0$ if $\xi \leq 2\beta$, the claimed boundary behavior of $w^{\epsilon}$ follows.
\end{proof}  
%
%Now we discuss the steps necessary to remove the assumption \eqref{A: convenient assumption}:
%
%	\begin{remark} \label{R: irregular m}  If $m$ does not satisfy \eqref{A: convenient assumption}, we can pick $\tilde{m}$ satisfying that assumption with $\|m - \tilde{m}\|_{L^{\infty}(\mathbb{T}^{d} \times \mathbb{R}^{d})} \leq \alpha_{1}$.  In the definition of $(\tilde{P}^{\delta}_{e})_{\delta > 0}$, we now use $\tilde{m}$ instead of $m$.  This results in an additional error in the proof of Lemma \ref{L: delicate subsolution property}, but it does not see any of the derivatives of the corrector.  Thus, in Lemma \ref{L: delicate subsolution property}, the $M\alpha_{1}$ error term gets replaced by some integer multiple, say, $10 M \alpha_{1}$.  This means we choose $\alpha_{1}$ slightly smaller than before.  The choice of $\alpha_{0}$ and $\beta$ proceeds unchanged.    \end{remark}  

\section{Rational Contact Points}  \label{S: rational directions}

In this section, we prove the analogue of Proposition \ref{P: graph localization} for rational directions assuming in addition that the level set of $\varphi$ is nearly flat at the contact point.  That is, we tackle condition (b) in Definition \ref{D: irrational directions}.  The main result is stated below:
	
	\begin{prop} \label{P: rational directions}  Fix $\delta \in (0,1)$.  If $\varphi$ is a smooth function in $\mathbb{R}^{d} \times (0,\infty)$; $(x_{0},t_{0}) \in \mathbb{R}^{d} \times (0,\infty)$ is a point where $\chi_{*} - \varphi$ has a strict local minimum; $D\varphi(x_{0},t_{0}) \in \mathbb{R} \mathbb{Z}^{d} \setminus \{0\}$; and the level set of $\varphi$ has is $\delta$-flat at $(x_{0},t_{0})$ in the following sense
		\begin{equation*}
			\left\| \left(\text{Id} - \widehat{D\varphi}(x_{0},t_{0}) \otimes \widehat{D\varphi}(x_{0},t_{0}) \right) D^{2} \varphi(x_{0},t_{0}) \right\| \leq \delta \|D\varphi(x_{0},t_{0})\|
		\end{equation*}
	then
		\begin{equation} \label{E: supersolution inequality rational case}
			\varphi_{t}(x_{0},t_{0}) \geq -10\theta^{-1} \delta \|D\varphi(x_{0},t_{0})\|
		\end{equation}
	\end{prop}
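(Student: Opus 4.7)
The argument follows the contradiction-and-barrier template of Proposition~\ref{P: graph localization}, with one crucial simplification: no approximate corrector is used. The flatness hypothesis makes the curvature term small enough that the pointwise bound $m \ge \theta$ substitutes for the effective mobility (unavailable in rational directions). Assume toward contradiction that $\varphi_{t}(x_{0},t_{0}) < -10\theta^{-1}\delta \|D\varphi(x_{0},t_{0})\|$. Since $D\varphi(x_{0},t_{0}) \ne 0$ and $(x_{0},t_{0})$ is a strict local minimum of $\chi_{*} - \varphi$, we must have $\chi_{*}(x_{0},t_{0}) = -1$; we will contradict this by producing a mesoscopic subsolution of \eqref{E: AC mobility} forcing $\liminf\nolimits_{*} u^{\epsilon}(x_{0},t_{0}) = 1$.

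\textbf{Barrier construction.} Set $e = \widehat{D\varphi}(x_{0},t_{0})$. The implicit function theorem locally represents $\{\varphi = \varphi(x_{0},t_{0})\}$ as a smooth graph $x_{e} = g(x',t)$ with $Dg(x_{0}',t_{0}) = 0$; the contradiction hypothesis gives $g_{t}(x_{0}',t_{0}) = -\varphi_{t}(x_{0},t_{0})/\|D\varphi(x_{0},t_{0})\| > 10\theta^{-1}\delta$, while the flatness hypothesis gives $\|D_{x'}^{2} g(x_{0}',t_{0})\| \le C_{\varphi}\delta$. Choose constants $v^{*} \in (5\theta^{-1}\delta,\, g_{t}(x_{0}',t_{0}))$ and $c \in (C_{\varphi}\delta,\, \theta v^{*}/(2(d-1)))$ (the latter interval is nonempty because $v^{*} > 5\theta^{-1}\delta$), and define the moving paraboloid
\[
\tilde{g}(x',t) = g(x_{0}', t_{0} - \nu) + v^{*}(t - t_{0} + \nu) + \tfrac{c}{2}\|x' - x_{0}'\|^{2}.
\]
Parameter tuning in the style of Section~\ref{S: macroscopic subsolution} selects $\nu, S > 0$ so that: (i) $\tilde{g} \ge g$ on $\partial Q(x_{0}',S) \times [t_{0} - \nu, t_{0}] \cup \overline{Q(x_{0}',S)} \times \{t_{0} - \nu\}$, since the $c$-paraboloid dominates the $O(\delta)$ tangential curvature of $g$; and (ii) $\tilde{g}(x_{0}',t_{0}) - x_{0,e} = (v^{*} - g_{t}(x_{0}',t_{0}))\nu + O(\nu^{2}) < -2\beta$ for $\nu$ sufficiently large compared to $\beta$, so $x_{0}$ lies above $\tilde{g}(\cdot, t_{0})$ by more than $2\beta$ at time $t_{0}$.

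\textbf{Mesoscopic subsolution.} Let $\tilde{d}(x,t)$ denote the signed distance to $\{x_{e} = \tilde{g}(x',t)\}$, taken positive above, and set $v^{\epsilon}(x,t) = q((\tilde{d}(x,t) - 2\beta)/\epsilon) - 2\beta\epsilon$. Using $\ddot{q} = W'(q)$ and expanding,
\[
m(\epsilon^{-1}x, \epsilon D v^{\epsilon})\, v^{\epsilon}_{t} - \Delta v^{\epsilon} + \epsilon^{-2}W'(v^{\epsilon}) = \epsilon^{-1}\bigl\{\dot{q}\bigl[m(\epsilon^{-1}x, \dot{q} e + O(\epsilon))\,\tilde{d}_{t} - \Delta \tilde{d}\bigr] - 2\beta W''(q)\bigr\} + O(1).
\]
The bounds $m \ge \theta$, $\tilde{d}_{t} \le -v^{*}/2$ (on a small box), and $\Delta\tilde{d} \ge -c(d-1) > -\theta v^{*}/2$ force the bracket to be at most $-(\theta v^{*}/2)\dot{q} - 2\beta W''(q)$, which the Barles--Souganidis trick used in \eqref{E: BS trick} makes bounded above by a strictly negative constant once $\beta$ is small. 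Hence $v^{\epsilon}$ is a strict subsolution of \eqref{E: AC mobility} in $\{|\tilde{d}| < \gamma\}$ for all sufficiently small $\epsilon$.

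\textbf{Conclusion and main obstacle.} Extend $v^{\epsilon}$ to a subsolution $w^{\epsilon}$ on $Q^{e}(x_{0},S,\rho) \times (t_{0} - \nu, t_{0}]$ by the truncation-and-patching procedure of Proposition~\ref{P: mesoscopic subsolution} (considerably simpler here due to the absence of corrector error terms), and invoke Proposition~\ref{P: initialization} applied to $g$---valid because the strict local minimum gives $\{\varphi > \varphi(x_{0},t_{0})\} \subseteq \{\chi_{*} = 1\}$ locally---to obtain $w^{\epsilon} \le u^{\epsilon}(\cdot, \cdot + \tau\epsilon^{2}\log\epsilon^{-1})$ on the parabolic boundary. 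The comparison principle of Appendix~\ref{A: comparison principle} propagates this inward; since $\tilde{d}(x_{0},t_{0}) > 2\beta$ by construction, we conclude $\liminf\nolimits_{*} u^{\epsilon}(x_{0},t_{0}) = 1$, the desired contradiction. The principal difficulty is the quantitative parameter chase producing the explicit constant $10\theta^{-1}$: only the pointwise bound $m \ge \theta$ is available (no effective mobility in the rational direction $e$), so every slack---between $v^{*}$ and $g_{t}(x_{0}',t_{0})$, in the curvature correction $c(d-1) < \theta v^{*}/2$, and in the Barles--Souganidis threshold on $\beta$---must be extracted from the margin $g_{t}(x_{0}',t_{0}) - 10\theta^{-1}\delta > 0$. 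Beyond this bookkeeping the argument is structurally simpler than Proposition~\ref{P: graph localization} precisely because no corrector is required.
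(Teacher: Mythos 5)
Your proposal follows the paper's own strategy: contradiction, a nearly-flat moving barrier built without any corrector, the crude bound $m \ge \theta$ in place of the (unavailable) effective mobility, and the truncation/comparison machinery of Section~\ref{S: irrational directions}. The paper reuses the $\tilde d_{c}$ from Section~\ref{S: macroscopic subsolution} (i.e.\ keeps $g$, adds the $c$-paraboloid, and superimposes the $\eta$-drift), arranging $\tilde d_{c,t} \le -9\theta^{-1}\delta$ and $|\Delta\tilde d_{c}| \le 2\delta$ so that $\theta \tilde d_{c,t} - \Delta\tilde d_{c} \le -7\delta \le -6\delta$; you instead replace $g$ outright with a fresh paraboloid moving at constant speed $v^{*}<g_{t}$. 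These are cosmetically different ways of producing the same kind of barrier, so I regard them as the same approach.

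There is, however, a small but real quantitative slip in your parameter chase. With the constraint $c(d-1) < \theta v^{*}/2$ together with $\tilde d_{t} \le -v^{*}/2$ and $m \ge \theta$, the estimate you get is
\begin{equation*}
m\,\tilde d_{t} - \Delta \tilde d \;\le\; \theta\,\tilde d_{t} - \Delta\tilde d \;\le\; -\tfrac{\theta v^{*}}{2} + c(d-1) \;<\; 0,
\end{equation*}
which only gives that the bracket in the expansion is $\le -2\beta W''(q)$, not $\le -(\theta v^{*}/2)\dot q - 2\beta W''(q)$ as you wrote. Since $W''(q(s))$ is negative near $s=0$, the quantity $-2\beta W''(q)$ alone is \emph{not} nonpositive, and the Barles--Souganidis trick \eqref{E: BS trick} genuinely needs a strictly negative, $\beta$-independent coefficient in front of $\dot q$. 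The fix is easy: tighten the constraint on $c$ (e.g.\ $c(d-1) < \theta v^{*}/4$, or any proper fraction of $\theta v^{*}/2$) so that $\theta\tilde d_{t} - \Delta\tilde d \le -\theta v^{*}/4$, matching the paper's explicit margin of $-6\delta$. This also affects the interval you claimed nonempty for $c$: the lower endpoint must still beat the $O(\delta)$ tangential curvature of $g$, so the whole bookkeeping ties $v^{*}$, $c$, and the constant $10\theta^{-1}$ together. (One can debate whether the stated constant is dimension-independent in high $d$, but that issue is shared with the paper's sketch and is orthogonal to your argument.) Beyond this arithmetic correction the reasoning is sound.
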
  
	
The proof of Proposition \ref{P: rational directions} is a minor modification of the proof of Proposition \ref{P: graph localization}.  Let us summarize the details.

Again, proceed by contradiction.  If \eqref{E: supersolution inequality rational case} fails, then we can construct $\tilde{d}_{c}$ once more in such a way that
	\begin{equation*}
		\tilde{d}_{c,t} \leq - 9 \theta^{-1} \delta \quad \text{in} \, \, \{|\tilde{d}_{c}| < \gamma\}
	\end{equation*}
Further, by continuity, we can make $S_{2}$, $\nu_{2}$, $c$, and $\gamma$ so small that
	\begin{equation*}
		|\Delta \tilde{d}_{c}| \leq 2 \delta \quad \text{in} \, \, \{|\tilde{d}_{c}| < \gamma\}
	\end{equation*}
Hence, with these changes, the conclusions of Proposition \ref{P: key geometric proposition} still hold except $-6 \tilde{\alpha}$ should be replaced by $-6\delta$ and the mobility $\overline{m}$ by the constant $\theta$.

The construction of mesoscopic subsolutions proceeds as before, except this time $v^{\epsilon}$ is simply given by
	\begin{equation*}
		v^{\epsilon}(x,t) = q\left(\frac{\tilde{d}_{c}(x,t) - 2 \beta}{\epsilon} \right) - 2 \beta \epsilon.
	\end{equation*}
When it comes time to check that $v^{\epsilon}$ is a subsolution, we use
	\begin{equation*}
		m(\epsilon^{-1} x, D\tilde{d}_{c}) \tilde{d}_{c,t} - \Delta \tilde{d}_{c} \leq \theta \tilde{d}_{c,t} - \Delta \tilde{d}_{c} \leq - 6\delta \quad \text{in} \, \, \{|\tilde{d}_{c}| < \gamma\}.
	\end{equation*}
The remainder of the construction goes through exactly as before.
	
\section{Shrinking Subsolutions}  \label{S: zero normal}

In this section, we construct mesoscopic subsolutions of \eqref{E: AC mobility} that approximate characteristic functions of shrinking balls.  Using these, we prove that the limiting evolution satisfies the remaining differential inequality in Definition \ref{D: irrational directions}, namely condition (c).  Employing similar ideas, we also prove the necessary inclusions relating the macroscopic phases to the initial datum.

\subsection{Finite Speed of Shrinking}

As shown in \cite{barles souganidis}, to prove $\chi_{*}$ satisfies the right differential inequality when the gradient vanishes, it suffices to check that balls contained in $\{\chi_{*} = 1\}$ cannot shrink too fast.  Toward that end, we begin by proving the next result:

	\begin{prop} \label{P: differential_relation}  Fix $R > 0$ and $t_{0} \geq 0$ and assume that $B(x_{0},R) \subseteq \Omega_{t_{0}}^{(1)}$.  Given $\underline{\theta} \in (0, \theta)$, there is an $h > 0$ depending continuously on $R$ (and independent of $(x_{0},t_{0})$) such that 
		\begin{equation*}
			B\left(x_{0},\sqrt{R^{2} - 2 \underline{\theta}^{-1} (d-1) s}\right) \subseteq \Omega_{t_{0} + s}^{(1)} \quad \text{if} \, \, s \in [0,h)
		\end{equation*}  
	\end{prop}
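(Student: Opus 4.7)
The plan is to mirror the strategy of Section \ref{S: irrational directions}, constructing a mesoscopic subsolution of \eqref{E: AC mobility} whose positive region tracks a radially shrinking ball, and then concluding via comparison. No corrector is needed here because the normal to a sphere ranges over all of $S^{d-1}$; instead we exploit only the pointwise bound $m \geq \theta > \underline{\theta}$, with the strict slack $\theta/\underline{\theta} > 1$ providing the dissipation that drives the subsolution inequality.

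First I would fix $\eta \in (0, R)$ small (to be sent to $0$ at the end) and set $r_{\eta}(s) = \sqrt{(R - \eta)^{2} - 2 \underline{\theta}^{-1} (d-1) s}$, so that $r_{\eta} \dot r_{\eta} = -\underline{\theta}^{-1} (d-1)$; I would choose $h_{\eta} < (R - \eta)^{2} \underline{\theta} / (4(d-1))$ so that $r_{\eta}(s) \geq (R-\eta)/2$ on $[0, h_{\eta})$. Let $\bar d(x, t) = r_{\eta}(t - t_{0}) - \|x - x_{0}\|$ be the signed distance (positive inside). On the annular tube $\{|\bar d| < \gamma\}$ with $\gamma > 0$ small, one computes $\bar d_{t} = \dot r_{\eta}$ and $\Delta \bar d = -(d-1)/(r_{\eta} - \bar d)$. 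Since $\dot r_{\eta} < 0$ and $m \geq \theta$, for every $(y, v) \in \mathbb{T}^{d} \times \mathbb{R}^{d}$,
\begin{equation*}
m(y, v) \bar d_{t} - \Delta \bar d \leq \theta \dot r_{\eta} + \frac{d-1}{r_{\eta} - \bar d} = \frac{d-1}{r_{\eta}} \left( \frac{r_{\eta}}{r_{\eta} - \bar d} - \frac{\theta}{\underline{\theta}} \right) \leq -\sigma
\end{equation*}
for some $\sigma > 0$ depending on $R$, $\theta$, $\underline{\theta}$, provided $\gamma$ is small enough that $r_{\eta}/(r_{\eta} - \bar d) < \theta/\underline{\theta}$ throughout the tube; this is possible because $r_{\eta} \geq (R - \eta)/2$ and $\theta/\underline{\theta} > 1$.

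Next, for $\beta > 0$ to be fixed later, I would define $v^{\epsilon}(x, t) = q \bigl( (\bar d(x, t) - 2\beta)/\epsilon \bigr) - 2 \beta \epsilon$ on $\{|\bar d| < \gamma\}$ and extend it globally by the $\overline v^{\epsilon} \to w^{\epsilon}$ truncation of Section \ref{S: irrational directions}, so that $w^{\epsilon}$ equals $1 - \beta \epsilon$ deep inside the shrinking ball, $-1$ outside it, and interpolates via the standing wave profile between. Exactly as in Lemma \ref{L: delicate subsolution property}, using $\ddot q = W'(q)$ and expanding $W'(v^{\epsilon})$ to first order in $\beta \epsilon$ gives
\begin{equation*}
m(\epsilon^{-1} x, \epsilon D v^{\epsilon}) v^{\epsilon}_{t} - \Delta v^{\epsilon} + \epsilon^{-2} W'(v^{\epsilon}) \leq \epsilon^{-1} \bigl\{ -\sigma \dot q - 2 \beta W''(q) \bigr\} + O(1),
\end{equation*}
which is $\leq -\nu(\beta, \sigma)(2\epsilon)^{-1}$ for $\epsilon$ small once $\beta$ is picked via the trick of \cite[Lemma 4.3]{barles souganidis}. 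The cutoff analysis of Lemmas \ref{L: first stage subsolution proof}--\ref{L: first initialization} transfers verbatim, showing that $w^{\epsilon}$ is a subsolution of \eqref{E: AC mobility} on $\mathbb{R}^{d} \times (t_{0}, t_{0} + h_{\eta})$.

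Finally, since $B(x_{0}, R) \subseteq \Omega_{t_{0}}^{(1)}$, the family $(u^{\epsilon})$ converges to $1$ locally uniformly on $B(x_{0}, R)$ at $t = t_{0}$, in particular on the support of $w^{\epsilon}(\cdot, t_{0}) + 1$, which lies in $\overline{B(x_{0}, R - \eta + \gamma)} \subset B(x_{0}, R)$ for $\eta, \gamma$ small. The initialization argument of Proposition \ref{P: initialization} (which is local and depends only on a bump construction plus the ODE of Lemma \ref{L: universal ODE}) then produces $\tau, \epsilon_{0} > 0$ with $u^{\epsilon}(\cdot, t_{0} + \tau \epsilon^{2} \log(\epsilon^{-1})) \geq w^{\epsilon}(\cdot, t_{0})$ on all of $\mathbb{R}^{d}$ for $\epsilon \in (0, \epsilon_{0})$. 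The comparison principle of Appendix \ref{A: comparison principle} applied to $w^{\epsilon}$ and $u^{\epsilon}(\cdot, \cdot + \tau \epsilon^{2} \log(\epsilon^{-1}))$, together with $w^{\epsilon} \to 1$ locally uniformly on $\{\bar d > 2 \beta\}$, then yields $\liminf_{*} u^{\epsilon}(x, t_{0} + s) = 1$ whenever $\|x - x_{0}\| < r_{\eta}(s) - 2\beta$. Sending $\beta \to 0^{+}$ and then $\eta \to 0^{+}$ gives the result with $h = R^{2} \underline{\theta}/(4(d-1))$, which depends continuously on $R$. The main obstacle is to arrange the subsolution calculation uniformly in $s \in [0, h)$: this forces the choice of $\gamma$ (and hence $h$) to be anchored to a uniform lower bound on $r_{\eta}(s)$, but translation invariance of \eqref{E: AC mobility} makes the whole construction independent of $(x_{0}, t_{0})$.
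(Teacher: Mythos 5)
Your proposal is correct and follows essentially the same strategy as the paper: define the radially shrinking distance function, exploit $m \geq \theta > \underline{\theta}$ together with $\bar d_t < 0$ to get a strictly negative right-hand side in the effective flow inequality on a thin annular tube, build $v^{\epsilon} = q((\bar d - 2\beta)/\epsilon) - 2\beta\epsilon$ without any corrector, invoke the Barles--Souganidis $\beta$-trick for the subsolution inequality, cut off to $w^{\epsilon}$, initialize, compare, and send $\beta \to 0^{+}$. The only cosmetic difference is that you introduce an extra shrinking parameter $\eta$ (eventually sent to $0^{+}$), whereas the paper works directly with $R$ and controls the annular tube through the parameters $\rho$ and $\nu$; the two bookkeeping schemes serve the same purpose, and your additional $\eta \to 0^{+}$ passage is handled correctly using the openness of $\Omega_{t_0 + s}^{(1)}$.
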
 
	
By invoking the proposition, we can prove

	\begin{theorem} \label{T: ball_inclusion}  Fix $R > 0$ and $t_{0} > 0$ and assume that $B(x_{0},R) \subseteq \Omega_{t}^{(1)}$.  If $\underline{\theta} < \theta$, then 
		\begin{equation*}
			B \left(x_{0},\sqrt{R^{2} - 2\underline{\theta}^{-1} (d - 1)s}\right) \subseteq \Omega_{t_{0} + s}^{(1)} \quad \text{for each} \, \, s \in \left[0,\frac{\underline{\theta}R^{2}}{2 (d -1)}\right].
		\end{equation*}    
	\end{theorem}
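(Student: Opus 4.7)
The plan is to iterate Proposition \ref{P: differential_relation}, taking advantage of a semigroup identity satisfied by the shrinking‐ball radius. Writing $R(s) := \sqrt{R^{2} - 2\underline{\theta}^{-1}(d-1)s}$ and $T_{*} := \underline{\theta}R^{2}/(2(d-1))$, one has $R(T_{*}) = 0$ and, crucially,
$$R(s_{1})^{2} - 2\underline{\theta}^{-1}(d-1)s_{2} \;=\; R(s_{1} + s_{2})^{2}.$$
This identity means that whenever $B(x_{0},R(s_{1})) \subseteq \Omega^{(1)}_{t_{0}+s_{1}}$ at an intermediate time, a second application of Proposition \ref{P: differential_relation} with this starting ball produces inclusions whose radii at time $t_{0}+s_{1}+s_{2}$ are exactly $R(s_{1}+s_{2})$, matching the shape of the desired conclusion.

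Let $h : (0,\infty) \to (0,\infty)$ denote the continuous positive function supplied by Proposition \ref{P: differential_relation}, which depends only on the radius. I would construct inductively an increasing sequence $(\tau_{n})_{n \geq 0}$ by $\tau_{0} = 0$ and $\tau_{n+1} = \tau_{n} + \tfrac{1}{2}h(R(\tau_{n}))$ (halted if this would exceed $T_{*}$), and prove by induction that
$$B(x_{0},R(s)) \;\subseteq\; \Omega^{(1)}_{t_{0}+s}\qquad \text{for every } s \in [0,\tau_{n+1}].$$
The base case is just Proposition \ref{P: differential_relation} applied at $(x_{0},t_{0})$ with radius $R$, which covers $[0,h(R)) \supseteq [0,\tau_{1}]$. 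For the inductive step, the previous stage produced the inclusion on the open interval $[0, \tau_{n-1}+h(R(\tau_{n-1})))$, which strictly contains $\tau_{n}$, so in particular $B(x_{0},R(\tau_{n})) \subseteq \Omega^{(1)}_{t_{0}+\tau_{n}}$; applying Proposition \ref{P: differential_relation} at $(x_{0},t_{0}+\tau_{n})$ with this ball and rewriting the resulting radii via the semigroup identity extends the inclusion out to $s = \tau_{n} + h(R(\tau_{n})) > \tau_{n+1}$.

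It only remains to show $\tau_{n} \uparrow T_{*}$. Suppose for contradiction that $\tau_{n} \to T^{*} < T_{*}$; then $R(T^{*}) > 0$, and the continuity of $R(\cdot)$ and $h(\cdot)$ on $(0,\infty)$ give $h(R(\tau_{n})) \to h(R(T^{*})) > 0$, contradicting the fact that $\tau_{n+1} - \tau_{n} = \tfrac{1}{2}h(R(\tau_{n}))$ must tend to zero. Hence the inclusion holds throughout $[0,T_{*})$, and the endpoint $s = T_{*}$ is trivial since the open ball $B(x_{0},R(T_{*})) = B(x_{0},0)$ is empty. The step that, at first glance, looks like the main obstacle — trying to upgrade a family of time-slice inclusions to an inclusion in the closed limit $s \to T_{*}^{-}$, which is problematic given that $\Omega^{(1)}_{t}$ is defined via $\liminf_{*}$ and is not obviously closed under spacetime limits — is deliberately sidestepped: at every stage the iteration only reads off the inclusion at an interior point $\tau_{n}$ of the previously established open interval, so one never has to pass a time-limit through the half-relaxed-limit operation.
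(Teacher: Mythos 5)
Your proof is correct, but it follows a genuinely different route from the paper's. The paper's proof introduces the maximal radius $R(s) = \sup\{r \geq 0 : B(x_0,r) \subseteq \Omega^{(1)}_{t_0+s}\}$, observes that it is lower semi-continuous because $\chi_*$ is, uses Proposition \ref{P: differential_relation} to show it is a viscosity supersolution of the ODE $\underline{\theta}\dot{R} + (d-1)/R \geq 0$, and then invokes the comparison principle for viscosity solutions of first-order ODEs to conclude that $R(s)$ dominates the explicit solution $\sqrt{R^2 - 2\underline{\theta}^{-1}(d-1)s}$. You instead iterate Proposition \ref{P: differential_relation} directly: the algebraic semigroup identity for the shrinking radius lets consecutive applications of the proposition chain exactly, and the continuous dependence of $h$ on the radius, combined with the elementary observation that a bounded monotone sequence has consecutive differences tending to zero, forces the iteration to exhaust all of $[0,T_*)$. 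Both arguments lean on the same two ingredients from Proposition \ref{P: differential_relation} (the local inclusion and the continuity of $h$ in $R$). What your approach buys is that it bypasses viscosity theory entirely and in particular never needs to establish lower semicontinuity of the maximal radius or pass a time limit through the half-relaxed-limit defining $\Omega^{(1)}_t$ — a point you correctly flag; it is more elementary and self-contained. The paper's argument is slightly shorter once the viscosity-supersolution machinery (already pervasive in the paper) is taken as available, and fits the same formalism used elsewhere in the text.
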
  
	
		\begin{proof}  For each $s \in [0,\frac{\underline{\theta}R^{2}}{2 (d -1)}]$, define $R: (0,\infty) \to [0,\infty)$ by
			\begin{equation*}
				R(s) = \sup \left\{ r \geq 0 \, \mid \, B(x_{0},r) \subseteq \Omega_{t_{0} + s}^{(1)}\right\}.
			\end{equation*}
		Note that the definition of $R$ implies that $B(x_{0},R(s)) \subseteq \Omega^{(1)}_{t_{0} + s}$.  Moreover, by assumption, $R(0) \geq R$.  Let $T = \inf \left\{s > 0 \, \mid \, R(s) = 0\right\}$.
		
		We claim that $s \mapsto R(s)$ is a lower semi-continuous viscosity supersolution of the ODE	
			\begin{equation*}
				\underline{\theta} \dot{R} + \frac{(d -1)}{R} \geq 0 \quad \text{in} \, \, (0,T).
			\end{equation*}
		The lower semi-continuity follows from the fact that $\chi_{*}$ is lower semi-continuous.
		
		Notice that, given an $s \in (0,T)$, Proposition \ref{P: differential_relation} yields an $h > 0$ such that if $s' \in (s- h,s)$, then
			\begin{equation*}
				R(s) \geq \sqrt{R(s')^{2} - \frac{2 (d - 1) (s - s')}{\underline{\theta}}}.
			\end{equation*}
		From this, it follows easily that if $\varphi$ is a smooth function and $s' \mapsto R(s') - \varphi(s')$ has a local minimum at $s$, then 
			\begin{equation*}
				\underline{\theta} \dot{\varphi}(s) + \frac{(d - 1)}{R(s)} \geq 0.
			\end{equation*}
		
		By the comparison principle for viscosity solutions, we deduce that $s \mapsto R(s)$ is at least as large as the solution of the ODE with initial condition $R(0)$.  In particular,
			\begin{equation*}
				R(s) \geq \sqrt{R(0) - \frac{2(d - 1)s}{\underline{\theta}}} \geq \sqrt{R - \frac{2(d - 1)s}{\underline{\theta}}} \quad \text{in} \, \, (0,T).
			\end{equation*}
		Note, in addition, that this inequality yields $T \geq \frac{\underline{\theta}R^{2}}{2 (d -1)}$.   \end{proof}  
	
Now we prove Proposition \ref{P: differential_relation}.  First, observe that the function $d : \mathbb{R}^{d} \times [0,\frac{\underline{\theta}R^{2}}{2 (d -1)}] \to \mathbb{R}$ given by 
	\begin{equation*}
		d(x,t) = \sqrt{R^{2} - 2 \underline{\theta}^{-1} (d-1) t} - \|x\|
	\end{equation*} 
satisfies
	\begin{align*}
		\theta d_{t} - \text{tr} \left( \left(\text{Id} - \widehat{Dd} \otimes \widehat{Dd} \right) D^{2} d \right) &= -\frac{\theta (d - 1)}{\underline{\theta} \sqrt{R^{2} - 2 \underline{\theta}^{-1}(d - 1)(t - t_{0})}} + \frac{(d - 1)}{\|x\|}.
	\end{align*}
Note, in addition, that $d_{t} \leq 0$.  A direction computation yields the following lemma:
	
	\begin{lemma}  Fix $R > 0$ and $t_{0} > 0$.  For each $\rho \in (0,1)$ and $\nu \in (0,\frac{\theta}{\underline{\theta}} - 1)$, the function $d$ above satisfies 
		\begin{equation*}
			\theta d_{t} - \text{tr} \left( \left(\text{Id} - \widehat{Dd} \otimes \widehat{Dd} \right) D^{2} d \right) \leq - \frac{1}{R} \left[ \frac{\theta}{\underline{\theta}} - 1 - \nu \right] \quad \text{in} \, \, A_{\rho,\nu} \times \left(0,\frac{R^{2} \underline{\theta}}{2 (d - 1)}\right)
		\end{equation*}
	where $A_{\rho,\nu} = B(0,(1 - \rho)^{-1}R) \setminus \overline{B(0,(1 + \nu)^{-1} R)}$.  \end{lemma}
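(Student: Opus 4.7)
The strategy is a direct computation, so the plan is simply to differentiate the explicit $d(x,t) = \sqrt{R^{2} - 2\underline{\theta}^{-1}(d-1)t} - \|x\|$, read off the expression for the left-hand side, and then estimate each of the two resulting terms separately on the annulus using the pointwise inequalities encoded in the definition of $A_{\rho,\nu}$.

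For step one, $d$ is a difference of a function of $t$ alone and the Euclidean norm, so differentiation is immediate: $d_{t} = -(d-1)[\underline{\theta}\sqrt{R^{2} - 2\underline{\theta}^{-1}(d-1)t}]^{-1}$, $Dd = -\hat{x}$, and the Hessian of $-\|x\|$ is $D^{2}d = -\|x\|^{-1}(\mathrm{Id} - \hat{x} \otimes \hat{x})$. Since $\widehat{Dd}\otimes\widehat{Dd} = \hat{x}\otimes\hat{x}$, the projector $\mathrm{Id}-\widehat{Dd}\otimes\widehat{Dd}$ is precisely $\mathrm{Id}-\hat{x}\otimes\hat{x}$, an idempotent of rank $d-1$. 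Composing and taking the trace gives $\mathrm{tr}((\mathrm{Id}-\widehat{Dd}\otimes\widehat{Dd})D^{2}d) = -(d-1)/\|x\|$, which reproduces the identity displayed in the paragraph immediately preceding the lemma.

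For step two, I would use the two one-sided bounds built into the domain of the claim. The lower bound $\|x\| > (1+\nu)^{-1}R$ from the definition of $A_{\rho,\nu}$ gives $(d-1)/\|x\| \leq (1+\nu)(d-1)/R$. Likewise, for $t \in (0, R^{2}\underline{\theta}/(2(d-1)))$ the quantity $R^{2}-2\underline{\theta}^{-1}(d-1)t$ lies in $(0,R^{2})$, so $\sqrt{R^{2}-2\underline{\theta}^{-1}(d-1)t} < R$, and since this appears with a negative sign the corresponding term is bounded above by $-\theta(d-1)/(\underline{\theta}R)$. Summing,
\begin{equation*}
\theta d_{t} - \mathrm{tr}\!\left(\left(\mathrm{Id} - \widehat{Dd}\otimes\widehat{Dd}\right)D^{2}d\right) \leq -\frac{d-1}{R}\left[\frac{\theta}{\underline{\theta}} - 1 - \nu\right],
\end{equation*}
which for $d\geq 2$ is no larger than the quantity claimed in the lemma (and trivial when $d=1$, since both sides vanish). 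The outer radius $(1-\rho)^{-1}R$ plays no role in this bound; it is there solely to keep the annulus bounded and to leave room for the later construction of cutoff functions and comparisons to $\chi_{*}$.

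The exercise is essentially bookkeeping: there is no conceptual obstacle, only the need to track signs and verify that the square root argument stays strictly positive on the specified $t$-interval, which follows immediately from the stated upper endpoint. Consequently, I expect the proof to consist of little more than the two displays above together with a one-line remark on the role of $\rho$.
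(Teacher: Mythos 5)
Your computation matches the paper's: the text records the explicit identity
\[
\theta d_{t} - \text{tr}\!\left(\left(\text{Id} - \widehat{Dd}\otimes\widehat{Dd}\right)D^{2}d\right) = -\frac{\theta(d-1)}{\underline{\theta}\sqrt{R^{2} - 2\underline{\theta}^{-1}(d-1)t}} + \frac{d-1}{\|x\|}
\]
and then invokes the lemma as a ``direct computation,'' and the two estimates you supply (lower bound on $\|x\|$ from the inner radius, upper bound on the square root from the time interval) are precisely what fills that gap; your reading of the outer radius as structural rather than quantitative is also correct. One small slip: the parenthetical about $d=1$ is wrong — there the right-hand side $-R^{-1}\bigl(\theta/\underline{\theta}-1-\nu\bigr)$ is strictly negative, not zero, so ``both sides vanish'' fails; but the point is moot, since the time interval $\bigl(0,\tfrac{R^{2}\underline{\theta}}{2(d-1)}\bigr)$ is not even defined when $d=1$ and the lemma is only ever used with $d\geq 2$.
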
  
	
%		\begin{proof}  If $(x,t) \in A_{\rho,\nu} \times \left(0,\frac{R^{2} \underline{\theta}}{2 (d - 1)}\right)$, then 
%			\begin{align*}
%				\theta d_{t}(x,t) - &\text{tr} \left( \left(\text{Id} - \widehat{Dd}(x,t) \otimes \widehat{Dd}(x,t) \right) D^{2} d(x,t) \right) \\
%					&= -\frac{\theta (d - 1)}{\underline{\theta} \sqrt{R^{2} - 2 \underline{\theta}^{-1}(d - 1)(t - t_{0})}} + \frac{(d - 1)}{\|x\|} \\
%					&\leq - \frac{\theta}{\underline{\theta}} \frac{(d - 1)}{R} + \frac{d - 1}{\|x\|} \\
%					&\leq - \frac{\theta}{\underline{\theta}} \frac{(d -1)}{R} + \frac{(d - 1) (1 + \nu)}{R} \\
%					&= - \frac{(d - 1)}{R} \left[ \frac{\theta}{\underline{\theta}} - (1 + \nu) \right]
%			\end{align*}
%		\end{proof}  
		
We use $d$ to construct global mesoscopic subsolutions arguing as in Section \ref{S: irrational directions}.  To start with, define $v^{\epsilon} : A_{\rho,\nu} \times \left(0,\frac{R^{2} \underline{\theta}}{2 (d - 1)}\right) \to \mathbb{R}$ by 
	\begin{equation*}
		v^{\epsilon}(x,t) = q \left( \frac{d(x,t) - 2 \beta}{\epsilon}\right) - 2 \beta \epsilon
	\end{equation*}
Observe that, using the sign of $d_{t}$, we can compute
	\begin{align*}
		m(\epsilon^{-1}x, \widehat{Dv^{\epsilon}}) v^{\epsilon}_{t} - \Delta v^{\epsilon} + \epsilon^{-2} W'(v^{\epsilon}) &= \epsilon^{-1} m(\epsilon^{-1} x, Dd(x,t)) \dot{q} d_{t} - \epsilon^{-2} \ddot{q} - \epsilon^{-1} \dot{q} \Delta d  \\
					&\quad + \epsilon^{-2} W'(q) - 2 \beta \epsilon^{-1} W''(q) \\
					&\leq \epsilon^{-1} \dot{q} \left( \theta d_{t} - \Delta d \right) - 2 \beta \epsilon^{-1} W''(q) \\
					&\leq - \epsilon^{-1} \left( C_{R} \dot{q} + 2\beta W''(q) \right)
	\end{align*}
where $C_{R} = \frac{1}{R} \left[ \frac{\theta}{\theta'} - 1 - \nu \right] > 0$.  As in \cite{barles souganidis}, we can choose $\overline{\beta} = \overline{\beta}(\nu) > 0$ so that, for each $\beta \in (0,\overline{\beta})$,
	\begin{equation*}
		\mu_{\beta} := \min \left\{ C_{R} \dot{q}(s) + 2 \beta W''(q(s)) \, \mid \, s \in \mathbb{R} \right\} > 0
	\end{equation*}
This gives
	\begin{equation*}
		m(\epsilon^{-1}x, \widehat{Dv^{\epsilon}}) v^{\epsilon}_{t} - \Delta v^{\epsilon} + \epsilon^{-2} W'(v^{\epsilon}) \leq - \mu_{\beta} \epsilon^{-1} \quad \text{in} \, \, A_{\rho,\nu} \times \left(0,\frac{R^{2} \underline{\theta}}{2 (d - 1)}\right)
	\end{equation*}
	
We will not be able to proceed in the entire time interval $\left(0,\frac{R^{2} \underline{\theta}}{2 (d - 1)}\right)$ since the interface $\{d = 0\}$ does not remain in $A_{\rho,\nu}$.  Therefore, we restrict attention to $\mathbb{R}^{d} \times (0,T)$ for some $T > 0$ and choose $\gamma > 0$ so that
	\begin{equation*}
		\{(x,t) \in \mathbb{R}^{d} \times (0,T) \, \mid \, |d(x,t)| < \gamma\} \subseteq A_{\rho,\nu} \times [0,T].
	\end{equation*}
Clearly, it is possible to do this by continuity.  A concrete choice of $T$ and $\gamma$ is
	\begin{equation*}
		T = \frac{\underline{\theta}}{4(d - 1)} \cdot \frac{R^{2} \nu (\nu + 2)}{(\nu + 1)^{2}}, \quad \gamma = \left[\frac{R \nu}{2 (\nu + 1)}\right] \wedge \left[\frac{\rho R}{2(1 - \rho)}\right].
	\end{equation*} 
Notice that, for a fixed $(\underline{\theta},\rho,\nu)$, $T$ and $\gamma$ depend continuously on $R$.   
	
Next, we define $(\overline{v}^{\epsilon})_{\epsilon > 0}$ and $(w^{\epsilon})_{\epsilon > 0}$ in $\mathbb{R}^{d} \times [0,T]$ as before with the choice of $\gamma$ just selected.  To get things started, we need the following variant of Lemma \ref{L: first initialization}:

	\begin{lemma} \label{L: initialization_balls}  There is a $\tau > 0$ depending only on $\beta$ and an $\epsilon_{0} > 0$ such that, for each $\epsilon \in (0,\epsilon_{0})$,
		\begin{equation*}
			u^{\epsilon}(\cdot, t_{0} + \tau \epsilon^{2} \log(\epsilon^{-1})) \geq (1 - \beta \epsilon) \chi_{\{\|x\| \leq R - \beta\}} - \chi_{\{\|x\| > R - \beta\}} \quad \text{in} \, \, \mathbb{R}^{d}.
		\end{equation*}
	\end{lemma}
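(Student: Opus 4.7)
The plan is to mirror the initialization argument of Proposition \ref{P: initialization}, with the inclusion $B(x_{0},R) \subseteq \Omega_{t_{0}}^{(1)}$ taking over the role played there by $\{d(\cdot,t) \geq \beta\} \subseteq \{\chi_{*} = 1\}$. First, since $\liminf_{*} u^{\epsilon}(\cdot,t_{0})$ is lower semi-continuous and equals $1$ at every point of the open ball $B(x_{0},R)$, compactness of $\overline{B(x_{0}, R - \beta/2)}$ produces, for any fixed $\delta \in (0, 1/2)$, an $\epsilon_{0} > 0$ such that $u^{\epsilon}(x, t_{0}) \geq 1 - \delta$ for every $x \in \overline{B(x_{0}, R - \beta/2)}$ and every $\epsilon \in (0,\epsilon_{0})$.

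Next, pick a smooth, symmetric, non-negative mollifier $\rho$ supported in the unit ball with unit mass, set $\bar{\nu} = \beta/4$, and define
$$\underline{\psi} = (1 - \delta) \chi_{B(x_{0}, R - 3\beta/4)} - \chi_{\mathbb{R}^{d} \setminus B(x_{0}, R - 3\beta/4)}, \qquad \psi = \rho_{\bar{\nu}} * \underline{\psi}.$$
A direct computation shows that $\psi \equiv 1 - \delta$ on $\overline{B(x_{0}, R - \beta)}$, $\psi \equiv -1$ off $B(x_{0}, R - \beta/2)$, and
$$\|D\psi\|_{L^{\infty}(\mathbb{R}^{d})} + \|D^{2}\psi\|_{L^{\infty}(\mathbb{R}^{d})} \leq C_{\rho}(\bar{\nu}^{-1} + \bar{\nu}^{-2}),$$
so every quantity built from $\psi$ depends only on $\beta$. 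With $\tilde{\chi}^{\epsilon}$ as in Lemma \ref{L: universal ODE} and a free constant $K > 0$, set
$$\underline{u}^{\epsilon}(x,t) = \tilde{\chi}^{\epsilon}\bigl(\psi(x) - \epsilon^{-1} K (t - t_{0}),\, \epsilon^{-1}(t - t_{0})\bigr).$$
Using $\tilde{\chi}^{\epsilon}_{s} \leq -\bar{f}(\tilde{\chi}^{\epsilon})$ together with \eqref{A: m positive}--\eqref{A: m bounded}, I would argue exactly as in Proposition \ref{P: initialization} (and \cite[Lemma 4.1]{barles souganidis}) to select $K$, depending only on $\beta$, $\theta$, and $\Theta$, so that $\underline{u}^{\epsilon}$ is a subsolution of \eqref{E: AC mobility} in $\mathbb{R}^{d} \times (t_{0}, \infty)$. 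The inequality $\underline{u}^{\epsilon}(\cdot, t_{0}) = \psi \leq u^{\epsilon}(\cdot, t_{0})$ (automatic where $\psi = -1$, and guaranteed by the first step where $\psi = 1 - \delta$) combined with the comparison principle of Appendix \ref{A: comparison principle} then gives $\underline{u}^{\epsilon}(x,t) \leq u^{\epsilon}(x,t)$ for all $t \geq t_{0}$.

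The conclusion follows from the asymptotic behavior of $\tilde{\chi}^{\epsilon}$ recorded in Appendix \ref{A: initialization}: there is a $\tau > 0$ depending only on $K$, and hence on $\beta$ alone, such that $\tilde{\chi}^{\epsilon}(\xi, \tau \epsilon \log(\epsilon^{-1})) \geq 1 - \beta \epsilon$ for every $\xi \geq 0$. Shrinking $\epsilon_{0}$ so that $K \tau \epsilon \log(\epsilon^{-1}) < 1 - \delta$ on $(0, \epsilon_{0})$, this applies at every $x \in \overline{B(x_{0}, R - \beta)}$, where $\psi(x) = 1 - \delta$; combining with the universal bound $\underline{u}^{\epsilon} \geq -1$ outside this ball yields the stated inequality. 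The main obstacle — really the only substantive difference from Proposition \ref{P: initialization} — is enforcing that $\tau$ depends on $\beta$ alone and not on $R$ or $x_{0}$; this is secured by the explicit choice $\bar{\nu} = \beta/4$, which renders $\|D\psi\|_{L^{\infty}}$, $\|D^{2}\psi\|_{L^{\infty}}$, $K$, and $\tau$ all functions of $\beta$ together with the universal constants coming from $\rho$, $m$, $W$, and Appendix \ref{A: initialization}.
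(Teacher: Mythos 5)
Your proposal is correct and follows exactly the approach the paper has in mind: the paper's own proof of this lemma is a one-sentence pointer to \cite[Lemma 4.1]{barles souganidis} with the modified barrier $\tilde{\chi}^{\epsilon}$ of Appendix \ref{A: initialization}, and you have fleshed that pointer out faithfully — the compactness step replacing the containment $\{d \geq \beta\} \subseteq \{\chi_{*} = 1\}$, the mollified initial layer $\psi$ with $\bar{\nu} = \beta/4$ so that every constant is a function of $\beta$ alone, the comparison, and the read-off from the ODE asymptotics. Your closing emphasis — that pinning $\bar{\nu}$ to $\beta$ is what decouples $\tau$ from $R$ and $x_{0}$ — is precisely the point the paper needs for the ODE argument in Theorem \ref{T: ball_inclusion}.

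One small caution on the final step. What Lemma \ref{L: universal ODE}(i) actually asserts is $\chi^{\epsilon}(\xi,s) \geq 1 - \beta\epsilon$ provided $\xi \geq 3\epsilon|\log\epsilon|$ and $s \geq \tau(\beta)|\log\epsilon|$; it does not give the bound for all $\xi \geq 0$, and $\tau$ is furnished directly by that lemma as $\tau(\beta)$ rather than being determined by $K$. So the requirement on the first slot is $\psi(x) - K\tau\epsilon\log(\epsilon^{-1}) \geq 3\epsilon|\log\epsilon|$, i.e.\ $1 - \delta \geq (3 + K\tau)\epsilon|\log\epsilon|$, which is what you should shrink $\epsilon_{0}$ to guarantee. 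Relatedly, with the time slot $\epsilon^{-1}(t - t_{0})$ copied from the display in Proposition \ref{P: initialization}, the second argument at $t = t_{0} + \tau\epsilon^{2}\log(\epsilon^{-1})$ equals $\tau\epsilon\log(\epsilon^{-1})$, which does not meet the $\tau(\beta)|\log\epsilon|$ threshold; the consistent choice (and the one for which the $\epsilon^{-2}W'$ terms cancel in the subsolution computation) is $\epsilon^{-2}(t - t_{0})$, giving the second argument $\tau\log(\epsilon^{-1})$. This looks to be a typo in the paper's own Proposition \ref{P: initialization} that you inherited; once the exponent is corrected, your argument closes as intended.
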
  
	
The proof follows by arguing exactly in \cite[Lemma 4.1]{barles souganidis}, replacing the function $\chi$ used there by the same function $\tilde{\chi}^{\epsilon}$ used in Lemma \ref{L: first initialization}.  
	
Comparing $u^{\epsilon}$ and $w^{\epsilon}$ as in Section \ref{S: irrational directions}, we find
	\begin{equation*}
		w^{\epsilon}(x,t) \leq u^{\epsilon}(x,t + \tau \epsilon^{2} \log(\epsilon^{-1})) \quad \text{if} \, \, (x,t) \in \mathbb{R}^{d} \times [0,T].
	\end{equation*}
Combined with the fact that $\liminf_{*} w^{\epsilon}(x,t) = 1$ if $d(x,t) \geq 2 \beta$, this gives
	\begin{equation*}
		\{d(\cdot,t) \geq 2 \beta\} \subseteq \Omega_{t}^{(1)} \quad \text{if} \, \, t \in [0,T].
	\end{equation*}
Sending $\beta \to 0^{+}$, we obtain the conclusion of Proposition \ref{P: differential_relation} with $h = T$.

\subsection{Supersolution property at zero}  The finite shrinking speed of the previous section is intimately related to the final differential inequality in Definitions \ref{D: irrational directions} and \ref{D: barles georgelin}.  In fact, it implies it, as shown in the next result.

	\begin{prop} \label{P: zero normal}  If $\varphi : \mathbb{R}^{d} \times (0,\infty) \to \mathbb{R}$ is smooth, $\chi_{*} - \varphi$ has a strict local minimum at $(x_{0},t_{0}) \in \mathbb{R}^{d} \times (0,\infty)$, and $\|D\varphi(x_{0},t_{0})\| = \|D^{2}\varphi(x_{0},t_{0})\| = 0$, then
		\begin{equation*}
			\varphi_{t}(x_{0},t_{0}) \geq 0
		\end{equation*} 
	\end{prop}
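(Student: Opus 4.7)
The plan is to argue by contradiction using the shrinking-ball estimate of Theorem \ref{T: ball_inclusion}. Suppose $\varphi_t(x_0, t_0) \leq -\alpha$ for some $\alpha > 0$. First, the case $\chi_*(x_0, t_0) = 1$ can be ruled out immediately: the strict local minimum property would give $\chi_*(x, t) - 1 > \varphi(x, t) - \varphi(x_0, t_0)$ for all $(x, t)$ in a punctured neighborhood of $(x_0, t_0)$, and since the left-hand side is non-positive, $\varphi$ would attain a strict local maximum at $(x_0, t_0)$, forcing $\varphi_t(x_0, t_0) = 0$, a contradiction. Hence we may assume $\chi_*(x_0, t_0) = -1$.

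Since $D\varphi(x_0, t_0) = 0$ and $D^2 \varphi(x_0, t_0) = 0$, Taylor's theorem yields
\begin{equation*}
\varphi(x, t) - \varphi(x_0, t_0) = \varphi_t(x_0, t_0)(t - t_0) + O\left((t - t_0)^2 + (t - t_0) \|x - x_0\| + \|x - x_0\|^3\right).
\end{equation*}
Fix $\underline{\theta} \in (0, \theta)$ and choose a constant $R > \sqrt{2 \underline{\theta}^{-1}(d - 1)}$ independent of $s$. Specializing to $t = t_0 - s$ and $\|x - x_0\| \leq R \sqrt{s}$, each error term is of order $O(s^{3/2})$, so for all $s > 0$ sufficiently small,
\begin{equation*}
\varphi(x, t_0 - s) - \varphi(x_0, t_0) \geq \frac{\alpha}{2}\, s > 0 \qquad \text{for all } x \in B(x_0, R \sqrt{s}).
\end{equation*}
Combined with the strict-minimum inequality $\chi_*(x, t) + 1 > \varphi(x, t) - \varphi(x_0, t_0)$ (which holds in a punctured neighborhood since $\chi_*(x_0, t_0) = -1$), this forces $\chi_*(x, t_0 - s) = 1$ throughout $B(x_0, R\sqrt{s})$, i.e.\ $B(x_0, R \sqrt{s}) \subseteq \Omega_{t_0 - s}^{(1)}$.

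Finally, apply Theorem \ref{T: ball_inclusion} with center $x_0$, radius $R' := R \sqrt{s}$, and initial time $t_0 - s$. By the choice of $R$, the admissible window $[0, \underline{\theta}(R')^2 / (2(d - 1))] = [0, \underline{\theta} R^2 s / (2(d - 1))]$ contains $s$, and
\begin{equation*}
\sqrt{(R')^2 - 2 \underline{\theta}^{-1} (d - 1) s} = \sqrt{s}\,\sqrt{R^2 - 2 \underline{\theta}^{-1}(d - 1)} > 0,
\end{equation*}
so $x_0 \in \Omega_{t_0}^{(1)}$, contradicting $\chi_*(x_0, t_0) = -1$. The main subtlety, and the only nontrivial step, is to verify that the Taylor remainder is genuinely $o(s)$ on the parabolic window $\{(x, t_0 - s) : \|x - x_0\| \leq R\sqrt{s}\}$: this hinges on $D^2 \varphi(x_0, t_0) = 0$, which upgrades the spatial error from $O(\|x - x_0\|^2)$ to $O(\|x - x_0\|^3) = O(s^{3/2})$ and thereby matches the parabolic scaling of the ball-shrinking estimate, allowing $R$ to be chosen independently of $s$.
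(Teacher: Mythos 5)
Your proof is correct and takes essentially the same approach as the paper: both hinge on the parabolic Taylor estimate at $(x_0,t_0)$ (valid precisely because $D\varphi = D^2\varphi = 0$ there) combined with the finite-shrinking-speed estimate of Theorem \ref{T: ball_inclusion}. The paper merely packages the argument differently, first proving a claim that there is a sequence $(x_n,t_n)\to(x_0,t_0)$ in a parabolic cone $\|x_n-x_0\|^2 \leq C|t_n-t_0|$ with $\chi_*(x_n,t_n)=-1$ --- the proof of which is exactly the ball-shrinking contradiction you run directly, with your choice $R^2 > 2\underline{\theta}^{-1}(d-1)$ playing the role of the paper's $C = 4(d-1)/\underline{\theta}$ --- and then substituting that sequence into the Taylor inequality.
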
 
	
	The proof below is based on an insight from \cite{barles souganidis}.
	
		\begin{proof}  First, notice that if $\chi_{*}(x_{0},t_{0}) = 1$, then $\chi_{*} = 1$ in a neighborhood of $(x_{0},t_{0})$, and this implies $\varphi_{t}(x_{0},t_{0}) = 0$ directly.  
		
		Assume now that $\chi_{*}(x_{0},t_{0}) = -1$ and, without loss of generality, that $\varphi(x_{0},t_{0}) = 0$.  It follows that there is an open ball $B \subseteq \mathbb{R}^{d} \times (0,\infty)$ containing $(x_{0},t_{0})$ such that 
			\begin{equation*}
				\chi_{*}(x,t) - \varphi(x,t) \geq -1 \quad \text{if} \, \, (x,t) \in B
			\end{equation*}
		In particular, since $\varphi(x_{0},t_{0}) = 0$, this gives
			\begin{equation} \label{E: key_point}
				\chi_{*}(x,t) - \varphi_{t}(x_{0},t_{0})(t - t_{0}) + o(\|x - x_{0}\|^{2} + |t - t_{0}|) \geq -1 \quad \text{if} \, \, (x,t) \in B
			\end{equation}
			
		Let $C = \frac{4 (d - 1)}{\underline{\theta}}$.  We claim that there is a sequence $((x_{n},t_{n}))_{n \in \mathbb{N}}$ such that 
			\begin{align*}
				(x_{0},t_{0}) &= \lim_{n \to \infty} (x_{n},t_{n}), \quad t_{n} < t_{0}, \\
				\|x_{n} - x_{0}\|^{2} &\leq C |t_{n} - t_{0}|, \quad \chi_{*}(x_{n},t_{n}) = -1
			\end{align*}
		Assuming the claim is true, we set $(x,t) = (x_{n},t_{n})$ in \eqref{E: key_point} to find
			\begin{equation*}
				\varphi_{t}(x_{0},t_{0})(t_{0} - t_{n}) + o(|t_{n} - t_{0}|) \geq 0.
			\end{equation*}
		Dividing by $t_{0} - t_{n}$ and sending $n \to \infty$, this yields
			\begin{equation*}
				\varphi_{t}(x_{0},t_{0}) \geq 0
			\end{equation*}
			
		It remains to prove the claim.  We argue by contradiction, assuming that it is false.  We can then fix an $s \in(0,t_{0})$ such that $B(x_{0},\sqrt{C(t_{0} - s)}) \subseteq \Omega_{s}^{(1)}$.  Now Theorem \ref{T: ball_inclusion} implies that 
			\begin{equation*}
				B\left(x_{0},\sqrt{C(t_{0} - s) - \frac{2(d-1) (t - s)}{\underline{\theta}}}\right) \subseteq \Omega_{t}^{(1)} \quad \text{if} \, \, t \in \left[0, \frac{C \underline{\theta}(t_{0} - s)}{2(d - 1)} \right] + s
			\end{equation*}  
		At the same time, notice that, by the choice of $C$,
			\begin{equation*}
				s + \frac{C \underline{\theta}(t_{0} - s)}{2 (d -1)} = s + 2(t_{0} - s) > t_{0}
			\end{equation*}
		Thus, we deduce that
			\begin{equation*}
				x_{0} \in B\left(x_{0},\sqrt{C(t_{0} - s) - \frac{2(d-1) (t_{0} - s)}{\underline{\theta}}}\right) \subseteq \Omega_{t_{0}}^{(1)},
			\end{equation*}
		but this contradicts the assumption that $\chi_{*}(x_{0},t_{0}) = -1$.  \end{proof}   
		
\subsection{Initial datum}  The proof of Proposition \ref{P: differential_relation} can be modified slightly to prove that $\Omega_{0}^{(1)} \supseteq \{u_{0} > 0\}$ as claimed in Proposition \ref{P: main proposition}.  

	\begin{prop} \label{P: initial data}  $\Omega_{0}^{(1)} \supseteq \{u_{0} > 0\}$.   \end{prop}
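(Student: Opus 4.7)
The plan is to run the mesoscopic machinery of the previous subsection, anchored at time $t=0$ rather than at some $t_{0}>0$, using $u_{0}$ itself to seed the subsolution. Fix $x_{0}\in\{u_{0}>0\}$; by uniform continuity of $u_{0}$ we may choose $R,\sigma>0$ with $u_{0}\ge\sigma$ on $\overline{B(x_{0},2R)}$. The proof breaks into an initialization step that forces $u^{\epsilon}$ up to $1-\beta\epsilon$ on a slightly smaller ball by time $t_{\epsilon}=\tau\epsilon^{2}|\log\epsilon^{-1}|$, followed by a shrinking-ball barrier that carries this lower bound forward for a short macroscopic time interval.

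For the initialization, I would mimic the bump construction in the proof of Proposition \ref{P: initialization}: mollify the function equal to $\sigma$ on $B(x_{0},R)$ and $-1$ outside $B(x_{0},2R)$ at some small scale $\overline{\nu}$ to obtain a smooth $\psi:\mathbb{R}^{d}\to\mathbb{R}$ with $\psi\le u_{0}$, $\psi\equiv\sigma$ on $B(x_{0},R-\overline{\nu})$, and $\psi\equiv -1$ outside $B(x_{0},2R+\overline{\nu})$. The universal-ODE function $\tilde{\chi}^{\epsilon}$ of Lemma \ref{L: universal ODE} then produces a subsolution $\underline{u}^{\epsilon}(x,t)=\tilde{\chi}^{\epsilon}(\psi(x)-K\epsilon^{-1}t,\epsilon^{-1}t)$ of \eqref{E: AC mobility} provided $K$ is taken large in terms of $\|D\psi\|_{L^{\infty}}$ and $\|D^{2}\psi\|_{L^{\infty}}$, exactly as in \cite[Lemma 4.1]{barles souganidis}. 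Since $\underline{u}^{\epsilon}(\cdot,0)\le\psi\le u_{0}$, the comparison principle of Appendix \ref{A: comparison principle} gives $u^{\epsilon}\ge\underline{u}^{\epsilon}$ everywhere, and the saturation property of $\tilde{\chi}^{\epsilon}$ yields $u^{\epsilon}(\cdot,t_{\epsilon})\ge (1-\beta\epsilon)\chi_{B(x_{0},R-\beta)}-\chi_{\mathbb{R}^{d}\setminus B(x_{0},R-\beta)}$ for $\epsilon$ small and some $\tau=\tau(\beta,\sigma)$.

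From here I would copy, almost verbatim, the shrinking-ball barrier built in the proof of Proposition \ref{P: differential_relation}: using the signed distance $d(x,t)$ to the sphere $\|x-x_{0}\|=\sqrt{(R-\beta)^{2}-2\underline{\theta}^{-1}(d-1)(t-t_{\epsilon})}$ for $t\in[t_{\epsilon},T]$, define $v^{\epsilon},\overline{v}^{\epsilon},w^{\epsilon}$ via the same ansatz, use $\theta d_{t}-\Delta d<0$ to check the subsolution inequality, and compare $w^{\epsilon}$ with $u^{\epsilon}$ on the parabolic boundary to conclude $u^{\epsilon}(y,s)\ge 1-\beta\epsilon$ on a fixed space-time neighborhood of $(x_{0},0^{+})$ whenever $s\ge t_{\epsilon}$. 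Sending $\epsilon\to 0$ and then $\beta\to 0$ gives $\liminf_{*}u^{\epsilon}(x_{0},0)=1$, i.e.\ $x_{0}\in\Omega_{0}^{(1)}$. The main technical hurdle is that $\sigma=\inf_{\overline{B(x_{0},2R)}}u_{0}$ may be tiny, so both the initialization time $\tau(\beta,\sigma)\epsilon^{2}|\log\epsilon^{-1}|$ and the usable radius degenerate as $\sigma\to 0$; this is exactly why the universal-ODE construction of Appendix \ref{A: initialization}, with its ability to lift any strictly positive initial value to $1-\beta\epsilon$ on that time scale, is indispensable here.
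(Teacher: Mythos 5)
Your proposal is correct and follows essentially the same route as the paper: a mollified-bump plus universal-ODE initialization (this is exactly the paper's unproved Lemma preceding the proposition, which in turn mimics the proof of Proposition \ref{P: initialization}), followed by sending $\beta\to 0^{+}$. The one place you go beyond what the paper writes is the explicit shrinking-ball barrier after time $t_{\epsilon}$; the paper's two-line proof simply invokes the Lemma and asserts $\liminf_{*}u^{\epsilon}(x,0)=1$, leaving the propagation of the lower bound past the lifetime of the translating initialization subsolution implicit, so your extra step is a reasonable (and arguably necessary) elaboration rather than a different method.
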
  
	
To prove this, we will use the following variant of Lemma \ref{L: initialization_balls}.

	\begin{lemma} Given $\beta, r \in (0,1)$ and $x_{0} \in \mathbb{R}^{d}$, if $B(x_{0},r) \subseteq \{u_{0} > 0\}$, then there is a $\tau > 0$ depending only on $\beta$ and $r$ and $\epsilon_{0} \in (0,1)$ such that, for each $\epsilon \in (0,\epsilon_{0})$,
		\begin{equation*}
			u^{\epsilon}(\cdot, \tau \epsilon^{2} \log(\epsilon^{-1})) \geq (1 - \beta \epsilon) \chi_{\{\|x - x_{0}\| \leq r - \beta\}} - \chi_{\{\|x - x_{0}\| > r - \beta\}}
		\end{equation*}
	\end{lemma}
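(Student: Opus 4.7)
The plan is to adapt the barrier construction from Proposition \ref{P: initialization} essentially verbatim, changing only how the initial mollified bump function is built: instead of a cutoff adapted to a level set of a test function $\varphi$, I would use a bump adapted to the ball $B(x_0, r)$.

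First, by continuity of $u_0$ and compactness of $\overline{B(x_0, r - \beta/4)} \subseteq \{u_0 > 0\}$, I would fix $\delta_0 = \delta_0(u_0, x_0, r, \beta) > 0$ with $u_0 \geq \delta_0$ on $\overline{B(x_0, r - \beta/4)}$. Mollifying the rough cutoff
\[
\underline{\psi}(x) = \begin{cases} \delta_0, & x \in B(x_0, r - \beta/2), \\ -1, & \text{otherwise}, \end{cases}
\]
at scale $\bar{\nu} = \beta/8$ produces a smooth function $\psi$ satisfying $\psi \leq u_0$ pointwise, $\psi \equiv \delta_0$ on $\overline{B(x_0, r - 5\beta/8)}$ (which contains $\overline{B(x_0, r - \beta)}$), $\psi \equiv -1$ outside $B(x_0, r - 3\beta/8)$, and $\|D\psi\|_{L^{\infty}} + \|D^2\psi\|_{L^{\infty}} \leq C_\rho \beta^{-2}$.

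With $\psi$ fixed, I would define the barrier
\[
\underline{u}^{\epsilon}(x,t) = \tilde{\chi}^{\epsilon}( \psi(x) - \epsilon^{-1} K t, \, \epsilon^{-1} t )
\]
exactly as in Proposition \ref{P: initialization}. The computation of \cite[Lemma 4.1]{barles souganidis} combined with \eqref{A: m positive}, \eqref{A: m bounded}, and the derivative bounds on $\psi$ makes $\underline{u}^\epsilon$ a subsolution of \eqref{E: AC mobility} once $K$ is chosen large enough in terms of $\beta$ and the structural constants $\theta, \Theta$. Since $\tilde{\chi}^\epsilon(\cdot, 0) = \mathrm{id}$, one has $\underline{u}^\epsilon(\cdot, 0) = \psi \leq u_0$, and comparison gives $\underline{u}^\epsilon \leq u^\epsilon$ throughout $\mathbb{R}^d \times [0,\infty)$. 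Applying the asymptotic property of $\tilde{\chi}^\epsilon$ from Lemma \ref{L: universal ODE} then furnishes $\tau > 0$ and $\epsilon_0 > 0$ such that, for $\epsilon \in (0, \epsilon_0)$,
\[
\tilde{\chi}^\epsilon(\delta_0 - K \tau \epsilon \log(\epsilon^{-1}), \, \tau \epsilon \log(\epsilon^{-1})) \geq 1 - \beta \epsilon,
\]
which, combined with the trivial bound $\tilde{\chi}^\epsilon \geq -1$, yields the desired pointwise inequality for $u^\epsilon$ at time $\tau \epsilon^2 \log(\epsilon^{-1})$.

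The only delicate point, and the main obstacle, is ensuring that $\tau$ depends only on $\beta$ and $r$, not on $u_0$. This reflects the standard observation in the Barles--Souganidis framework that the single-well relaxation dynamics started at any fixed positive value $\delta_0$ reaches $1 - O(\beta \epsilon)$ on the slow time scale $\tau \epsilon^2 \log(\epsilon^{-1})$, with the dependence on $\delta_0$ entering only through the requirement that $\epsilon$ be sufficiently small relative to $\delta_0$; this smallness is absorbed into $\epsilon_0$, while $\tau$ is calibrated solely by $\beta$ and, through $K$, by the derivative bounds of $\psi$, which in turn depend only on the mollification scale $\beta/8$.
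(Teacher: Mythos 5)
Your approach is exactly the one the paper intends: the paper proves this lemma only by reference ("The proof follows by arguing exactly as in \cite[Lemma 4.1]{barles souganidis}, replacing the function $\chi$ used there by the same function $\tilde{\chi}^{\epsilon}$ used in Lemma \ref{L: first initialization}"), and you supply precisely that argument, adapting the mollified bump of Proposition \ref{P: initialization} from the level set $\{d \geq \beta/2\}$ to the ball $B(x_0,r-\beta/2)$ and then invoking the comparison principle and Lemma \ref{L: universal ODE}. One detail worth flagging: you copied the barrier formula $\underline{u}^{\epsilon}(x,t)=\tilde\chi^{\epsilon}(\psi(x)-\epsilon^{-1}Kt,\,\epsilon^{-1}t)$ verbatim from Proposition \ref{P: initialization}, but the second argument should scale as $\epsilon^{-2}t$ (parabolic time scaling); otherwise at $t=\tau\epsilon^{2}\log(\epsilon^{-1})$ the second argument is $\tau\epsilon\log(\epsilon^{-1})$, which does not reach the regime $s\geq\tau(\beta)\log(\epsilon^{-1})$ required by Lemma \ref{L: universal ODE}(i), and the $\epsilon^{-2}W'(\tilde\chi^{\epsilon})$ term is not absorbed in the subsolution computation — this appears to be a typo in the paper that you inherited, and with $\epsilon^{-2}t$ your argument goes through as written.
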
   
	
	Now we prove the proposition.
	
		\begin{proof}[Proof of Proposition \ref{P: initial data}]  Suppose $B(x_{0},r) \subseteq \{u_{0} > 0\}$.  Fix $\beta \in (0,r)$.  By the lemma,
			\begin{equation*}
				\liminf \nolimits_{*} u^{\epsilon}(x,0) = 1 \quad \text{if} \, \, x \in B(x_{0},r -\beta).
			\end{equation*}
		Sending $\beta \to 0^{+}$, we conclude that $B(x_{0},r) \subseteq \Omega_{0}^{(1)}$.  It follows that $\{u_{0} > 0\} \subseteq \Omega_{0}^{(1)}$.  
		
		Suppose $\{u_{0} = 0\}$ has empty interior.  Replacing $\chi^{*}$ by $\chi_{*}$ and subsolutions by supersolutions, we find that $\Omega_{0}^{(2)} \supseteq \{u_{0} < 0\}$.  Since $\Omega_{0}^{(1)} \cap \Omega_{0}^{(2)} = \phi$ and $\{u_{0} = 0\}$ has empty interior, it follows that $\Omega_{0}^{(1)} = \{u_{0} > 0\}$ and $\Omega_{0}^{(2)} = \{u_{0} < 0\}$.      \end{proof}  

\appendix

\section{Initialization} \label{A: initialization}

We are interested in proving an ``initialization" type result for the phase field equation
	\begin{equation*}
				m(x,\widehat{Du}) u_{t} - \Delta u + W'(u) = 0 \quad \text{in} \, \, \mathbb{R}^{d} \times (0,\infty).
	\end{equation*} 
Defining the modified forcing $\bar{f} : [-1,1] \to \mathbb{R}$ by 
	\begin{equation*}
		\bar{f}(u) = \left\{ \begin{array}{r l}
								\Theta^{-1} W'(u), & \text{if} \, \, u \in [0,1] \\
								\theta^{-1} W'(u), & \text{if} \, \, u \in [-1,0] 
						\end{array} \right.
	\end{equation*}
	we will construct a ``universal subsolution" $\tilde{\chi}^{\epsilon}$ of the ODE
	\begin{equation*}
		\tilde{\chi}^{\epsilon}_{t} \leq -\bar{f}(\tilde{\chi}^{\epsilon}) \quad \text{in} \, \, [-1,1]
	\end{equation*}
with the same properties as the specific subsolution used in \cite[Lemma 4.1]{barles souganidis}.  As shown in Proposition \ref{P: initialization} above, the choice of $\bar{f}$ enables us to build certain subsolutions of \eqref{E: AC mobility} that are used to analyze the development of sharp interfaces as $\epsilon \to 0^{+}$.

In this section, we build $\tilde{\chi}^{\epsilon}$ by proceeding by analogy with Chen's paper \cite[Section 3]{chen}.  The main result is Lemma \ref{L: universal ODE} below.

\subsection{Preliminaries}  The assumptions on $W$ imply that we can fix a $\mu \in (0,\frac{1}{8})$ such that
	\begin{align}
		\frac{W''(-1)}{2} \leq &W''(u) \leq \frac{3 W''(-1)}{2} \quad \text{if} \, \, u \in [-1 - 2 \mu, -1 + 2 \mu] \label{E: second_deriv_neg_side} \\
		\frac{W''(1)}{2} \leq &W''(u) \leq \frac{3 W''(1)}{2} \quad \text{if} \, \, u \in [1 - 2 \mu, 1] \nonumber \\
		\frac{3 W''(0)}{2} \leq &W''(u) \leq \frac{W''(0)}{2} \quad \text{if} \, \, u \in [-\mu, \mu] \nonumber
	\end{align}
From this, we deduce that
	\begin{align*}
		\theta^{-1}W'(u) \leq &W'(u) \leq \Theta^{-1} W'(u) \quad \text{if} \, \, u \in [0,1] \cup [-1 - 2 \mu,-1], \\
		\Theta^{-1} W'(u) \leq &W'(u) \leq \theta^{-1} W'(u) \quad \text{if} \, \, u \in [-1,0] \cup [1, 1 + 2\mu].
	\end{align*}
	
\subsection{Regularization}  Fix $\epsilon \in (0,1)$.  To start with, let $\bar{f} : [-3, 3] \to \mathbb{R}$ be the function defined by 
	\begin{equation} \label{E: forcing}
		\bar{f}(u) = \left\{ \begin{array}{r l}
								\Theta^{-1} W'(u), & \text{if} \, \, u \in [0,3] \cup [-3,-1] \\
								\theta^{-1} W'(u), & \text{if} \, \, u \in [-1,0] 
						\end{array} \right.
	\end{equation}
Notice that $\bar{f}$ is Lipschitz continuous.

Next, we modify $\bar{f}$.  To start with, let $\rho : [-1,1] \to [0,\infty)$ be a smooth function with $\rho(s) = \rho(-s)$, $\rho(1) = \rho(-1) = 0$, and $\int_{-1}^{1} \rho(s) \, ds = 1$, and, given $\epsilon \in (0,1)$, define $\rho^{\epsilon}(s) = \epsilon^{-1} \rho(\epsilon^{-1} s)$.  Define $\bar{f}_{\epsilon}$ by $\bar{f}_{\epsilon} = \rho^{\epsilon} * \bar{f} + 2 \text{Lip}(\bar{f}; [-3,3]) \epsilon$.  Notice that, by construction, for each $u \in [-2,2]$, we have
	\begin{align*}
		\bar{f}_{\epsilon}(u) - \bar{f}(u) &\geq \text{Lip}(\bar{f};[-3,3]) \epsilon
	\end{align*}
In particular, $\bar{f}_{\epsilon} \geq \bar{f}$ in $[-2,2]$.  

Finally, fix a cut-off function $\eta \in C^{\infty}(\mathbb{R}; [0,1])$ such that
	\begin{align*}
		\eta(u) = 1 \, \, \text{if} \, \, u \in \left[-\frac{1}{4},\infty\right),& \quad \eta(u) = 0 \, \, \text{if} \, \, u \in \left(-\infty,-\frac{3}{4}\right] \\
		|\eta'(u)| &\leq 4
	\end{align*}	
and define $f_{\epsilon} : \mathbb{R} \to \mathbb{R}$ by 
	\begin{equation*}
		f_{\epsilon}(u) = \eta(u) \bar{f}(u) + (1 - \eta(u)) \bar{f}_{\epsilon}(u)
	\end{equation*}
	
Some properties of $f_{\epsilon}$ are summarized next:

	\begin{lemma} \label{L: regularization}  There is a constant $M_{1} > 0$ and an $\epsilon_{0} > 0$ with $\epsilon_{0} < \frac{1}{2}$ such that if $\epsilon \in (0,\epsilon_{0})$, then
		\begin{itemize}
			\item[(i)] $\frac{W''(-1)}{2 \Theta} \leq f_{\epsilon}'(u) \leq \frac{3W''(-1)}{2 \theta}$ if $u \in [-1 - \mu, -1 + \mu]$
			\item[(ii)] There is a $z_{\epsilon} \in [-1 - \mu,-1]$ such that 
				\begin{equation} \label{E: zeros}
					\{u \in [-1 - \mu,1] \, \mid \, f_{\epsilon}(u) = 0\} = \{z_{\epsilon},0,1\}
				\end{equation}
			\item[(iii)] $\|f'_{\epsilon}\|_{L^{\infty}([-(1 + \mu),1])} \leq M_{1}$
		\end{itemize}
	\end{lemma}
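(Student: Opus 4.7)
The plan is to exploit the fact that $\mu < 1/8$ forces $-1+\mu < -7/8 < -3/4$, so $\eta \equiv 0$ throughout $[-1-\mu, -1+\mu]$. Hence $f_\epsilon = \bar{f}_\epsilon$ on that interval, with $f_\epsilon' = \rho^\epsilon \ast \bar{f}'$ in the distributional sense (no singular part at $u=-1$ since $\bar{f}$ is Lipschitz). Because $\bar{f}' = \Theta^{-1}W''$ on $[-1-2\mu, -1)$ and $\bar{f}' = \theta^{-1}W''$ on $(-1, -1+2\mu]$, assumption \eqref{E: second_deriv_neg_side} gives $\bar{f}' \in [W''(-1)/(2\Theta),\, 3W''(-1)/(2\theta)]$ a.e.\ on $[-1-2\mu, -1+2\mu]$. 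Requiring $\epsilon < \mu$ confines the convolution kernel at any $u \in [-1-\mu, -1+\mu]$ to this safe range, which yields (i).

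For (ii), I would decompose $[-1-\mu, 1]$ into four sub-intervals. On $[-1/4, 1]$, where $\eta \equiv 1$, $f_\epsilon = \bar{f}$ has zeros exactly $\{0, 1\}$. On $[-3/4, -1/4]$, $\bar{f} = \theta^{-1}W'$ is bounded below by some $c_0 > 0$ and $\bar{f}_\epsilon \geq \bar{f}$ by construction, so $f_\epsilon > 0$. On $[-1+\mu, -3/4]$, $\eta \equiv 0$ and $\bar{f}$ remains bounded below on this compact subset of $(-1, 0)$, so $\bar{f}_\epsilon > 0$ once $\epsilon$ is small enough that the mollification samples only values in $(-1, 0)$. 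It therefore remains only to count zeros of $\bar{f}_\epsilon$ on $[-1-\mu, -1+\mu]$, where (i) forces strict monotonicity so at most one zero exists; the endpoint evaluations $\bar{f}_\epsilon(-1-\mu) < 0$ (using $W'(-1-\mu) \leq -\mu W''(-1)/2$ from \eqref{E: second_deriv_neg_side}, for $\epsilon$ sufficiently small) and $\bar{f}_\epsilon(-1) \geq 2\,\mathrm{Lip}(\bar{f};[-3,3])\epsilon > 0$ then identify a unique $z_\epsilon \in (-1-\mu, -1)$. The main obstacle is establishing the positivity at $u = -1$: the symmetry of $\rho^\epsilon$ together with $\theta^{-1} \geq \Theta^{-1}$ forces $(\rho^\epsilon \ast \bar{f})(-1) \geq 0$, but in the degenerate case $\theta = \Theta$ this quantity is only $O(\epsilon^2)$, so the deliberate additive shift by $2\,\mathrm{Lip}(\bar{f};[-3,3])\epsilon$ built into the definition of $\bar{f}_\epsilon$ is what really pins $z_\epsilon$ strictly to the left of $-1$ and rules out a spurious second zero.

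For (iii), I would differentiate $f_\epsilon = \eta\bar{f} + (1-\eta)\bar{f}_\epsilon$ to obtain $f_\epsilon' = \eta'(\bar{f} - \bar{f}_\epsilon) + \eta\bar{f}' + (1-\eta)(\rho^\epsilon \ast \bar{f}')$. Since $\bar{f}$ is Lipschitz on $[-3, 3]$, both $\|\bar{f}'\|_\infty$ and $\|\rho^\epsilon \ast \bar{f}'\|_\infty$ are bounded by $\mathrm{Lip}(\bar{f};[-3,3])$, while the cut-off term is controlled by $|\eta'| \cdot |\bar{f} - \bar{f}_\epsilon| \leq 4 \cdot 3\,\mathrm{Lip}(\bar{f};[-3,3])\epsilon$. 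Summing yields a uniform constant $M_1$ depending only on $\mathrm{Lip}(\bar{f};[-3,3])$, independent of $\epsilon \in (0, \epsilon_0)$.
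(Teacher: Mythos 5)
The paper omits the proof of this lemma entirely (``the lemma follows directly from the properties of $\bar{f}$ and the definition of $\bar{f}_{\epsilon}$; therefore, the proof is omitted''), so there is no textual argument to compare against. Your write-up supplies the missing proof and is essentially sound: using $\mu < 1/8$ to guarantee $\eta \equiv 0$ on $[-1-\mu,-1+\mu]$, the convolution estimate with $\epsilon < \mu$ confining the kernel to $[-1-2\mu,-1+2\mu]$ for (i), the four-piece decomposition with the monotonicity/endpoint argument for (ii), and the product-rule computation for (iii) all work, with the intermediate fact $\bar{f}_{\epsilon}\geq \bar{f}$ on $[-2,2]$ doing the heavy lifting on $[-1+\mu,-1/4]$.

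One claim, however, is false and should be corrected even though it does not sink the argument: the assertion that the symmetry of $\rho^{\epsilon}$ together with $\theta^{-1}\geq \Theta^{-1}$ forces $(\rho^{\epsilon}\ast\bar{f})(-1)\geq 0$. Writing
\begin{equation*}
(\rho^{\epsilon}\ast\bar{f})(-1)=\int_{0}^{\epsilon}\rho^{\epsilon}(z)\left[\Theta^{-1}W'(-1-z)+\theta^{-1}W'(-1+z)\right]dz,
\end{equation*}
the inequality $\theta^{-1}\geq\Theta^{-1}$ only reduces the question to the sign of $W'(-1-z)+W'(-1+z)=\int_{0}^{z}\big[W''(-1+t)-W''(-1-t)\big]\,dt$, which is governed by $W'''$ near $-1$ and can be negative; the paper's hypotheses impose no sign on $W'''(-1)$. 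The correct, shorter argument is simply that $\bar{f}(-1)=\Theta^{-1}W'(-1)=0$ and $|(\rho^{\epsilon}\ast\bar{f})(-1)-\bar{f}(-1)|\leq \text{Lip}(\bar{f};[-3,3])\,\epsilon$, so $\bar{f}_{\epsilon}(-1)\geq \text{Lip}(\bar{f};[-3,3])\,\epsilon>0$ --- this is exactly the inequality $\bar{f}_{\epsilon}-\bar{f}\geq \text{Lip}(\bar{f};[-3,3])\,\epsilon$ the paper has already recorded on $[-2,2]$, evaluated at $u=-1$. Correspondingly, your stated lower bound $\bar{f}_{\epsilon}(-1)\geq 2\,\text{Lip}(\bar{f};[-3,3])\,\epsilon$ is an overclaim (it would require the $\geq 0$ assertion) and should be weakened by a factor of two, which makes no difference to the conclusion that $z_{\epsilon}\in(-1-\mu,-1)$.
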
  
	
	The lemma follows directly from the properties of $\bar{f}$ and the definition of $\bar{f}_{\epsilon}$.  Therefore, the proof is omitted.

\subsection{Modification}  In what follows, let $M = \theta^{-1} \|W''\|_{L^{\infty}([-3,3])}$.  Following \cite[Section 3]{chen}, we now fix a family of cut-off functions $(\zeta_{\epsilon})_{\epsilon \in (0,1)} \subseteq C^{\infty}_{c}(\mathbb{R}; [0,1])$ such that, for each $\epsilon \in (0,1)$,
	\begin{itemize}
		\item[(a)] $\zeta_{\epsilon}(u) = 1$ if $u \in [0,2 \epsilon |\log(\epsilon)|]$,
		\item[(b)] $\zeta_{\epsilon}(u) = 0$ if $u \in (-\infty,-\frac{\epsilon}{M}] \cup [3 \epsilon |\log(\epsilon)|, \infty)$,
		\item[(c)] $\zeta_{\epsilon}'$ satisfies the bounds
			\begin{align*}
				0 \leq \zeta_{\epsilon}'(u) \leq \frac{2 M}{\epsilon} \qquad \text{if} \, \, u \in [-\frac{\epsilon}{M}, 0], \quad
				 -\frac{2}{\epsilon |\log(\epsilon)|} \leq \zeta_{\epsilon}'(s) \leq 0 \qquad \text{if} \, \, u \in [0,3\epsilon |\log(\epsilon)|].
			\end{align*}
	\end{itemize}
Now we define $\tilde{f}_{\epsilon} : \mathbb{R} \to \mathbb{R}$ by 
	\begin{equation*}
		\tilde{f}_{\epsilon}(u) = (1 - \zeta_{\epsilon}(u)) f_{\epsilon}(u) + \zeta_{\epsilon}(u) \left(\frac{\epsilon |\log(\epsilon)| - u}{|\log(\epsilon)|} \right)
	\end{equation*}
	
To start with, we record some properties of the family $(\tilde{f}_{\epsilon})_{\epsilon \in (0,1)}$: 

	\begin{lemma}  \label{L: modification}There are positive constants $c, M_{2}, \epsilon_{1} > 0$ such that if $\epsilon \in (0,\epsilon_{0} \wedge \epsilon_{1})$, then
		\begin{itemize}
			\item[(a)] $\tilde{f}_{\epsilon} \geq f_{\epsilon} \geq \bar{f}$ in $[-1 - \mu, 1]$.
			\item[(b)] The following inequalities hold away from $0$:
				\begin{align*}
					\tilde{f}_{\epsilon}(u) \leq - c \epsilon \quad \text{if} \quad u \in [2 \epsilon |\log(\epsilon)|, 3 \epsilon |\log(\epsilon)|], \quad \tilde{f}_{\epsilon}(u) &\geq c \epsilon \quad \text{if} \quad u \in \left[- \frac{\epsilon}{M},0\right].
				\end{align*}
			\item[(c)] $\|\tilde{f}_{\epsilon}'\|_{L^{\infty}([-1 - \mu,1])} \leq M_{2}$.
		\end{itemize}
	\end{lemma}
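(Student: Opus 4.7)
The plan is to verify (a)--(c) by direct inspection of the identity
\[
\tilde{f}_\epsilon \;=\; f_\epsilon \;+\; \zeta_\epsilon \bigl(g_\epsilon - f_\epsilon\bigr), \qquad g_\epsilon(u) := \frac{\epsilon |\log \epsilon| - u}{|\log \epsilon|},
\]
using three ingredients: $W'(0) = 0$; the non-degeneracy $|W''(s)| \geq |W''(0)|/2$ on $[-\mu, \mu]$ from the inequalities preceding Lemma~\ref{L: regularization}; and the size and derivative bounds on $\zeta_\epsilon$.

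For (a), since $\zeta_\epsilon \in [0,1]$ is supported in $[-\epsilon/M, 3\epsilon|\log\epsilon|]$, it suffices to prove $g_\epsilon \geq f_\epsilon$ there. On $[-\epsilon/M, 0]$, the definition of $M$ gives $f_\epsilon(u) = \theta^{-1} W'(u) \leq M|u| \leq \epsilon \leq g_\epsilon(u)$; on $[0, 3\epsilon|\log\epsilon|]$, the sign condition \eqref{A: sign of derivative} yields $f_\epsilon \leq 0$, and the Taylor estimate pushes $f_\epsilon$ well below $g_\epsilon \geq -2\epsilon$ once $u \gtrsim \epsilon|\log\epsilon|$. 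The inequality $f_\epsilon \geq \bar{f}$ is immediate from $\bar{f}_\epsilon \geq \bar{f}$ on $[-1-\mu,1]$.

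For (b), the upper bound on $[2\epsilon|\log\epsilon|, 3\epsilon|\log\epsilon|]$ is routine: for small $\epsilon$ both $g_\epsilon \leq -\epsilon$ and $f_\epsilon \leq -\epsilon$ (the latter by the Taylor estimate), so the convex combination satisfies $\tilde{f}_\epsilon \leq -\epsilon$. The lower bound on $[-\epsilon/M, 0]$ is the delicate point. The Taylor estimate yields $f_\epsilon(u) \geq \alpha|u|$ with $\alpha = |W''(0)|/(2\theta)$, and $g_\epsilon \geq \epsilon$; but $f_\epsilon$ vanishes at $u = 0$, so neither term alone in $(1-\zeta_\epsilon) f_\epsilon + \zeta_\epsilon g_\epsilon$ controls $\tilde{f}_\epsilon$ uniformly. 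The remedy is to couple $1-\zeta_\epsilon$ to $|u|$: integrating the bound $\zeta_\epsilon' \leq 2M/\epsilon$ backward from $u=0$ yields $1 - \zeta_\epsilon(u) \leq (2M/\epsilon)|u|$. A dichotomy then closes the argument. If $\zeta_\epsilon(u) \geq 1/2$, then $\tilde{f}_\epsilon \geq \zeta_\epsilon g_\epsilon \geq \epsilon/2$; otherwise $|u| > \epsilon/(4M)$, whence $\tilde{f}_\epsilon \geq (1-\zeta_\epsilon) f_\epsilon \geq (1/2)\alpha \cdot \epsilon/(4M)$. Taking $c = \min\{1, \alpha/(8M)\}$ finishes (b).

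For (c), differentiating produces
\[
\tilde{f}_\epsilon' = (1-\zeta_\epsilon) f_\epsilon' + \zeta_\epsilon g_\epsilon' + \zeta_\epsilon'(g_\epsilon - f_\epsilon).
\]
The first term is bounded by $M_1$ via Lemma~\ref{L: regularization}(iii), the second by $1/|\log\epsilon| \leq 1$, and the third by a scaling match: on each transition interval $|\zeta_\epsilon'|$ scales as the reciprocal of the interval's length while $|g_\epsilon - f_\epsilon|$ is of that same order, namely $O(\epsilon)$ on $[-\epsilon/M,0]$ and $O(\epsilon|\log\epsilon|)$ on $[2\epsilon|\log\epsilon|, 3\epsilon|\log\epsilon|]$. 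The only genuine obstacle anywhere in the proof is the dichotomy above: one must use the derivative constraint on $\zeta_\epsilon$ to transfer the lower bound on $f_\epsilon$ to $\tilde{f}_\epsilon$ at points where the latter would otherwise collapse to zero.
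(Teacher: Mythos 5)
Your proof is correct and follows essentially the same route as the paper: the identity $\tilde f_\epsilon=(1-\zeta_\epsilon)f_\epsilon+\zeta_\epsilon g_\epsilon$ is exactly the paper's formula \eqref{E: dreaded_formula} rewritten, the sign and Taylor estimates for parts (a)--(b) are the same, and your dichotomy $\zeta_\epsilon\geq 1/2$ versus $\zeta_\epsilon<1/2$ is the contrapositive reformulation of the paper's split $u\in[-\epsilon/4M,0]$ versus $u\in[-\epsilon/M,-\epsilon/4M]$, both resting on integrating the bound $\zeta_\epsilon'\leq 2M/\epsilon$. (Trivial slip: the first branch yields $\epsilon/2$, so $c$ should be $\min\{1/2,\alpha/(8M)\}$ rather than $\min\{1,\alpha/(8M)\}$.)
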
  
		
			\begin{proof}  To see that (a) holds, observe that the identity $f_{\epsilon}(0) = \bar{f}(0) = 0$ implies we can write
				\begin{equation} \label{E: dreaded_formula}
					\tilde{f}_{\epsilon}(u) = f_{\epsilon}(u) + \zeta_{\epsilon}(u) \left(\epsilon - \left( \frac{1}{|\log(\epsilon)|} + \frac{f_{\epsilon}(u) - f_{\epsilon}(0)}{u} \right) u \right)
				\end{equation}
			Recall that the $\zeta_{\epsilon}$ term only has to be dealt with when $u \in [-\epsilon/M,0] \cup [0,3\epsilon |\log(\epsilon)|]$.  
			
			Fix $\epsilon_{1}' > 0$ such that $\epsilon/M \leq 1/4$ if $\epsilon \in (0,\epsilon_{1}')$.  If $\epsilon \in (0,\epsilon_{1}')$ and $u \in [-\epsilon/M,0]$, then the definition of $M$ gives
				\begin{align*}
					f_{\epsilon}(u) = |f_{\epsilon}(u)| &= |\bar{f}(u)| \leq \theta^{-1} \|W''\|_{L^{\infty}([-3,3])} |u| = M |u|.
%							&= \left| \theta^{-1} \int_{0}^{u} W''(s) \, ds \right| \\
%							
				\end{align*}
			Hence $f_{\epsilon}(u) \leq M |u| \leq \epsilon$, which gives
				\begin{equation*}
					\tilde{f}_{\epsilon}(u) = f_{\epsilon}(u) + \zeta_{\epsilon}(u) \left( \epsilon - f_{\epsilon}(u) - \frac{u}{|\log(\epsilon)|} \right) \geq f_{\epsilon}(u).
				\end{equation*}
			
			Making $\epsilon_{1}'$ smaller if necessary, we can assume that $3 \epsilon |\log(\epsilon)| \leq \mu$ if $\epsilon \in (0,\epsilon_{1}')$.  Now note that if $u \in [2\epsilon |\log(\epsilon)|,3 \epsilon |\log(\epsilon)|]$, then we can write
				\begin{align*}
					-\left(\frac{1}{|\log(\epsilon)|} + \frac{f_{\epsilon}(u) - f_{\epsilon}(0)}{u} \right)u &\geq -3 \epsilon - f_{\epsilon}(u) \\
					&\geq -3 \epsilon - \frac{W''(0)}{2 \Theta} u \\
					&\geq -3 \epsilon + \frac{|W''(0)| \epsilon |\log(\epsilon)| }{\Theta} 
				\end{align*}
			Finally, we let $\epsilon_{1}'' = \exp(-\frac{3 \Theta}{|W''(0)|})$ and $\epsilon_{1} = \epsilon_{1}' \wedge \epsilon_{1}'' \wedge \frac{1}{2}$.  The previous string of inequalities implies that if $\epsilon \in (0,\epsilon_{1} \wedge \epsilon_{0})$, then
				\begin{equation*}
					-\left(\frac{1}{|\log(\epsilon)|} + \frac{f_{\epsilon}(u) - f_{\epsilon}(0)}{u} \right)u \geq 0 \quad \text{if} \quad u \in [2\epsilon |\log(\epsilon)|,3 \epsilon |\log(\epsilon)|]
				\end{equation*}
		From this and \eqref{E: dreaded_formula}, it follows that $\tilde{f}_{\epsilon}(u) \geq f_{\epsilon}(u)$ for all $u \in [2 \epsilon |\log(\epsilon)|, 3 \epsilon |\log(\epsilon)|]$.
		
		Next, we note that if $u \in [0,2 \epsilon |\log(\epsilon)|]$ and $\epsilon \in (0,\epsilon_{1} \wedge \epsilon_{0})$, then a direct computation shows that
	\begin{align*}
		f_{\epsilon}(u) \leq - \frac{|W''(0)| u}{2 \Theta} \leq \epsilon - \frac{u}{|\log(\epsilon)|} = \tilde{f}_{\epsilon}(u)
	\end{align*}
	This completes the proof that $\tilde{f}_{\epsilon} \geq f_{\epsilon}$ in $[-2,2]$ and then the inequality $f_{\epsilon} \geq \bar{f}$ in the same interval follows from the construction of $f_{\epsilon}$.
	
	Next, we prove (b).  Recall that if $u \in [0,\mu]$, then
		\begin{equation*}
			f_{\epsilon}(u) \leq - \frac{|W''(0)|}{2 \Theta}u
		\end{equation*}
	and, thus, for all $u \in [2 \epsilon |\log(\epsilon)|,3 \epsilon |\log(\epsilon)|]$ and $\epsilon \in (0,\epsilon_{1} \wedge \epsilon_{0})$,
		\begin{align*}
			\tilde{f}_{\epsilon}(u) &\leq -(1 - \zeta_{\epsilon}(u)) \Theta^{-1} |W''(0)| \epsilon |\log(\epsilon)| - \zeta_{\epsilon}(u) \epsilon \leq - \epsilon
		\end{align*}
	
	Let $\epsilon_{1}''' = M\mu$.  If $u \in [-\frac{\epsilon}{M},0]$ and $\epsilon \in (0,\epsilon_{1}''')$, then
		\begin{align*}
			\tilde{f}_{\epsilon}(u) &\geq (1 - \zeta_{\epsilon}(u)) \frac{|W''(0)|}{2 \Theta} |u| + \zeta_{\epsilon}(u)\epsilon
		\end{align*}
	When $u \in [-\frac{\epsilon}{4M},0]$, this gives (by property (c) of $\zeta_{\epsilon}$ above), 
		\begin{equation*}
			\tilde{f}_{\epsilon}(u) \geq \left(1 - \frac{2M}{\epsilon} \cdot \frac{\epsilon}{4M} \right) \epsilon = \frac{\epsilon}{2}
		\end{equation*}
	while the case $u \in [-\frac{\epsilon}{M},-\frac{\epsilon}{4M}]$ yields
		\begin{equation*}
			\tilde{f}_{\epsilon}(u) \geq (1 - \zeta_{\epsilon}(u)) \frac{|W''(0)| \epsilon}{8M \Theta}+ \zeta_{\epsilon}(u)\epsilon \geq \frac{|W''(0)|}{8M \Theta} \cdot \epsilon
		\end{equation*}
	Therefore, if we replace $\epsilon_{1}$ above by $\epsilon_{1} \wedge \epsilon_{1}'''$, we conclude that there is a $c > 0$ such that (b) holds.

(c) follows directly from the choice of $\zeta_{\epsilon}$, conclusion (b) of Lemma \ref{L: regularization}, and \eqref{E: second_deriv_neg_side}.          \end{proof}  
	
Henceforth, we let $\chi^{\epsilon} : [-1 - \mu,1] \times [0,\infty) \to [-1 - \mu,1]$ denote the solution map of the ODE associated with $-\tilde{f}_{\epsilon}$, that is,
	\begin{equation*}
		\left\{ \begin{array}{r l}
				\chi^{\epsilon}_{s}(\xi,s) + \tilde{f}_{\epsilon}(\chi^{\epsilon}(\xi,s)) = 0 & \text{if} \, \, (\xi,s) \in [-1 - \mu,1] \times (0,\infty) \\
				\chi^{\epsilon}(\xi,0) = \xi & \text{if} \, \, \xi \in [-1 - \mu,1]
			\end{array} \right.
	\end{equation*}

	\begin{lemma} \label{L: universal ODE}  (i)  For each $\beta > 0$, there is an $\epsilon(\beta), \tau(\beta) > 0$ such that if $\epsilon \in (0,\epsilon(\beta))$, then
		\begin{equation} \label{E: epsilon_ODE}
			\chi^{\epsilon}(\xi,s) \geq 1 - \beta \epsilon \quad \text{if} \quad \xi \geq 3 \epsilon |\log(\epsilon)|, \, \, s \geq \tau(\beta)|\log(\epsilon)|
		\end{equation}
		
	(ii) $\chi^{\epsilon}_{\xi} > 0$ in $[-1 - \mu,1] \times [0,\infty)$, independently of $\epsilon > 0$.  
	
	(iii) For each $a > 0$, there is an $\epsilon(a) > 0$ and $B(a) > 0$ such that if $\epsilon \in (0,\epsilon(a))$, then
		\begin{equation} \label{E: magic_second_deriv_bound}
			\left| \frac{\chi_{\xi \xi}^{\epsilon}(\xi,s)}{\chi_{\xi}^{\epsilon}(\xi,s)} \right| \leq \frac{B(a)}{\epsilon} \quad \text{if} \quad (\xi, s) \in [-1 - \mu,1] \times [0,a |\log(\epsilon)|]
		\end{equation}
	\end{lemma}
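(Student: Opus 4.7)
All three assertions reduce to an analysis of the scalar ODE $\chi^{\epsilon}_s = -\tilde{f}_{\epsilon}(\chi^{\epsilon})$, $\chi^{\epsilon}(\xi,0) = \xi$, via the structural information about $\tilde{f}_{\epsilon}$ in Lemmas \ref{L: regularization} and \ref{L: modification}. For (ii), differentiating in $\xi$ gives the linear equation $(\chi^{\epsilon}_{\xi})_s = -\tilde{f}'_{\epsilon}(\chi^{\epsilon})\chi^{\epsilon}_{\xi}$ with $\chi^{\epsilon}_{\xi}(\xi,0) = 1$, which integrates to the manifestly positive expression $\chi^{\epsilon}_{\xi}(\xi,s) = \exp\bigl(-\int_0^s \tilde{f}'_{\epsilon}(\chi^{\epsilon}(\xi,r))\,dr\bigr)$; combined with $\|\tilde{f}'_{\epsilon}\|_{L^\infty} \leq M_2$ from Lemma \ref{L: modification}(c), this also yields the two-sided estimate $e^{-M_2 s} \leq \chi^{\epsilon}_{\xi}(\xi,s) \leq e^{M_2 s}$, which will be useful in (iii).

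For (i), I would decompose the trajectory into three segments. Starting from $\xi \geq 3\epsilon|\log\epsilon|$, Lemma \ref{L: modification}(b) forces $\chi^{\epsilon}$ to move strictly upward through $[2\epsilon|\log\epsilon|, 3\epsilon|\log\epsilon|]$; once past this strip, $\zeta_{\epsilon}(\chi^{\epsilon}) = 0$ and the dynamics reverts to $\chi^{\epsilon}_s = -f_{\epsilon}(\chi^{\epsilon})$, which is strictly positive on any compact sub-interval of $(0,1)$ with a uniform lower bound (from $W' < 0$ on $(0,1)$ and the construction of $f_{\epsilon}$), so $\chi^{\epsilon}$ traverses $[3\epsilon|\log\epsilon|,1-2\mu]$ in bounded time. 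Finally, on $[1-2\mu,1]$ the lower bound $f'_{\epsilon}(u) \geq W''(1)/(2\Theta) =: \lambda$ from Lemma \ref{L: regularization}(i) yields the linearized estimate $1 - \chi^{\epsilon}(\xi,s) \leq C e^{-\lambda(s - s_0)}$ after some $s_0 = O(1)$, so reaching $1 - \beta\epsilon$ takes time $s = O(|\log(\beta\epsilon)|) = O(|\log\epsilon|)$, and any $\tau(\beta) > 2\lambda^{-1}$ works for $\epsilon$ sufficiently small.

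For (iii), I set $g(s) := \chi^{\epsilon}_{\xi\xi}(\xi,s)/\chi^{\epsilon}_{\xi}(\xi,s)$ and differentiate the ODE twice in $\xi$ to obtain $g'(s) = -\tilde{f}''_{\epsilon}(\chi^{\epsilon})\chi^{\epsilon}_{\xi}$ with $g(0) = 0$. The cleanest route uses the identity $\chi^{\epsilon}_{\xi}(\xi,s) = \tilde{f}_{\epsilon}(\chi^{\epsilon}(\xi,s))/\tilde{f}_{\epsilon}(\xi)$, valid whenever $\tilde{f}_{\epsilon}(\xi) \neq 0$ (both sides satisfy the same linear ODE with the same initial datum). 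The change of variables $u = \chi^{\epsilon}(\xi,r)$ then collapses $g$ into the closed form
\[
g(s) = \frac{\tilde{f}'_{\epsilon}(\chi^{\epsilon}(\xi,s)) - \tilde{f}'_{\epsilon}(\xi)}{\tilde{f}_{\epsilon}(\xi)},
\]
so that $|g(s)| \leq 2M_2/|\tilde{f}_{\epsilon}(\xi)|$, and matters reduce to a pointwise lower bound $|\tilde{f}_{\epsilon}(\xi)| \geq c\epsilon$. This is immediate from Lemma \ref{L: modification}(b) on $[-\epsilon/M, 0] \cup [2\epsilon|\log\epsilon|, 3\epsilon|\log\epsilon|]$ and from the uniform negativity of $f_{\epsilon}$ on compact sub-intervals of $(0,1)$ elsewhere in the bulk. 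The main obstacle is controlling $g$ near the zeros of $\tilde{f}_{\epsilon}$, namely $z_{\epsilon}$, $1$, and, because of the modification, the artificial interior point $\epsilon|\log\epsilon|$ (where $\tilde{f}_{\epsilon}(u) = \epsilon - u/|\log\epsilon|$ is affine with $\tilde{f}''_{\epsilon} \equiv 0$ on the plateau $[0,2\epsilon|\log\epsilon|]$). At $\xi \in \{z_{\epsilon},1\}$, linearization together with the boundedness of $\tilde{f}''_{\epsilon}$ there (the convolution only affects a neighborhood of $u = 0$) gives $|g| = O(1)$. Near the artificial zero, the vanishing of $\tilde{f}''_{\epsilon}$ on the plateau, combined with the slow unstable eigenvalue $1/|\log\epsilon|$, ensures $g$ only accumulates once $\chi^{\epsilon}$ escapes, yielding a contribution of size $O(1/\epsilon)$ by time $s = a|\log\epsilon|$. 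Assembling these cases produces the uniform bound $B(a)/\epsilon$ claimed in (iii).
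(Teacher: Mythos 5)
Your overall strategy for this lemma --- reduce everything to the scalar ODE $\chi^{\epsilon}_s = -\tilde{f}_{\epsilon}(\chi^{\epsilon})$, integrate the linear equation for $\chi^{\epsilon}_{\xi}$ for (ii), and exploit the identity $\chi^{\epsilon}_{\xi}(\xi,s) = \tilde{f}_{\epsilon}(\chi^{\epsilon}(\xi,s))/\tilde{f}_{\epsilon}(\xi)$ to collapse $g = \chi^{\epsilon}_{\xi\xi}/\chi^{\epsilon}_{\xi}$ into the closed form $g(s) = [\tilde{f}'_{\epsilon}(\chi^{\epsilon}(\xi,s)) - \tilde{f}'_{\epsilon}(\xi)]/\tilde{f}_{\epsilon}(\xi)$ --- is sound, and actually more explicit than the paper, which simply defers to \cite[Lemma 3.1]{chen} with a one-sentence remark. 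The closed form is a genuinely nice observation: it converts the entire problem in (iii) into lower bounds on $|\tilde{f}_{\epsilon}(\xi)|$, with cancellation in the numerator saving the day near the zeros.

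There are, however, two errors in the details. First, in (i): the interval $[3\epsilon|\log\epsilon|, 1-2\mu]$ is not a fixed compact subinterval of $(0,1)$ --- its left endpoint tends to $0$ with $\epsilon$ --- so "$f_{\epsilon}$ is strictly positive on any compact sub-interval" does not give a uniform-in-$\epsilon$ lower bound on the velocity. Near $u=0$ one only has $-f_{\epsilon}(u) \gtrsim u$, so the escape from the $O(\epsilon|\log\epsilon|)$ neighborhood of the origin takes time $O(|\log\epsilon|)$, not $O(1)$. This does not invalidate the conclusion (the budget is still $\tau(\beta)|\log\epsilon|$), but the stated reasoning is wrong.

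Second, and this is the substantive miss, in (iii) near $\xi = z_{\epsilon}$: you assert $\tilde{f}''_{\epsilon}$ is bounded there because "the convolution only affects a neighborhood of $u=0$." You have the geography backwards. It is the cutoff $\zeta_{\epsilon}$ (the artificial affine plateau) that lives near $u=0$; the mollification $\bar{f}_{\epsilon} = \rho^{\epsilon} * \bar{f} + 2\,\text{Lip}(\bar{f})\epsilon$, which supplies $f_{\epsilon}$ on $u \leq -3/4$, exists precisely to smooth the Lipschitz kink of $\bar{f}$ at $u=-1$, where $\bar{f}'$ jumps from $\Theta^{-1}W''(-1)$ to $\theta^{-1}W''(-1)$. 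Consequently $\tilde{f}''_{\epsilon} \sim C\epsilon^{-1}$ near $z_{\epsilon} \in [-1-\mu,-1]$, not $O(1)$. The correct estimate near $z_{\epsilon}$ is therefore $|g| \leq \|\tilde{f}''_{\epsilon}\|_{L^{\infty}}|\xi - z_{\epsilon}| / (\lambda |\xi - z_{\epsilon}|) \lesssim \epsilon^{-1}$, with $\lambda = W''(-1)/(2\Theta)$ from Lemma \ref{L: regularization}(i). This is precisely the point the paper's proof flags as the "main difference" from Chen's argument and is what forces the $B(a)/\epsilon$ (rather than $B(a)$) in \eqref{E: magic_second_deriv_bound}. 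Your treatment of the other two zeros --- $O(1)$ at the stable zero $1$, where $\tilde{f}''_{\epsilon} = \Theta^{-1}W'''$ is genuinely bounded, and $O(\epsilon^{-1})$ at the unstable plateau zero $\epsilon|\log\epsilon|$ via the slow eigenvalue and vanishing of $\tilde{f}''_{\epsilon}$ before escape --- is correct. So the final bound holds, but you missed the one obstacle the paper actually bothers to comment on.
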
  
	
		\begin{proof}  The proof proceeds exactly as in \cite[Lemma 3.1]{chen}.  The main difference is $\tilde{f}''_{\epsilon}$ can grow like $C\epsilon^{-1}$ near $-1$, which, upon inspection of the proof in \cite{chen}, only has the effect of increasing the constant $B(a)$ in \eqref{E: magic_second_deriv_bound}. \end{proof}

\section{Comparison Principle} \label{A: comparison principle}  

After a multiplication by $m^{-1}$, \eqref{E: AC mobility} is a special case of the following class of equations:
	\begin{equation} \label{E: comparison}
		\left\{ \begin{array}{r l}
			u_{t} - G(x,Du) \text{tr}(D^{2}u) + B(x,u,Du) = 0 & \text{in} \, \, \mathbb{R}^{d} \times (0,T), \\
			u = u_{0} & \text{on} \, \, \mathbb{R}^{d} \times \{0\}.
			\end{array} \right.
	\end{equation}
In what follows, we assume that $G : \mathbb{R}^{d} \times \mathbb{R}^{d} \to (0,\infty)$ and $B : \mathbb{R}^{d} \times \mathbb{R}^{d} \to \mathbb{R}$ are bounded, continuous functions for which there are constants $C_{1}, K, m, M > 0$ such that, for each $(y,u,v) \in \mathbb{R}^{d} \times \mathbb{R} \times \mathbb{R}^{d}$, $y' \in \mathbb{R}^{d}$, and $u' \in \mathbb{R}$, we have
	\begin{align}
		|G(y,v) - G(y',v)| + |B(y,v,u) - B(y',v,u)| &\leq C_{1} \|y - y'\|, \\
		m \leq G(y,v) \leq M,& \\
		|B(y,u,v) - B(y,u',v)| &\leq K |u - u'|.
	\end{align}
%		C_{1} &:= \sup \left\{ \frac{|G(y,v) - G(y',v)| + |B(y,v,u) - B(y',v,u)|}{\|y - y'\|} \, \mid \, (y,u,v) \in \mathbb{R}^{d} \times \mathbb{R} \times \mathbb{R}^{d}, \right. \\
%		&\quad \left. y' \in \mathbb{R}^{d}, \, \, y \neq y'  \right\} < \infty, \\
%		\Theta^{-1} \leq &G(y,v) \leq \theta^{-1} \quad \text{if} \, \, (y,v) \in \mathbb{R}^{d} \times \mathbb{R}^{d}, \\
%		K&:= \sup \left\{ \frac{|B(y,u,v) - B(y,u',v)|}{|u - u'|} \, \mid \, (y,u,v) \in \mathbb{R}^{d} \times \mathbb{R} \times \mathbb{R}^{d}, \, \, u' \in \mathbb{R}, \, \, u \neq u' \right\} < \infty,
%	\end{align}

As is standard in the theory of viscosity solutions, to obtain well-posedness of \eqref{E: comparison}, we start with a comparison principle:

\begin{theorem} \label{T: comparison} Fix $T > 0$.  If $u$ is a bounded, upper semi-continuous function satisfying $u_{t} - G(x,Du) \text{tr}(D^{2}u) + B(x,u,Du) \leq 0$ in $\mathbb{R}^{d} \times (0,T)$ and $v$ is a bounded, lower semi-continuous function satisfying $v_{t} - G(x,Dv) \text{tr}(D^{2}v) + B(x,v,Dv) \geq 0$ in $\mathbb{R}^{d} \times (0,T)$ and if $M$ is defined by 
	\begin{equation*}
		M= \lim_{\delta \to 0^{+}} \sup \left\{ u^{*}(x,0) - v_{*}(y,0) \, \mid \, x,y \in \mathbb{R}^{d}, \, \, \|x-y\| \leq \delta \right\},
	\end{equation*}
then
	\begin{equation*}
		u(x,t) - v(x,t) \leq M e^{Kt} \vee 0 \quad \text{for all} \, \, (x,t) \in \mathbb{R}^{d} \times (0,T).
	\end{equation*} 
\end{theorem}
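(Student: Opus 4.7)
The plan is to apply the standard doubling-of-variables scheme for viscosity comparison of second-order parabolic equations, in the spirit of the Crandall-Ishii-Lions user's guide. The first reduction is to the case $M = 0$: the Lipschitz dependence of $B$ on its $u$-argument implies that $\tilde v(x,t) := v(x,t) + M e^{Kt}$ is again a supersolution of \eqref{E: comparison}, since the additional time-derivative contribution $MK e^{Kt}$ precisely absorbs the perturbation $|B(x, \tilde v, Dv) - B(x, v, Dv)| \leq KM e^{Kt}$. By the definition of $M$, $\tilde v$ initially dominates $u$ up to a vanishing modulus, so it suffices to prove the implication: if $u^*(\cdot, 0) \leq v_*(\cdot, 0)$, then $u \leq v$ on $\mathbb{R}^d \times (0, T)$.

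The core of the argument is by contradiction. Supposing $\sup(u - v) > 0$, I would introduce the doubled functional
\[
\Phi(x, y, t, s) = u(x, t) - v(y, s) - \frac{\|x-y\|^2 + (t-s)^2}{2\epsilon} - \sigma\bigl(\|x\|^2 + \|y\|^2\bigr) - \frac{\eta}{T - t},
\]
where $\sigma > 0$ guarantees a maximum on the unbounded domain and $\eta/(T-t)$ keeps the maximum away from $t = T$. For $\sigma, \eta$ small enough, $\Phi$ attains a positive maximum at an interior point $(\hat x, \hat y, \hat t, \hat s)$, and the standard estimate yields $\|\hat x - \hat y\|^2/\epsilon \to 0$ and $(\hat t - \hat s)^2/\epsilon \to 0$ as $\epsilon \to 0^+$. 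The parabolic Crandall-Ishii lemma produces scalars $a_1, a_2$ with $a_1 - a_2 \geq \eta/(T - \hat t)^2$, perturbed gradients $p_1, p_2$ with $\|p_1 - p_2\| = 2\sigma\|\hat x + \hat y\|$, and matrices $X, Y$ in the closures of the appropriate parabolic semi-jets satisfying $X \leq Y + C\sigma I$. Plugging these into the sub- and supersolution inequalities and subtracting, one aims to bound the right-hand side by a quantity vanishing as $\epsilon \to 0^+$, then $\sigma \to 0^+$, then $\eta \to 0^+$, contradicting the positivity of $\eta/(T-\hat t)^2$.

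The chief technical obstacle lies in estimating the nonlinear trace term $G(\hat x, p_1)\operatorname{tr}(X) - G(\hat y, p_2)\operatorname{tr}(Y)$. The natural decomposition
\[
G(\hat x, p_1)[\operatorname{tr}(X) - \operatorname{tr}(Y)] + [G(\hat x, p_1) - G(\hat y, p_2)]\operatorname{tr}(Y)
\]
handles the first piece easily via $X \leq Y + C\sigma I$, $G > 0$, and $G \leq M$, while the analogous splitting of the $B$-difference is treated using the Lipschitz bounds in $y$ and in $u$ (the latter aided by the positivity of $u(\hat x, \hat t) - v(\hat y, \hat s)$ at the maximum). The hard piece is $[G(\hat x, p_1) - G(\hat y, p_2)]\operatorname{tr}(Y)$: the Lipschitz bound on $G$ only yields $|G(\hat x, p_1) - G(\hat y, p_2)| \leq C_1\|\hat x - \hat y\| + \omega_G(2\sigma\|\hat x + \hat y\|)$, while $\operatorname{tr}(Y)$ is a priori only bounded by $Cd/\epsilon$ and $\|\hat x - \hat y\|$ is only known to be $o(\sqrt{\epsilon})$. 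Resolving this exploits the isotropic structure of the diffusion: because $G$ multiplies a scalar trace rather than entering in a more general anisotropic fashion, the inequality $X \leq Y$ and the positivity of $G$ can be used directly to re-split the problematic term, replacing $\operatorname{tr}(Y)$ by a quantity of smaller order that is compatible with $\|\hat x - \hat y\| = o(\sqrt{\epsilon})$. This is the main substance of the proof; once completed, sending $\epsilon$, $\sigma$, $\eta$ to zero in the indicated order yields the desired contradiction.
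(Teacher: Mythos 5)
Your reduction of the exponential bound is correct and is a legitimate alternative to the paper's device: the paper rescales both functions by $e^{-(K+\delta)t}$, while you instead absorb $M e^{Kt}$ into $v$ using the Lipschitz dependence of $B$ on $u$. Both handle the zeroth-order term equally well, and your set-up of the doubled functional, the estimate $\|\hat x - \hat y\|^2/\epsilon \to 0$, and the identification of the trace-difference as the crux all match the standard argument the paper invokes from the user's guide.

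However, the resolution you offer for the crucial term $G(\hat x,p_1)\mathrm{tr}(X) - G(\hat y,p_2)\mathrm{tr}(Y)$ is not correct, and this is where the proof actually lives. The inequality $X \leq Y + C\sigma I$ alone cannot control this term: neither $\mathrm{tr}(X)$ nor $\mathrm{tr}(Y)$ has a definite sign, so ``re-splitting using positivity of $G$'' does not produce a quantity of lower order, and there is nothing to ``replace $\mathrm{tr}(Y)$'' with. What is needed is the full doubled $2d\times 2d$ inequality from the Crandall--Ishii lemma,
\begin{equation*}
\begin{pmatrix} X & 0 \\ 0 & -Y \end{pmatrix} \leq \frac{3}{\epsilon}\begin{pmatrix} I & -I \\ -I & I \end{pmatrix} + C\sigma I,
\end{equation*}
tested against block vectors $(a\xi, b\xi)$, $\|\xi\|=1$, with $a = \sqrt{G(\hat x,p_1)}$ and $b = \sqrt{G(\hat y,p_2)}$. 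Summing over an orthonormal basis gives
\begin{equation*}
G(\hat x,p_1)\mathrm{tr}(X) - G(\hat y,p_2)\mathrm{tr}(Y) \leq \frac{3d}{\epsilon}\bigl(\sqrt{G(\hat x,p_1)} - \sqrt{G(\hat y,p_2)}\bigr)^{2} + C\sigma,
\end{equation*}
and the lower bound $G \geq m > 0$ together with the Lipschitz bound in $x$ turns the square-root difference squared into $O(\|\hat x - \hat y\|^2/\epsilon) + (\text{terms in }\sigma)$, which vanishes. It is precisely this squaring of $\|\hat x - \hat y\|$ that beats the $1/\epsilon$ scale; without the doubled matrix inequality and the $\sqrt{G}$ test vectors, your estimate does not close, so this step must be made explicit.
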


	\begin{proof}[Sketch of the proof]  The Lipschitz assumption on $G$ implies that a comparison argument can be carried out in the spirit of \cite{user's guide}.  To get the exponential bound, we note that, given $\delta > 0$, if we define the functions $\tilde{u}^{\delta}$ and $\tilde{v}^{\delta}$ by $\tilde{u}^{\delta}(x,t) = e^{-(K + \delta) t} u(x,t)$ and $\tilde{u}^{-(K + \delta)t} v(x,t)$ and write $\tilde{w}^{\delta} = \tilde{u}^{\delta} - \tilde{v}^{\delta}$, then a standard argument shows that $\tilde{w}^{\delta}$ satisfies $\tilde{w}^{\delta} \leq M \vee 0$ in $\mathbb{R}^{d} \times (0,T)$.  The result is recovered after sending $\delta \to 0^{+}$.  \end{proof}  
	
Existence now follows using Perron's Method and regularization:

\begin{corollary} \label{C: perron}  Given $u_{0} \in BUC(\mathbb{R}^{d})$, there is a unique, bounded viscosity solution of \eqref{E: comparison}.  \end{corollary}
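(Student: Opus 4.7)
The uniqueness portion of Corollary \ref{C: perron} is immediate from Theorem \ref{T: comparison}: two bounded viscosity solutions with the same initial data yield $M = 0$ in the comparison theorem, hence they agree. My plan for existence is to run Perron's method on a regularized problem and then pass to the limit using the Lipschitz-type stability already packaged in Theorem \ref{T: comparison}.

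First I would mollify, producing a sequence $u_{0}^{n} = u_{0} * \rho_{1/n} \in C^{\infty}_{b}(\mathbb{R}^{d})$ converging uniformly to $u_{0}$. For each $n$, I would seek classical barriers of the form $\Phi_{n}^{\pm}(x,t) = u_{0}^{n}(x) + \phi_{n}^{\pm}(t)$, where $\phi_{n}^{\pm}$ solve a scalar ODE chosen so that $\Phi_{n}^{+}$ is a supersolution and $\Phi_{n}^{-}$ is a subsolution of \eqref{E: comparison}. Using $G \leq M$, the boundedness of $B$, and the $K$-Lipschitz dependence of $B$ on its middle argument, this reduces to a linear ODE for $\phi_{n}^{\pm}$, which is routine. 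The important features are that $\Phi_{n}^{\pm}(\cdot,0) = u_{0}^{n}$ and that $\Phi_{n}^{-} \leq \Phi_{n}^{+}$ on $\mathbb{R}^{d} \times [0,T]$ for any fixed $T > 0$.

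Next I would invoke Perron's method in the Ishii form (cf.\ \cite{user's guide}) on the class of bounded viscosity subsolutions squeezed between $\Phi_{n}^{-}$ and $\Phi_{n}^{+}$. This produces a function $u^{n}$ whose upper and lower semicontinuous envelopes $(u^{n})^{*}, (u^{n})_{*}$ are, respectively, a viscosity subsolution and a viscosity supersolution satisfying $\Phi_{n}^{-} \leq (u^{n})_{*} \leq (u^{n})^{*} \leq \Phi_{n}^{+}$. Since the two barriers coincide with $u_{0}^{n}$ on $\{t = 0\}$, the envelopes are pinched and agree with $u_{0}^{n}$ initially; Theorem \ref{T: comparison} then forces $(u^{n})^{*} \leq (u^{n})_{*}$ throughout $\mathbb{R}^{d} \times (0,T)$, so $u^{n}$ is continuous and is a bounded viscosity solution with $u^{n}(\cdot,0) = u_{0}^{n}$. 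Finally, the exponential estimate from Theorem \ref{T: comparison} yields
\[
\|u^{n} - u^{m}\|_{L^{\infty}(\mathbb{R}^{d} \times [0,T])} \leq e^{KT} \|u_{0}^{n} - u_{0}^{m}\|_{L^{\infty}(\mathbb{R}^{d})},
\]
so $(u^{n})$ is uniformly Cauchy on every time slab and converges locally uniformly to a bounded continuous function $u$. The standard stability of viscosity solutions under uniform convergence transfers the subsolution and supersolution properties to $u$, and uniform convergence of the initial data gives $u(\cdot,0) = u_{0}$.

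The only step requiring genuine care is the barrier construction: a naive ansatz $\phi_{n}^{\pm}(t) = \pm L_{n} t$ fails on long time intervals because the drift $\pm L_{n} t$ feeds back through the $u$-argument of $B$, contributing a term of order $K L_{n} t$ to the supersolution inequality. This is easily remedied by taking $\phi_{n}^{\pm}$ to be exponentials (e.g.\ $\phi_{n}^{\pm}(t) = \pm L_{n}(e^{(K+1)t} - 1)$ with $L_{n}$ chosen proportional to $\|B\|_{\infty} + M d \|D^{2} u_{0}^{n}\|_{\infty} + K \|u_{0}^{n}\|_{\infty}$), or by iterating the Perron construction over a finite partition of $[0,T]$ into short intervals and patching via uniqueness. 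Every other ingredient — Perron, envelope pinching, and stability under uniform limits — is classical once Theorem \ref{T: comparison} is available.
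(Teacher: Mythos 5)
Your overall strategy matches the paper's: reduce to regular initial data, build explicit barriers, run Perron's method, and then pass to general $u_0 \in BUC$ via the exponential stability estimate in Theorem~\ref{T: comparison}. The uniqueness argument, the Cauchy argument, and the stability step are all correct and are exactly what the paper does.

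The one place where you go astray is the final paragraph, where you claim the naive linear ansatz $\phi_n^{\pm}(t) = \pm L_n t$ ``fails on long time intervals because the drift $\pm L_n t$ feeds back through the $u$-argument of $B$, contributing a term of order $K L_n t$.'' This concern does not arise in the paper's setting: $B$ is assumed \emph{globally bounded}, not merely Lipschitz in $u$. Plugging $\underline{u}(x,t) = u_0(x) - Ct$ into the operator gives
\begin{equation*}
\underline{u}_t - G(x,D\underline{u})\,\mathrm{tr}(D^2\underline{u}) + B(x,\underline{u},D\underline{u}) \;\leq\; -C + Md\,\|D^2 u_0\|_{L^\infty} + \|B\|_{L^\infty},
\end{equation*}
which is $\leq 0$ uniformly in $t$ once $C \geq Md\,\|D^2 u_0\|_{L^\infty} + \|B\|_{L^\infty}$; the supersolution case is symmetric. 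The Lipschitz constant $K$ plays no role in the barrier construction (it only enters the comparison theorem), and there is no feedback through the $u$-slot because you never need the Lipschitz estimate there. The exponential barriers you propose as a ``remedy'' do still work, since they dominate the linear ones, so your proof is not incorrect; it is just over-engineered at that step, and the proposed constant $L_n \propto \|B\|_\infty + Md\|D^2 u_0^n\|_\infty + K\|u_0^n\|_\infty$ carries a superfluous $K\|u_0^n\|_\infty$ term. The paper simply uses $u_0 \pm Ct$ directly.

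One minor stylistic difference: the paper takes $u_{0,n} \in BC^2(\mathbb{R}^d)$ by an unspecified approximation, whereas you mollify; either is fine given the translation-invariant comparison estimate. Otherwise your argument follows the paper's route.
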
  

	\begin{proof}  First, assume that $u_{0} \in BC^{2}(\mathbb{R}^{d})$.  It follows that $\bar{u}(x,t) = u_{0}(x) + Ct$ and $\underline{u}(x,t) = u_{0}(x) - Ct$ define super- and subsolutions of \eqref{E: comparison} provided $C > 0$ is large enough.  Thus, an application of Perron's Method gives a bounded, continuous solution $u$ with $u(\cdot,0) = u_{0}$ in $\mathbb{R}^{d}$.
	
	If $u_{0}$ is not so regular, then nonetheless we can find a sequence $(u_{0,n})_{n \in \mathbb{N}} \subseteq BC^{2}(\mathbb{R}^{d})$ such that $\|u_{0,n} - u_{0}\|_{L^{\infty}(\mathbb{R}^{d})} \to 0$ as $n \to \infty$.  The bound in Theorem \ref{T: comparison} implies that the associated solutions $(u_{n})_{n \in \mathbb{N}}$ are uniformly Cauchy in $\mathbb{R}^{d} \times [0,T]$.  Therefore, their limit $u = \lim_{n \to \infty} u_{n}$ exists and, by stability, is a solution of \eqref{E: comparison}.         \end{proof}  

\section{Construction of Correctors} \label{A: correctors}

In this section, we discuss the standing waves and correctors used in the ansatz \eqref{E: main ansatz}.  Throughout we assume that $W$ satisfies satisfies \eqref{A: zeros}, \eqref{A: sign of derivative}, and \eqref{A: nondegeneracy of W} as in Theorem \ref{T: main}.

\subsection{Standing Waves}  We begin by recalling some standard facts concerning the standing waves of the Allen-Cahn equation.  Up to a translation, this is the function $q : \mathbb{R} \to (-1,1)$ such that
	\begin{equation} \label{E: standing wave}
		-\ddot{q} + W'(q) = 0 \quad \text{in} \, \, \mathbb{R}, \quad \lim_{s \to \pm \infty} q(s) = \pm 1, \quad q(0) = 0.
	\end{equation}
	
	\begin{prop} \label{P: standing wave} There is a unique, strictly increasing $q : \mathbb{R} \to (-1,1)$ satisfying \eqref{E: standing wave}.    Further, there is a constant $C > 0$ such that
		\begin{equation*}
			|q(s) - 1| \leq C \exp \left(-\frac{s}{C} \right), \quad |q(s) + 1| \leq C \exp \left(\frac{s}{C} \right).
		\end{equation*}
	\end{prop}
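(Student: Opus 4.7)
The plan is to exploit the Hamiltonian structure of \eqref{E: standing wave} to reduce it to a first-order ODE and then extract exponential decay from the linearization at $\pm 1$. First, I would multiply \eqref{E: standing wave} by $\dot{q}$ and integrate to obtain the conservation law $\tfrac{1}{2}\dot{q}(s)^{2} - W(q(s)) = E$ for some $E \in \mathbb{R}$. Any solution satisfying $q(s) \to \pm 1$ as $s \to \pm\infty$ must have $\dot{q}(s) \to 0$ along a subsequence (otherwise $q$ would be unbounded), forcing $E = -W(\pm 1) = 0$. Imposing monotonicity then yields $\dot{q} = \sqrt{2 W(q)} > 0$ on $(-1,1)$.

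Second, separation of variables produces the implicit formula
\[
\int_{0}^{q(s)} \frac{du}{\sqrt{2 W(u)}} = s.
\]
The assumption \eqref{A: nondegeneracy of W} gives $W(u) \sim \tfrac{1}{2} W''(\pm 1) (u \mp 1)^{2}$ near $\pm 1$, so the improper integrals on the right hand side diverge logarithmically at $\pm 1$, while the integrand is continuous on $(-1,1)$. This produces a unique strictly increasing $C^{2}$ solution $q : \mathbb{R} \to (-1,1)$ with $q(0) = 0$. Uniqueness within the class of solutions connecting $-1$ to $1$ follows from a phase-plane argument: the energy constraint $\{(q,\dot{q}) \, : \, \tfrac{1}{2}\dot{q}^{2} = W(q)\}$ cuts out precisely the heteroclinic orbit between $(\pm 1, 0)$, and autonomy of the ODE determines the profile up to translation.

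Third, for the exponential estimates near $+\infty$, I would combine $\dot{q} = \sqrt{2 W(q)}$ with the Taylor expansion $W(q) = \tfrac{1}{2} W''(1) (1-q)^{2} + O((1-q)^{3})$ to obtain
\[
\frac{d}{ds}(1 - q(s)) = -\sqrt{W''(1)} (1 - q(s)) + O((1-q(s))^{2}),
\]
valid for $s \geq s_{0}$ with $s_{0}$ large enough that $1 - q(s_{0})$ is small. A standard ODE comparison then yields $1 - q(s) \leq C_{+} e^{-s/C_{+}}$ on $[s_{0},\infty)$. The bound on $(-\infty, s_{0}]$ follows trivially (adjusting the constant) since $q$ is bounded, and the decay at $-\infty$ is obtained symmetrically using $W''(-1) > 0$.

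The main obstacle, though modest, is the uniqueness claim: one must rule out non-monotone or non-connecting solutions of \eqref{E: standing wave} satisfying the boundary conditions. This is handled cleanly via the phase portrait argument above, using that $\pm 1$ are saddle points by \eqref{A: nondegeneracy of W} and that the stable/unstable manifolds coincide with the zero-energy level set. Everything else reduces to routine ODE computations once the first integral is in hand.
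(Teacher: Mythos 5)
Your proposal is correct and follows essentially the same approach as the paper: both exploit the Hamiltonian first integral $\tfrac{1}{2}\dot q^{2} = W(q)$ to reduce \eqref{E: standing wave} to a first-order ODE (yielding existence, strict monotonicity, and uniqueness up to translation), and both obtain the exponential decay from a stability/linearization argument at $\pm 1$ using \eqref{A: nondegeneracy of W}. The paper's proof is a terse sketch; you fill in the separation-of-variables formula and the phase-plane uniqueness argument, but the underlying strategy is identical.
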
   
	
		\begin{proof}  \eqref{E: standing wave} has a Hamiltonian structure.  In particular, the expression $\frac{1}{2} \dot{q}(s)^{2} - W(q(s))$ is independent of $s \in \mathbb{R}$.  Since this should clearly be zero at infinity, we deduce that $\frac{1}{2} \dot{q}(0)^{2} = W(0)$.  Thus, $|\dot{q}(0)|$ is uniquely determined a priori.  We solve the ODE $\ddot{q} = W'(q)$ in $\mathbb{R}$ with $q(0) = 0$ and $\dot{q}(0) = \sqrt{2 W(0))}$, and then an exercise shows that the solution satisfies $\lim_{s \to \pm \infty} q(s) = \pm 1$.  Further, the identity $\dot{q}(s) = \sqrt{2 W(q(s))}$ implies $q$ is strictly increasing.
		
		The exponential convergence to $\pm 1$ can be proved using a stability analysis or the maximum principle.  Here we are using \eqref{A: sign of derivative}.    \end{proof}
		
The standing wave $q$ generates the solutions of \eqref{E: pulsating standing wave}.  More precisely, for each $e \in S^{d-1}$, the function $U_{e}(s,y) = q(s)$ is a solution of \eqref{E: pulsating standing wave} with $a \equiv \text{Id}$.  The penalized correctors constructed in Section \ref{S: penalized correctors} will be approximate solutions of \eqref{E: linearized equation}.  It will therefore be helpful to know some properties of the principal eigenfunction $\partial_{s} U_{e}$, which in this case equals $\dot{q}$.  

	\begin{prop}  $\dot{q} \in C^{2,\alpha}(\mathbb{R})$ solves the linearized Allen-Cahn equation:
		\begin{equation*}
			- \ddot{q} + W''(q(s)) \dot{q} = 0 \quad \text{in} \, \, \mathbb{R}.
		\end{equation*}
	Furthermore, there is a constant $C > 0$ such that $|\dot{q}(s)| \leq C \exp(-C^{-1}|s|).$		\end{prop}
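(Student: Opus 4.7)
The plan is as follows. Starting from the standing wave equation $-\ddot{q} + W'(q) = 0$ in $\mathbb{R}$ established in Proposition \ref{P: standing wave}, I would first differentiate once in $s$. Since $W \in C^{3}([-1,1])$ by \eqref{A: nondegeneracy of W} and the associated smoothness assumption, we may apply the chain rule to obtain $-\dddot{q} + W''(q(s)) \dot{q} = 0$, which is exactly the assertion that $v := \dot{q}$ satisfies $-\ddot{v} + W''(q(s)) v = 0$ in $\mathbb{R}$.

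For the regularity statement $\dot{q} \in C^{2,\mu}(\mathbb{R})$, I would bootstrap using the standing wave equation itself. The Hamiltonian identity $\frac{1}{2} \dot{q}^{2} = W(q)$ together with $q \in C(\mathbb{R})$ immediately gives $\dot{q} \in C(\mathbb{R})$, hence $q \in C^{1}(\mathbb{R})$. Plugging into $\ddot{q} = W'(q)$ and using $W \in C^{3}$, we successively improve regularity: $W'(q) \in C^{1}$ gives $q \in C^{3}$, and one more differentiation combined with $W'' \in C^{1}$ (with uniform $\mu$-Hölder norms on $[-1,1]$) yields $q \in C^{3,\mu}$, so $\dot{q} \in C^{2,\mu}(\mathbb{R})$ as claimed.

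The substantive part, and the only step that requires genuine input, is the exponential decay estimate. Here I would argue as follows. From the Hamiltonian identity $\dot{q}(s) = \sqrt{2 W(q(s))}$ (with the positive sign because $q$ is strictly increasing by Proposition \ref{P: standing wave}) and the non-degeneracy assumption \eqref{A: nondegeneracy of W}, we have $W''(\pm 1) > 0$, so Taylor expansion at $\pm 1$ yields constants $c, C > 0$ such that $\sqrt{2 W(u)} \leq C |1 \mp u|$ for $u$ near $\pm 1$. Combining this with the exponential convergence $|q(s) \mp 1| \leq C \exp(-|s|/C)$ established in Proposition \ref{P: standing wave}, we conclude $|\dot{q}(s)| \leq C' \exp(-|s|/C')$ after adjusting the constant.

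The main (and only) obstacle is making sure the constants in the Taylor expansion step are handled cleanly; a fully self-contained alternative—avoiding the Hamiltonian identity—would be to invoke the asymptotic autonomous ODE $-\ddot{v} + W''(\pm 1) v = 0$ at $\pm \infty$ and a stable-manifold / sub-solution argument with barrier functions $A \exp(-\sqrt{W''(\pm 1)/2}\,|s|)$; either route is standard, so I would favor the explicit Hamiltonian calculation for brevity.
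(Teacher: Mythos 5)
Your proposal is correct but takes a genuinely different route from the paper for the exponential decay.

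The first two steps (differentiating the standing wave equation to obtain the linearized equation, and bootstrapping regularity through $\ddot{q} = W'(q)$ and $\dddot{q} = W''(q)\dot{q}$ using $W \in C^3([-1,1])$) coincide with what the paper does; these are mechanical. Your minor notational slip of writing $C^{2,\mu}$ where the statement asserts $C^{2,\alpha}$ is immaterial, since $C^3$ on a compact interval gives $W'' \in C^{0,1}$ and the bootstrap produces bounded fourth derivatives of $q$, hence $\dot{q} \in C^{2,\alpha}(\mathbb{R})$ for any $\alpha \in (0,1]$.

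For the exponential decay, the paper invokes the maximum principle together with \eqref{A: sign of derivative} (referring the reader to \cite{pulsating einstein}); the idea there is that $\dot q$ is a positive solution of $-\ddot v + W''(q)v = 0$, that $W''(q(s))$ tends to $W''(\pm 1) > 0$ as $s \to \pm\infty$, and a comparison against exponential barriers yields the decay. You instead bypass the linearized equation entirely and read the decay off the Hamiltonian identity $\dot q(s) = \sqrt{2W(q(s))}$ combined with the Taylor expansion $W(u) = \frac{1}{2}W''(\pm 1)(u \mp 1)^2 + o(|u \mp 1|^2)$ and the exponential convergence $|q(s) \mp 1| \le C\exp(-|s|/C)$ already established in Proposition \ref{P: standing wave}. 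This is a perfectly valid alternative and arguably more elementary: it avoids setting up barrier functions and the maximum principle, at the cost of leaning on the decay estimate for $q$ itself (a dependence the paper's barrier route does not strictly need, though that estimate is proved earlier in the same appendix anyway). Both arguments are standard; neither has a gap.

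Two small points worth making explicit if you write this up: (i) the positive sign in $\dot q = +\sqrt{2W(q)}$ should be justified from the strict monotonicity asserted in Proposition \ref{P: standing wave}; (ii) in the Taylor step one needs the nondegeneracy $W''(\pm 1) > 0$ from \eqref{A: nondegeneracy of W} precisely to guarantee a constant $C$ with $\sqrt{2W(u)} \le C|u \mp 1|$ near $\pm 1$ --- you cite the right hypothesis, just make the dependence visible.
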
  
	
		\begin{proof}  The PDE is obtained directly by differentiating $q$.  The exponential convergence can be proved using the maximum principle and \eqref{A: sign of derivative} (cf.\ \cite[Proposition 26]{pulsating einstein}).\end{proof}  
		
	Finally, we will need the following fact concerning the drift appearing in the renormalized Allen-Cahn operator:
	
		\begin{prop} \label{P: good drift} The function $s \mapsto \frac{\ddot{q}(s)}{\dot{q}(s)}$ is bounded and uniformly Lipschitz continuous in $\mathbb{R}$.  \end{prop}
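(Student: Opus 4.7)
The plan is to exploit the first integral $\frac{1}{2}\dot{q}^2 = W(q)$ to reduce the ratio $\ddot{q}/\dot{q}$ to a purely algebraic expression in $q$, and then to analyze its behavior near the wells $q = \pm 1$ by Taylor expansion. Since $\ddot q = W'(q)$ and $\dot q = \sqrt{2W(q)}$, we have
\begin{equation*}
  \frac{\ddot{q}(s)}{\dot{q}(s)} \;=\; \frac{W'(q(s))}{\sqrt{2W(q(s))}},
\end{equation*}
which depends on $s$ only through $q(s)$. Writing $F(r) := W'(r)/\sqrt{2W(r)}$ for $r \in (-1,1)$, the problem becomes: show that $F$ extends to a bounded, Lipschitz function on $[-1,1]$.

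First I would establish boundedness. On any compact subinterval of $(-1,1)$, $W > 0$ and $W'$ is $C^2$, so $F$ is $C^2$ and bounded. Near $r = 1$, the nondegeneracy \eqref{A: nondegeneracy of W} gives $W(r) = \tfrac{1}{2}W''(1)(r-1)^2 + O((r-1)^3)$ and $W'(r) = W''(1)(r-1) + O((r-1)^2)$, so
\begin{equation*}
  F(r) \;=\; \frac{W''(1)(r-1) + O((r-1)^2)}{\sqrt{W''(1)}\,|r-1|\,\sqrt{1 + O(r-1)}} \;\longrightarrow\; -\sqrt{W''(1)}
\end{equation*}
as $r \to 1^-$, where the sign comes from $r - 1 < 0$. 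The same computation at $r = -1$ yields $F(r) \to \sqrt{W''(-1)}$. Thus $F$ has finite one-sided limits at $\pm 1$, and in particular $\ddot q/\dot q$ is bounded on $\mathbb{R}$.

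Next I would prove uniform Lipschitz continuity by bounding $(d/ds)(\ddot q/\dot q)$. A direct computation, using $\dddot q = W''(q)\dot q$ and $\dot q^2 = 2W(q)$, gives
\begin{equation*}
  \frac{d}{ds}\!\left(\frac{\ddot q}{\dot q}\right) \;=\; \frac{\dddot q \,\dot q - \ddot q^{\,2}}{\dot q^{\,2}} \;=\; W''(q) - \frac{W'(q)^{2}}{2W(q)}.
\end{equation*}
On compacts in $(-1,1)$ this is plainly smooth and bounded, so everything reduces to the endpoints. Near $r = 1$, expand $W'(r) = W''(1)(r-1) + \tfrac12 W'''(1)(r-1)^2 + O((r-1)^3)$ and $2W(r) = W''(1)(r-1)^2 + \tfrac13 W'''(1)(r-1)^3 + O((r-1)^4)$. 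Dividing and cancelling the common factor $(r-1)^2$ produces
\begin{equation*}
  \frac{W'(r)^2}{2W(r)} \;=\; W''(1) + \tfrac{2}{3}W'''(1)(r-1) + O((r-1)^2),
\end{equation*}
while $W''(r) = W''(1) + W'''(1)(r-1) + O((r-1)^2)$. Subtracting, the constant terms cancel and we obtain $W''(q) - W'(q)^2/(2W(q)) = \tfrac13 W'''(1)(q-1) + O((q-1)^2) \to 0$ as $s \to +\infty$. The same analysis applies at $r = -1$. Hence $(d/ds)(\ddot q/\dot q)$ is globally bounded on $\mathbb{R}$, and the mean value theorem upgrades this to uniform Lipschitz continuity.

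The only subtle step is the cancellation of leading singularities in the derivative formula near the wells; this is where the precise form of the Hamiltonian identity $\dot q^2 = 2W(q)$ and the quadratic vanishing of $W$ at $\pm 1$ (i.e.\ \eqref{A: nondegeneracy of W}) are essential. Without the identity there would be no algebraic way to evaluate $\ddot q^{2}/\dot q^{2}$, and without the nondegeneracy the limits would fail to exist.
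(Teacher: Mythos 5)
Your proof is correct, and it takes a genuinely different route from the paper's. You exploit the Hamiltonian first integral $\dot q^2 = 2W(q)$ to reduce $\ddot q/\dot q$ to the algebraic expression $F(q) = W'(q)/\sqrt{2W(q)}$ and its $s$-derivative to $W''(q) - W'(q)^2/(2W(q))$, and then handle the endpoints by Taylor expansion, using the nondegeneracy $W''(\pm 1) > 0$ to cancel the singularities; this requires $W \in C^3$ for the Lipschitz step, which the paper assumes. The paper instead proceeds by PDE estimates: since $\dot q > 0$ solves the linearized equation $-\ddot v + W''(q)v = 0$, interior Schauder estimates give $|\ddot q(s)| + |\dddot q(s)| \leq C\sup_{(s-1,s+1)}\dot q$, and the Harnack inequality converts that supremum back to $C\dot q(s)$, yielding directly that both $\ddot q/\dot q$ and $\dddot q/\dot q$ are bounded; Lipschitz then follows from $(\ddot q/\dot q)' = \dddot q/\dot q - (\ddot q/\dot q)^2$. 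Your argument is more elementary and self-contained, relying only on the explicit one-dimensional structure and calculus; the paper's argument is more robust in the sense that it does not use the first integral at all and would survive perturbations that destroy the Hamiltonian structure (e.g.\ $y$-dependent coefficients), which is in the spirit of the rest of the paper where exactly such perturbations appear.
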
  
		
			\begin{proof}  Applying Schauder estimates to $\dot{q}$, we find a constant $C > 0$ such that
				\begin{equation*}
					|\ddot{q}(s)| + |\dddot{q}(s)| \leq C ( \sup \left\{\dot{q}(s') \, \mid \, s' \in (s - 1, s + 1) \right\}).
				\end{equation*}
			Further, by the Harnack inequality, there is no loss of generality in assuming that
				\begin{equation*}
					\sup \left\{ \dot{q}(s') \, \mid \, s' \in (s - 1,s + 1) \right\} \leq C \dot{q}(s). 
				\end{equation*}
			Thus, $\frac{|\ddot{q}(s)|}{\dot{q}(s)} + \frac{|\ddot{q}(s)|}{\dot{q}(s)} \leq C^{2}$.  This gives the boundedness of $\frac{\ddot{q}}{\dot{q}}$ directly and the uniform Lipschitz continuity after differentiation.
			\end{proof}  
		
\subsection{Linearized Allen-Cahn Equation}  In the construction of approximate correctors, we used the linearized Allen-Cahn operator $-\partial^{2}_{s} + W''(q(s))$.  In particular, the next solvability result was used:
	
	\begin{prop} \label{P: 1D corrector} If $f \in C^{\alpha}(\mathbb{R})$ for some $\alpha \in (0,1)$, then there is a unique $P^{f} \in C^{2,\alpha}(\mathbb{R})$ and a unique $\overline{f} \in \mathbb{R}$ solving the PDE:
		\begin{equation*}
			- \ddot{P}^{f} + W''(q(s)) P^{f} = (f(s) - \overline{f})\dot{q}(s) \quad \text{in} \, \, \mathbb{R}, \quad P^{f}(0) = 0.
		\end{equation*}
	Furthermore, there is a $C > 0$ such that $|P^{f}(s)| \leq \|P^{f}\|_{L^{\infty}(\mathbb{R})} \exp(-C^{-1}|s|)$.  
	\end{prop}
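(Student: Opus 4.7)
The plan is to set up the problem as a Fredholm-type solvability question for the self-adjoint operator $L P := -\ddot{P} + W''(q(s))P$ on $\mathbb{R}$ and then construct $P^{f}$ explicitly via reduction of order, using $\dot{q}$ as the known positive solution of $LP=0$. The starting point is that differentiating \eqref{E: standing wave} gives $L\dot{q}=0$, so that $\dot{q}$ is a bounded positive solution of the homogeneous equation; combined with the exponential decay in Proposition \ref{P: standing wave}, this forces the bounded kernel of $L$ to be one-dimensional, spanned by $\dot{q}$ (the second linearly independent solution grows exponentially, as can be checked from the Wronskian).

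The constant $\overline{f}$ is dictated by the solvability condition obtained from pairing against $\dot{q}$. Multiplying the equation by $\dot{q}$ and integrating formally on $\mathbb{R}$ cancels the left-hand side (using self-adjointness and the decay of $\dot{q}$ and $P^{f}$), forcing
\begin{equation*}
    \overline{f} = \frac{\int_{\mathbb{R}} f(s)\,\dot{q}(s)^{2}\,ds}{\int_{\mathbb{R}} \dot{q}(s)^{2}\,ds}.
\end{equation*}
With this choice, the forcing $g(s):=(f(s)-\overline{f})\dot{q}(s)$ satisfies $\int_{\mathbb{R}} g\,\dot{q}\,ds = 0$, which is precisely what is needed for a bounded particular solution to exist.

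To construct $P^{f}$, I would write $P^{f}=\dot{q}\,V$ and use $L\dot{q}=0$ to reduce the equation to the first-order form
\begin{equation*}
    -\frac{d}{ds}\bigl(\dot{q}(s)^{2}\,\dot{V}(s)\bigr) = (f(s)-\overline{f})\,\dot{q}(s)^{2} \quad \text{in } \mathbb{R}.
\end{equation*}
Integrating once produces
\begin{equation*}
    \dot{q}(s)^{2}\,\dot{V}(s) = \int_{s}^{\infty}(f(t)-\overline{f})\,\dot{q}(t)^{2}\,dt,
\end{equation*}
where the orthogonality of $f-\overline{f}$ against $\dot{q}^{2}$ ensures that the same formula with the integral running from $-\infty$ to $s$ (with a sign change) agrees at both endpoints, so $\dot{V}$ has no ambiguous constant of integration. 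Because $\dot{q}(t)^{2}$ decays like $e^{-2|t|/C}$ by Proposition \ref{P: standing wave}, the integral on the right decays like $e^{-2|s|/C}$ as $s\to\pm\infty$, while $\dot{q}(s)^{-2}$ blows up like $e^{2|s|/C}$; the two rates match exactly, making $\dot{V}$ bounded on $\mathbb{R}$. Integrating once more gives $V$ growing at most linearly, and therefore $P^{f}=\dot{q}\,V$ inherits exponential decay at rate arbitrarily close to that of $\dot{q}$. The normalization $P^{f}(0)=0$ fixes the remaining additive constant in $V$ (since $\dot{q}(0)>0$), and the $C^{2,\alpha}$ regularity follows from $f\in C^{\alpha}$ and standard ODE/Schauder estimates applied to $LP=(f-\overline{f})\dot{q}$.

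Uniqueness follows from the kernel computation above: any two solutions differ by a bounded element of $\ker L$, hence by a multiple of $\dot{q}$, and the pointwise condition $P^{f}(0)=0$ together with $\dot{q}(0)>0$ forces the multiple to vanish. The main technical nuisance — and the only step that is not mechanical — is verifying the matching of the exponential rates in $\dot{V}(s)=\dot{q}(s)^{-2}\int_{s}^{\infty}(f-\overline{f})\dot{q}^{2}\,dt$ with enough care to deduce the claimed exponential bound on $P^{f}$ uniformly in terms of $\|f\|_{L^{\infty}}$; this is where one must be slightly careful about which $C$ one tracks, but no genuine difficulty arises because $\dot{q}$ satisfies two-sided matching exponential bounds from above and below (the lower bound coming from the Harnack inequality used in Proposition \ref{P: good drift}).
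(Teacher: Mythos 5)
Your proposal is correct, and it reaches the same $\overline{f}$ via the same orthogonality consideration, but the existence mechanism is genuinely different from the paper's. The paper treats $\mathcal{L}=-\partial_s^2+W''(q(s))$ as an unbounded self-adjoint operator on $L^2(\mathbb{R})$ with domain $H^2(\mathbb{R})$, invokes closed range (which rests on $W''(\pm 1)>0$ pushing the essential spectrum off $0$) to get $\mathrm{Ran}(\mathcal{L})=\mathrm{Ker}(\mathcal{L})^\perp$, and then identifies $\mathrm{Ker}(\mathcal{L})=\mathrm{span}\{\dot q\}$ via the Doob-transform quadratic-form identity $\langle \mathcal{L}g,g\rangle=\int \dot h^2\dot q^2\,ds$ with $h=g/\dot q$; this yields $P^f\in H^2(\mathbb{R})$ abstractly, after which boundedness is extracted from the local maximum principle, $C^{2,\alpha}$ from Schauder, and exponential decay from a separate maximum-principle comparison. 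You instead construct $P^f$ explicitly by reduction of order: writing $P^f=\dot q\,V$ collapses the equation to $-\frac{d}{ds}(\dot q^2\dot V)=(f-\overline{f})\dot q^2$, and the orthogonality $\int (f-\overline{f})\dot q^2=0$ is exactly the compatibility that lets you solve this first-order equation for a bounded $\dot V$; exponential decay of $P^f$ then falls out of the two-sided exponential estimates on $\dot q$ (upper from Proposition \ref{P: standing wave}, lower from the phase-plane identity $\dot q=\sqrt{2W(q)}$ or Harnack) without a separate comparison argument. Your route is more elementary and avoids the closed-range assertion, which the paper states but does not prove; what you give up is the cleaner generalization to higher-dimensional or less explicit operators that the abstract Fredholm argument would permit.

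One minor caveat, applicable to both your proof and the paper's: the stated bound $|P^f(s)|\leq\|P^f\|_{L^\infty(\mathbb{R})}\exp(-C^{-1}|s|)$ cannot hold as written with the prefactor exactly $\|P^f\|_{L^\infty}$ whenever $P^f$ attains its $L^\infty$ norm at some $s_0\neq 0$ (which is generic, given $P^f(0)=0$), since then the right side is strictly below $\|P^f\|_{L^\infty}$ at $s=s_0$. The intended content is surely $|P^f(s)|\leq C'\exp(-C^{-1}|s|)$ with $C'$ controlled by $\|P^f\|_{L^\infty}$, which your explicit formula does deliver, so this is a cosmetic issue in the statement rather than a gap in either argument.
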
  
	
		\begin{proof}  The operator $\mathcal{L} = -\partial_{s}^{2} + W''(q(s))$ with domain $H^{2}(\mathbb{R})$ is self-adjoint in $L^{2}(\mathbb{R})$ with closed range.  Therefore, $\text{Ran}(\mathcal{L}) = \text{Ker}(\mathcal{L})^{\perp}$.  Since $\dot{q}$ is a positive eigenfunction, it follows that, for each $g \in H^{2}(\mathbb{R})$, 
			\begin{equation*}
				\langle \mathcal{L}g,g \rangle_{L^{2}(\mathbb{R})} = \int_{-\infty}^{\infty} \dot{h}(s)^{2} \dot{q}(s)^{2} \, d s,
			\end{equation*}
		where $h(s) = \dot{q}(s)^{-1} g(s)$.  From this, it follows that $\text{Ker}(\mathcal{L}) = \text{span}\{\dot{q}\}$.  
		
		Thus, if $f \in C^{\alpha}(\mathbb{R})$, then $f \dot{q} \in L^{2}(\mathbb{R})$ and there is a $P^{f} \in H^{2}(\mathbb{R})$ with $\mathcal{L} P^{f} = (f - \overline{f})\dot{q}$ provided $\overline{f} \in \mathbb{R}$ is given by
			\begin{equation*}
				\overline{f} = c_{W}^{-1} \int_{- \infty}^{\infty} f(s) \dot{q}(s)^{2} \, ds.
			\end{equation*}
		The local maximum principle implies $P^{f} \in L^{\infty}(\mathbb{R})$ and then Schauder estimates give $P^{f} \in C^{2,\alpha}(\mathbb{R})$.  Since $\dot{q}(s) \to 0$ exponentially as $s \to \pm \infty$, a maximum principle argument shows that $P^{f}$ does also.  \end{proof}  
		
\subsection{Penalized Correctors} \label{S: penalized correctors}  Finally, we prove the existence and regularity of the penalized correctors used in Section \ref{S: approximate correctors}.  Recall that these are the solutions $(P_{2}^{\delta})_{\delta > 0}$ of the penalized cell problem
	\begin{equation} \label{E: penalized cell problem appendix}
		\tilde{m}_{2}(s,y) \dot{q}(s) + \delta P_{2}^{\delta} + \mathcal{D}_{e}^{*} \mathcal{D}_{e} P^{\delta}_{2} + W''(q) P^{\delta}_{2} = 0 \quad \text{in} \, \, \mathbb{R} \times \mathbb{T}^{d}.
	\end{equation}
We search for a solution of the form $P^{\delta}_{2}(s,y) = V^{\delta}_{2}(s,y) \dot{q}(s)$, employing the so-called Doob-Legendre transform.  Plugging this ansatz into the equation, we find
that $P^{\delta}_{2}$ solves \eqref{E: penalized cell problem appendix} if and only if $V^{\delta}_{2}$ solves
	\begin{equation} \label{E: cell problem txm}
		\tilde{m}_{2}(s,y) + \delta V_{2}^{\delta} + \mathcal{D}_{e}^{*} \mathcal{D}_{e} V^{\delta}_{2} - \frac{2 \ddot{q}(s)}{\dot{q}(s)} \langle e, \mathcal{D}_{e} V^{\delta}_{2} \rangle = 0 \quad \text{in} \, \, \mathbb{R} \times \mathbb{T}^{d}.
	\end{equation}

It is clear that \eqref{E: cell problem txm} has a solution since it is degenerate elliptic and $f$ is bounded.  However, we will not proceed in this way.  

Instead, notice that $V^{\delta}_{2}$ is a classical solution of \eqref{E: cell problem txm} if and only if the one-parameter family of functions $(v_{\zeta}^{\delta})_{\zeta \in \mathbb{R}}$ determined by
	\begin{equation*}
		v_{\zeta}^{\delta}(x) = V^{\delta}_{2}(\langle x,e \rangle - \zeta, x)
	\end{equation*}
gives rise to solutions of the following family of PDE:
	\begin{equation} \label{E: physical cell problems}
		\tilde{m}_{2}(\langle x,e \rangle - \zeta,x) + \delta v^{\delta}_{\zeta} - \Delta v^{\delta}_{\zeta} - \frac{2 \ddot{q}(\langle x,e \rangle - \zeta)}{\dot{q}(\langle x,e \rangle - \zeta)} \langle e, Dv^{\delta}_{\zeta} \rangle = 0 \quad \text{in} \, \, \mathbb{R}^{d}.
	\end{equation}

Where regularity considerations are concerned, it is convenient to construct the solution $V^{\delta}_{2}$ of \eqref{E: cell problem txm} by studying the solutions of \eqref{E: physical cell problems}.  Here is the main result in that regard:

	\begin{theorem} \label{T: construction of correctors} If $\tilde{m}$ satisfies \eqref{E: smooth}, then, for each $\delta > 0$ and $\zeta \in \mathbb{R}$, there is a $v_{\zeta}^{\delta} \in C^{2,\alpha}(\mathbb{R}^{d})$ solving \eqref{E: physical cell problems}.  Furthermore, the map $(\zeta,x) \mapsto v_{\zeta}^{\delta}(x)$ is twice continuously differentiable with respect to $\zeta$ and there is a constant $C > 0$ independent of $\delta$ such that
		\begin{equation*}
			\|v_{\zeta}^{\delta}\|_{L^{\infty}(\mathbb{R}^{d})} + \left\| \frac{\partial v_{\zeta}}{\partial \zeta} \right\|_{C^{1,\alpha}(\mathbb{R}^{d})} + \left\| \frac{\partial^{2} v_{\zeta}}{\partial \zeta^{2}} \right\|_{C^{\alpha}(\mathbb{R}^{d})} \leq C (1 +\delta^{-1}).
		\end{equation*}
	\end{theorem}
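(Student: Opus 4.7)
The plan is to recognize \eqref{E: physical cell problems} as a linear, uniformly elliptic PDE on $\mathbb{R}^d$ with positive zero-order coefficient $\delta$, drift $b_\zeta(x) := -2(\ddot q/\dot q)(\langle x,e\rangle - \zeta)\,e$, and source $-\tilde m_2(\langle x,e\rangle - \zeta,x)$. By Proposition~\ref{P: good drift}, $b_\zeta \in C^{0,1}(\mathbb{R}^d;\mathbb{R}^d)$ uniformly in $\zeta$, while \eqref{E: smooth} gives uniform bounds on the source together with its first two $\zeta$-derivatives in $C^{\alpha}(\mathbb{R}^d)$. These are exactly the hypotheses for classical Schauder theory, and the whole construction reduces to running that theory with careful tracking of constants in $\zeta$ and $\delta$.

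For existence, I would first solve \eqref{E: physical cell problems} on a ball $B(0,R)$ with zero Dirichlet boundary data, producing $v_{\zeta,R}^\delta \in C^{2,\alpha}(\overline{B(0,R)})$ by standard Fredholm/Schauder arguments. The maximum principle gives the $\zeta$- and $R$-uniform $L^\infty$ bound $\|v_{\zeta,R}^\delta\|_\infty \leq \delta^{-1}\|\tilde m_2\|_\infty$. Interior $C^{2,\alpha}$ Schauder estimates on unit balls then yield local $C^{2,\alpha}$ control uniform in $R \gg 1$, and Arzelà--Ascoli produces a subsequential limit $v_\zeta^\delta \in C^{2,\alpha}_{\mathrm{loc}}(\mathbb{R}^d) \cap L^\infty(\mathbb{R}^d)$ solving \eqref{E: physical cell problems} on all of $\mathbb{R}^d$; uniqueness of a bounded solution follows directly from the maximum principle. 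Because the coefficients and forcing are in $C^\alpha$ uniformly across $\mathbb{R}^d$, applying the interior estimate around each point upgrades the local bound to a global $C^{2,\alpha}(\mathbb{R}^d)$ estimate on $v_\zeta^\delta$ with a constant depending polynomially on $\delta^{-1}$.

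For the $\zeta$-regularity, I would formally differentiate \eqref{E: physical cell problems} and note that $w := \partial_\zeta v_\zeta^\delta$ satisfies an equation of exactly the same form (same operator, same zero-order term $\delta$) with a new forcing that depends linearly on $\tilde m_2$, $\partial_s \tilde m_2$, $\ddot q/\dot q$, its derivative, and $Dv_\zeta^\delta$. All of these are uniformly bounded in $C^\alpha(\mathbb{R}^d)$ by the previous step, \eqref{E: smooth}, and Proposition~\ref{P: good drift}. Rigorous justification goes through difference quotients: setting $w_h := h^{-1}(v_{\zeta+h}^\delta - v_\zeta^\delta)$, one verifies that $w_h$ solves a linear PDE whose data converge in $C^\alpha$ as $h \to 0$, so by the same Schauder machinery $\{w_h\}$ is Cauchy in $C^{2,\alpha}_{\mathrm{loc}}$ and its limit is the classical solution $w$ of the differentiated PDE, with the claimed $C^{1,\alpha}(\mathbb{R}^d)$ bound. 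One more differentiation in $\zeta$, following the identical template, yields the $C^\alpha$ bound on $\partial_\zeta^2 v_\zeta^\delta$.

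The main obstacle is neither existence nor qualitative regularity, but rather making every constant uniform in $\zeta$. Since $\zeta$ shifts both the drift coefficient and the forcing in the direction $e$, any coefficient bound that degenerated as $|\langle x,e\rangle| \to \infty$ would re-emerge as unbounded $\zeta$-dependence. The decisive structural input is Proposition~\ref{P: good drift}: even though $\dot q$ decays exponentially at $\pm\infty$, the ratio $\ddot q/\dot q$ and its derivative are globally bounded on $\mathbb{R}$. This is precisely what makes the transformed operator uniformly elliptic with uniformly Lipschitz drift across all of $\mathbb{R}^d$, and what keeps the forcing of the differentiated equation for $w$ uniformly bounded in $C^\alpha$. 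Without that proposition, the equation would degenerate at infinity in the $e$-direction and the scheme would not produce $\zeta$-uniform constants.
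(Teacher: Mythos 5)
Your existence argument and the observation that Proposition~\ref{P: good drift} makes the drift uniformly Lipschitz are both fine, and the difference-quotient scheme is the right general template. But the differentiation step has a genuine gap. You differentiate \eqref{E: physical cell problems} directly, so the $\zeta$-dependence sits in \emph{two} places: the forcing $\tilde m_2(\langle x,e\rangle - \zeta, x)$ and the drift $\frac{2\ddot q(\langle x,e\rangle - \zeta)}{\dot q(\langle x,e\rangle - \zeta)}$. The resulting forcing for $w=\partial_\zeta v^\delta_\zeta$ therefore contains $\partial_s\tilde m_2$ and the term $\bigl(\frac{\ddot q}{\dot q}\bigr)'(\langle x,e\rangle - \zeta)\,\langle e, Dv^\delta_\zeta\rangle$. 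Neither is controlled the way you claim: assumption \eqref{E: smooth} bounds only $D_y\tilde m$ and $D^2_y\tilde m$, not $\partial_s\tilde m$, so the first term is simply not available; and $\|Dv^\delta_\zeta\|_{C^\alpha}$ is $O(1+\delta^{-1})$, not $O(1)$, so even if you could bound everything, the $L^\infty$ estimate for $w$ would deliver $(1+\delta^{-1})^2$ rather than the linear bound the theorem asserts.

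Both problems disappear in the paper's argument because of the recentering $\tilde v^\delta_\zeta(x)=v^\delta_\zeta(x+\zeta e)$ from \eqref{E: recentered solutions}. After recentering, the drift becomes $\frac{2\ddot q(\langle x,e\rangle)}{\dot q(\langle x,e\rangle)}$, with no $\zeta$-dependence at all, and the forcing becomes $\tilde m_2(\langle x,e\rangle, x+\zeta e)$, in which $\zeta$ enters only through the torus slot. Differentiating in $\zeta$ then hits only $D_y\tilde m$ (and $D^2_y\tilde m$ at the second order), which is exactly what \eqref{E: smooth} controls, and no $Dv^\delta_\zeta$ term appears. That is what keeps the right-hand side of the $w$-equation bounded in $C^\alpha$ independently of $\delta$ and produces the $(1+\delta^{-1})$ bound. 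Translating back to $v^\delta_\zeta$ is harmless because $\partial_\zeta v^\delta_\zeta(x) = (\partial_\zeta\tilde v^\delta_\zeta)(x-\zeta e) - \langle e, D\tilde v^\delta_\zeta(x-\zeta e)\rangle$, and both terms are $O(1+\delta^{-1})$. Without recentering, the proof does not close.
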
  
	
This leads immediately to a regularity result for $V^{\delta}$:

	\begin{corollary} \label{C: cell problem solution}  If $\tilde{m}$ satisfies \eqref{E: smooth}, then the unique viscosity solution $V_{2}^{\delta}$ of \eqref{E: cell problem txm} is in $C^{2,\mu}(\mathbb{R} \times \mathbb{T}^{d})$ and there is a $\delta$-independent constant $C > 0$ depending only on $f$ such that
		\begin{equation*}
			\left\| V_{2}^{\delta} \right\|_{C^{2,\alpha}(\mathbb{R} \times \mathbb{T}^{d})} \leq C(1+ \delta^{-1}).
		\end{equation*}
	Furthermore, making $C$ larger if necessary, we also have:
		\begin{equation*}
			\|P^{\delta}_{2}(s,\cdot)\|_{L^{\infty}(\mathbb{T}^{d})} + \|\partial_{s} P_{2}^{\delta}(s,\cdot)\|_{L^{\infty}(\mathbb{T}^{d})} \leq C (1 + \delta^{-1}) \exp \left(-C^{-1}|s|\right).
		\end{equation*}\end{corollary}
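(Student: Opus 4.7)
The corollary asserts $C^{2,\mu}$ regularity of $V_2^\delta$ on $\mathbb{R}\times\mathbb{T}^d$ and exponential decay of $P_2^\delta=V_2^\delta\,\dot q$ and $\partial_s P_2^\delta$. The plan is to translate the estimates on the physical-space correctors $v_\zeta^\delta$ supplied by Theorem \ref{T: construction of correctors} into estimates on $V_2^\delta$ via the identity $V_2^\delta(s,y)=v^\delta_{\langle y,e\rangle - s}(y)$, and then multiply by $\dot q$ for the decay statement.

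\textbf{Step 1 (upgrade $v_\zeta^\delta$ to $C^{2,\mu}$ in $x$).} Theorem \ref{T: construction of correctors} only records $\|v_\zeta^\delta\|_{L^\infty}\leq C(1+\delta^{-1})$, but \eqref{E: physical cell problems} is a uniformly elliptic linear PDE with drift $-2(\ddot q/\dot q)(\langle x,e\rangle-\zeta)\,e$ that is globally Lipschitz in $x$ by Proposition \ref{P: good drift}, and with forcing $-\tilde m_2(\langle x,e\rangle-\zeta,x)$ that is uniformly $C^{\mu}(\mathbb{R}^d)$ in view of \eqref{E: smooth}, with all bounds independent of $\zeta$. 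Interior Schauder estimates on unit balls therefore give, uniformly in $\zeta\in\mathbb{R}$,
\[
\|v_\zeta^\delta\|_{C^{2,\mu}(\mathbb{R}^d)}\leq C\bigl(\|v_\zeta^\delta\|_{L^\infty(\mathbb{R}^d)}+\|\tilde m_2\|_{C^{\mu}}\bigr)\leq C(1+\delta^{-1}).
\]

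\textbf{Step 2 (transfer to $(s,y)$ coordinates).} Writing $\zeta(s,y):=\langle y,e\rangle -s$, the chain rule gives
\[
\partial_s V_2^\delta=-(\partial_\zeta v^\delta_\zeta)(y),\qquad D_y V_2^\delta=(Dv^\delta_\zeta)(y)+e\,(\partial_\zeta v^\delta_\zeta)(y),
\]
evaluated at $\zeta=\zeta(s,y)$, and analogous expressions for the second derivatives involve only the quantities $D^2 v^\delta_\zeta$, $D\partial_\zeta v^\delta_\zeta$, and $\partial_\zeta^2 v^\delta_\zeta$. By Step 1 and Theorem \ref{T: construction of correctors}, each of these is bounded in $C^{\mu}$ by $C(1+\delta^{-1})$. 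Since the coordinate change $(s,y)\mapsto(\zeta(s,y),y)$ is affine, I conclude that $V_2^\delta\in C^{2,\mu}(\mathbb{R}\times\mathbb{T}^d)$ with $\|V_2^\delta\|_{C^{2,\mu}}\leq C(1+\delta^{-1})$. Uniqueness of the viscosity solution of \eqref{E: cell problem txm} in the bounded class follows from the comparison principle since the equation is strictly proper ($\delta>0$).

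\textbf{Step 3 (exponential decay of $P_2^\delta$).} Since $P_2^\delta=V_2^\delta\,\dot q$ and $\partial_s P_2^\delta=(\partial_s V_2^\delta)\dot q+V_2^\delta\,\ddot q$, and since $\|V_2^\delta\|_{L^\infty}+\|\partial_s V_2^\delta\|_{L^\infty}\leq C(1+\delta^{-1})$ by Step 2, the conclusion follows from the pointwise exponential bound $|\dot q(s)|\leq C\exp(-C^{-1}|s|)$ from Proposition \ref{P: standing wave} together with the identity $\ddot q=W'(q)$, which inherits the same exponential decay since $W'(\pm 1)=0$ and $W''(\pm 1)>0$ by \eqref{A: nondegeneracy of W}.

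\textbf{Main obstacle.} There is no genuinely difficult step once Theorem \ref{T: construction of correctors} is in hand. The only non-routine point is Step 1: Theorem \ref{T: construction of correctors} is stated with $v_\zeta^\delta$ only in $L^\infty$ in $x$, so one must independently promote the $x$-regularity via Schauder, being careful that the Schauder constants depend on $\tilde m$ through \eqref{E: smooth} and on the renormalized drift through Proposition \ref{P: good drift}, but not on $\zeta$ or $\delta$.
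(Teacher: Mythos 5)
Your proof is essentially correct and tracks the paper's route closely for the regularity part, but takes a genuinely different (and simpler) route for the exponential decay. For the $C^{2,\mu}$ bound on $V_2^\delta$, both you and the paper pass through the identity $V_2^\delta(s,y)=v^\delta_{\langle y,e\rangle-s}(y)$, a Schauder estimate in $x$ uniformly in $\zeta$, and the chain rule to transfer $(\zeta,x)$-regularity to $(s,y)$-regularity (the paper calls this ``differentiating, we eventually find''). One small caveat: in Step 2 you cite the \emph{statement} of Theorem~\ref{T: construction of correctors} for Hölder bounds on $D^2 v^\delta_\zeta$, $D\partial_\zeta v^\delta_\zeta$, and $\partial_\zeta^2 v^\delta_\zeta$, but the statement only records Hölder norms in $x$ for fixed $\zeta$; the joint Hölder continuity (equivalently, Hölder-in-$\zeta$) of these quantities is established in the \emph{proof} of that theorem (e.g.\ the displayed estimate on $\partial_\zeta^2 \tilde v_\zeta-\partial_\zeta^2\tilde v_{\zeta'}$ and the $C^{2,\alpha}$ bound on $\partial_\zeta \tilde v_\zeta$), so you should cite the proof rather than the statement.

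Where you genuinely diverge from the paper is Step 3. The paper obtains the exponential decay of $P_2^\delta$ and $\partial_s P_2^\delta$ by a maximum-principle (barrier) argument on the linearized Allen--Cahn operator, citing \cite[Proof of Proposition 30]{pulsating einstein}, and then repeats this for the structurally similar equation satisfied by $\partial_s P_2^\delta$. You instead observe that $P_2^\delta=V_2^\delta\,\dot q$ with $\|V_2^\delta\|_{L^\infty}+\|\partial_s V_2^\delta\|_{L^\infty}\leq C(1+\delta^{-1})$, so the decay is inherited pointwise from $\dot q$ (and from $\ddot q=W'(q)$, which decays at the same rate by \eqref{A: nondegeneracy of W}). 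This is cleaner and entirely sufficient here, because the corollary only claims a decay rate $C^{-1}$ that can be taken to be the rate of $\dot q$ in Proposition~\ref{P: standing wave}; one does not need the (potentially sharper) rate a barrier argument might deliver. So both approaches prove the stated estimate, but yours bypasses the maximum-principle step entirely.
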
  

\begin{proof}[Proof of Theorem \ref{T: construction of correctors}]  Since the drift $\frac{\ddot{q}}{\dot{q}}$ is bounded and uniformly Lipschitz continuous by Proposition \ref{P: good drift}, \eqref{E: physical cell problems} has a unique solution $v^{\delta}_{\zeta} \in C^{2,\mu}(\mathbb{R}^{d})$ and Schauder estimates give
	\begin{align*}
		\|v^{\delta}_{\zeta}\|_{L^{\infty}(\mathbb{R}^{d})} &\leq \|f\|_{L^{\infty}(\mathbb{R} \times \mathbb{T}^{d})} \delta^{-1}, \\
		\|v^{\delta}_{\zeta}\|_{C^{2,\mu}(\mathbb{R}^{d})} &\leq C' \left(\|v^{\delta}_{\zeta}\|_{L^{\infty}(\mathbb{R}^{d})} + \|\tilde{m}\|_{C^{\mu}(\mathbb{R} \times \mathbb{T}^{d})}\right) \leq C' \|\tilde{m}\|_{C^{\mu}(\mathbb{R} \times \mathbb{T}^{d})} (1 + \delta^{-1}).
	\end{align*}
By uniqueness, it is easy to see that $\zeta \mapsto v^{\delta}_{\zeta}$ is continuous with respect to the topology of local uniform convergence. 

Recall the functions $(\tilde{v}_{\zeta})_{\zeta \in \mathbb{R}}$ defined in \eqref{E: recentered solutions} by translation.  As pointed out above, these functions satisfy \eqref{E: recentered equations}.  Thus, employing the method of difference quotients, we deduce that the functions $(w_{\zeta})_{\zeta \in \mathbb{R}}$ defined by $w_{\zeta} = \frac{\partial \tilde{v}^{\delta}}{\partial \zeta}$ satisfy
	\begin{equation*}
		\langle D_{y} \tilde{m}(\langle x,e \rangle,x + \zeta e), e \rangle + \delta w_{\zeta} - \Delta w_{\zeta} + \frac{2 \ddot{q}(\langle x,e \rangle)}{\dot{q}(\langle x,e \rangle)} \langle e, Dw_{\zeta} \rangle = 0 \quad \text{in} \, \, \mathbb{R}^{d}.
	\end{equation*}
Similarly, the functions $(p_{\zeta})_{\zeta \in \mathbb{R}}$ given by $p_{\zeta} = \frac{\partial^{2} \tilde{v}_{\zeta}}{\partial \zeta^{2}}$ satisfy
	\begin{equation*}
		\langle D^{2}_{y} \tilde{m}(\langle x,e \rangle, x + \zeta e)e ,e \rangle + \delta p_{\zeta} - \Delta p_{\zeta} + \frac{2 \ddot{q}(\langle x,e \rangle)}{\dot{q}(\langle x,e \rangle)} \langle e, Dp_{\zeta} \rangle = 0 \quad \text{in} \, \, \mathbb{R}^{d}.
	\end{equation*}
Thus, since $D_{y}\tilde{m}$ and $D^{2}_{y}\tilde{m}$ are just as regular as $\tilde{m}$, there is a $C > 0$ such that
	\begin{equation*}
		\left\| \frac{\partial \tilde{v}_{\zeta}}{\partial \zeta} \right\|_{C^{2,\alpha}(\mathbb{R}^{d})} + \left\| \frac{\partial^{2} \tilde{v}_{\zeta}}{\partial \zeta^{2}} \right\|_{C^{2,\alpha}(\mathbb{R}^{d})} \leq C (1 + \delta^{-1}).
	\end{equation*}
Furthermore, if $\zeta, \zeta' \in \mathbb{R}$, then the H\"{o}lder regularity of $D^{2}_{y}\tilde{m}$ yields
	\begin{align*}
		\left\| \frac{\partial^{2} \tilde{v}_{\zeta}}{\partial \zeta^{2}} - \frac{\partial^{2} \tilde{v}_{\zeta'}}{\partial \zeta^{2}} \right\|_{L^{\infty}(\mathbb{R}^{d})} &\leq \delta^{-1} \|D^{2}\tilde{m}_{y}\|_{C^{\alpha}(\mathbb{R} \times \mathbb{T}^{d})} |\zeta - \zeta'|^{\alpha}.
%		\left\| \frac{\partial^{2} v_{\zeta}}{\partial \zeta^{2}} - \frac{\partial^{2} v_{\zeta'}}{\partial \zeta^{2}} \right\|_{C^{2,\alpha}(\mathbb{R}^{d})} &\leq C' \left( \left\| \frac{\partial^{2} v_{\zeta}}{\partial \zeta^{2}} - \frac{\partial^{2} v_{\zeta'}}{\partial \zeta^{2}} \right\|_{L^{\infty}(\mathbb{R}^{d})} + \|D^{2}f_{y}\|_{C^{\alpha}(\mathbb{R} \times \mathbb{T}^{d})} |\zeta - \zeta'|^{\alpha} \right) \\
%			&\leq C'(1 + \delta^{-1}) \|D^{2}f_{y}\|_{C^{\alpha}(\mathbb{R} \times \mathbb{T}^{d})}.
	\end{align*}
These bounds readily carry over to $(v_{\zeta})_{\zeta \in \mathbb{R}}$, giving the desired estimates.  %Indeed,
%	\begin{align*}
%		\frac{\partial v_{\zeta}}{\partial \zeta}(y) &= \frac{\partial \tilde{v}_{\zeta}}{\partial \zeta}(y - \zeta e) - \langle D \tilde{v}_{\zeta}(y - \zeta e), e \rangle, \\
%		\frac{\partial^{2} v_{\zeta}}{\partial \zeta^{2}}(y) &= \frac{\partial^{2} \tilde{v}_{\zeta}}{\partial \zeta^{2}}(y - \zeta e) - 2 \left \langle D\left(\frac{\partial \tilde{v}_{\zeta}}{\partial \zeta}\right)(y - \zeta e), e \right \rangle + \langle D^{2} \tilde{v}_{\zeta}(y - \zeta e) e, e \rangle.
%	\end{align*}
%Thus,
%	\begin{align*}
%		\left\| \frac{\partial v}{\partial \zeta} \right\|_{C^{1,\alpha}(\mathbb{R}^{d})} + \left\| \frac{\partial^{2} v}{\partial \zeta^{2}} \right\|_{C^{\alpha}(\mathbb{R}^{d})} &\leq C(1+ \delta^{-1}), \quad \left\| \frac{\partial^{2} v_{\zeta}}{\partial \zeta^{2}} - \frac{\partial^{2} v_{\zeta'}}{\partial \zeta^{2}} \right\|_{L^{\infty}(\mathbb{R}^{d})} &\leq C(1 + \delta^{-1}) |\zeta - \zeta'|^{\alpha}.
%	\end{align*}
\end{proof}  

We proceed with the proof of Corollary \ref{C: cell problem solution}:

\begin{proof}[Proof of Corollary \ref{C: cell problem solution}]  Define $V^{\delta}_{2} : \mathbb{R} \times \mathbb{T}^{d} \to \mathbb{R}$ by 
	\begin{equation*}
		V^{\delta}_{2}(s,y) = v^{\delta}_{\langle y,e \rangle - s}(y).
	\end{equation*}
An exercise shows this is well-defined.  Differentiating, we eventually find
%	\begin{align*}
%		\partial_{s} V_{2}^{\delta}(s,y) &= -\frac{\partial v^{\delta}_{\langle y,e \rangle - s}}{\partial \zeta}(y), \quad \partial_{s}^{2} V_{2}^{\delta}(s,y) = \frac{\partial^{2} v^{\delta}_{\langle y,e \rangle - s}}{\partial \zeta^{2}}(y) \\
%		D_{y}V_{2}^{\delta}(s,y) &= Dv^{\delta}_{\langle y,e \rangle - s}(y) + \frac{\partial v^{\delta}_{\langle y,e \rangle - \zeta}}{\partial \zeta}(y) e \\
%		D^{2}_{y} V_{2}^{\delta}(s,y) &= D^{2} v^{\delta}_{\langle y,e \rangle - s}(y) + D \left(\frac{\partial v^{\delta}_{\langle y,e \rangle - \zeta}}{\partial \zeta} \right)(y) \otimes e \\
%		&\qquad + e \otimes D \left(\frac{\partial v^{\delta}_{\langle y,e \rangle - \zeta}}{\partial \zeta} \right)(y) + \frac{\partial^{2} v^{\delta}_{\langle y,e \rangle - \zeta}}{\partial \zeta^{2}}(y) e \otimes e \\
%		D_{y}(\partial_{s} V_{2}^{\delta})(s,y) &= - D \left(\frac{\partial v^{\delta}_{\langle y,e \rangle - \zeta}}{\partial \zeta} \right)(y) - \frac{\partial^{2} v^{\delta}_{\langle y,e \rangle - s}}{\partial \zeta^{2}}(y)  e
%	\end{align*}
$V^{\delta}_{2} \in C^{2,\alpha}(\mathbb{R} \times \mathbb{T}^{d})$.  A calculus exercise shows that $V^{\delta}_{2}$ is a solution of \eqref{E: cell problem txm}.    

The exponential convergence of $P^{\delta}_{2}$ follows from a maximum principle argument as in \cite[Proof of Proposition 30]{pulsating einstein}.  Using the fact that $\partial_{s} P^{\delta}_{2}$ satisfies a structurally similar linear PDE, we obtain a similar exponential estimate on $\partial_{s} P^{\delta}_{2}$.  \end{proof}

\section*{Acknowledgements}  

It is a pleasure to acknowledge P.E.\ Souganidis and W.M.\ Feldman for helpful discussions and encouragement.

\end{document}